\newcommand{\abb}[5]{%
\setlength{\arraycolsep}{0.4ex}%
\begin{array}{rcccc}%
#1 &:\,& #2 & \,\,\longrightarrow\,\, & #3 \\[0.5ex]%
     & & #4 & \longmapsto & #5%
\end{array}%
}
\renewcommand{\vartheta}{\theta}
\newcommand{\N}{\mathbb{N}}
\newcommand{\Z}{\mathbb{Z}}
\newcommand{\R}{\mathbb{R}}
\newcommand{\x}{\mathbf{x}}
\newcommand{\Q}{\mathbb{Q}}
\renewcommand{\leq}{\leqslant}
\numberwithin{equation}{section}
\newtheorem{theorem}{Theorem}[section]
\newtheorem{lemma}[theorem]{Lemma}
\newtheorem{remark}[theorem]{Remark}
\newtheorem{corollary}[theorem]{Corollary}
\newtheorem{proposition}[theorem]{Proposition}
\newtheorem{example}[theorem]{Example}
    \newtheoremstyle{TheoremNum}
        {\topsep}{\topsep}              
        {\itshape}                      
        {}                              
        {\bfseries}                     
        {.}                             
        { }                             
        {\thmname{#1}\thmnote{ \bfseries #3}}
    \theoremstyle{TheoremNum}
    \newtheorem{thmn}{Theorem}
  \newtheorem{corn}{Corollary}
    \newtheorem{propn}{Proposition}
\theoremstyle{definition}
\newtheorem{definition}[theorem]{Definition}
\DeclareMathOperator{\Tr}{tr}
\DeclareMathOperator{\inti}{int}
\DeclareMathOperator{\clo}{cl}
\DeclareMathOperator{\bd}{bd}
\DeclareMathOperator{\conv}{conv}
\DeclareMathOperator{\E}{\mathcal{E}}
\DeclareFontFamily{U}{mathx}{\hyphenchar\font45}
\DeclareFontShape{U}{mathx}{m}{n}{
      <5> <6> <7> <8> <9> <10>
      <10.95> <12> <14.4> <17.28> <20.74> <24.88>
      mathx10
      }{}
\DeclareSymbolFont{mathx}{U}{mathx}{m}{n}
\DeclareMathAccent{\widecheck}{0}{mathx}{"71}
\definecolor{dblackcolor}{rgb}{0.0,0.0,0.0}
\definecolor{dbluecolor}{rgb}{0.01,0.02,0.7}
\definecolor{dgreencolor}{rgb}{0.2,0.4,0.0}
\definecolor{dgraycolor}{rgb}{0.30,0.3,0.30}
\begin{document}

\title{The Wonderful Geometry of the Vandermonde map}

\author{Jose Acevedo}
\address{Escuela de Matem\'aticas, Universidad Industrial de Santander, Bucaramanga, Colombia}
\email{jgacehab@correo.uis.edu.co}

\author{Grigoriy Blekherman}
\address{School of Mathematics, Georgia Institute of Technology, 686 Cherry Street Atlanta, GA 30332, USA}
\email{greg@math.gatech.edu}

\author{Sebastian Debus}
\address{Sebastian Debus, Technische Universität Chemnitz, Fakultät für Mathematik, 09107 Chemnitz, Germany}
\email{sebastian.debus@mathematik.tu-chemnitz.de}

\author{Cordian Riener}
\address{Department of Mathematics and Statistics, UiT - the Arctic University of Norway, 9037 Troms\o, Norway}
\email{cordian.riener@uit.no}

\thanks{The first and second author were partially supported by NSF grant DMS-1901950. The third and fourth author have been supported by European Union's Horizon 2020 research and innovation programme under the Marie Sk\l{}odowska-Curie grant agreement 813211 (POEMA) and the Troms\o~ Research foundation grant agreement 17matteCR. The third author was additionally supported by the Deutsche Forschungsgemeinschaft (DFG, German Research Foundation) – 314838170, GRK 2297 MathCoRe}

\keywords{Vandermonde map, cyclic polytopes, trace polynomials, copositivity, undecidability}

\begin{abstract}
We study the geometry of the image of the nonnegative orthant under the power-sum and elementary symmetric polynomials maps. After analyzing the image in a finite number of variables, we concentrate on the limit as the number of variables approaches infinity. We explain how the geometry of the limit plays a crucial role in undecidability results in nonnegativity of symmetric polynomials, deciding validity of trace inequalities in linear algebra, and extremal combinatorics (recently observed by Blekherman, Raymond, and F. Wei \cite{blekherman2022undecidability}). We verify the experimental observation that the image has the combinatorial geometry of a cyclic polytope, as noted by Melánová, Sturmfels, and  Winter \cite{melanova2022recovery}, and generalize results of Choi,  Lam, and Reznick \cite{choi1987even} on nonnegative even symmetric polynomials. We also show that undecidability does not hold for the normalized power sum map. 
\end{abstract}
\maketitle
\markboth{J.~Acevedo, G.~Blekherman, S.~Debus, and C.~Riener}{Symmetric forms}

\section{Introduction} Our main subject is the so-called Vandermonde map which appears quite naturally in various contexts, thus providing connections between different mathematical domains. Our interest in the Vandermonde map is motivated by the following  problem. Suppose that we are given a polynomial expression in traces of powers of real symmetric matrices, such as \[2[\operatorname{tr} (A^2)]^2\operatorname{tr} (B^6) - \operatorname{tr} (A^4) [\operatorname{tr} (B^2)]^3,\] is there an algorithm to decide whether this expression is nonnegative for all real symmetric matrices $A$, $B$ of all sizes? What happens if we replace trace by normalized trace $\widetilde{\operatorname{tr}} (A)=\frac{\operatorname{tr}(A)}{n}$, where $n$ is the size of the matrix?

One of our main results is that the first (unnormalized) problem is \emph{undecidable}, while the second one is \emph{decidable}. The key to the hardness of the unnormalized problem is the fascinating geometry of \emph{the image of the probability simplex under the Vandermonde map}. As we explain below, some geometric properties of this set were observed in different areas of mathematics making it an important and beautiful object to study.

For any $n\times n$ matrix $A$ recall that $\operatorname{tr} (A^d)=\lambda_1^d+\dots+\lambda_n^d$, where $\lambda_i$ are the eigenvalues of $A$. We use $p_d$ to denote the $d$-th power sum polynomial: $p_d(x)=x_1^d+\dots+x_n^d$. We see that testing whether $2[\operatorname{tr} (A^2)]^2\operatorname{tr} (B^6) - \operatorname{tr} (A^4) [\operatorname{tr} (B^2)]^3$ is nonnegative on all real symmetric matrices of all sizes is equivalent to understanding whether $2p_2^2(x)p_6(y)-p_4(x)p_2^3(y)$ is nonnegative on all real vectors $x$ and $y$ of any dimension. 
Define the \textit{$d$-th Vandermonde map $\nu_{n,d}$} by sending a point in $\mathbb{R}^n$ to its image under the first $d$ power sums:
$$\nu_{n,d}(x)=(p_1(x),\dots,p_{d}(x)). $$  
Let $\Delta_{n-1}$ be the probability simplex in $\mathbb{R}^n$: $\Delta_{n-1}$ consists of all vectors with nonnegative coordinates with the sum of coordinates equal to $1$. We call the image $\nu_{n,d}(\Delta_{n-1})$ of the probability simplex under the Vandermonde map the \textit{$(n,d)$-Vandermonde cell} and denote it by $\Pi_{n,d}$. Observe that the first coordinate of $\Pi_{n,d}$ is identically $1$, and so we may project it out, and see $\Pi_{n,d}$ as the subset of $\mathbb{R}^{d-1}$, which is the image of $\Delta_{n-1}$ under $(p_2,\dots, p_d)$.

Since $2p_2^2(x)p_6(y)-p_4(x)p_2^3(y)$ is an even polynomial homogeneous in both $x$ and $y$, using the substitution $a_k=\frac{p_{2k}(x)}{p_2^k(x)}$ and $b_k=\frac{p_{2k}(y)}{p_2^k(y)}$, we see that deciding whether it is nonnegative for all $x,y \in \mathbb{R}^n$ is equivalent to deciding whether the polynomial $2b_3-a_2$ is nonnegative on the product $\Pi_{n,2}\times \Pi_{n,3}$. 

We reach two important conclusions: 
first, we are interested in nonnegativity of polynomials on (products of) Vandermonde cells $\Pi_{n,d}$, and second, to consider matrices of all sizes we need to take the \emph{limit of the Vandermonde cell $\Pi_{n,d}$ as $n$ goes to infinity}.

The Vandermonde cell $\Pi_{n,d}$ is a compact subset of $\mathbb{R}^{d-1}$, and our first main result is that $\Pi_{n,d}$ has \emph{the combinatorial structure of a cyclic polytope}, verifying an experimental observation of \cite{melanova2022recovery}. 

For a fixed $d$ the sets $\Pi_{n,d}$ form an increasing sequence of sets in $\mathbb{R}^{d-1}$. Let $\Pi_d$ be the closure of the union of $\Pi_{n,d}$.
We show that the set $\Pi_d$ has the combinatorial structure of an \textit{infinite cyclic polytope}, and that $\Pi_d$ is not semialgebraic for all $d \geq 3$. The sets $\Pi_{n,3}$, for $n=3,4,5$, are depicted in Figure \ref{fig: N33 and N53}. 
The reduction needed to show undecidability of the unnormalized trace problem is borrowed from the one used by Hatami and Norin in \cite{hatami2011undecidability} in the context of homomorphism density inequalities in graph theory. The set used by Hatami and Norin is essentially a linear transformation of the set $\Pi_3$, and the reduction is based on the geometry of $\Pi_3$. In particular this shows that deciding validity of matrix power trace inequalities is already undecidable if we only use second, fourth and sixth matrix powers, and we need at most 9 matrix variables for the problem to become undecidable. We note that the geometry of $\Pi_3$ was also used directly by Blekherman, Raymond and Wei \cite{blekherman2022undecidability} to show undecidability of homomorphism density inequalities with arbitrary edge weights.

We also consider the image of $\Delta_{n-1}$ under elementary symmetric polynomials. Then the boundary also has the combinatorial structure of a cyclic polytope. This follows from Newton's identities. We write $E_{n,d} := (e_2,\ldots,e_d)(\Delta_{n-1})$ and denote the limit image by $E_d$. We show that the convex hull of $E_{n,d}$ is a convex polytope with combinatorial type of a cyclic polytope. This allows us to geometrically explain and slightly generalize the result of Choi, Lam and Reznick \cite{choi1987even} on test sets for nonnegativity of even symmetric sextics. We note that the convex hull result can be traced to the work of Bollobás in extremal graph theory \cite{bollobas1976relations}.

Testing nonnegativity of univariate normalized trace polynomials was considered  by Klep, Pascoe and Vol{\v{c}}i{\v{c}} \cite{klep2021positive} where the authors proved a Positivstellensatz in the univariate case. Geometrically, such normalized trace polynomials correspond to power means. Nonnegativity of polynomials in power means was investigated by Blekherman and Riener in degree $4$ \cite{blekherman2021symmetric} and more generally by Acevedo and Blekherman \cite{acevedo2024power}. We briefly illustrate the connection with the Vandermonde map. Decidability of the normalized trace problem follows quickly from the work in \cite{blekherman2021symmetric}. As before we can consider the image of the normalized Vandermonde map by fixing the degree $d$ and taking the (closure of the) limit as $n$ goes to infinity. As explained in \cite{acevedo2024power} the geometry of the normalized limit is different than in the unnormalized setting. 
For instance, the limit of the image of the normalized Vandermonde map applied to the nonnegative orthant $\mathbb{R}^n_{\geq 0}$ 
corresponds to the set of first $d$ moments of probability measures supported on $\mathbb{R}_{\geq 0}$, and it is well-known that this set can be described by linear matrix inequalities \cite{MR3729411}. In particular, the limit is semialgebraic for each $d$.

\subsection{Previous Work and Main Results in Detail}
The Vandermonde map has been studied from several different perspectives. Originating from the question of understanding univariate hyperbolic polynomials, Arnold, Givental and Kostov investigated the sets $(e_1,\ldots,e_d)(\R^n)$ \cite{arnol1986hyperbolic,givental1987moments,kostov1989geometric,kostov1999hyperbolicity}. Detailed expositions providing many important details can be found in a paper by Meguerditchian \cite{meguerditchian1992theorem} and Rosenblum's Ph.D. thesis \cite{rosenblum2023topology}. Kostov investigated the limit of the images of $\R^n$ for $d=4$. The authors observed that one can also allow positive \textit{weights} in the definition of the Vandermonde map. Their description of the boundary of the image of the Vandermonde map and of fibers generalizes to the map \[\R^n \longrightarrow \R^d, x \mapsto (a_1x_1+\ldots+a_nx_n,\ldots,a_1x_1^d+\ldots+a_nx_n^d)\]for any positive weights $a_1,\ldots,a_n > 0$. This is mainly due to the fact that Jacobians of the weighted and unweighted maps differ only by positive constant multiples.

The restriction of the Vandermonde map to the nonnegative orthant was investigated by Ursell \cite{ursell1959inequalities}. The paper contains several important results some of which we reprove. Ursell observed that the geometry of the Vandermonde map restricted to the nonnegative orthant generalizes further to arbitrary real positive exponents, i.e. to maps \[\R_{\geq 0}^n \longrightarrow \R^d, x \mapsto (x_1^{\alpha_1}+\ldots+x_n^{\alpha_1},\ldots,x_1^{\alpha_d}+\ldots+x_n^{\alpha_d})\] for which $0 < \alpha_1 < \ldots < \alpha_d$. Ursell's original motivation came from studying valid inequalities in $\ell_p$-norms.

 Recently, there has been an interest in describing fibers and the image of the Vandermonde map using computational algebraic geometry \cite{bik2021semi,melanova2022recovery}. Bik, Czapliński and Wageringel derived a semialgebraic description of $\nu_{n,3}([-1,1]^n)$ for all $n \geq 3$ which has applications in the study of $L$-functions and their zeros. Melánová, Sturmfels and Winter explored fibers and the image  of the Vandermonde map over the complex numbers and real numbers. \medskip

Our first theorem is a result initially found by Ursell \cite{ursell1959inequalities}, who proved the statement for arbitrary real exponents. We provide a different proof for integer exponents by adapting the techniques in Arnold's, Givental's and Kostov's work. We formulate here the result for the map $\nu_{n,d}$ although we later prove it for a sequence of increasing positive integer exponents whose first entry is $1$. \medskip

\begin{thmn}[\ref{thm:1}] 
For integers $n \geq d$ the set $\bd \Pi_{n,d}$ is the closure of the set of evaluations of $\nu_{n,d}$ at all points in $\Delta_{n-1}$ of the following two types: 
\begin{itemize}
 \item[\emph{(1)}] $(\underbrace{0,\ldots,0}_{m_0},\underbrace{x_1}_{m_1},\underbrace{x_2,\ldots,x_2}_{m_2},\ldots,\underbrace{x_{d-1},\ldots,x_{d-1}}_{m_{d-1}})$ with $m_0 \geq 0$, $m_{2k-1} = 1$ and $m_{2k} \geq 1$ for all $k$,
 \item[\emph{(2)}] $(\underbrace{x_1,\ldots,x_1}_{m_1},\underbrace{x_2}_{m_2},\ldots,\underbrace{x_{d-1},\ldots,x_{d-1}}_{m_{d-1}})$ with $m_{2k} = 1$ and $m_{2k-1} \geq 1$ for all $k$,
\end{itemize} 
and $0 < x_1 < x_2 < \ldots < x_{d-1}$ in both types. Moreover, the pre-image of any point in $\bd \Pi_{n,d}$ is unique, up to permutation of coordinates.
\end{thmn}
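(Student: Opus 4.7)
I'll adapt the Arnold--Givental--Kostov strategy for $x\mapsto(e_1,\ldots,e_d)(x)$ on $\R^n$ to our simplex setting. Since $p_1\equiv 1$ on $\Delta_{n-1}$, project to view $\nu_{n,d}|_{\Delta_{n-1}}$ as a map into $\R^{d-1}$. A standard Sard argument applied face by face on $\Delta_{n-1}$ shows that $\bd\Pi_{n,d}$ is contained in $\nu_{n,d}(\bd\Delta_{n-1})$ together with the image of the critical set of $\nu_{n,d}$ restricted to the relative interior of $\Delta_{n-1}$. The Jacobian of $\nu_{n,d}\colon\R^n\to\R^d$ equals $\operatorname{diag}(1,2,\ldots,d)$ times the Vandermonde matrix in $(x_1,\ldots,x_n)$, so its rank is exactly the number of distinct coordinates among the $x_i$. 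Combining this with the face stratification of $\Delta_{n-1}$, every preimage of a boundary point has coordinates of the form $\{0\}^{m_0}\cup\{y_1\}^{m_1}\cup\cdots\cup\{y_k\}^{m_k}$ with $y_1<\cdots<y_k$, $k\leq d-1$, $m_0\geq 0$, and $m_j\geq 1$.

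To see which of these candidates truly lie on the boundary, I study the local image through a first- and second-order expansion. Writing $\mu_j=\sum_{i:x_i=y_j}\dot{x}_i$, the first-order contribution to $p_k$ ($k\geq 2$) is $k\sum_{j\geq 1}y_j^{k-1}\mu_j$, subject to $\sum_{j\geq 1}\mu_j\leq 0$ (with equality when $m_0=0$), since zeros can only be activated non-negatively. Along directions making this first-order term vanish, splitting a cluster with $m_j\geq 2$ yields a second-order, one-sided contribution proportional to $\bigl(k(k-1)y_j^{k-2}\bigr)_{k=2}^d$, because $\sigma_{j,2}=\sum_{i\text{ in cluster }j}\dot{x}_i^{\,2}\geq 0$. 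The resulting polyhedral tangent cone is all of $\R^{d-1}$ unless it lies in a closed half-space; by Farkas' lemma, the half-space case is equivalent to the existence of a nonzero polynomial $Q(y)=\sum_{k=2}^d c_k y^k$ with $Q'(y_j)=c$ for all $j$, $Q''(y_j)\leq 0$ whenever $m_j\geq 2$, and $c\geq 0$ when $m_0>0$. Since $Q'(y)-c$ has degree $d-1$ and must vanish at the $k$ positive values $y_j$, we conclude $k=d-1$, the roots are simple, and $c\neq 0$. Applying Rolle's theorem to $Q'(y)-c$ on $[0,\infty)$, starting from $Q'(0)-c=-c$, forces the signs of $Q''(y_1),\ldots,Q''(y_{d-1})$ to alternate $+,-,+,\ldots$ when $c>0$ and $-,+,-,\ldots$ when $c<0$. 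The constraint $Q''(y_j)\leq 0$ on indices with $m_j\geq 2$ thus singles out precisely Type~(1) when $c>0$ (allowing $m_0\geq 0$) and Type~(2) when $c<0$ (forcing $m_0=0$).

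Uniqueness of the preimage follows from the same data: the supporting polynomial $Q$ is determined by $p^*$ up to a positive scalar, so the ordered tuple $y_1<\cdots<y_{d-1}$ is fixed as the positive roots of $Q'-c$; the multiplicities $(m_1,\ldots,m_{d-1})$ are then the unique solution of the square, nonsingular Vandermonde system $\sum_{j=1}^{d-1}m_j y_j^k=p_k^*$ for $k=1,\ldots,d-1$, and $m_0=n-\sum_{j\geq 1}m_j$ is forced. The most delicate step is the second-order tangent-cone analysis: one must carefully verify that mixed combinations of first-order cluster motion with second-order splitting collapse to the polyhedral cone above, and handle the borderline face behaviour when $m_0>0$, so that the Farkas step can be applied cleanly.
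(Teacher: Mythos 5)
Your local computation—the polynomial $Q$ with $Q'(y_j)=c$, the sign conditions on $Q''$, and the Rolle/root-counting analysis—is essentially the same univariate-polynomial trick the paper runs through Lagrange multipliers, Descartes' rule and interlacing of $f'$, so that kernel is sound. The gap is the bridge from this local data at one preimage to membership in $\bd\Pi_{n,d}$. You assert that a candidate image point is interior unless the first/second-order cone of attainable directions lies in a closed half-space. That implication is not valid: the convex cone generated by attainable directions can be all of $\R^{d-1}$ while the point is still a boundary point (the set $\{(u,v):v\ge -u^2\}$ has full tangent cone at the origin), so "no supporting functional" does not give local surjectivity of the image; and in the other direction, a half-space bound extracted from one preimage says nothing about the other points of the fiber $\nu_{n,d}^{-1}(p^*)$, whose local images could cover the missing side, so "supporting functional exists" does not by itself place $p^*$ on the boundary. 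The paper supplies exactly this missing global control: connectedness of the Vandermonde varieties intersected with the Weyl chamber (Givental's reconstruction lemma and Kostov's lemma, with $p_{\beta_{d-1}}$ as a Morse function), the resulting fact that the fiber of the projection $\Pi_{n,\alpha}\to\Pi_{n,\alpha'}$ over an interior point is a nondegenerate interval whose endpoints are attained precisely at type (1)/(2) points, and a limiting argument along generic interior points to capture every boundary point. Your proposal has no substitute for this fiber analysis, and acknowledging the "delicate tangent-cone step" does not close it.

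Two further steps do not hold as written. First, uniqueness: you take for granted that the supporting polynomial $Q$ is determined by $p^*$ up to a positive scalar, but $\Pi_{n,d}$ is not convex, the support is only local and one-sided, and a priori distinct preimages could carry distinct $Q$'s, so recovering the $y_j$'s and then the multiplicities from a Vandermonde system is circular. The paper instead proves uniqueness from connectedness of the fiber combined with injectivity of weighted Vandermonde maps on ordered tuples (total positivity of generalized Vandermonde matrices and the Gantmacher--Krein theorem). Second, the claim that the Farkas condition forces $k=d-1$ is wrong: with $k<d-1$ distinct positive values there is more, not less, freedom to choose $Q$ satisfying $Q'(y_j)=c$ and the sign constraints, so your criterion does not single out configurations of full multiplicity length; boundary points with degenerate preimages genuinely occur and enter the theorem only through the closure, a limiting step (handled in the paper via Lemma \ref{lem: two multiplicity vector sequences} and the projection argument) that your proposal never addresses.
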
 \medskip

We then investigate concretely the boundaries of $\Pi_{n,3}$ and the limit set $\Pi_3$ and derive consequences for all $d\geq 3$. \medskip

\begin{corn}[\ref{cor:not semialgebraic}]
The sets $\Pi_{d}$ and $E_d$ are not semialgebraic for all $d \geq 3$.
\end{corn}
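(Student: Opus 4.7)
The strategy is to prove non-semialgebraicity of $\Pi_3$ first, and then reduce the remaining cases to it. For $d \geq 4$, the coordinate projection $\R^{d-1} \to \R^2$ onto $(p_2, p_3)$ maps each $\Pi_{n,d}$ onto $\Pi_{n,3}$; since all $\Pi_{n,d}$ lie in the compact cube $[0,1]^{d-1}$, the projection restricted to this cube is closed and hence commutes with the closure in the definition of $\Pi_d$, so it sends $\Pi_d$ onto $\Pi_3$. Tarski--Seidenberg then transfers non-semialgebraicity from $\Pi_3$ to $\Pi_d$. For $E_d$, Newton's identities yield, on the affine hyperplane $p_1 = e_1 = 1$, a polynomial bijection $\R^{d-1} \to \R^{d-1}$ with polynomial inverse that takes $\Pi_d$ onto $E_d$; hence $E_d$ is semialgebraic if and only if $\Pi_d$ is.

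The core of the argument is the case $d = 3$, where we exhibit an intersection with an algebraic curve that has infinitely many connected components. Let $V := \{p_3 = p_2^2\} \subset \R^2$. The symmetric function identity
\[
p_1 p_3 - p_2^2 \;=\; \sum_{i<j} x_i x_j (x_i - x_j)^2
\]
shows that $p_3 \geq p_2^2$ on $\Delta_{n-1}$, with equality exactly when all nonzero coordinates of $\x$ agree. Hence $\Pi_{n,3} \cap V = \{(1/k, 1/k^2) : 1 \leq k \leq n\}$, the images of the uniform configurations. In the limit we claim
\[
\Pi_3 \cap V \;=\; \{(1/k, 1/k^2) : k \geq 1\} \cup \{(0,0)\}.
\]
The inclusion $\supseteq$ is immediate. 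For $\subseteq$, associate to each $\x \in \Delta_{n-1}$ the atomic probability measure $\mu_{\x} := \sum_i x_i\, \delta_{x_i}$ on $[0,1]$, whose mean is $p_2(\x)$ and second moment is $p_3(\x)$, so that its variance equals $p_3 - p_2^2$. If $(t, t^2) \in \Pi_3$ with $t > 0$ arises as a limit of images of $\x^{(n)} \in \Delta_{k_n - 1}$, then $\mu_{\x^{(n)}}$ has mean $\to t$ and variance $\to 0$. By Chebyshev's inequality, $\mu_{\x^{(n)}}([t-\epsilon, t+\epsilon]) \to 1$ for every $\epsilon > 0$; since each atom in this interval carries weight in $[t-\epsilon, t+\epsilon]$, the integer count $N_n := |\{i : x_i^{(n)} \in [t-\epsilon, t+\epsilon]\}|$ satisfies $N_n \cdot (t-\epsilon) \leq \mu_{\x^{(n)}}([t-\epsilon, t+\epsilon]) \leq N_n \cdot (t+\epsilon)$, forcing $N_n \to 1/t$ as $n \to \infty$ and $\epsilon \to 0$. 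A sequence of positive integers can converge to $1/t$ only if $1/t$ is itself a positive integer, so $t = 1/k$ for some $k \geq 1$.

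The set $\{(1/k, 1/k^2) : k \geq 1\} \cup \{(0,0)\}$ is discrete with a single accumulation point at the origin, so its connected components are the singletons, which are infinite in number. Since intersections of semialgebraic sets are semialgebraic and any semialgebraic subset of $\R^n$ has only finitely many connected components, $\Pi_3$ cannot be semialgebraic. The main obstacle is the passage to the closure in the characterization of $\Pi_3 \cap V$: one has to rule out additional points on $V$ that might arise only as limits of configurations lying strictly above the parabola, and the variance/Chebyshev reformulation is what turns this analytic issue into the arithmetic constraint forcing $1/t \in \Z_{\geq 1}$.
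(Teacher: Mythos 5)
Your proof is correct, and for the core case $d=3$ it takes a genuinely different route from the paper. The paper parametrizes $\bd\Pi_{n,3}$ explicitly (Theorem \ref{thm:Parametrization Nnd}), shows that $\bd\Pi_3$ has countably many isolated singular points $\left(\frac1k,\frac1{k^2}\right)$ (Corollary \ref{cor:N3 infinite many }), and gets a contradiction from the fact that the singular locus of a semialgebraic boundary is again semialgebraic and hence would have only finitely many points. You instead intersect $\Pi_3$ with the parabola $V=\{p_3=p_2^2\}$: the identity $p_1p_3-p_2^2=\sum_{i<j}x_ix_j(x_i-x_j)^2$, read as the variance of the probability measure $\sum_i x_i\delta_{x_i}$, together with Chebyshev's inequality pins down $\Pi_3\cap V=\{(\frac1k,\frac1{k^2}):k\ge1\}\cup\{(0,0)\}$, a countably infinite set, which is incompatible with semialgebraicity of $\Pi_3$ since semialgebraic sets have finitely many connected components. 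Your route avoids the explicit boundary parametrization and the (slightly delicate) claim that the singular locus is semialgebraic, and the Chebyshev step correctly disposes of the main subtlety, namely points of $V$ arising only as limits of configurations strictly above the parabola; the paper's route yields finer information about $\bd\Pi_3$ (arcs, cusps) that it reuses elsewhere. Your reductions agree with the paper's: for $d\ge4$ the closed projection onto $(p_2,p_3)$ plus Tarski--Seidenberg, and for $E_d$ the polynomial homeomorphism coming from Newton's identities on $e_1=p_1=1$. Two small polish points: $\Pi_3\cap V$ is not literally discrete (the origin is a non-isolated point of it), but all you need is that its components are singletons, e.g.\ because a connected subset of $\R^2$ with more than one point is uncountable; and ``forcing $N_n\to 1/t$'' should be phrased via $\liminf/\limsup$ bounds for fixed $\epsilon$ followed by $\epsilon\to0$, which indeed forces $1/t\in\Z_{\ge1}$.
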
 \medskip

 Let $\mu_d : \R \to \R^d, t \mapsto (t,t^2,\ldots,t^d)$ denote the $d$-dimensional moment curve and let $t_1 < \ldots < t_n$. For $n > d$ the \textit{cyclic polytope} $C(n,d)$ is the convex polytope with vertices $\mu_d(t_i)$ for $1 \leq i \leq n$. The combinatorial type of the cyclic polytope is independent of the chosen $n$ points on the moment curve. Cyclic polytopes are the polytopes with maximal $f$-vector among all convex polytopes of given dimension and number of vertices \cite{mcmullen1970maximum,stanley1975upper}. The facets of $C(n,d)$ are characterized by \textit{Gale's evenness condition} \cite{gale1963neighborly}. A subset $\{\mu_d(t_{i_1}),\ldots,\mu_d(t_{i_d})\}$ with $i_j < i_{j+1}$ for all $1 \leq j <d$ spans a facet if and only if any two elements in $\{t_1,\ldots,t_n\} \setminus \{t_{i_1},\ldots,t_{i_d}\}$ are separated by an even number of elements in $\{t_{i_1},\ldots,t_{i_d}\}$. 
 
 The set $\Pi_{n,d}$ is topologically a $(d-1)$-dimensional closed ball for $n\geq d$. Answering a question in \cite{melanova2022recovery} we prove that the set $\Pi_{n,d}$ has the combinatorial structure of a cyclic polytope, in the sense that the boundary is a gluing of patches where each patch is a \emph{curved simplex} and the vertices of the patches are characterized by Gale's evenness condition. A set $S$ is a curved simplex if it is the image of $\Delta_m$ under a homeomorphism $f$, such that $f$ is a diffeomorphism when restricted to the relative interior of any face of $\Delta_m$.\medskip

\begin{thmn} [\ref{thm:combinatorially cyclic polytope}]
The set $\Pi_{n,d}$ has the combinatorial structure of the cyclic polytope $C(n,d-1)$, i.e. there is a homeomorphism $ \bd C(n,d-1) \rightarrow \bd\Pi_{n,d} $ that is a diffeomorphism when restricted to the relative interior of any face of $\bd C(n,d-1)$.
\end{thmn}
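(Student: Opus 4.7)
The plan is to leverage Theorem \ref{thm:1} to decompose $\bd \Pi_{n,d}$ into strata indexed by multiplicity patterns and then match this decomposition to the face lattice of $\bd C(n,d-1)$ via Gale's evenness, parametrizing each stratum by a simplex to obtain the diffeomorphism on open faces. As the first step, declare the $n$ candidate vertices: for $k = 0, \dots, n-1$ set
\[ P_k := \nu_{n,d}\bigl(\underbrace{0, \ldots, 0}_{k}, \underbrace{1/(n-k), \ldots, 1/(n-k)}_{n-k}\bigr). \]
The first coordinate of $P_k$ equals $1$ and, after projecting it out, $P_k$ lands at $\mu_{d-1}(t_k)$ with $t_k = 1/(n-k)$; since $t_0 < \cdots < t_{n-1}$, the $P_k$ realize the $n$ points on the moment curve that define $C(n, d-1)$. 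By Theorem \ref{thm:1} every open $(d-2)$-dimensional stratum of $\bd \Pi_{n,d}$ has a preimage indexed by a Type (1) or Type (2) pattern, and the uniqueness clause guarantees these strata are disjoint and meet only in lower-dimensional strata obtained by further collapses $x_i \to 0$ or $x_i \to x_{i+1}$.

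Second, I would match the cells combinatorially to $C(n,d-1)$. For a cell with Type (1) pattern $(m_0, 1, m_2, 1, m_4, 1, \ldots)$, degenerating all $x_i$ into a single value through a chosen threshold yields the $d-1$ limit vertices
\[ \{m_0,\, m_0+1\} \cup \{m_0+m_2+1,\, m_0+m_2+2\} \cup \cdots, \]
with a trailing singleton at $n-1$ when $d$ is even; Type (2) produces $\{0\}$, then consecutive pairs, and a trailing singleton at $n-1$ when $d$ is odd. In all cases the singleton blocks occur only at the extreme endpoints $0$ and $n-1$ while the interior blocks have even size, so each vertex set is a facet of $C(n, d-1)$ by Gale's evenness. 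Enumerating patterns by $m_0$ and $m_1$ together with the parity constraints reproduces the known count of facets of $C(n, d-1)$, so the correspondence is a bijection between top cells and facets.

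Third, I would parametrize each cell and glue. Introducing difference coordinates $y_1 = x_1$, $y_i = x_i - x_{i-1}$ (for Type (1), with analogous conventions for Type (2)), the sum-to-one constraint becomes $\sum_i c_i y_i = 1$ with positive $c_i$, so the open cell preimage in $\Delta_{n-1}$ is the relative interior of a standard $(d-2)$-simplex whose $i$-th vertex (where $c_i y_i = 1$) is mapped by $\nu_{n,d}$ to precisely the $P_k$ listed above. The Jacobian of $(p_2,\ldots,p_d)$ in the $y$-coordinates is the product of a Vandermonde matrix in the $x_i$'s with a positive diagonal, hence nonsingular on the open cell, yielding a diffeomorphism from the open simplex onto the open cell that extends continuously to the closure. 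Facet by facet, I would then define $\Phi : \bd C(n, d-1) \to \bd \Pi_{n,d}$ by identifying the $(d-2)$-simplex of barycentric coordinates on the facet with the $y$-simplex and composing with $\nu_{n,d}$. The principal obstacle is global continuity on shared sub-faces: two top cells meeting along a lower-dimensional stratum arise by sending some $y_i \to 0$, and the two cellwise parametrizations must agree on that shared boundary. This is precisely where the uniqueness clause of Theorem \ref{thm:1} is essential, forcing the degenerate preimage on the shared face to be the same regardless of which top cell is approached; smoothness of $\Phi$ on every open face of $\bd C(n,d-1)$ then follows from the cellwise Vandermonde Jacobian.
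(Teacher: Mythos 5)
Your route is essentially the paper's: stratify $\bd \Pi_{n,d}$ via Theorem \ref{thm:1} into cells indexed by type (1)/(2) multiplicity vectors, identify each cell's vertex set with a facet of $C(n,d-1)$ through Gale's evenness condition (this is Proposition \ref{prop:simplices and cyclic polytopes} in the paper, and your vertex lists agree with Lemma \ref{lem:gale} after relabeling), parametrize each facet affinely by a simplex and compose with the Vandermonde map, and use uniqueness of boundary preimages to glue. The one step where you are too quick is the claim that nonsingularity of the Jacobian on the open cell ``yields a diffeomorphism from the open simplex onto the open cell'': a nonvanishing Jacobian only gives a local diffeomorphism, and global injectivity on the cell is precisely the nontrivial content of the paper's Lemma \ref{lem:homeom}, which is proved by combining total positivity of generalized Vandermonde matrices with the global homeomorphism criterion of \cite{mas1979homeomorphisms} (via Gantmacher--Krein). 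In your setup this can be patched without that machinery by the uniqueness clause of Theorem \ref{thm:1}, which you invoke only for the gluing: two distinct points of a single ordered cell with the same boundary image would be permutations of one another, hence equal, and the same argument makes the images of distinct open cells disjoint; continuity plus compactness of $\bd C(n,d-1)$ then upgrades the glued continuous bijection to the desired homeomorphism, with smoothness on relative interiors of faces coming from the Vandermonde Jacobian as you say. A smaller point: you obtain the bijection between multiplicity patterns and facets by matching counts, whereas the paper constructs the inverse explicitly; your version additionally needs the (easy) observation that distinct patterns give distinct vertex sets.
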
 \medskip

We provide an explicit map $ \bd C(n,d-1) \rightarrow \bd\Pi_{n,d} $ in Section \ref{sec:comb prop}. \\
For $n \geq d$ it follows from Newton's identities 
\begin{align} \label{eq:Newton's identities}
    p_k & =(-1)^{k-1}ke_k+\sum _{i=1}^{k-1}(-1)^{k-1+i}e_{k-i}p_i \hspace{2cm}\text{for all } 1 \leq k \leq n
\end{align}
that the image of the Vandermonde map is diffeomorphic to the image of the first $d$ elementary symmetric polynomials, i.e.
$E_{n,d} \cong \Pi_{n,d}$ under a polynomial diffeomorphism. We show that the convex set $\mathcal{E}_{n,d} := \conv E_{n,d}$ has nice properties which $ \conv \Pi_{n,d}$ does not have. \medskip

\begin{thmn}[\ref{thm:Convex hull Image of Elementary}]
For $d\ge3$ the set $\mathcal{E}_{n,d}$ is the image of the cyclic polytope $C(n,d-1)$ under an invertible affine linear map, and it is the convex hull of the points $\left({k \choose 2}\frac{1}{k^2},\dots,{k\choose d}\frac{1}{k^d}\right)$ for $k\in[n]$.
\end{thmn}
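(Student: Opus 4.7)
The plan is to prove the theorem in two stages: (1) realize the candidate vertex set as an invertible affine image of the vertex set of the cyclic polytope $C(n,d-1)$; (2) verify that these candidates exhaust the extreme points of $\mathcal{E}_{n,d}$.

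For (1), set $w_k := \left(\binom{k}{2}/k^2,\dots,\binom{k}{d}/k^d\right)$ and use the identity $\binom{k}{j}/k^j = \frac{1}{j!}\prod_{i=1}^{j-1}(1 - i/k)$ to express the $j$-th coordinate of $w_k$ as a polynomial of exact degree $j-1$ in $s:=1/k$, with nonzero leading coefficient $(-1)^{j-1}/j$. This realizes the map $(s, s^2, \ldots, s^{d-1}) \mapsto w_k$ as affine linear with lower-triangular linear part and nonzero diagonal, hence invertible on $\R^{d-1}$. Since $\mu_{d-1}(1/k)$ for $k \in [n]$ are the vertices of $C(n,d-1)$ (the parameters $1/k$ being distinct), $\conv\{w_k\}$ is the desired invertible affine image of $C(n,d-1)$.

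For (2), one inclusion is immediate: $w_k = (e_2,\ldots,e_d)(v_k)$ with $v_k := (1/k,\ldots,1/k,0,\ldots,0)\in\Delta_{n-1}$ (having $k$ nonzero entries), so $\conv\{w_k\}\subseteq\mathcal{E}_{n,d}$. For the reverse inclusion, by Krein--Milman it suffices to show every extreme point of $\mathcal{E}_{n,d}$ coincides with some $w_k$. Writing such an extreme point as $z=(e_2,\ldots,e_d)(x)$ with $x\in\Delta_{n-1}$, the target statement is that $x$ has a single distinct positive value, i.e.\ $x=v_k$. The key computation is the deletion identity $e_j(x^{(p)}) - e_j(x^{(q)}) = (x_q - x_p)\, e_{j-1}(x^{(p,q)})$; whenever $x$ has distinct positive values at positions $p \ne q$, specialising to $j=2$ shows that the tangent direction $v\in T_x\Delta_{n-1}$ increasing $x_p$ by one unit and decreasing $x_q$ by one unit produces a nonzero first-order change in $e_2$, so the differential of $(e_2,\ldots,e_d)$ at $x$ has positive rank along a direction traversable in both signs inside $\Delta_{n-1}$.

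The main obstacle is promoting this local non-vanishing of the differential to an actual non-extremity statement: a smooth curve in $E_{n,d}$ through a boundary point need not exhibit it as the midpoint of two points of $\mathcal{E}_{n,d}$. To overcome this I would invoke Theorem~\ref{thm:combinatorially cyclic polytope}, transported to the elementary-symmetric side via the polynomial diffeomorphism $\Pi_{n,d}\cong E_{n,d}$ afforded by Newton's identities \eqref{eq:Newton's identities}. This endows $\bd E_{n,d}$ with the gluing-of-curved-simplices structure of $\bd C(n,d-1)$, whose vertices---selected by Gale's evenness condition---are exactly the $w_k$. Each curved simplex lies in the convex hull of its corner vertices, because the facet-defining affine functions of $\conv\{w_k\}$ pull back under $k \mapsto w_k$ to polynomials in $1/k$ vanishing on the prescribed subset of vertices and of a single sign elsewhere. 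Combined with $\mathcal{E}_{n,d}=\conv \bd E_{n,d}$---valid since $E_{n,d}$ is homeomorphic to a $(d-1)$-ball---this yields $\mathcal{E}_{n,d}\subseteq\conv\{w_k\}$, completing the proof.
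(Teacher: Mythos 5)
Your stage (1) is fine and matches the affine--linear reparametrization of the moment curve used in the paper. The gap is in stage (2), at the step ``each curved simplex lies in the convex hull of its corner vertices.'' The justification you give --- that the facet-defining affine functionals of $\conv\{w_k\}$ pull back under $k\mapsto w_k$ to polynomials in $1/k$ vanishing at the prescribed vertices and of one sign at the remaining ones --- only controls the values of those functionals at the finitely many points $w_k$; it says nothing about their values on a curved patch, which is the image of a simplex $\Delta_m^*$ under the \emph{nonlinear} Vieta map and whose relative interior consists of images of points with $d-1$ distinct positive coordinates. What you would actually need is that every affine combination of $e_2,\dots,e_d$ that is nonnegative at all the points $\bar{k}$ is nonnegative on all of $\Delta_{n-1}$ (equivalently on the patches), and that is precisely the nontrivial content of the theorem, not a consequence of the combinatorial structure. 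Worse, your argument uses only ingredients that are invariant under the Newton-identities diffeomorphism: Theorem \ref{thm:combinatorially cyclic polytope}, Gale's evenness condition, and vertices lying on (an affine image of) the moment curve. All of these hold verbatim for $\Pi_{n,d}$ with vertices $(1/k,\dots,1/k^{d-1})$, yet Proposition \ref{prop:not power sums} shows $\Pi_{n,d}\not\subseteq\conv\{(1/k,\dots,1/k^{d-1})\}$ for $d\geq 4$. So no argument of the shape you propose can close the gap; it would prove a false statement.

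The missing ingredient is a property special to elementary symmetric polynomials, namely that each $e_j$ is multiaffine (degree at most one in every variable). The paper exploits this via Bollob\'as's argument (Lemma \ref{lem:zhao}): for an affine combination $\phi$ of $e_2,\dots,e_d$ and two coordinates with $x_1+x_2$ held fixed, $\phi$ is affine in the product $x_1x_2$, so a minimizer on $\Delta_{n-1}$ with the maximal number of zero coordinates must have all its positive coordinates equal, i.e.\ be some $\bar{k}$. Combined with Krein--Milman (extreme points of $\mathcal{E}_{n,d}$ are minimizers of affine functionals, hence lie among the $(e_2,\dots,e_d)(\bar{k})$) and your stage (1) (these points are in convex position), this yields the theorem. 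Your earlier differential/deletion-identity computation, as you yourself note, only produces a smooth curve in $E_{n,d}$ through the point and cannot rule out extremality, so the entire burden rests on the unjustified convex-hull claim.
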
 \medskip

Recall that any symmetric polynomial can be written as a polynomial expression in elementary symmetric polynomials or power sums. A polynomial is called \emph{copositive} if it is nonnegative on the nonnegative orthant. The vertices of $\mathcal{E}_{n,d}$ relate to copositivity of homogeneous symmetric polynomials in which only $e_1$ occurs nonlinearly, i.e. symmetric forms which can be written as $$f=c_1e_1^d+c_2e_1^{d-2}e_2+\ldots+c_me_d$$  for some $c_1,\ldots,c_m \in \R$. We call such forms \emph{hook-shaped polynomials}. \medskip

\begin{thmn}[\ref{thm:discrete test set in elementarys}]
Let $f(e_1,\ldots,e_d)$ be a hook-shaped polynomial in $n \geq d$ variables. Then $f$ is copositive if and only if  $f\left(1,{k \choose 2}\frac{1}{k^2},\ldots,{k \choose d} \frac{1}{k^d}\right) \geq 0$ for all $k \in[n]$.
\end{thmn}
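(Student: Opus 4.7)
The plan is to reduce the copositivity question to an affine-linear nonnegativity statement on the polytope $\mathcal{E}_{n,d}$ and then apply Theorem \ref{thm:Convex hull Image of Elementary} to identify the vertices.

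First, since $f(e_1,\ldots,e_d) = c_1 e_1^d + c_2 e_1^{d-2} e_2 + \cdots + c_m e_d$ is homogeneous of degree $d$, and every nonzero $x \in \R^n_{\geq 0}$ has $e_1(x) > 0$, copositivity of $f$ is equivalent to nonnegativity along the rescaled inputs $x/e_1(x) \in \Delta_{n-1}$. On the simplex $e_1(x) = 1$, so $f$ restricts to
\[
L(y_2,\ldots,y_d) := f(1, y_2,\ldots,y_d) = c_1 + c_2 y_2 + \cdots + c_m y_d,
\]
which is affine linear in $(y_2,\ldots,y_d)$ precisely because $f$ is hook-shaped (each $e_i$ with $i \geq 2$ appears only to the first power and never multiplied by another such $e_j$). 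Hence $f$ is copositive if and only if $L \geq 0$ on $E_{n,d}$.

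Because $L$ is affine linear, $L \geq 0$ on $E_{n,d}$ is in turn equivalent to $L \geq 0$ on the convex hull $\mathcal{E}_{n,d}=\conv E_{n,d}$, and finally to $L$ being nonnegative at every vertex of $\mathcal{E}_{n,d}$. By Theorem \ref{thm:Convex hull Image of Elementary}, $\mathcal{E}_{n,d}$ is affinely isomorphic to the cyclic polytope $C(n,d-1)$ and coincides with the convex hull of the $n$ points $v_k := \bigl(\binom{k}{2}\tfrac{1}{k^2},\ldots,\binom{k}{d}\tfrac{1}{k^d}\bigr)$ for $k\in[n]$. Since $C(n,d-1)$ has exactly $n$ vertices when $n\geq d$, the $v_k$ are forced to be precisely the vertices of $\mathcal{E}_{n,d}$. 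Evaluating $L$ at $v_k$ recovers $f\bigl(1,\binom{k}{2}/k^2,\ldots,\binom{k}{d}/k^d\bigr)$, which yields the claimed discrete test.

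There is no serious obstacle here: once Theorem \ref{thm:Convex hull Image of Elementary} is in hand, the statement is a direct corollary. The one point worth being careful about is verifying that the $n$ listed points really are the vertices of $\mathcal{E}_{n,d}$ and not merely a spanning set, but this is immediate from the cyclic polytope $C(n,d-1)$ having exactly $n$ vertices in the regime $n\geq d$.
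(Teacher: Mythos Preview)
Your proof is correct and follows essentially the same approach as the paper: reduce to $\Delta_{n-1}$ by homogeneity, observe that the restriction is affine linear in $(e_2,\ldots,e_d)$, and then invoke Theorem~\ref{thm:Convex hull Image of Elementary} to reduce nonnegativity on $\mathcal{E}_{n,d}$ to checking the $n$ vertices. Your extra remark confirming that the $n$ listed points are genuinely the vertices (via the vertex count of $C(n,d-1)$) is a nice sanity check the paper leaves implicit.
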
 \medskip

For $d=3$ this test set was found by Choi, Lam and Reznick \cite{choi1987even} but formulated for even symmetric sextics expressed in power sums. \medskip

\begin{thmn}[\ref{thm:choi-lam-reznick}] \cite{choi1987even}
Let $f(p_2,p_4,p_6)=ap_2^3+bp_4p_2+cp_6$, where $a,b,c\in\R$, be an even symmetric form of degree $6$ in $n \geq 3$ variables. Then, $f$ is nonnegative if and only if $f\left(1,\frac{1}{k},\frac{1}{k^2}\right)$ is nonnegative for all $k \in [n]$.
\end{thmn}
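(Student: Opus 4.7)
The plan is to reduce the statement to Theorem \ref{thm:discrete test set in elementarys}. Since $f(p_2,p_4,p_6)$ involves only even power sums, $f(x)=f(|x|)$ coordinatewise, so its nonnegativity on $\R^n$ is equivalent to its nonnegativity on the nonnegative orthant. Performing the change of variables $y_i=x_i^2$ (which maps $\R_{\geq 0}^n$ surjectively onto itself) and using $p_{2k}(x)=p_k(y)$, the question becomes whether the homogeneous symmetric form of degree $3$
\[ h(y) \;:=\; a\,p_1(y)^3 \;+\; b\,p_1(y)\,p_2(y) \;+\; c\,p_3(y) \]
is copositive in $y\in\R_{\geq 0}^n$.

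Next I would translate $h$ into elementary symmetric polynomials using Newton's identities \eqref{eq:Newton's identities}. Substituting $p_1=e_1$, $p_2=e_1^2-2e_2$, $p_3=e_1^3-3e_1e_2+3e_3$ yields
\[ g(e_1,e_2,e_3) \;=\; (a+b+c)\,e_1^3 \;-\; (2b+3c)\,e_1 e_2 \;+\; 3c\,e_3, \]
in which only $e_1$ appears nonlinearly. Hence $g$ is hook-shaped with $d=3$, and Theorem \ref{thm:discrete test set in elementarys} (applicable since $n\geq 3$) reduces copositivity of $h$ to the finite test
\[ g\!\left(1,\; \binom{k}{2}\tfrac{1}{k^2},\; \binom{k}{3}\tfrac{1}{k^3}\right) \;\geq\; 0 \qquad \text{for all } k \in [n]. \]

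Finally I would identify these test points in the original coordinates. The vector $y=(\tfrac{1}{k},\ldots,\tfrac{1}{k},0,\ldots,0)$ with $k$ nonzero entries realises the triple $(e_1,e_2,e_3)=(1,\binom{k}{2}/k^2,\binom{k}{3}/k^3)$, and at it $p_1(y)=1$, $p_2(y)=1/k$, $p_3(y)=1/k^2$. Taking $x_i=\sqrt{y_i}$ produces a vector in $\R_{\geq 0}^n$ at which $p_2(x)=1$, $p_4(x)=1/k$, $p_6(x)=1/k^2$, so the displayed test condition is exactly $f(1,1/k,1/k^2)\geq 0$. The argument is essentially bookkeeping once the previous theorem is in hand; the main (minor) subtlety is to verify that the Newton-transformed polynomial really is hook-shaped and that the test points in the $e$-coordinates correspond to the simple vectors $(\tfrac{1}{k},\ldots,\tfrac{1}{k},0,\ldots,0)$ and hence translate back to the $(p_2,p_4,p_6)$-values stated.
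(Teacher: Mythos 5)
Your reduction is correct and is exactly the route the paper indicates: it cites Choi--Lam--Reznick for the statement but explains (in the introduction and Section 5) that for $d\le 3$ the squaring substitution, homogeneity, and the affine-linear Newton relations on the simplex make the power-sum test set equivalent to the elementary-symmetric one, so the theorem follows from Theorem \ref{thm:discrete test set in elementarys}. Your computation of the hook-shaped form $g=(a+b+c)e_1^3-(2b+3c)e_1e_2+3ce_3$ and the identification of the test points $\bar k$ with $(p_2,p_4,p_6)=(1,1/k,1/k^2)$ correctly fills in the bookkeeping of that argument.
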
 \medskip

Using Newton's identities \eqref{eq:Newton's identities} and the fact that we can restrict to $p_2 =1$ due to homogeneity, the test sets in elementary symmetric polynomials and power sums are equivalent for $d \leq 3$ due to the linear relation of those families of polynomials on the probability simplex. Surprisingly, the test set for copositivity in power sums does not generalize to any higher degree. \medskip

\begin{propn}[\ref{prop:not power sums}]
For all $n\ge d\ge4$ the set $\Pi_{n,d}$ is not contained in $\conv \left\{ \left( \frac{1}{k},\frac{1}{k^2},\cdots,\frac{1}{k^{d-1}} \right) : k \in [n] \right\}$. Moreover, $\Pi_d \not \subseteq \conv \left\{(0,\ldots,0), \left( \frac{1}{k},\frac{1}{k^2},\cdots,\frac{1}{k^{d-1}} \right) : k \in \N_{>0} \right\}$.   
\end{propn}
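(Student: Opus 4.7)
The plan is to reduce to the case $d=4$ by projection, and then to exhibit an explicit affine-linear functional together with an explicit point of $\Pi_{n,4}$ on which it is negative. The coordinate projection $\pi:\R^{d-1}\to\R^3$, $(q_2,\ldots,q_d)\mapsto(q_2,q_3,q_4)$, is linear and sends $\Pi_{n,d}$ onto $\Pi_{n,4}$ and the point $(1/k,1/k^2,\ldots,1/k^{d-1})$ to $(1/k,1/k^2,1/k^3)$; likewise it maps $\Pi_d$ onto $\Pi_4$ and fixes the origin. Both statements in the proposition therefore reduce to producing a point of $\Pi_{n,4}$ (respectively of $\Pi_4$) lying outside $\conv\bigl(\{0\}\cup\{(1/k,1/k^2,1/k^3):k\in\N_{>0}\}\bigr)$.

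I would then search for a linear functional $L:\R^3\to\R$ that is nonnegative at every test point and vanishes at the origin. Substituting gives $L(1/k,1/k^2,1/k^3)$ as a cubic polynomial in $y=1/k$; to be nonnegative at $y\in\{1,1/2,1/3,1/4,\ldots\}$ and to vanish at $y=0$ (which takes care of the ``moreover'' statement) it is natural to place roots exactly at $y=1/2$ and $y=1/3$. This singles out
\[ L(q_2,q_3,q_4)=q_2-5q_3+6q_4, \qquad L(1/k,1/k^2,1/k^3)=\frac{(k-2)(k-3)}{k^3}, \]
which is strictly positive for $k=1$, zero for $k\in\{2,3\}$, strictly positive for $k\ge 4$, and $L(0,0,0)=0$. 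Hence $L\ge 0$ on both convex hulls in question.

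To violate $L$, I would probe the one-parameter arc $x(a)=(a,a,1-2a,0,\ldots,0)\in\Delta_{n-1}$ for $a\in[0,1/2]$, which passes through the preimages of the test points at $k=1,2,3$. A direct computation yields $(p_2,p_3,p_4)=\bigl(6a^2-4a+1,\,1-6a+12a^2-6a^3,\,2a^4+(1-2a)^4\bigr)$; at $a=2/5$ this equals $(9/25,17/125,33/625)$, and $L=225/625-425/625+198/625=-2/625<0$. Since $x(2/5)$ uses only three nonzero coordinates, the corresponding point lies in $\Pi_{n,4}$ for every $n\ge 3$, and its lifts under $\pi$ lie in $\Pi_{n,d}$ for every $n\ge d\ge 4$ and in $\Pi_d$, proving both claims.

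The only genuinely creative step is guessing the separating functional; once one recognises that the test points parameterise the moment curve $y\mapsto(y,y^2,y^3)$, the natural candidates are cubics in $y$ nonnegative on $\{0\}\cup\{1/k:k\in\N_{>0}\}$, and the simplest one with $L(0)=0$ has its remaining roots at the innermost values $y=1/2$ and $y=1/3$. Finding the bad arc $(a,a,1-2a,0,\ldots)$ is then guided by the observation that $L$ must be tight at the three test vertices $k=1,2,3$ simultaneously, and this arc is the unique one-parameter family in $\Delta_{n-1}$, up to permutation, interpolating all three of their preimages.
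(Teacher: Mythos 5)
Your argument is correct and is essentially the paper's own approach: the paper likewise separates with an affine functional in $(p_2,p_3,p_4)$ that is nonnegative on the moment-curve test points (it uses $p_2-3p_3+2p_4$, i.e.\ the form $2p_4-3p_3p_1+p_2p_1^2$, whose values at the test points are $(k-1)(k-2)/k^3$, with a violating point obtained from $(t,1,\ldots,1)$ for $t$ large) and then reduces $d>4$ to $d=4$ by projecting onto the first coordinates. Your only deviations are the placement of the roots ($k=2,3$ instead of $k=1,2$) and the explicit witness $(2/5,2/5,1/5,0,\ldots,0)$, which are cosmetic rather than structural.
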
 \medskip

Finally, we prove undecidability of testing validity of inequalities of trace polynomials in all real symmetric matrices of all sizes. \medskip

\begin{thmn} [\ref{thm:undecidable traces}]
The following decision problem is undecidable.
\begin{itemize}
    \item[{\footnotesize Instance:}] A positive integer $k$ and a trace polynomial $f(X_1,\ldots,X_k)$.
    \item[{\footnotesize Question:}] Is $f(M_1,\ldots,M_k)$ nonnegative for all real symmetric matrices $M_1,\ldots,M_k$ of all sizes?
\end{itemize}
\end{thmn}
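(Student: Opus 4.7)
The plan is to reduce from the undecidability of Hilbert's tenth problem (Matiyasevich's theorem) by piggybacking on the reduction of Hatami and Norin \cite{hatami2011undecidability}, transferring it from the language of graph homomorphism densities into the language of trace polynomials. The starting observation is that for a real symmetric $n\times n$ matrix $M$ with eigenvalues $\lambda_1,\ldots,\lambda_n$ one has $\operatorname{tr}(M^d)=p_d(\lambda_1,\ldots,\lambda_n)$, and every tuple in $\R^n$ is realized as a spectrum. Hence deciding nonnegativity of a trace polynomial $f(X_1,\ldots,X_k)$ on all tuples of symmetric matrices of all sizes is equivalent to deciding nonnegativity of the associated polynomial $F$ in the variables $p_d(x^{(j)})$ on $\bigcup_{n\geq 1}(\R^n)^k$.

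Next, I would restrict attention to trace polynomials in which each variable $X_j$ appears only inside traces of its \emph{even} powers $\operatorname{tr}(X_j^{2m})$ and whose total degree in each $X_j$ is fixed. Evenness allows me to replace each $x^{(j)}\in\R^n$ by its componentwise absolute value, reducing to nonnegativity on $(\R_{\geq 0}^n)^k$; homogeneity in each block further permits the normalization $p_2(x^{(j)})=1$; and the coordinatewise substitution $x_i\mapsto\sqrt{x_i}$ puts each block on the probability simplex $\Delta_{n-1}$. The problem thus becomes nonnegativity of a specific polynomial $\widetilde F$ on a product of Vandermonde cells $\Pi_{n,d_j}$ for every $n$, equivalently on the product of the limiting sets $\Pi_{d_j}$. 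For the reduction it will suffice to take every $d_j=3$, so the only geometric input is the structure of $\Pi_3$.

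The last step is to import the Hatami--Norin encoding. In \cite{hatami2011undecidability} an arbitrary Diophantine system is converted into a polynomial inequality in homomorphism densities of even cycles (essentially $C_4$ and $C_6$) relative to a weighted graph variable, and the key geometric object exploited there is, up to an invertible affine transformation, the set $\Pi_3$ itself; this identification is made precise in \cite{blekherman2022undecidability}. Interpreting the adjacency matrix of a weighted graph as a real symmetric matrix $M$, the cycle densities become unnormalized traces $\operatorname{tr}(M^2),\operatorname{tr}(M^4),\operatorname{tr}(M^6)$, and the Hatami--Norin inequality translates into a trace polynomial inequality of exactly the form above, using only second, fourth and sixth matrix powers and at most nine matrix variables. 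Since the encoded Diophantine problem is undecidable, the resulting trace inequality problem is too.

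The principal obstacle lies at the interface between the normalized world of graph homomorphism densities, naturally scaled by powers of the number of vertices, and the unnormalized world of traces in which the theorem is stated---the very same distinction that separates $\Pi_3$ from its normalized moment counterpart. The latter is semialgebraic and gives a decidable problem, whereas $\Pi_3$ is not semialgebraic (Corollary \ref{cor:not semialgebraic}) and yields an undecidable one. The delicate bookkeeping is to check that, after stripping the vertex-count normalizations in the Hatami--Norin construction, the resulting trace inequality genuinely lands in the full geometry of $\Pi_3$ rather than in its smaller semialgebraic power-mean counterpart, so that Diophantine hardness is preserved.
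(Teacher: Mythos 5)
Your first half is exactly the paper's reduction: traces are power sums of eigenvalues, every real tuple is a spectrum, restricting to multihomogeneous trace polynomials in $\Tr(X_i^2),\Tr(X_i^4),\Tr(X_i^6)$ and using evenness plus homogeneity turns the question into nonnegativity of a polynomial on $(\{1\}\times\Pi_{n,3})^k$ for all $n$, hence on $(\{1\}\times\Pi_3)^k$. The gap is in what you call ``importing the Hatami--Norin encoding'': you never actually establish that deciding nonnegativity of integer polynomials on $(\{1\}\times\Pi_3)^k$ is undecidable, which is the technical heart of the matter (the paper's Theorem \ref{thm:undecidable}). You do not reduce from a precisely quoted statement of \cite{hatami2011undecidability} or \cite{blekherman2022undecidability}; instead you sketch a dictionary whose one concrete claim is wrong as stated: cycle homomorphism densities of a weighted graph on $n$ vertices are \emph{normalized} traces, $t(C_{2m},G)=\Tr(A^{2m})/n^{2m}$, not $\Tr(A^{2m})$, so the Hatami--Norin inequality cannot be transliterated symbol-for-symbol into a trace inequality. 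Your closing paragraph concedes exactly this: the ``delicate bookkeeping'' you defer is not a routine check you may omit, it is the step on which the whole proof turns, and you even express uncertainty about whether hardness survives it.

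For the record, the worry itself is resolvable (cycle densities of a graphon are genuine unnormalized power sums of the kernel operator's eigenvalues, so after the multihomogeneous normalization the relevant geometry really is $\Pi_3$), but resolving it --- or, as the paper does, bypassing graphs altogether --- is precisely the missing work. The paper re-derives the Hatami--Norin reduction directly in the symmetric-function setting: starting from undecidability of negativity of $p\in\Z[Y_1,\ldots,Y_k]$ on $\{\frac{n-1}{n}:n\in\N\}^k$, it uses the convex hull description of the limit elementary symmetric cell, $C=\conv(2\mathfrak{e}_2,6\mathfrak{e}_3)(\Delta)$ with vertices $\left(\frac{n-1}{n},\frac{(n-1)(n-2)}{n^2}\right)$ (Proposition \ref{cor:image infinite probability simplex}), the piecewise linear lower boundary $L$, the strictly convex function $g(x)=2x^2-x$ touching that boundary exactly at the vertices, the Hatami--Norin auxiliary polynomial $q=p\cdot\prod_i(1-Y_i)^6+M\sum_i(Z_i-g(Y_i))$, and the substitution $\tau$ into multihomogeneous product symmetric functions of $\mathfrak{e}_{1,(i)},\mathfrak{e}_{2,(i)},\mathfrak{e}_{3,(i)}$; Newton's identities in degree at most $3$ then convert between elementary symmetric polynomials and power sums on the simplex, giving Theorem \ref{thm:undecidable traces}. (This low-degree linear equivalence is also why the argument is special to $d=3$; compare Proposition \ref{prop:not power sums}.) Without an argument of this kind, or a precise citation replacing it, your proposal is a correct plan rather than a proof.
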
\medskip

When we replace the usual trace by the \text{normalized trace}, i.e. $\frac{\Tr (A)}{n}$ for a symmetric matrix $A$ of size $n \times n$, the problem becomes decidable. \medskip

\begin{thmn}[\ref{thm:normalized decidable}]
 The following decision problem is decidable.
\begin{itemize}
    \item[{\footnotesize Instance:}] A positive integer $k$ and a normalized trace polynomial $f(X_1,\ldots,X_k)$.
    \item[{\footnotesize Question:}] Is $f(M_1,\ldots,M_k)$ nonnegative for all real symmetric matrices $M_1,\ldots,M_k $ of all sizes?
\end{itemize}     
\end{thmn}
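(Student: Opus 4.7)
The plan is to follow the strategy sketched in the introduction: reduce nonnegativity of a normalized trace polynomial, via the eigenvalue substitution $\widetilde{\Tr}(M^d) = \tfrac{1}{n}p_d(\lambda_1,\ldots,\lambda_n)$ for real symmetric $M$ of size $n$, to nonnegativity of an ordinary commutative polynomial on a product of \emph{limit normalized Vandermonde sets}, and then invoke the truncated Hamburger moment problem to realize each factor as a spectrahedron, so that Tarski--Seidenberg applies.

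First, a normalized trace polynomial $f(X_1,\ldots,X_k)$ can be expanded as a polynomial $g$ in finitely many indeterminates $y_{i,d} := \widetilde{\Tr}(X_i^d)$. Since every real symmetric $M_i$ is orthogonally diagonalizable, the value $\widetilde{\Tr}(M_i^d)$ depends only on the spectrum $\lambda^{(i)}\in\R^{n_i}$ of $M_i$, via $\tfrac{1}{n_i}p_d(\lambda^{(i)})$. Thus the nonnegativity question becomes: is
\[ g\bigl(\widetilde{\nu}_{n_1,d_1}(\lambda^{(1)}),\ldots,\widetilde{\nu}_{n_k,d_k}(\lambda^{(k)})\bigr)\geq 0 \]
for every $n_i\geq 1$ and every $\lambda^{(i)}\in\R^{n_i}$, where $\widetilde{\nu}_{n,d}(x) := \tfrac{1}{n}(p_1(x),\ldots,p_d(x))$ is the normalized Vandermonde map. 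By continuity of $g$ this is equivalent to testing $g\geq 0$ on the product $\prod_{i=1}^k \widetilde{\Pi}_{d_i}$, where $\widetilde{\Pi}_d$ denotes the closure of $\bigcup_{n\geq 1}\widetilde{\nu}_{n,d}(\R^n)$.

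Next, I would identify $\widetilde{\Pi}_d$ with the truncated Hamburger moment set
\[ \mathcal{M}_d = \bigl\{ (m_1,\ldots,m_d)\in\R^d : m_k = \textstyle\int_\R t^k\,d\mu(t)\text{ for some probability measure }\mu\text{ on }\R \bigr\}. \]
One inclusion is immediate, since $\widetilde{\nu}_{n,d}(\lambda)$ is exactly the moment vector of the empirical measure $\tfrac{1}{n}\sum_i\delta_{\lambda_i}$; for the reverse, any finitely atomic probability measure with rational weights $q_i/n$ coincides (after repeating atoms) with a measure of the form $\tfrac{1}{n}\sum_i\delta_{t_i}$, and such measures are weak-$*$ dense among probability measures with finite $d$-th moment, which is enough to realize every point of $\mathcal{M}_d$ as a limit. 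Invoking now the classical truncated Hamburger moment theorem (see e.g.\ \cite{MR3729411}), $\mathcal{M}_d$ is a spectrahedron, cut out by positive semidefiniteness of Hankel matrices built from $(1,m_1,\ldots,m_d)$, and hence in particular semialgebraic.

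Finally, the product $\prod_i \mathcal{M}_{d_i}$ is semialgebraic, and deciding whether a polynomial is nonnegative on an explicitly given semialgebraic set is expressible as a first-order sentence over the reals, and is therefore decidable by Tarski--Seidenberg. The main obstacle is the moment-cone identification in the second paragraph: showing that the closure of the union of normalized Vandermonde images is exactly the truncated moment set $\mathcal{M}_d$, and citing the correct spectrahedral description. This is also the dividing line between the normalized and unnormalized settings: Corollary~\ref{cor:not semialgebraic} shows that the unnormalized limit $\Pi_d$ is \emph{not} semialgebraic for $d\geq 3$, which is precisely the feature that allows encoding undecidable problems in the unnormalized trace setting.
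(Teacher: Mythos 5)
Your argument is correct in outline, but it takes a genuinely different route from the paper's proof. The paper does not go through the moment body at all: it invokes Timofte's half-degree principle in the form of the Blekherman--Riener lemma (Lemma \ref{lem:gregcordian}), which says that the power-mean polynomials $\mathfrak{f}_n$ are nonnegative for all $n$ if and only if an associated polynomial $\Phi_{\mathfrak{f}}$ is nonnegative on the explicit semialgebraic set $\Delta_{d-1}\times\R^{d}$ of (weights, nodes); decidability then follows from the first-order theory of the reals, and the multivariate case is handled by taking products of such sets. In effect the paper parametrizes the relevant limit set by at most $d$-atomic normalized spectra, whereas you describe the same limit set implicitly by linear matrix inequalities coming from the truncated Hamburger moment problem --- a route the paper itself mentions as an alternative explanation in the introduction and before the proof, but does not carry out. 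Your version buys an explicit spectrahedral picture of $\widetilde{\Pi}_d$ and avoids citing the half-degree principle; the paper's version buys freedom from moment-problem boundary subtleties, since the parametrization by $\Delta_{d-1}\times\R^d$ is already first-order as it stands. On that last point, one statement of yours needs a correction, though it does not damage the argument: the truncated Hamburger set $\mathcal{M}_d$ of genuine probability-measure moment vectors is \emph{not} closed and is \emph{not} equal to the Hankel spectrahedron (e.g.\ $(m_1,\ldots,m_4)=(1,1,1,2)$ has positive semidefinite Hankel matrix but no representing measure, yet it is a limit of empirical moment vectors). What is true, and all you need, is the chain $\mathcal{M}_d\subseteq\widetilde{\Pi}_d\subseteq\clo\mathcal{M}_d$, so that $\widetilde{\Pi}_d=\clo\mathcal{M}_d$, and this closure is semialgebraic with an explicitly computable description: for even $d$ it is exactly the Hankel spectrahedron (positive definite Hankel matrices admit atomic representing measures, and the spectrahedron is the closure of its interior), while for odd $d$ the top moment is unconstrained and the Hankel condition involves only the moments up to $d-1$. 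Since testing nonnegativity of $g$ on a closed set is the same as on any dense subset, your reduction and the final appeal to Tarski--Seidenberg go through once the identification is stated for the closure rather than for $\mathcal{M}_d$ itself.
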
 \medskip

\paragraph{\bf Acknowledgements:} We are grateful to the two anonymous referees for their insightful comments and constructive feedback, which significantly contributed to improving the clarity and quality of this paper.

\section{The Vandermonde map} \label{sec:Vandermonde map}

In this section we study the geometry of the boundary of the Vandermonde cell. We start with some definitions.
\begin{definition} \label{def:Vandermonde}
\begin{enumerate}[leftmargin=*]
\item For $n,k\in \mathbb{N}$, $n\geq k$ we write \[e_k := \sum_{I \subset [n], |I|=k}\prod_{i \in I}x_i\] for the \emph{$k$-th elementary symmetric polynomial} in $n$ variables.
\item For $a\in\R_{>0}$ we consider  \[p_{a}:=\sum_{i=1}^n x_i^a\]
the \emph{power sum function}, which for $a \in \N$ is called a \emph{power sum polynomial}.
\item Given a sequence of strictly increasing positive rational numbers  $\alpha = (\alpha_1,\ldots,\alpha_d) \in \Q_{>0}^d$ we consider the \emph{$\alpha$-Vandermonde map} in $n$ variables to be the function 
$$ \abb{\nu_{n,\alpha}}{\R_{\geq 0}^n}{\R^d}{x}{(p_{\alpha_1}(x),p_{\alpha_2}(x),\dots,p_{\alpha_d}(x))}.$$  
For $\alpha=(1,2,\dots,d)$ we write $\nu_{n,d}$ for $\nu_{n,\alpha}$.
\end{enumerate}
\end{definition}
In the sequel we will restrict our study of Vandermonde maps to the probability simplex and power sum polynomials. 

\begin{definition}
Let $\Delta_{n-1}:=\left\{x\in\R^{n}_{\geq 0}\,:\,x_1+\ldots+x_{n}=1\right\}$ and  $\alpha = (\alpha_1,\ldots,\alpha_{d-1})\in\Z^{d-1}_{>1}$ be a  strictly increasing sequence of integers larger than $1$. The  \emph{$(n,\alpha)$-Vandermonde cell} $\Pi_{n,\alpha}$ is the set $\nu_{n,\alpha}(\Delta_{n-1})$. 
For $\alpha = (2,\ldots,d)$ we also write $\Pi_{n,\alpha} = \Pi_{n,d}$. We denote by $\mathcal{W}_n:=\{x\in\R^n:0\leq x_1\le x_2\le\dots\le x_n\}$ the nonnegative Weyl chamber, and we write $W_{n-1}:=\mathcal{W}_n \cap \Delta_{n-1}$.
\end{definition}
Note that we have $\Pi_{n,\alpha}=\nu_{n,\alpha}(W_{n-1})$.

\medskip

In Subsection \ref{subsection boundary of the vandermonde cell} we investigate the boundary of the Vandermonde cell.  In Subsection \ref{subsection power sum and elementary} we parameterize the boundaries of the sets $\Pi_{n,3}$ and $\Pi_3$ and we show that the limit set $\Pi_d$ is not semialgebraic for all $d \geq 3$. In Subsection \ref{sec:boundary of sub-probability simplex image} we analyze the boundary of the image of the sub-probability simplex $\tilde{\Delta}_n := \{ x \in \R_{\geq 0}^n : \sum_{i=1}^n x_i \leq 1\}$ under the map $\nu_{n,\alpha}$.

\subsection{The boundary of the  Vandermonde cell} \label{subsection boundary of the vandermonde cell}

We start by defining points whose image under the Vandermonde map will give the boundary of the Vandermonde cell, up to closure.

\begin{definition}\label{def:type}
Let $x=(x_1,x_2,\dots,x_n) \in \mathbb{R}_{\geq 0}^n$ with $x_1\leq x_2\leq \dots \leq x_n$  be a point with $\ell$ distinct positive coordinates $0<y_1<\dots<y_\ell$. Let $m_i$ be the multiplicity with which $y_i$ occurs as an entry of $x$ for $1 \leq i \leq \ell$ and write $m_0$ for the multiplicity of $0$ in $x$. We call the vector $m=(m_0,\ldots,m_{\ell})\in \N^{\ell+1}$ the \emph{multiplicity vector} of $x$, and $\ell$ the \emph{multiplicity length} of $x$. Note that the multiplicity length of $x$ is equal to the number of distinct positive coordinates in $x$.

If $m_0\ge0$, $m_{2i-1}=1, m_{2i} \geq 1$ for all $i\geq 1$ we say that the point $x$ is of type $(1)$. 
Points of type (1) with multiplicity length $\ell$ have the form:
\[
(\underbrace{0,\ldots,0}_{m_0},\underbrace{y_1}_{m_1},\underbrace{y_2,\ldots,y_2}_{m_2},\ldots,\underbrace{y_{\ell},\ldots,y_{\ell}}_{m_{\ell}}).
\]
 If $m_0=0, m_{2i-1}\geq 1, m_{2i}=1$ for all $i\geq 1$ we say that $x$ is of type $(2)$. Points of type (2) with multiplicity length $\ell$ have the form:
 \[ (\underbrace{y_1,\ldots,y_1}_{m_1},\underbrace{y_2}_{m_2},\ldots,\underbrace{y_{\ell},\ldots,y_{\ell}}_{m_{\ell}}).\]
Note that some points (e.g. $(1,1,\ldots,1))$ are in the closure of both type (1) and (2) points. 

For points $x=(x_1,\dots,x_n)\in \mathbb{R}^n$ we define the multiplicity vector of $x$, and the multiplicity length of $x$ as the ones for $(|x_1|,|x_2|,\dots,|x_n|)$.
\end{definition}

Our aim is to prove the following characterization of points on the boundary of the Vandermonde cell.

\begin{theorem}\label{thm:1}
For integers $n \geq d\ge2$ and positive integer exponents $1<\alpha_1<\dots<\alpha_{d-1}$, the set $\bd \Pi_{n,\alpha}$ is the closure of the set of evaluations of $\nu_{n,\alpha}$ at all points of multiplicity length equal to $d-1$ and type (1) or (2) in $\Delta_{n-1}$. Moreover, the pre-image of any point in $\bd \Pi_{n,\alpha}$ is unique, up to permutation of coordinates.
\end{theorem}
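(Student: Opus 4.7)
The plan is to adapt the Arnold--Givental--Kostov strategy. A point $x\in W_{n-1}$ with multiplicity vector $(m_0,m_1,\dots,m_\ell)$ sits in the relative interior of a face $F$ that I parametrise by its distinct positive coordinates $y_1<\dots<y_\ell$ subject to $\sum m_i y_i=1$; the Jacobian of $\nu_{n,\alpha}|_F$ is then the $(d-1)\times\ell$ matrix $J_{ji}=m_i\alpha_j y_i^{\alpha_j-1}$. A non-zero covector $c\in\R^{d-1}$ annihilates this Jacobian (modulo the simplex constraint) iff $c^\top J$ is proportional to $(m_1,\dots,m_\ell)$, equivalently iff the polynomial $q(t)=-\lambda+\sum_{j=1}^{d-1}c_j\alpha_j t^{\alpha_j-1}$ vanishes at $y_1,\dots,y_\ell$. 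Since $\{1,t^{\alpha_1-1},\dots,t^{\alpha_{d-1}-1}\}$ is a Chebyshev system on $(0,\infty)$, any non-zero such $q$ has at most $d-1$ positive real roots (Descartes' rule of signs on a polynomial with $d$ monomial terms of distinct exponents).

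Three consequences follow. If $\ell\geq d$, then $\nu_{n,\alpha}|_F$ is a submersion, so $\nu_{n,\alpha}(F)$ is open in $\R^{d-1}$ and lies in the interior of $\Pi_{n,\alpha}$. If $\ell\leq d-2$, then $\dim F=\ell-1\leq d-3$, so $\nu_{n,\alpha}(F)$ contributes only lower-dimensional pieces. If $\ell=d-1$, positivity of the relevant generalised Vandermonde determinant makes $J$ invertible, hence $\nu_{n,\alpha}|_F$ is an immersion of a $(d-2)$-dimensional manifold into $\R^{d-1}$ whose outer normal $c$ is determined up to scaling. Therefore, up to closure of lower-dimensional strata, $\bd\Pi_{n,\alpha}$ is the union of images of $\ell=d-1$ faces.

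Next, I identify \emph{which} $\ell=d-1$ faces actually land on the boundary. Because $q$ attains the maximal $d-1$ positive roots, the equality case of Descartes forces all $d$ coefficients of $q$ to be non-zero and strictly alternating in sign. The codimension-one refinements of $F$ in $W_{n-1}$ are of two kinds: (a) splitting a positive block $B_i$ (requires $m_i\geq2$) produces a second-order tangent whose pairing with $c$ is a positive multiple of $q'(y_i)$, and (b) activating a zero (requires $m_0\geq1$) produces a first-order tangent whose pairing with $c$ equals $q(0)=-\lambda$. Since $q$ has only simple positive roots, the signs of $q'(y_1),\dots,q'(y_{d-1})$ strictly alternate, and the sign of $q(0)$ is fixed by that same alternation. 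A parity check then shows that all refinement directions fall on one side of $\nu_{n,\alpha}(F)$ precisely in two cases: either $m_0=0$ and every index $i\geq1$ with $m_i\geq2$ is odd (type (2)), or every such $i$ is even and, when $m_0\geq1$, the zero-activation falls on the matching side (type (1)). Any other multiplicity pattern produces two refinement directions on opposite sides, placing $\nu_{n,\alpha}(F)$ into the interior of $\Pi_{n,\alpha}$.

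Finally, for uniqueness of the preimage, suppose $x,x'\in W_{n-1}$ are both boundary points with $\nu_{n,\alpha}(x)=\nu_{n,\alpha}(x')$. The well-defined normal to $\bd\Pi_{n,\alpha}$ at this common image determines the same polynomial $q$, whose positive roots are simultaneously the distinct positive coordinates of $x$ and of $x'$; in particular these agree as sets $\{y_1,\dots,y_{d-1}\}$. The positive multiplicities $(m_i)_{i\geq1}$ are then uniquely determined by the linear system $\sum_{i=1}^{d-1} m_i y_i^{\alpha_k}=p_{\alpha_k}(x)$ for $k=0,\dots,d-1$ (setting $\alpha_0=1$), whose $d\times(d-1)$ coefficient matrix is a generalised Vandermonde of full column rank, and $m_0=n-\sum_{i\geq1}m_i$ is then forced. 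The main obstacle I anticipate is the parity bookkeeping in the third step: converting the strict Descartes alternation of $q$'s coefficients into the correct sign pattern for $\{q'(y_i)\}$ and $q(0)$, and then matching it cleanly with the asymmetric roles of $m_0$ in the definitions of types (1) and (2), is the intricate combinatorial heart of the proof; the remaining steps reduce to standard linear algebra and dimension counts.
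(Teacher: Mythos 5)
Your first two steps (the rank computation via Descartes' rule and the classification of the $\ell=d-1$ fold points by the signs of $q'(y_i)$ and $q(0)$) run parallel to the paper's Lemmas \ref{lem:d-1 points}, \ref{lem:critical points} and Proposition \ref{prop2}/Corollary \ref{cor:pap3 1}, and the parity bookkeeping you flag as the main obstacle is indeed carried out there and is manageable. The genuine gap is elsewhere: your argument is entirely \emph{local}. One-sidedness of the image of a neighborhood of $x$ (all refinement directions on one side of the immersed patch $\nu_{n,\alpha}(F)$) does not show that $\nu_{n,\alpha}(x)\in\bd\Pi_{n,\alpha}$, because $\Pi_{n,\alpha}$ is the image of the whole simplex: a point $x'$ far from $x$ in $\Delta_{n-1}$ with $\nu_{n,\alpha}(x')$ near $\nu_{n,\alpha}(x)$ could cover the other side, making the fold point interior. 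Ruling this out requires a global ingredient, and this is exactly what the bulk of the paper's proof supplies: the connectedness of the restricted Vandermonde varieties $V^{\beta}_{d-1}(c)\cap\mathcal{W}_n$ (Lemma \ref{lem:interval}, proved by a Morse-theoretic sweep using Givental's Lemma \ref{lemma:giv} and Kostov's Lemma \ref{lemma:kostov}), which turns the strict local extrema of Corollary \ref{cor:pap3 1} into global extrema along the fibers of the projection $\pi:\Pi_{n,\alpha}\to\Pi_{n,\alpha'}$ (Proposition \ref{prop: projection of Vandermonde cell}). Your proposal contains no substitute for this step, so the forward inclusion (type (1)/(2) points land on the boundary) and the exact identification of the boundary are not established; relatedly, the claim that the $\ell\le d-2$ strata are absorbed into the closure of the $\ell=d-1$ type (1)/(2) images is asserted but not proved (the paper needs Lemma \ref{lem: two multiplicity vector sequences} and the interval argument for this).

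The uniqueness argument has the same defect. You deduce the polynomial $q$ from ``the well-defined normal to $\bd\Pi_{n,\alpha}$'' at the common image, but the boundary is only piecewise smooth (it has the corner structure of a cyclic polytope), and the theorem asserts uniqueness at \emph{every} boundary point, including the closure strata where the pre-image has multiplicity length $<d-1$ and no single normal covector is available; moreover you would still need to know that every pre-image of such a point is a critical point whose multiplier covector coincides with that normal. The paper instead proves uniqueness (Lemma \ref{lem: unique pre-image Simplex}) by combining the connectedness of the fiber in $\mathcal{W}_n$ with injectivity of the weighted Vandermonde maps on ordered domains via total positivity (Lemma \ref{lem:homeom}); your final moment-matrix computation recovers the multiplicities only after the set of distinct values is known, which is precisely the part your normal-vector argument does not secure in general.
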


To understand the boundary of the Vandermonde cell we consider the notion of  Vandermonde varieties, which are pre-images of points under the Vandermonde map. The study of such varieties goes back to the works of Arnold, Givental and Kostov \cite{arnol1986hyperbolic,givental1987moments,kostov1989geometric} who had considered these in their study of hyperbolic polynomials. These authors consider the Vandermonde map $\nu_{n,d}$ which uses the first $d$ integer exponents. Furthermore, Rosenblum's recent Ph.D. thesis \cite{rosenblum2023topology} transfers the work of Arnold, Givental and Kostov to the first $d$ even integer exponents.

  In this paper, we restrict the Vandermonde map to the nonnegative orthant, and allow for general integer exponents, but many of the previously developed techniques will apply with some modifications. 
   Since we restrict to $x\in\R^n_{\geq 0}$ and $x_i\mapsto x_i^2$ is bijective for $x_i\geq 0$ we can ``square'' all variables, i.e., we can replace $(1,\alpha_1,\ldots,\alpha_{k-1})$, by $(2,\beta_1,\dots,\beta_{k-1})=(2,2\alpha_1,\ldots,2\alpha_{k-1})$, and replace the probability simplex $\Delta_{n-1}$ with the unit sphere $\mathbb{S}^{n-1}$. This does not change the image of our map. 
   
   As this will be a convenient way to deal with the conditions $x_i\geq 0$ in the proofs we will assume this squaring of the variables in the remainder of this section.
  We fix $2\leq d\leq n$ and a vector of positive even integer exponents $\beta = (2,\beta_1,\ldots,\beta_{d-1})$ with $2 < \beta_1 < \ldots < \beta_{d-1}$. When using even integer exponents, which will be denoted by the letter $\beta$, we will assume that the domain of the map $\nu_{n,\beta}$ is $\mathbb{R}^n$.

\begin{definition}
For $2 \leq k \leq d$ and $c\in \mathbb{R}^{k}$ define the Vandermonde variety of $c$
\[V_k^\beta(c):=\nu_{n,(2,\beta_1,\ldots,\beta_{k-1})}^{-1}(c).\]
Note that here the Vandermonde variety is not restricted to the nonnegative orthant.
\end{definition}

 Some fundamental properties of these varieties had been shown already by the mentioned authors. We show that their proofs can be almost directly adapted to the setup presented above. To this end we follow the proofs presented in \cite{meguerditchian1992theorem} (see also \cite{rainer2004perturbation} Section 3).

 We begin with an observation for the tangent space of a Vandermonde variety.

\begin{lemma}\label{lem:d-1 points} Let $n \geq k$, $c \in \{1\}\times\R^{k-1}_{\geq 0}$ and $\beta=(\beta_1,\ldots,\beta_{k-1})$ be a vector of pairwise distinct even positive integer exponents with $\beta_i\geq 4$. Then a point $x \in V_{k}^{\beta}(c)$ is smooth if and only if the multiplicity length of $x$ is at least $k$.
\end{lemma}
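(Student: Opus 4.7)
The plan is to recognize smoothness of a point $x \in V_k^\beta(c)$ as a rank condition on the Jacobian of the defining map, and then compute that rank in terms of the distinct absolute values among $x_1,\ldots,x_n$ via a generalized Vandermonde argument.

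First, I would note that $V_k^\beta(c)$ is the zero set of the $k$ polynomials $p_2-1,\,p_{\beta_1}-c_2,\,\ldots,\,p_{\beta_{k-1}}-c_k$ in $\R^n$, so by the implicit function theorem a point $x$ is smooth on $V_k^\beta(c)$ if and only if the Jacobian $J(x)$ of $(p_2,p_{\beta_1},\ldots,p_{\beta_{k-1}})\colon\R^n\to\R^k$ has full rank $k$ at $x$. After dividing row $i$ by the positive constants $2,\beta_1,\ldots,\beta_{k-1}$ respectively (a rank-preserving operation), this $k\times n$ matrix has $(i,j)$-entry $x_j^{e_i}$, where $e_1=1$ and $e_{i+1}=\beta_i-1$ for $1\leq i\leq k-1$. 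Because each $\beta_i$ is even and $\geq 4$, the exponents $e_1<e_2<\ldots<e_k$ are distinct positive odd integers.

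Next, I would reduce the rank computation to a matrix indexed by the distinct positive values of $|x_j|$. Columns indexed by $j$ with $x_j=0$ vanish since $e_i\geq 1$. For two indices $j,j'$ with $|x_j|=|x_{j'}|>0$, the oddness of each $e_i$ gives $x_{j'}^{e_i}=\mathrm{sign}(x_j x_{j'})\,x_j^{e_i}$ with the sign uniform in $i$, so columns $j$ and $j'$ are scalar multiples of one another. Consequently the rank of $J(x)$ equals that of the $k\times\ell$ matrix $M$ whose $s$-th column is $(y_s^{e_1},\ldots,y_s^{e_k})^\top$, where $0<y_1<\ldots<y_\ell$ is the list of distinct positive values of $|x_1|,\ldots,|x_n|$ and $\ell$ is the multiplicity length of $x$.

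Finally, I would conclude both directions. If $\ell<k$ then $\rank M\leq\ell<k$ and $x$ is singular. If $\ell\geq k$, any $k$ columns of $M$ produce a square submatrix whose determinant is a generalized Vandermonde $\det(y_{s_j}^{e_i})_{i,j}$ with distinct positive entries $y_{s_1}<\ldots<y_{s_k}$ and distinct nonnegative integer exponents $e_1<\ldots<e_k$. The classical nonvanishing (in fact positivity) of such determinants---provable by writing them as a Schur polynomial in the $y_{s_j}$ times the ordinary Vandermonde, or inductively via Rolle's theorem on a polynomial with the requisite sign-change pattern---forces this minor to be nonzero, so $\rank M=k$ and $x$ is smooth on $V_k^\beta(c)$. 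The one genuinely delicate step is the nonvanishing of generalized Vandermonde determinants with arbitrary distinct nonnegative integer exponents; this is classical but not completely trivial, so I would either cite a standard source or include a short inductive proof to keep the lemma self-contained.
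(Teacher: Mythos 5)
Your proposal is correct and follows essentially the same route as the paper: both reduce smoothness to full rank of the Jacobian of $(p_2,p_{\beta_1},\ldots,p_{\beta_{k-1}})$, observe that zero columns drop out and columns with equal $|x_j|$ are proportional (the paper does this by using evenness to pass to the nonnegative orthant, you by using the oddness of the exponents $\beta_i-1$), and then conclude via the nonvanishing of a generalized Vandermonde minor --- which is exactly the Descartes' rule of signs argument the paper applies to a putative row dependence. The only cosmetic difference is that you cite the classical positivity of generalized Vandermonde determinants where the paper spells out the Descartes step directly.
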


\begin{proof} 
Note that all polynomials $p_2,p_{\beta_1},\ldots,p_{\beta_{k-1}}$ are even symmetric, i.e. invariant under permutation and sign switching. Thus, a point $x \in V_k^\beta(c)$ is smooth if and only if any point in the orbit $\{(\epsilon_1 x_{\sigma(1)},\ldots,\epsilon_n x_{\sigma(n)}) : \epsilon_1,\ldots,\epsilon_n \in \{\pm 1\}, \sigma : [n] \to [n] \text{ a permutation}\}$ is smooth. Hence, we can restrict to points in the nonnegative orthant.

The Jacobian matrix of the map $(p_2,p_{\beta_1},\ldots,p_{\beta_{k-1}})$ equals 

\[
\left( \begin{array}{cccc}
    2x_1 &  2x_2 & \ldots & 2x_n \\
  \beta_1 x_1^{\beta_1-1}  & \beta_1 x_2^{\beta_1-1} & \ldots & \beta_{1} x_n^{\beta_{1}-1} \\
    \vdots & \vdots & \ddots & \vdots \\
   \beta_{k-1} x_1^{\beta_{k-1}-1}  & \beta_{k-1} x_2^{\beta_{k-1}-1} & \ldots & \beta_{k-1} x_n^{\beta_{k-1}-1}
\end{array} \right)
\]

Clearly, for $x\in\R_{\geq 0}^n$ with multiplicity length less than $k$ this matrix will not be of full rank. 
Now suppose that the rows of the Jacobian matrix are linearly dependent, so there exist $\ell_1,\ldots,\ell_{k} \in \mathbb{R}$, not all zero, such that every column satisfies $\ell_1\cdot 2x_j+\ell_2\cdot \beta_1 x_j^{\beta_1-1}+\ldots+\ell_{k}\cdot \beta_{k-1}x_j^{\beta_{k-1}-1}=0$. Therefore, every coordinate $x_j$ is a root of the same univariate polynomial \[f(t)= b_1t+b_2t^{\beta_1-1}+\ldots+b_{k}t^{\beta_{k-1}-1}.\] However, by Descartes' rule of signs $f$ can have at most $k-1$ different roots in $\R_{> 0}$. Therefore, if $x$ has multiplicity length larger than $k-1$, then the Jacobian matrix has full rank. 
\end{proof}

We also note the following Remark which is immediate from the proof.
\begin{remark}\label{rem:rank}
Let $F_{\ell}$ be an $\ell$-dimensional face of $W_{n-1}$. 
Then, for every $x$ in the relative interior of $F_{\ell}$ we have that the rank of the Jacobian of the map $(p_2,p_{\beta_1},\ldots,p_{\beta_{k-1}})$ is $\min\{\ell,k\}$.
\end{remark}

Now, for $\beta=(2,\beta_1,\ldots,\beta_{d-1})$ we study the level sets of the power sum polynomials $p_2,\ldots,p_{\beta_{d-2}}$ in order to understand the boundary of the image. 

\begin{lemma} \label{lem:critical points}
 For generic $c\in\{1\}\times\R^{d-2}_{\geq 0}$, the critical points of $p_{\beta_{d-1}}$ on $V_{d-1}^\beta(c)$ are precisely the points with multiplicity length equal to $d-1$. 
\end{lemma}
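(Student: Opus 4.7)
The plan is to combine the Lagrange multiplier condition for smooth critical points, a direct extension of the Descartes' rule argument from Lemma~\ref{lem:d-1 points}, and a dimension count that rules out singular points of the fiber $V_{d-1}^\beta(c)$ for generic $c$.

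First, I would observe that at any smooth point $x \in V_{d-1}^\beta(c)$, the point $x$ is critical for $p_{\beta_{d-1}}$ restricted to the fiber if and only if $\nabla p_{\beta_{d-1}}(x)$ lies in the row span of the Jacobian of $(p_2,p_{\beta_1},\ldots,p_{\beta_{d-2}})$ at $x$; equivalently, the enlarged $d \times n$ Jacobian of $(p_2,p_{\beta_1},\ldots,p_{\beta_{d-1}})$ has rank at most $d-1$ at $x$. This reduces the question to a rank computation on the enlarged Jacobian.

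Second, I would re-run the Descartes argument from Lemma~\ref{lem:d-1 points} for this enlarged Jacobian. A nontrivial linear combination of its rows vanishes on every column iff every coordinate $x_j$ is a root of a common univariate polynomial
\[ f(t) = b_1 t + b_2 t^{\beta_1-1} + \ldots + b_d t^{\beta_{d-1}-1}, \]
which, being a nonzero sum of $d$ real monomials with distinct exponents, has at most $d-1$ positive real roots by Descartes' rule of signs. So rank deficiency at $x$ is equivalent to multiplicity length at most $d-1$. Combined with Lemma~\ref{lem:d-1 points} (smoothness in $V_{d-1}^\beta(c)$ is equivalent to multiplicity length $\geq d-1$), every smooth critical point has multiplicity length exactly $d-1$. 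Conversely, if $x$ has multiplicity length exactly $d-1$, then since each $\beta_i-1$ is odd the columns of the enlarged Jacobian reduce, up to sign, to at most $d-1$ vectors in $\R^d$, so its rank is automatically at most $d-1$ and $x$ is critical.

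Third, I would discharge the smoothness assumption for generic $c$ via a dimension count. The locus $S \subset \R^n$ of points of multiplicity length at most $d-2$ is a semialgebraic set of dimension at most $d-2$, stratified by a discrete choice of multiplicity vector together with at most $d-2$ continuous parameters $y_1 < \ldots < y_\ell$ for the distinct positive absolute values. Intersecting with $\mathbb{S}^{n-1} = \{p_2 = 1\}$ drops the dimension by one, so $S \cap \mathbb{S}^{n-1}$ has dimension at most $d-3$, and its image under the semialgebraic map $(p_{\beta_1},\ldots,p_{\beta_{d-2}}) : \mathbb{S}^{n-1} \to \R^{d-2}$ is semialgebraic of dimension at most $d-3$, hence nowhere dense in $\R^{d-2}$. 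Consequently, for $c$ in a dense open subset of $\{1\} \times \R^{d-2}_{\geq 0}$, the fiber $V_{d-1}^\beta(c)$ is disjoint from $S$, all of its points are smooth, and step two identifies its critical points with the multiplicity-length-$(d-1)$ points.

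The main obstacle I anticipate is the third step: one must perform the dimension bookkeeping on the sphere $\mathbb{S}^{n-1}$ rather than on all of $\R^n$, so that the strict inequality $d-3 < d-2$ actually excludes a generic slice $c$ of the $(d-2)$-dimensional target. The first two steps are essentially direct adaptations of what is already carried out in the proof of Lemma~\ref{lem:d-1 points}.
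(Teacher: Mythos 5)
Your proposal is correct, and its core coincides with the paper's argument: your ``enlarged $d\times n$ Jacobian has rank at most $d-1$'' criterion is exactly the paper's Lagrange multiplier condition, the Descartes count on $f(t)=b_1t+b_2t^{\beta_1-1}+\cdots+b_dt^{\beta_{d-1}-1}$ is the same, and your converse (columns coincide up to sign, so rank drops, while smoothness forces the last row into the span of the first $d-1$) is the paper's argument with the $(d-1)\times d$ matrix $A(z)$ and the observation that the coefficient of the last column is nonzero. The genuine difference is the genericity step: the paper asserts generic smoothness of $V_{d-1}^\beta(c)$ via the Jacobian criterion and then uses weighted homogeneity of the Vandermonde map to transfer this to the measure-zero slice $\{1\}\times\R^{d-2}_{\geq 0}$, whereas you bound directly the dimension of the locus of multiplicity length at most $d-2$ intersected with $\mathbb{S}^{n-1}$ and of its semialgebraic image in $\R^{d-2}$, obtaining a dense open set of admissible $c$ inside the slice. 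Your route is more self-contained and makes explicit precisely the point the paper delegates to homogeneity; the paper's route is shorter and reuses Sard-type genericity already needed elsewhere. One small repair: in the forward direction of your second step you should invoke, as you do in the converse, that all exponents of $f$ are odd (equivalently, reduce to nonnegative coordinates by sign invariance, as the paper does), so that negative coordinates contribute absolute values that are again positive roots of $f$; without this remark Descartes only controls the positive coordinates, not the multiplicity length as defined via $(|x_1|,\ldots,|x_n|)$.
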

\begin{proof}
For generic $c$ the Vandermonde variety $V_{d-1}^\beta(c)$ is smooth and by the Jacobian criterion (\cite[Thm 16.19]{eisenbud2013commutative}) $(n-d+1)$ equidimensional (or empty). Since the Vandermonde map is weighted homogeneous, this also holds for generic $c\in\{1\}\times\R^{d-2}_{\geq 0}$. Therefore by Lemma \ref{lem:d-1 points} every point $x\in V_{d-1}^\beta(c)$ will have multiplicity length at least $d-1$.

Since all the exponents are supposed to be even, every of the polynomials in $\{p_{2},\ldots, p_{\beta_{d-1}}\}$ is invariant by sign change, and further more, the set of critical points of $p_{\beta_{d-1}}$ is closed under sign changes as well. Thus we have that  
$z=(z_1,\dots,z_n) \in  V_{d-1}^\beta(c)$ is a critical point of $p_{\beta_{d-1}}$ if and only if $(|z_1|,|z_2|,\dots,|z_n|)$ is a critical point and we my restrict to points with nonnegative coordinates.
Now consider a critical point $z=(z_1,\ldots,z_n) \in \R^n_{\geq 0}$ of $p_{\beta_{d-1}}$ on $V_{d-1}^\beta(c)$. Since $V_{d-1}^\beta(c)$ is smooth there exist $\lambda_0,\ldots,\lambda_{d-2} \in \R$ such that all partial derivatives of the Lagrangian function  

\begin{equation}\label{eq:largange} 
L(X) := p_{\beta_{d-1}}(X)+\lambda_0(p_2(X)-1)+ \sum_{i=1}^{d-2}\lambda_i (p_{\beta_i}(X)-c_{i+1})
\end{equation} 
vanish at $z$. This yields
\[ 0=\nabla p_{\beta_{d-1}} (z)  +  \lambda_0\nabla p_2 (z)+\lambda_1 \nabla p_{\beta_1}(z) + \ldots + \lambda_{d-2} \nabla p_{\beta_{d-2}}(z). \]
Again noting that $\frac{\partial{p_{a}}}{\partial x_j} =ax_j^{a-1}$ we observe that there exists a univariate polynomial
\begin{equation}\label{eq:f}
    f(t) = \beta_{d-1}t^{\beta_{d-1}-1} +2\lambda_0t +  \lambda_1 \beta_1 t^{\beta_1-1} + \ldots + \lambda_{d-2} {\beta_{d-2}}t^{\beta_{d-2}-1}
\end{equation}
such  that  $f(z_i) = 0$ for all $1 \leq i \leq n$. However, by Descartes' rule of signs the polynomial $f$ can have at most $d-1$ positive roots. Since $V_{d-1}^\beta(c)$ is smooth, the point $z$   must have exactly $d-1$ distinct non-zero coordinates by Lemma \ref{lem:d-1 points}. Conversely, assume $z \in V_{d-1}^\beta(c)$ has precisely $d-1$ distinct positive coordinates which we arrange in the following $(d-1)\times d$ matrix
\[ 
A(z) := \left( 
\begin{array}{cccc}
    2z_1 & \beta_1z_1^{\beta_1-1} & \ldots & \beta_{d-1}z_1^{\beta_{d-1}-1}\\
    2z_2 & \beta_1z_2^{\beta_1-1} & \ldots &\beta_{d-1} z_2^{\beta_{d-1}-1}\\
   \vdots & \vdots & & \vdots \\
    2z_{d-1} &\beta_1 z_{d-1}^{\beta_1-1} & \ldots & \beta_{d-1}z_{d-1}^{\beta_{d-1}-1}
\end{array}
\right)
\]
The column rank of $A$ is at most $d-1$ and thus there exist $\tilde{\lambda}_0,\ldots,\tilde{\lambda}_{d-1}$ which give the linear dependence of the columns of $A(z)$. Note that $\tilde{\lambda}_{d-1}\neq 0$ as again by Descartes rule of signs  the first $d-1$ columns are linearly independent. Thus $\frac{1}{\tilde{\lambda}_{d-1}}(\tilde{\lambda}_0,\ldots,\tilde{\lambda}_{d-2})$ are Lagrange multipliers for $z$. 
\end{proof}

\begin{proposition}\label{prop2} 
Let $V_{d-1}^\beta(c)$ be a smooth Vandermonde variety and $x$ be a critical point of $p_{\beta_{d-1}}$ on $V_{d-1}^\beta(c)$. Let $m_i$ be the multiplicities given in Definition \ref{def:type}, and let $r_i = m_i-1$. Then, if $d$ is odd (resp. even), the Hessian of $p_{\beta_{d-1}}$ on $V_{d-1}^\beta(c)$ at the point $x$ is the sum of a negative (resp. positive) definite quadratic form on $\R^{a}$ and a positive (resp. negative) definite quadratic form on $\R^{b}$, where $a = \sum_{i < d, i \not \in 2 \N} r_i$ and $b = m_0 +\sum_{i < d, i \in 2\N} r_i$. In particular, $p_{\beta_{d-1}}$ is a Morse function on $V_{d-1}^\beta(c)$ of index $a$ for $d$ odd, and index $b$ for $d$ even.
\end{proposition}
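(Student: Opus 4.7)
The plan is to reduce the computation of the constrained Hessian to that of the Lagrangian $L$ from \eqref{eq:largange}: at the critical point $x$, $\mathrm{Hess}\,(p_{\beta_{d-1}}|_{V_{d-1}^\beta(c)})$ equals the restriction of $\mathrm{Hess}_x L$ to $T_x V_{d-1}^\beta(c)$, which is the standard Lagrange-multipliers identity. Since $L$ is a separable sum of one-variable functions, $\mathrm{Hess}_x L$ is the diagonal matrix with $j$-th entry $g(x_j) := f'(x_j)$, where $f$ is the univariate polynomial from \eqref{eq:f}. The strategy is to decompose $T_x V_{d-1}^\beta(c)$ along the blocks of coordinates on which $x$ is constant, so that $\mathrm{Hess}_x L$ becomes block-diagonal with constant-sign blocks, and then read the signature off the signs $\mathrm{sign}\,g(y_i)$.

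I partition $[n]$ into blocks $B_0, B_1, \ldots, B_{d-1}$, where $B_0$ is the set of $m_0$ indices with $x_j = 0$ and $B_i$ the set of $m_i$ indices with $x_j = y_i$. Writing $\beta_0 := 2$ and $S_i := \sum_{j \in B_i} v_j$ for a tangent vector $v$, the $d-1$ linear tangency conditions become $\sum_{i=1}^{d-1} y_i^{\beta_k - 1} S_i = 0$ for $k = 0, 1, \ldots, d-2$, while the coordinates in $B_0$ contribute nothing (every relevant partial derivative of $p_{\beta_k}$ vanishes at $0$). The coefficient matrix of this system is a generalized Vandermonde in the distinct positive $y_i$'s with strictly increasing positive exponents, hence nonsingular; this forces $S_1 = \cdots = S_{d-1} = 0$ and leaves the coordinates in $B_0$ free. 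Thus $T_x V_{d-1}^\beta(c) = \R^{B_0} \oplus \bigoplus_{i=1}^{d-1} H_i$ with $H_i := \{v \in \R^{B_i} : S_i = 0\}$ of dimension $r_i$, and because $\mathrm{Hess}_x L$ acts on $\R^{B_i}$ as the scalar $g(y_i)$, the constrained Hessian is the direct sum of $g(0)\, \mathrm{Id}_{\R^{m_0}}$ and $g(y_i)\, \mathrm{Id}_{H_i}$ for $i \geq 1$.

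Next I determine $\mathrm{sign}\, g(y_i)$ for $i = 0, 1, \ldots, d-1$ (with $y_0 := 0$). By Descartes' rule, $f$ has at most $d-1$ positive roots; since $y_1, \ldots, y_{d-1}$ are roots, these are all of them and all simple. The root at $0$ is also simple, as $\lambda_0 = 0$ would reduce the number of sign changes of the coefficients of $f$ below $d-1$, leaving too few positive roots for Descartes. Because $\beta_{d-1} > 0$, $f$ is positive on $(y_{d-1}, \infty)$ and flips sign across each simple root, so on $(y_i, y_{i+1})$ (setting $y_d := \infty$) the sign of $f$ is $(-1)^{d-1-i}$; at a simple root the sign of $f'$ equals the sign of $f$ immediately to the right of the root, giving $\mathrm{sign}\, g(y_i) = (-1)^{d-1-i}$.

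Combining, for $d$ odd $(-1)^{d-1-i}$ equals $+1$ exactly when $i$ is even, so the constrained Hessian has $b = m_0 + \sum_{i < d,\, i \in 2\N} r_i$ positive and $a = \sum_{i < d,\, i \notin 2\N} r_i$ negative eigenvalues, exhibiting it as the claimed orthogonal direct sum and giving Morse index $a$; for $d$ even all signs flip, swapping the roles of $a$ and $b$ and yielding Morse index $b$. I expect the main obstacle to be the tangent-space decomposition in the second step: it relies on the nonsingularity of the generalized Vandermonde system, which uses both the distinctness of the $y_i$'s (equivalently, that $x$ has multiplicity length exactly $d-1$) and the strict monotonicity of the exponents $\beta_k$. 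Once that is secured, the rest of the proof is a sign-bookkeeping exercise driven by Descartes' rule.
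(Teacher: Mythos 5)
Your proof is correct and follows essentially the same route as the paper's: the second-order Lagrange identity reduces everything to the diagonal form $\mathrm{diag}(f'(x_j))$ restricted to the tangent space, and Descartes' rule pins down $0,y_1,\dots,y_{d-1}$ as the simple roots of $f$ so that the signs of $f'$ alternate exactly as in \eqref{eq:signs}. The only differences are cosmetic: you restrict to an explicitly computed tangent space (the generalized Vandermonde system forcing $S_1=\dots=S_{d-1}=0$, with the $m_0$ zero coordinates left free), whereas the paper works in the first $n-d+1$ coordinates as local coordinates, and you obtain the signs of $f'$ at the roots from the sign changes of $f$ across simple roots rather than from the paper's interlacing argument.
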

\begin{proof}
As before, it suffices to analyze critical points with nonnegative coordinates, since all of the exponents $\beta_i$ are even. Let $\tilde{x}\in \mathbb{R}^n_{\geq 0}$ be a critical point.  
Up to permutation of the coordinates $\tilde{x}$ can be assumed to be of the form
$$ \tilde{x}= (\underbrace{0,\ldots,0}_{m_0},\underbrace{y_1,\ldots,y_1}_{r_1},\ldots,\underbrace{y_{d-1},\ldots,y_{d-1}}_{r_{d-1}},y_1,y_2,\ldots,y_{d-2},y_{d-1}),$$ with $y_i>0$, as
by Lemma \ref{lem:critical points} we can assume that $\tilde{x}$ has precisely $d-1$ distinct non-zero coordinates $0<y_1<\ldots<y_{d-1}$. From  Lemma \ref{lem:d-1 points} we see that the corresponding $d-1$ last columns of the associated Jacobian will be of full rank.  Now we consider the Lagrange function $L$ from  \eqref{eq:largange} as a function in a new system of local coordinates, for which we can choose the first $n-d+1$ coordinates. Choosing these coordinates and observing that  the  Hessian of the Lagrange function in \eqref{eq:largange} equals \begin{align}\label{eq:hessianL}
\frac{\partial^2 L}{\partial {x}_i \partial {x}_j}(\tilde{x}) = 0 \text{ for }i \neq j,\text{ and  }\frac{\partial^2 L}{\partial {x}_i \partial {x}_i}(\tilde{x}) = f'(\tilde{x}_i),
\end{align}
where $f$ is given by (\ref{eq:f}), we see that the Hessian form equals 
\begin{align*} 
d^2L_{\tilde{x}}(h_1,\ldots,h_{n-d+1})=(h_1^2+\ldots+h_{r_1}^2)f'(y_1)+\ldots+(h_{n-d-r_{d-1}+2}^2+\ldots+h_{n-d+1}^2)f'(y_{d-1}).
\end{align*}
Since $\tilde{x}$ is a critical point we know from the proof of Lemma \ref{lem:critical points} that every coordinate of $\tilde{x}$ satisfies the same univariate polynomial equation  $f(y_i)=0$. Observe that $f$ has at most $d$ terms and so, by Descartes' rule of signs, $y_1,\dots,y_{d-1}$ are simple roots of $f$, therefore $f$ has exactly $d$ terms, and one of them has degree one, so $0$ is also a simple root. By the mean value theorem, the roots of the derivative $f'(t)$ interlace the roots of $f$. Noting that the leading coefficient of $f$ is positive, we find that the function values of the derivative $f'$ at the roots of $f$ satisfy 
\begin{align}\label{eq:signs}
f'(y_{d-1}) > 0,~ f'(y_{d-2}) < 0,~ f'(y_{d-3}) >0, \ldots, (-1)^{d-1} f'(y_1) < 0,~ (-1)^{d-1} f'(0) > 0,
\end{align}
from which we deduce the first statement. Finally, since $f$ has no multiple roots, equation \eqref{eq:hessianL} shows that $p_{\beta_{d-1}}$ is a Morse function on $V_{d-1}^{\beta}(c)$, and from \eqref{eq:signs} we find its index is $r_{d-2}+r_{d-4}+\dots$, which is $a$ for $d$ odd and $b$ for $d$ even.
\end{proof}

Thus, we immediately obtain:
\begin{corollary} \label{cor:pap3 1}
Suppose that $V_{d-1}^\beta(c)$ is smooth and let $x$ be a critical point of $p_{\beta_{d-1}}$ on $V_{d-1}^\beta(c)$. Then the multiplicity length of $x$ is $d-1$ and
\begin{enumerate}[label=(\alph*)]
   \item For $d$ odd: $x$ is a local minimum if and only if $x$ is of type (1), and $x$ is a local maximum if and only if $x$ is of type (2).
    \item  For $d$ even: $x$ is a local minimum if and only if $x$ is of type (2), and $x$ is a local maximum if and only if $x$ is of type (1).
\end{enumerate}
Additionally, all the local extrema are strict in these cases.
\end{corollary}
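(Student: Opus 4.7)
The corollary is essentially a bookkeeping consequence of Proposition \ref{prop2}, because that proposition has already done the substantive work. The assertion that the multiplicity length equals $d-1$ is the content of Lemma \ref{lem:critical points}. For the characterization of local extrema, the plan is to invoke the standard fact from calculus on manifolds: at a critical point on a smooth submanifold, a positive definite Hessian forces a strict local minimum and a negative definite Hessian forces a strict local maximum. So everything reduces to reading off when the Hessian form in Proposition \ref{prop2} is definite, and of which sign.

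Recall that the proposition writes the Hessian at $x$ as an orthogonal sum of a definite form on $\R^a$ and a definite form of the opposite sign on $\R^b$, where
\begin{equation*}
a \;=\; \sum_{\substack{1 \leq i < d \\ i \text{ odd}}} r_i, \qquad b \;=\; m_0 + \sum_{\substack{1 \leq i < d \\ i \text{ even}}} r_i,
\end{equation*}
and $r_i = m_i - 1$. Since $a+b = n-d+1$ is exactly the dimension of the tangent space of $V_{d-1}^{\beta}(c)$ at $x$, definiteness of the Hessian is equivalent to one of the two summands being trivial. For $d$ odd the proposition says the Hessian is negative on $\R^a$ and positive on $\R^b$, so it is positive definite iff $a=0$ and negative definite iff $b=0$; for $d$ even the two signs swap, reversing the roles of $a$ and $b$.

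What remains is matching $a=0$ and $b=0$ against the definitions of types (1) and (2). The condition $a=0$ says $m_i = 1$ for every odd $i$ with $1 \leq i \leq d-1$, which is precisely the defining condition of type (1) (the value of $m_0$ is unconstrained, matching $m_0 \geq 0$). The condition $b=0$ says $m_0 = 0$ together with $m_i = 1$ for every even $i$ with $1 \leq i \leq d-1$, which is precisely type (2). Combining this with the parity analysis yields the four cases of the corollary, and since the Hessian is nondegenerate in each of these cases the extrema are automatically strict. I do not anticipate any real obstacle: the only thing to be careful about is keeping the parity of the index $i$ straight against the parity of $d$, since these two parities interact in both the formulas for $a, b$ and in Proposition \ref{prop2}'s sign convention.
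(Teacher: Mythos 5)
Your proposal is correct and follows exactly the route the paper intends: the paper states this corollary as an immediate consequence of Proposition \ref{prop2} (with the multiplicity-length claim coming from Lemma \ref{lem:critical points}, whose argument only uses smoothness of the variety), and your bookkeeping of when $a=0$ or $b=0$, matched against types (1) and (2) and the parity of $d$, together with the nondegeneracy of the Hessian for the ``only if'' direction and strictness, is precisely that reading-off.
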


\begin{remark}\label{rem:gen}
We note that results in Lemma \ref{lem:d-1 points}-Corollary \ref{cor:pap3 1} also hold for positive rational exponents $\gamma=(\gamma_1,\dots,\gamma_{d-1})$ with $1<\gamma_1<\dots<\gamma_{d-1}$. The denominators in the fractions can be removed by using a transformation $x\mapsto x^k$ for some large integer $k$. We also expect that the results hold for arbitrary real exponents since the key ingredients, such as  the rank of the Jacobian matrix, which is a generalized Vandermonde matrix whose rank only depends on the multiplicity length of the vector $x$, and Descartes' rule of signs,  hold for positive real exponents. However, doubling the exponents, which we used to study a globally defined map, does not have the same effect for positive real exponents. 
\end{remark}

We now can prove the following crucial lemma.

\begin{lemma}\label{lem:interval}
For all $c\in\R^{d-1}$ we have that \(\Breve{V}_{d-1}^\beta(c):=V_{d-1}^\beta(c)\cap \mathcal{W}_n\) is either empty or connected. 
\end{lemma}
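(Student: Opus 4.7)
The proof proceeds by induction on $d$. The base case $d = 2$ is immediate: $\Breve{V}_1^\beta(c)$ equals $\emptyset$ if $c_1 < 0$, the single point $\{0\}$ if $c_1 = 0$, and a closed spherical simplex on the sphere $\{p_2 = c_1\}$ if $c_1 > 0$, each of which is connected.

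For the inductive step, assume the statement for $d-1$. Given $c = (c', c_{d-1}) \in \R^{d-1}$ with $c' \in \R^{d-2}$, one has
\[
\Breve{V}_{d-1}^\beta(c) \;=\; g^{-1}(c_{d-1}), \qquad g \;:=\; p_{\beta_{d-2}}\big|_{\Breve{V}_{d-2}^\beta(c')},
\]
and the set $\Breve{V}_{d-2}^\beta(c')$ is compact (the equation $p_2 = c_1$ confines it to a sphere) and, by the induction hypothesis, empty or connected. Assuming non-emptiness, $g$ attains its minimum $m$ and maximum $M$; clearly $g^{-1}(c_{d-1}) = \emptyset$ unless $c_{d-1} \in [m, M]$, so the task reduces to showing that every non-empty level set of $g$ is connected.

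The key input is Corollary \ref{cor:pap3 1} applied with $d$ replaced by $d-1$: every critical point of $g$ on the smooth locus of $V_{d-2}^\beta(c')$ has multiplicity length exactly $d - 2$. Since $n \ge d$, any such point has at least two equal coordinates or a vanishing coordinate, so it lies on $\bd \mathcal{W}_n$. Therefore $g$ has no critical points on the interior of $\Breve{V}_{d-2}^\beta(c')$, and the topology of its level sets can change only as one passes through boundary strata of $\mathcal{W}_n$. A nested induction on $n$ controls these strata: each facet $\{x_i = x_{i+1}\}$ or $\{x_1 = 0\}$ identifies naturally with the Weyl chamber in $n-1$ variables for a weighted Vandermonde map, and Remark \ref{rem:gen} ensures that the structural apparatus used above (Jacobian rank, the Lagrange multiplier argument, Descartes' rule) transfers to this weighted setting, so the inductive hypothesis remains available on each stratum. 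Gluing the smooth interior foliation leaves of $g^{-1}(c_{d-1})$ to the connected pieces of the boundary strata then yields connectedness of the full level set.

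The main obstacle is the bookkeeping in the combined $(n, d)$-induction: each boundary facet of $\mathcal{W}_n$ reduces the problem to a weighted Vandermonde map in fewer variables, and one must justify that the structural apparatus of this section transfers coherently to these weighted variants. The most delicate point is verifying that distinct smooth leaves of the interior foliation of $g^{-1}(c_{d-1})$ cannot close up into separate connected components but always meet through a common boundary stratum, thereby guaranteeing that the whole level set is connected.
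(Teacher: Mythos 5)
Your skeleton matches the paper's (induction on $d$, sweeping $\Breve{V}_{d-2}^\beta(c')$ with the next power sum, and observing via Lemma \ref{lem:critical points}/Corollary \ref{cor:pap3 1} that the relevant critical points have multiplicity length $d-2$ and hence sit on $\bd\mathcal{W}_n$), but the proof has a genuine gap exactly at the point you defer to ``bookkeeping'' and call ``the most delicate point.'' Knowing that critical points lie on boundary strata does not by itself tell you \emph{how} the level-set topology changes there, and your proposed remedy --- a nested induction on $n$ over the facets $\{x_i=x_{i+1}\}$, $\{x_1=0\}$ followed by ``gluing'' --- is not an argument: even if each facet's restricted level set were connected for the induced weighted map, the full level set is a union of an interior part and several facet parts, and a union of connected sets is connected only if they chain together; nothing you write rules out a component of $g^{-1}(c_{d-1})$ that avoids a given facet, or two components meeting disjoint collections of facets. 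This is precisely where the paper substitutes a concrete mechanism: Proposition \ref{prop2} computes the signature $(a,b)$ of the Hessian at each boundary critical point, the local chart $\phi_m$ puts a neighborhood in the corner model $\R^{a}_{\geq0}\times\R^{b}_{\geq0}$, and Givental's Lemma \ref{lemma:giv} then says the only topology changes during the sweep are births and deaths of simplex components (trivial change at ``saddles''); connectedness of every level set then follows because a second component would have to die without ever joining the rest, contradicting the induction hypothesis that $\Breve{V}_{d-2}^\beta(c')$ is connected. Your proposal contains no analogue of this step.

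A second omission: the lemma is claimed for \emph{all} $c\in\R^{d-1}$, but your critical-point analysis (and the Morse-theoretic sweep implicit in it) is only valid when $V_{d-2}^\beta(c')$ is smooth, i.e.\ for generic $c'$. The paper devotes the last part of its proof to the singular fibers, using Remark \ref{rem:rank} to split into a transversal case (handled by taking limits of connected nearby fibers) and a zero-dimensional case (handled by Kostov's Lemma \ref{lemma:kostov}, which upgrades ``all but finitely many level sets connected'' to ``all connected''). Your base case and the generic sweep leave these non-generic values of $c$ untreated, so even granting the gluing step the argument would not prove the statement as claimed.
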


Kostov’s Theorem 1.13 \cite{kostov1989geometric} has the equivalent statement, for the image of the first \( d \) power sum polynomials, possibly including negative coordinates. It is rooted in the works of  Kostov, Givental, and Arnold. We adapt the argument to our more general setting and show that the original proof applies to general power sums when restricting to the non-negative orthant. Note, that  Rosenblum's PhD thesis \cite{rosenblum2023topology}, in Chapter 5, Section 2  offers another reference for the case involving even power sums.
Following the original proofs, our argument builds on the following essential ingredients. The first of these is due to Givental. His result \cite[Lemma 2]{givental1987moments} will be applied in the situation of smooth Vandermonde varieties where the function \( p_{\beta_d} \) serves as a Morse function.  It can be used to reconstruct the topology of a level set near critical points. 
\begin{lemma}[Givental]\label{lemma:giv}
The reconstruction of the topology of a level set of a function \( f \) on \( \mathbb{R}_{\geq 0}^a \times \mathbb{R}_{\geq 0}^b \) near the critical point \( (0, 0) \) with non-degenerate Hessian \( F = Q_+ + Q_- \) is trivial if \( a, b > 0 \) and consists of the birth (or death) of a simplex otherwise.
\end{lemma}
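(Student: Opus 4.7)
The plan is to reduce $f$ to a quadratic normal form by a Morse Lemma adapted to manifolds with corners, and then analyze the level sets of the model quadratic form on the orthant directly. The dichotomy between $a,b>0$ and the vanishing of one of them then becomes visible at the level of the explicit model.

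First, I would invoke a Morse Lemma with corners to obtain a local diffeomorphism $\phi$ of a neighborhood of $(0,0)$ in $\R^{a+b}$ preserving the stratification by faces of $\R^a_{\geq 0}\times\R^b_{\geq 0}$ such that $f\circ\phi = f(0,0)+ \|x\|^2 - \|y\|^2$. This follows the classical Morse Lemma argument: interpolate $f_s := (1-s)Q + s(f - f(0,0))$ between the target quadratic $Q(x,y) = \|x\|^2 - \|y\|^2$ and $f - f(0,0)$, and integrate the Moser-type vector field $v_s$ solving $df_s(v_s) = Q - (f - f(0,0))$. The extra observation needed in the corner setting is that the vector field $v_s$ vanishes on each coordinate hyperplane through the critical point since both $f - f(0,0)$ and $Q$ do, so the flow of $v_s$ preserves every face of $\R^a_{\geq 0}\times\R^b_{\geq 0}$.

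With $f$ in normal form, set $L_t := f^{-1}(t) \cap B_\epsilon \cap (\R^a_{\geq 0} \times \R^b_{\geq 0})$ for small $|t|$. If $b = 0$, then $L_t = \{x\in\R^a_{\geq 0} : \|x\|^2 = t\}\cap B_\epsilon$ is a spherical simplex of dimension $a-1$ for $t>0$, a single point for $t=0$, and empty for $t<0$, which is exactly the birth (or death) of an $(a-1)$-simplex as $t$ crosses the critical value; the case $a = 0$ is symmetric with the sign of $t$ reversed. If instead $a,b > 0$, I would parameterize $L_t$ using polar-type coordinates: for $(x,y)\in L_t$ with $x,y\ne 0$ set $r := \sqrt{\|x\|^2+\|y\|^2}\in [\sqrt{|t|},\epsilon]$, $u := x/\|x\|$ (ranging over the spherical simplex in $\R^a_{\geq 0}$), and $v := y/\|y\|$ (ranging over the spherical simplex in $\R^b_{\geq 0}$), so that $\|x\| = \sqrt{(r^2+t)/2}$ and $\|y\| = \sqrt{(r^2-t)/2}$ are determined by $r$ and $t$. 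This exhibits $L_t$ as a quotient of $[\sqrt{|t|},\epsilon]$ times the product of the two spherical simplices, in which one spherical factor collapses at the lower endpoint. A rescaling of $r$ together with a continuous unfolding of the collapsed factor produces a homeomorphism $L_t\cong L_{t'}$ depending continuously on $t$, so the reconstruction is trivial.

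The main obstacle is the Morse Lemma with corners in the first step: one must verify that the Moser vector field $v_s$ can be chosen tangent to every boundary face simultaneously, which uses that both $f - f(0,0)$ and $Q$ are quadratic near the corner critical point so that $v_s$ vanishes to first order along each coordinate hyperplane. A secondary obstacle is the balanced case, where one must check that the homeomorphism $L_t\cong L_{t'}$ is well-defined across the collapsed stratum where $\|x\| = 0$ (for $t<0$) or $\|y\| = 0$ (for $t>0$), i.e.\ that the retractions of the two factors glue continuously along the degenerate end.
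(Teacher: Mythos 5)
The paper does not actually prove this statement: it is quoted verbatim as Givental's result (\cite[Lemma 2]{givental1987moments}) and used as a black box, so your proposal is an attempt to supply a proof of a cited lemma rather than to reproduce an argument in the paper. Judged on its own merits, the second half of your argument (the explicit analysis of the level sets of the model quadratic on the orthant, giving a simplex birth/death when $a=0$ or $b=0$ and constant topology when $a,b>0$) is essentially fine. The problem is the first half, which you yourself flag as the main obstacle: the ``Morse lemma with corners'' is not justified, and the justification you give is incorrect. You claim the Moser field $v_s$ is tangent to the faces ``since both $f-f(0,0)$ and $Q$ vanish on each coordinate hyperplane through the critical point''; neither function vanishes on a coordinate hyperplane (e.g.\ $Q=\|x\|^2-\|y\|^2$ is nonzero on $\{x_1=0\}$ when $a\geq 2$), and what is actually needed is that the $i$-th \emph{component} of $v_s$ vanish on $\{x_i=0\}$, a divisibility condition that does not follow from your premise and is exactly the delicate point.

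There is also a structural obstruction to the normal form you aim for. Any diffeomorphism of a neighborhood of the corner that preserves $\R^a_{\geq 0}\times\R^b_{\geq 0}$ has derivative at $(0,0)$ preserving the tangent cone, hence given by a monomial (permutation times positive diagonal) matrix; consequently $f$ can be brought to $\|x\|^2-\|y\|^2$ by a face-preserving change of coordinates only if its Hessian is already diagonal, whereas the hypothesis only gives a block splitting $F=Q_++Q_-$ with $Q_\pm$ definite on the two factors. So as stated, the reduction step is false in general, and even in the diagonal case (which is what occurs in the paper's application, cf.\ the Hessian computation in Proposition \ref{prop2}) it still requires a genuine corner-adapted argument. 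Givental's own route avoids this entirely: the corner is realized as the quotient of a smooth neighborhood by a finite reflection/permutation group, the Morse lemma is performed equivariantly upstairs (averaging the usual construction over the group), and the normal form then descends to the orthant with its face structure preserved automatically. Either adopt that equivariant argument, or prove a corner Morse lemma with an honest tangency argument for the Moser field (with the target being the actual quadratic part $Q_++Q_-$, not the standard form); as written, the central step of your proof has a gap.
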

With applying Proposition \ref{prop2} in mind, we have that if both $a, b > 0$ then the topology of the level set does not change when passing through a critical point. If either $a$ or $b$ is $0$ then the level set either gains a connected component, which is homeomorphic to a simplex, or a connected component of the level set disappears, without joining the other connected components. 
As Givental notes \cite{givental1987moments}  the lemma almost directly gives the result in the smooth case. We will expand his argumentation below for the convenience of the reader following the exposition of \cite{meguerditchian1992theorem}.

As a second central ingredient that will be used in the case of non-smooth varieties, we will employ  Kostov’s Lemma \cite[Lemma 2.6]{kostov1989geometric}, which guarantees connectedness of level sets when all but finitely many of them are connected.

\begin{lemma}[Kostov]\label{lemma:kostov}
Let \( \Omega \subset \mathbb{R}^n \) be a connected compact set and \( f : \Omega \to \mathbb{R} \) be continuous. If all but finitely many sets \( \Omega \cap \{ f = \text{const} \} \) are connected, then all of them are connected.
\end{lemma}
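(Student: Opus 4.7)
The plan is to argue by contradiction. Suppose $D := \{c \in \R : L_c := \Omega \cap \{f=c\} \text{ is disconnected}\}$ is finite and nonempty, pick $c_0 \in D$, and derive a disconnection of $\Omega$. Let $A$ be a single connected component of $L_{c_0}$ and $B := L_{c_0} \setminus A$; both are compact, disjoint, and nonempty. Because $\Omega \subset \R^n$ is a compact metric space, choose open sets $U \supset A$ and $V \supset B$ in $\Omega$ with $\overline{U} \cap \overline{V} = \emptyset$ (take small enough open neighborhoods of $A$ and $B$ measured by the positive distance $d(A,B)$). The complement $K := \Omega \setminus (U \cup V)$ is compact and disjoint from $L_{c_0}$, so $f(K)$ is compact and avoids $c_0$; combined with the finiteness of $D$, I can choose $\delta > 0$ so that $f(K) \cap (c_0-\delta, c_0+\delta) = \emptyset$ and $D \cap (c_0-\delta, c_0+\delta) = \{c_0\}$. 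Thus every level set $L_c$ with $c \in (c_0-\delta, c_0+\delta)$ lies inside $U \cup V$, and for $c \neq c_0$ in this interval, $L_c$ is connected, hence contained entirely in $U$ or in $V$.

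Define $J_U := \{c \in (c_0-\delta, c_0+\delta) \setminus \{c_0\} : \emptyset \neq L_c \subset U\}$ and $J_V$ analogously. A standard compactness argument proves $J_U$ and $J_V$ are open: if $c_n \to c \in J_U$ and $x_n \in L_{c_n} \cap V$, then any limit point $x_*$ of $(x_n)$ lies in $L_c \cap \overline{V}$, which is empty because $L_c \subset U$ and $\overline{U} \cap \overline{V} = \emptyset$. Assume $c_0$ is interior to $f(\Omega)$ so that $L_c \neq \emptyset$ throughout $(c_0-\delta, c_0+\delta)$ (the extremum case uses a one-sided interval and is otherwise identical). Then $J_U$ and $J_V$ partition $(c_0-\delta, c_0+\delta) \setminus \{c_0\}$, so each of the connected pieces $(c_0-\delta, c_0)$ and $(c_0, c_0+\delta)$ lies entirely in one of them. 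If one of $J_U, J_V$ is globally empty, say $J_V$, then $L_c \cap V = \emptyset$ for every $c \in (c_0-\delta, c_0+\delta) \setminus \{c_0\}$, which gives $V \cap f^{-1}((c_0-\delta, c_0+\delta)) = V \cap L_{c_0} = B$. The left side is the intersection of two open subsets of $\Omega$, so $B$ is open in $\Omega$; since $B$ is also closed, $B$ is a nonempty proper clopen subset of $\Omega$ (as $A \neq \emptyset$), contradicting connectedness.

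The main obstacle is the remaining ``crossed'' configuration, e.g.\ $J_U = (c_0-\delta, c_0)$ and $J_V = (c_0, c_0+\delta)$, where $A$ behaves like a strict local maximum and $B$ like a strict local minimum. The key step here is the asymmetric separation
\begin{equation*}
\overline{\{f > c_0\}} \cap L_{c_0} \subset B \qquad \text{and} \qquad \overline{\{f < c_0\}} \cap L_{c_0} \subset A.
\end{equation*}
Indeed, any sequence $x_n \in \{f > c_0\}$ with $x_n \to x_* \in L_{c_0}$ eventually has $f(x_n) \in (c_0, c_0+\delta)$, hence $x_n \in f^{-1}((c_0,c_0+\delta)) \subset V$ by the assumption on $J_V$, so $x_* \in \overline{V}$, and $\overline{U} \cap \overline{V} = \emptyset$ excludes $A$; the second inclusion is symmetric. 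Now define the closed sets
\begin{equation*}
 Y := \overline{\{f < c_0\}} \cup A \qquad \text{and} \qquad Z := \overline{\{f > c_0\}} \cup B.
\end{equation*}
Each of the four pieces of $Y \cap Z$ reduces to a subset of $A \cap B = \emptyset$ (using the separation above, $\overline{\{f<c_0\}} \cap \overline{\{f>c_0\}} \subset L_{c_0}$, and $A,B \subset L_{c_0}$), so $Y \cap Z = \emptyset$. On the other hand $\Omega = \{f < c_0\} \cup L_{c_0} \cup \{f > c_0\}$ and $L_{c_0} = A \cup B$ together give $Y \cup Z = \Omega$. Thus $\Omega = Y \sqcup Z$ is a disjoint union of two nonempty closed sets, contradicting connectedness of $\Omega$ and forcing $D = \emptyset$.
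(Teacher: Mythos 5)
The paper never proves this lemma: it is quoted from Kostov \cite{kostov1989geometric} and used as a black box, so there is no internal proof to compare against. Your blind argument is a self-contained point-set-topology proof and is correct in substance: shrinking to an interval $(c_0-\delta,c_0+\delta)$ that avoids $f(K)$ and all other disconnected values, the openness of $J_U$ and $J_V$, the clopen contradiction $V\cap f^{-1}((c_0-\delta,c_0+\delta))=B$ when one of them is empty, and, in the crossed configuration, the separation $\overline{\{f>c_0\}}\cap L_{c_0}\subset B$, $\overline{\{f<c_0\}}\cap L_{c_0}\subset A$ leading to the disconnection $\Omega=\bigl(\overline{\{f<c_0\}}\cup A\bigr)\sqcup\bigl(\overline{\{f>c_0\}}\cup B\bigr)$ all check out, and the extremal case does reduce to the one-sided clopen branch as you claim. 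This gives the paper something it currently lacks, namely an elementary proof that would make the appeal to Kostov self-contained.

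One step needs repair. Taking $A$ to be a connected component of $L_{c_0}$ and $B:=L_{c_0}\setminus A$ does not guarantee that $B$ is closed: in a compact set the complement of a component need not be closed (consider $\{0\}\cup\{1/n : n\geq 1\}$ with $A=\{0\}$), and you use compactness of $B$ both to get the positive distance $d(A,B)$ defining $U$ and $V$ and in the final clopen contradiction. The fix is immediate: since $L_{c_0}$ is a disconnected compact set, write $L_{c_0}=A\sqcup B$ with $A$, $B$ nonempty and relatively clopen, hence both compact; nothing later in your argument uses that $A$ is a full component, so the rest goes through verbatim.
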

With these preparations, we now give the proof of Lemma \ref{lem:interval}.
\begin{proof}[Proof of Lemma \ref{lem:interval}]
For a fixed  $c=(1,c_1,\dots,c_{d-1})\in\R^{d}$ the Vandermonde variety \(V^{\beta}_{d-1}(c)\) is given as the intersection of $d$ level sets of power sum polynomials. The proof is done by induction on $d$. 
The base case $d=1$ holds since $\mathbb{S}^{n-1}\cap \mathcal{W}_n$ is connected. To make the induction step, we define  for $c=(1,c_1,\dots,c_{d-1})\in\R^{d}$ the truncated sequence $c'=(1,c_1,\ldots,c_{d-2})\in\R^{d-1}$. As the induction hypothesis we assume that \(\Breve{V}^{\beta}_{d-2}(c')\) is connected, and we show that all level sets of $p_{\beta_{d-1}}$ on  \(\Breve{V}^{\beta}_{d-2}(c')\) are connected. This implies that \(\Breve{V}^{\beta}_{d-1}(c)\) is connected as well. We  first consider the situation in which  \(V^{\beta}_{d-2}(c')\) is smooth. By Sard's Lemma, this is the case for almost all $c\in\R^d$.  We note that Proposition \ref{prop2} gives us: 
\begin{enumerate}
\item If $V^{\beta}_{d-2}(c')$  is smooth, then it intersects the walls of $\mathcal{W}_n$ transversely.
 \item The critical points of $p_{\beta_{d-1}}$ on $V^{\beta}_{d-2}(c)$  are exactly the isolated points of intersection of $V^{\beta}_{d-2}$ with the $d-2$ dimensional faces of $\mathcal{W}_n$.
\end{enumerate}

Now we sweep through $\Breve{V}^{\beta}_{d-2}(c')$ with level sets of $p_{\beta_{d-1}}$. Since $\Breve{V}^{\beta}_{d-2}(c')$ is compact, for sufficiently small (and sufficiently large) $a$ the level sets $p_{\beta_{d-1}}=a$ are empty. By general Morse theory, the topology of the level sets does not change between critical points of $p_{\beta_{d-1}}$. If we can apply Lemma \ref{lemma:giv}, then at all saddle points of $p_{\beta_{d-1}}$ the topology of the level sets also does not change; at local maxima or minima the topology of the level sets changes by gaining or losing a single connected component. However, if the level set $p_{\beta_{d-1}}=a$ has at least two connected components for some $a$, then one of the components will eventually disappear (without joining the other connected components) and this contradicts connectedness of $\Breve{V}^{\beta}_{d-2}(c')$, which we have by the induction assumption. Therefore, the level sets $p_{\beta_{d-1}}=a$ are connected for all $a$. As the level sets of $p_{\beta_{d-1}}$ give us Vandermonde varieties $\Breve{V}^{\beta}_{d-2}(c)$ this finishes the proof in the smooth case, provided that we can apply Lemma \ref{lemma:giv}. We now show that this can be done.
 
Let $z \in \Breve{V}_{d-2}^\beta(c)$ be a point with $d-2$ distinct positive coordinates and multiplicity vector $m=(m_0,m_1,\ldots,m_{d-2})\in \N^{d-1}$. Consider the function  
$$\phi_m: \mathbb{R}^n \longrightarrow \mathbb{R}^n, \, x \mapsto (x_{1},\ldots,x_{m_0},x_{m_0+2}-x_{m_0+1},\ldots,p_{\beta_1}(x),\ldots,p_{\beta_{d-2}}(x))$$ where we keep the first $m_0$ coordinates of $x$, then take consecutive differences based on the multiplicity vector $m$, and leave the remaining $d-2$ coordinates for the power sums $p_{\beta_1},\dots,p_{\beta_{d-2}}$. Note that any point in $\Breve{V}_{d-2}^\beta(c)$ with multiplicity vector $m$ is mapped to $(0,\ldots,0,c_1,\ldots,c_{d-2})$.
 We observe that $\phi_m$ is a local diffeomorphism at  $z$ with $ \phi_m(z) = (0, \dots, 0, c_1, \dots, c_{d-2})$, and $\phi_m$ sends a neighborhood of $z$  in $\Breve{V}^{\beta}_{d-2}(c')$  onto a neighborhood of  $(0, \dots, 0, c_1, \dots, c_{d-2})$  in  $\mathbb{R}^{a+b}_{\ge0} \times \{(c_1, \dots, c_{d-2})\}$.
With the help of this transformation, we can apply Lemma \ref{lemma:giv}.
It remains to examine the case when $\Breve{V}^{\beta}_{d-1}(c)$ is not smooth and thus the fiber of the map $\nu_{n,\beta}$ over $c$ contains a non-regular point. Here the proof by Kostov \cite{kostov1989geometric} can be followed directly: Assuming that  $V^{\beta}_{d-2}(c)$ is of dimension $n-k$ for some $k$. Consider an $\ell$-dimensional face $F$  of $\mathcal{W}_n$. From  Remark \ref{rem:rank} we find that the rank of the Jacobian of $(p_{2},p_{\beta_1},\ldots,p_{\beta_{d-1}})$ on the relative interior of $F$ is $\ell$ if $\ell <k$ and $d$ otherwise. Thus in the first case, the intersection is zero-dimensional and it is transversal in the second case. When the intersection is transversal, we have that $\Breve{V}^{\beta}_{d-1}(c)$ is connected. Indeed, by the proof in the smooth case above, we know that there is a sequence of $\hat{c}$ converging to $c$ for which $\Breve{V}^{\beta}_{d-1}(\hat{c})$ is connected and transversality allows us to conclude connectedness in the limit. In the first case, the intersection is reduced to a finite number of points and we use Lemma \ref{lemma:kostov} to conclude.
\end{proof}

Our next goal is to understand the fibers of the projection of $\Pi_{n,\alpha}$ on the first $d-2$ coordinates. For an increasing sequence of positive integers $\alpha=(1,\alpha_1,\dots,\alpha_{d-1})$  define $\alpha'=(1,\alpha_1,\dots,\alpha_{d-2})$. We now show the following.  
\begin{proposition} \label{prop: projection of Vandermonde cell}
Let $n \geq d \geq 2$ and consider the projection $\pi: \Pi_{n,\alpha}\rightarrow \Pi_{n,\alpha'}$ onto the first $d-2$ coordinates. Then the pre-image of any point on the boundary of $\Pi_{n,\alpha'}$ is a single point, and the pre-image of any point in the interior of $\Pi_{n,\alpha'}$ is a non-degenerate interval.
\end{proposition}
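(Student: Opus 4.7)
The fiber over $c'$ is controlled by the Weyl-chamber slice: setting $K_{c'} := \nu_{n,\alpha'}^{-1}(c') \cap W_{n-1}$, the $S_n$-symmetry of the Vandermonde maps gives $\pi^{-1}(c') = \{c'\} \times p_{\alpha_{d-1}}(K_{c'})$, so the problem reduces to describing the range of $p_{\alpha_{d-1}}$ on $K_{c'}$. My first step is to argue that this range is always a closed interval. The set $K_{c'}$ is compact, being closed in the compact set $W_{n-1}$; by the extension of Lemma \ref{lem:interval} to arbitrary strictly increasing positive-integer exponents (justified in Remark \ref{rem:gen}), $K_{c'}$ is also connected. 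Consequently $p_{\alpha_{d-1}}(K_{c'})$ is a closed interval $[m(c'), M(c')]$, which degenerates to a single point exactly when $p_{\alpha_{d-1}}$ is constant on $K_{c'}$.

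If $c' \in \bd \Pi_{n,\alpha'}$, applying Theorem \ref{thm:1} to the Vandermonde map $\nu_{n,\alpha'}$ shows that $K_{c'}$ is a single point, so the fiber collapses. If instead $c' \in \inti \Pi_{n,\alpha'}$, my plan is to produce $x_{\ast} \in K_{c'} \cap \inti W_{n-1}$. At such a point the multiplicity length is $n \geq d$, and the Lagrange-multiplier / Descartes'-rule-of-signs argument from the proof of Lemma \ref{lem:critical points} rules out any relation expressing $\nabla p_{\alpha_{d-1}}(x_{\ast})$ as a linear combination of $\nabla p_1(x_{\ast}), \nabla p_{\alpha_1}(x_{\ast}), \ldots, \nabla p_{\alpha_{d-2}}(x_{\ast})$: such a relation would force the $n \geq d$ distinct positive entries of $x_{\ast}$ to be roots of a univariate polynomial supported on only $d-1$ monomials. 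Since $x_{\ast}$ is a smooth point of the Vandermonde variety by (the analogue of) Lemma \ref{lem:d-1 points}, $p_{\alpha_{d-1}}$ is not locally constant on $K_{c'}$ near $x_{\ast}$, giving $m(c') < M(c')$.

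The main obstacle is therefore the existence of $x_{\ast}$. Here I would first observe that $U := \nu_{n,\alpha'}(\inti W_{n-1})$ is open in $\Pi_{n,\alpha'}$: at any point of $\inti W_{n-1}$ the Jacobian of $\nu_{n,\alpha'}|_{\Delta_{n-1}}$ has full rank $d-2$ onto the hyperplane $\{p_1 = 1\}$ by Remark \ref{rem:rank}, so $\nu_{n,\alpha'}|_{\inti W_{n-1}}$ is a submersion. Hence $\Pi_{n,\alpha'} \setminus U$ is relatively closed, and any point in it has all preimages on $\bd W_{n-1}$. A second application of Remark \ref{rem:rank} shows that only faces of $W_{n-1}$ of dimension $\geq d-1$ can contribute to an open subset of $\Pi_{n,\alpha'}$; the two resulting face types, $\{x_i = 0\}$ and $\{x_i = x_{i+1}\}$, give, after the obvious identifications, Weyl chambers in $n-1$ variables for an unweighted or weighted Vandermonde map, whose boundary is again governed by the type-$(1)/(2)$ description of Theorem \ref{thm:1}. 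Induction on $n$, with base case $n = d$ (where every face of $\bd W_{n-1}$ has dimension $\leq d-2$ and its image has dimension strictly less than $d-2$, so cannot cover any open subset of $\Pi_{n,\alpha'}$), then forces $\Pi_{n,\alpha'} \setminus U \subseteq \bd \Pi_{n,\alpha'}$, which produces the required interior preimage. The delicate technical point of the inductive step is the uniform treatment of the weighted Vandermonde cells that arise on identification-type faces; here I would use the fact noted in the introduction that their boundary geometry is combinatorially identical to the unweighted case, so that Theorem \ref{thm:1} and Remark \ref{rem:gen} apply to them verbatim.
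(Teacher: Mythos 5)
Your reduction of the fiber of $\pi$ to the range of $p_{\alpha_{d-1}}$ on the compact connected set $K_{c'}$ is fine, the boundary direction is fine (though you should cite Lemma \ref{lem: unique pre-image Simplex} rather than Theorem \ref{thm:1}: in the paper the proof of Theorem \ref{thm:1} uses this very Proposition, so invoking it here --- and again later for the facet cells --- is circular; only the uniqueness statement, which is Lemma \ref{lem: unique pre-image Simplex}, is available at this stage), and the non-degeneracy argument at a fiber point of multiplicity length at least $d$ is correct. The genuine gap is exactly at the step you call the main obstacle: producing such a point $x_{\ast}$ when $c'\in\inti\Pi_{n,\alpha'}$. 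Your base case $n=d$ is false as stated: a facet of $W_{d-1}$ has dimension $d-2$, its relative interior consists of points of multiplicity length $d-1$, and there (by the same generalized Vandermonde/total positivity argument behind Remark \ref{rem:rank} and Lemma \ref{lem:homeom}) the map $(p_{\alpha_1},\ldots,p_{\alpha_{d-2}})$ restricted to the facet has rank $d-2$, so the facet's image has nonempty interior in $\R^{d-2}$; it does cover open subsets of $\Pi_{d,\alpha'}$ (already for $d=3$, $n=3$ the facet $\{x_1=x_2\}$ maps onto all of $[\tfrac13,1]$). So the dimension count that anchors your induction does not hold.

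Moreover, the inductive step does not address the only hard case. If $c'$ has no pre-image of multiplicity length $\geq d$, then all pre-images lie in finitely many faces of dimension $\leq d-2$, on each of which the map is injective by Lemma \ref{lem:homeom}; combined with connectedness (Lemma \ref{lem:interval}) the fiber $K_{c'}$ is then a single point of multiplicity length $\leq d-1$. The whole content of the interior direction is to show that such a point must map to $\bd\Pi_{n,\alpha'}$, and nothing in your sketch does this: knowing that $c'$ lies in the image of a facet (a weighted Vandermonde cell in $n-1$ variables) says nothing, since those images are full-dimensional and overlap $\inti\Pi_{n,\alpha'}$, and if $c'$ happens to lie on the boundary of every facet cell containing it, no conclusion about $\bd\Pi_{n,\alpha'}$ follows. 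This is precisely the situation the paper resolves by a different device: approximate $\bar c$ by generic interior points $c_i$, whose $\pi$-fibers are non-degenerate intervals with endpoints coming from type (1) and type (2) pre-images of multiplicity length $d-1$ (Lemma \ref{lem:critical points}, Corollary \ref{cor:pap3 1}, Lemma \ref{lem: unique pre-image Simplex}); passing to a subsequence with fixed multiplicity vectors and applying Lemma \ref{lem: two multiplicity vector sequences}, the common limit is also a limit of type (1) or (2) points of multiplicity length $d-2$, whence $\bar c\in\bd\Pi_{n,\alpha'}$ by Corollary \ref{cor:pap3 1} --- a contradiction. Your proposal never engages with this mechanism (or any substitute for it), so the interior half of the statement remains unproven; as a smaller point, even if your induction were repaired, a pre-image in the relative interior of a facet is not in $\inti W_{n-1}$, though multiplicity length $n-1\geq d$ would still suffice for your rank argument when $n\geq d+1$.
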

First we establish some preliminary lemmas.

\begin{lemma}\label{lem: two multiplicity vector sequences}
Let $n \geq d$, $z \in  W_{n-1}$ with multiplicity vector $(e_0,e_1,\ldots,e_k)$ be the limit of two sequences of points in $W_{n-1}$ with multiplicity vectors $m$/$m'$ of types (1)/(2) and of multiplicity length $d-1$. Then $z$ is also the limit of a sequence of points of type (1) or (2) of multiplicity length $d-2$.
\end{lemma}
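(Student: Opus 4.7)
The plan is to reduce the geometric hypothesis to a combinatorial one about block decompositions of $z$'s multiplicity vector, and then explicitly construct the length-$(d-2)$ approximating sequence either by zeroing a small coordinate or by merging two coordinates that already collapse together in the limit.

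First, I would record the combinatorial reformulation: a sequence with fixed multiplicity vector $m^{(t)}$ of type $(t)$ and multiplicity length $d-1$ converges to $z$ iff there is a partition of the $d-1$ positive positions of $m^{(t)}$ into consecutive blocks $B_0^{(t)}, B_1^{(t)}, \ldots, B_k^{(t)}$ (with $B_0^{(t)}$ possibly empty, accounting for the coordinates tending to $0$) of lengths $\ell_i^{(t)}$ summing to $d-1$, such that the entries of $m^{(t)}$ in block $B_i^{(t)}$ sum to $e_i$ (and $e_0 = m_0^{(t)} + \sum_{j \in B_0^{(t)}} m_j^{(t)}$). Since type~$(1)$ patterns have $1$'s at odd positions and free entries at even positions, and conversely for type~$(2)$, whenever $\ell_i^{(t)} = 1$ at a position where $m^{(t)}$ carries a fixed~$1$, the block sum $e_i$ is forced to equal $1$.

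I would then split into two cases. If $e_0 > 0$, then $m_0^{(2)} = 0$ implies that the smallest positive coordinate $y_1'^{(n)}$ of $y^{(n)}$ satisfies $y_1'^{(n)} \to 0$. Setting those $m_1^{(2)}$ coordinates to zero and renormalizing to $\Delta_{n-1}$ produces a sequence $\tilde y^{(n)}$ with multiplicity vector $(m_1^{(2)};\, 1, m_3^{(2)}, 1, m_5^{(2)}, \ldots)$, which is of type $(1)$ and length $d-2$, and still converges to $z$ since $y_1'^{(n)} \to 0$.

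If $e_0 = 0$, then $\ell_0^{(1)} = \ell_0^{(2)} = 0$, and $n \geq d$ combined with $\sum \ell_i^{(t)} = d-1$ forces $k \leq d-2$, so there is a least $i^* \geq 1$ with $\max(\ell_{i^*}^{(1)}, \ell_{i^*}^{(2)}) \geq 2$. For every $i < i^*$ one has $\ell_i^{(1)} = \ell_i^{(2)} = 1$ and $s_i = i$, and the forced-$1$ constraints from the two types (acting at opposite parities of $i$) together yield $e_i = 1$ for all such $i$, and moreover $a_j = 1$ (resp.\ $b_j = 1$) at every index $j$ corresponding to a position below $i^*$. Checking the parity of $i^*$ against both types rules out the possibility that $\ell_{i^*}^{(1)} = 1$ when $i^*$ is odd or that $\ell_{i^*}^{(2)} = 1$ when $i^*$ is even. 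I would then merge the two consecutive coordinates $y_{i^*}^{(n)}$ and $y_{i^*+1}^{(n)}$ of $x^{(n)}$ if $i^*$ is odd, or the analogous coordinates of $y^{(n)}$ if $i^*$ is even: these lie in the same block $B_{i^*}$ of the relevant decomposition and therefore both converge to $z_{i^*}$, so replacing one by the other and renormalizing to $\Delta_{n-1}$ gives a length-$(d-2)$ sequence still converging to $z$. A direct computation using the forced values $a_j = 1$ (resp.\ $b_j = 1$) for the indices below $i^*$ shows that the resulting multiplicity vector matches exactly a type $(2)$ pattern when $i^*$ is odd, and a type $(1)$ pattern when $i^*$ is even.

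The main obstacle is the case analysis at $i^*$: verifying that the merged multiplicity pattern falls precisely into type $(1)$ or type $(2)$ of length $d-2$. This hinges on the simultaneous availability of both type~$(1)$ and type~$(2)$ decompositions, which is exactly what pins down the auxiliary values $a_j, b_j$ to equal~$1$ at all indices strictly below $i^*$; without both hypotheses, the merger would generically produce a multiplicity vector of neither type.
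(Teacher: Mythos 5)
Your proposal is correct and follows essentially the same route as the paper's proof: the same case split on whether $e_0>0$ (zero out the smallest block of the type-(2) sequence to get a type-(1) sequence of length $d-2$) or $e_0=0$ (locate the first index where two values must collapse, use parity to decide whether to merge two adjacent values of the type-(1) or the type-(2) sequence, yielding the opposite type of length $d-2$). The only differences are presentational: you define the critical index via block lengths rather than via the first $i$ with $e_i\geq 2$ (these coincide, since your own argument shows $e_i=m_i=m_i'=1$ below it), you renormalize to the simplex, and the notation $s_i,a_j,b_j$ and the assertion $k\leq d-2$ are left slightly informal but are immediate from your block setup.
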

\begin{proof}
Suppose that $z$ is the limit of two sequences of points $x^{(i)}$/$y^{(i)}$ with multiplicity vectors $m$/$m'$ of types (1)/(2) which have both multiplicity length $d-1$.
We proceed by a case distinction. 
First, we assume that $e_0  \geq 1$. Then since $y_1^{(i)}>0$ (as $y^{(i)}$ is of type (2)) we have that $z$ is also the limit of the sequence 
\[
\hat{x}^{(i)}:=(\underbrace{0,\ldots,0}_{\# = m_1'},y_2^{(i)},\ldots,y_{d-1}^{(i)})
\]
with a fixed multiplicity vector of type (1) and of multiplicity length $d-2$.

Second, we suppose that $e_0=0$, i.e. $z_1 > 0$. Let $k$ be the minimal positive integer with $e_k \geq 2$ (which exists since $n \geq d$). Since $z$ is the limit of the sequences $x^{(i)}$/$y^{(i)}$ we must have $m_i=m_i'=1$ for all $1  \leq i < k$. If $k$ is even then, since $m'_k=1$, we have that $z$ is also the limit of the sequence 
\[\tilde{x}^{(i)}:= (y_1^{(i)},\ldots,y_{k-1}^{(i)},\underbrace{y_k^{(i)},\ldots,y_k^{(i)}}_{\# = m_k'+m_{k+1}'},y_{k+2}^{(i)},\ldots,y_{d-1}^{(i)})\]
with multiplicity vector $(1,\ldots,1,m_k'+m_{k+1}',m_{k+2}',\ldots,m_{d-1}')$ of length $d-2$ and of type (1).   
On the other hand, if $k$ is odd then since $m_k=1$ we have that $z$ is also the limit of the sequence 
\[\tilde{y}^{(i)}:= (x_1^{(i)},\ldots,x_{k-1}^{(i)},\underbrace{x_k^{(i)},\ldots,x_k^{(i)}}_{\# = m_k+m_{k+1}},x_{k+2}^{(i)},\ldots,x_{d-1}^{(i)})\]
with multiplicity vector $(1,\ldots,1,m_k+m_{k+1},m_{k+2},\ldots,m_{d-1})$ of length $d-2$ and of type (2).   
\end{proof}

\begin{lemma} \label{lem:homeom}
Let $K \subset \mathcal{W}_d$ be a full dimensional compact polyhedron. For positive weights $w_1,\ldots,w_d > 0$ and real exponents $0 < \gamma_1 < \ldots < \gamma_d$ the map \[ \nu_{w,\gamma} : K \to \nu_{w,\gamma}(K),\, x \mapsto \left(\sum_{i=1}^d w_ix_i^{\gamma_1},\ldots,\sum_{i=1}^d w_ix_i^{\gamma_d}\right)\] is a homeomorphism and a diffeomorphism when restricted to the relative interior of any face of $K$.
\end{lemma}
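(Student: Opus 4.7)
The plan is to split the lemma into the two statements that $\nu_{w,\gamma}$ is injective on $K$ and that its differential restricted to the tangent space of each face of $K$ has full rank at relative interior points. Continuity of $\nu_{w,\gamma}$ on the compact Hausdorff space $K$ then automatically upgrades injectivity to a homeomorphism onto the image, and the inverse function theorem turns the full-rank statement into the claimed diffeomorphism on each face interior.

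For the rank statement, fix a face $F$ of dimension $k$ and a point $x \in \mathrm{relint}(F)$. Partition the coordinates of $x$ into blocks of equal value; any tangent vector to $F$ at $x$ is constant on each block and vanishes on any block whose value is fixed at $0$. Writing $d(\nu_{w,\gamma}|_F)_x$ in these block coordinates gives a matrix with $(j,l)$-entry $\gamma_j W_l z_l^{\gamma_j - 1}$, where $z_l > 0$ is the value of the $l$-th positive block and $W_l = \sum_{i\in\text{block }l} w_i > 0$ is the aggregated weight. Factoring out the positive diagonals $\mathrm{diag}(\gamma_j)$ and $\mathrm{diag}(W_l z_l^{\gamma_1-1})$ leaves a generalized Vandermonde matrix $(z_l^{\gamma_j-\gamma_1})_{j,l}$, which has full column rank for distinct positive $z_l$ by the classical positivity theorem, equivalently by the Descartes-rule argument used in Lemma \ref{lem:d-1 points}. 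Any additional linear constraints defining $F$ within its enclosing face of $\mathcal{W}_d$ cut out a $k$-dimensional subspace transverse to the kernel of this restricted differential, so the rank is exactly $k$.

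For injectivity, suppose $\nu_{w,\gamma}(x) = \nu_{w,\gamma}(y)$ for $x, y \in \mathcal{W}_d$, enumerate the distinct positive values appearing in $x$ or $y$ as $u_1 < \cdots < u_m$, and set $c_l = \sum_{i: x_i = u_l} w_i - \sum_{j: y_j = u_l} w_j$. The hypothesis rewrites as the exponential-sum identity $\sum_l c_l u_l^t = 0$ at $t = \gamma_1, \ldots, \gamma_d$. If all $c_l$ vanish then the atomic measures $\sum_i w_i \delta_{x_i}$ and $\sum_i w_i \delta_{y_i}$ agree, and uniqueness of the partition of $(w_1,\ldots,w_d)$ into consecutive blocks with prescribed sums---valid since the partial sums of the positive weights are strictly increasing---yields $x = y$. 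The main obstacle is forcing $c_l = 0$: a naive Descartes bound admits up to $2d - 1$ sign changes in $(c_l)$, so the proof must additionally exploit the sorted structure of $x, y \in \mathcal{W}_d$. An alternative, more topological, route proceeds by a degree argument: the positive Jacobian determinant on $\mathrm{int}(\mathcal{W}_d)$ makes the preimage count locally constant on regular values, and anchoring at boundary points where Theorem \ref{thm:1} already gives uniqueness pins the count to $1$ throughout.
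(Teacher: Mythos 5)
Your reduction of the face-wise diffeomorphism claim to a rank computation is sound: since a face $F$ of $K$ whose relative interior meets a face of $\mathcal{W}_d$ is contained in that face, tangent vectors to $F$ are block-constant and vanish on the zero block, and the aggregated-weight generalized Vandermonde matrix then has full column rank, so the differential is injective on $T_xF$ and the inverse function theorem applies. The genuine gap is the global injectivity, which is the heart of the lemma, and you have correctly diagnosed but not repaired it: the signed measure $\sum_l c_l\delta_{u_l}$ can have up to $2d$ atoms, so Descartes' rule (or the Chebyshev-system bound) permits it to have $d$ or more sign changes while annihilating the $d$ exponents $\gamma_1,\dots,\gamma_d$, and nothing in your argument extracts the needed extra sign-change restriction from the sorted structure and the fixed weight vector. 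Your fallback degree argument is also not viable as stated: it is unsubstantiated (one needs properness onto the image, control of the behavior at $\bd K$ where the Jacobian condition only concerns projections onto face-spanned subspaces, and connectedness of the set of regular values), and it is circular within the paper, because the uniqueness of boundary pre-images in Theorem \ref{thm:1} is itself derived from the present lemma through Lemma \ref{lem: unique pre-image Simplex} and Proposition \ref{prop: projection of Vandermonde cell}.

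What is missing is precisely a global univalence theorem that converts your local positivity data into injectivity on the whole polyhedron. The paper invokes \cite[Theorem~1]{mas1979homeomorphisms}: for a map on a compact polyhedron it suffices that, at every point and for every subspace $L$ spanned by a face containing that point, the composition of the Jacobian with the orthogonal projection onto $L$ has positive determinant. This condition is then verified exactly by the linear algebra you already have, upgraded from full rank to positivity: the Jacobian is (up to positive row and column scalings) a generalized Vandermonde matrix, hence totally positive on the interior of $\mathcal{W}_d$, and by the Gantmacher--Krein theorem a totally positive matrix $A$ satisfies $v^TAv>0$ for $v\neq 0$, which gives the positive determinant of every projected restriction. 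So your local analysis is essentially the right computation, but without a Mas-Colell/Gale--Nikaido-type global inverse theorem (or a complete degree-theoretic substitute proved from scratch) the homeomorphism claim does not follow from it.
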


\begin{proof}
First, we show that the map $\nu_{w,\gamma}$ is a homeomorphism. From \cite[Theorem~1]{mas1979homeomorphisms} it is equivalent to showing that for any $x \in K$ and any subspace $L \subset \R^d$, with $x \in L$, spanned by a face of $K$, the linear map that is the composition of the multiplication by the Jacobian matrix of $\nu_{w,\gamma}$ and the orthogonal projection onto $L$ must have a positive determinant at $x$.

The Jacobian matrix of $\nu_{w,\gamma}$ is (up to scaling of the rows) a generalized Vandermonde matrix with real exponents of the form $(x_i^{\gamma_j-1})_{1\leq i,j \leq d}$. It is known that generalized Vandermonde matrices are totally positive when the variables $0<x_1<\ldots < x_d$ are totally ordered and positive (see e.g. \cite[Example~0.1.4]{Fallat2011} for integer exponents and \cite[Section~4.1]{pinkus2010totally} for real exponents).
By the Gantmacher-Krein theorem \cite[Proposition 5.4]{pinkus2010totally} any totally positive matrix $A \in \R^{d \times d}$ has positive simple eigenvalues which implies $v^TAv>0$ for any non-zero vector $v \in \R^d$. Thus the same property holds for the projection of $A$ on any subspace of $\R^d$. 

Second, we show that $\nu_{w,\gamma}$ restricted to the relative interior of any face of $K$ is a diffeomorphism. 
The Jacobian determinant of $\nu_{w,\gamma}$ restricted to the relative interior of any face of $K$ is positive and thus non-singular (because the boundary of $\mathcal{W}_d$ is given by $x_i=0$ or $x_i=x_j$ and the Jacobian matrix of $\nu_{w,\gamma}$ is totally positive in the interior of $\mathcal{W}_d$). Hence, by the inverse function theorem, the map $\nu_{w,\gamma}$ is a local diffeomorphism, but since it is also a homeomorphism then it is a diffeomorphism.
\end{proof}

\begin{lemma}\label{lem: unique pre-image Simplex}
Any point in $\bd \Pi_{n,\alpha}$ has a unique pre-image, up to permutation, in $\Delta_{n-1}$.
\end{lemma}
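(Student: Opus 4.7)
The plan is to prove that the fiber
\[
\mathcal{F}_p := \nu_{n,\alpha}^{-1}(p) \cap W_{n-1}
\]
is a single point for every $p \in \bd \Pi_{n,\alpha}$, which is equivalent to the stated uniqueness up to permutation of coordinates in $\Delta_{n-1}$. I will extract this from three structural properties of $\mathcal{F}_p$: it is connected, every one of its elements has multiplicity length at most $d-1$, and $\nu_{n,\alpha}$ is injective when restricted to any single multiplicity stratum. Together these force $\mathcal{F}_p$ to be a finite, connected subset of the Hausdorff space $W_{n-1}$, and therefore a single point.

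Connectedness of $\mathcal{F}_p$ follows from Lemma \ref{lem:interval}. After the squaring substitution $x_i \mapsto x_i^2$ of Section \ref{subsection boundary of the vandermonde cell}, $\mathcal{F}_p$ corresponds to $V_d^\beta(1,p) \cap \mathcal{W}_n$ with $\beta = (2, 2\alpha_1, \ldots, 2\alpha_{d-1})$; applying Lemma \ref{lem:interval} to any extended sequence $(2, 2\alpha_1, \ldots, 2\alpha_{d-1}, \gamma)$ of length $d+1$ with $\gamma > 2\alpha_{d-1}$ then yields the connectedness (the extra dummy exponent only shifts the indexing). For the multiplicity-length bound, suppose $z \in W_{n-1}$ has multiplicity length $\ell \geq d$. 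Then $z$ lies in the relative interior of a face $F$ of $W_{n-1}$ of dimension $\ell - 1 \geq d-1$, and Remark \ref{rem:rank} gives that the rank of $\nu_{n,\alpha}|_F$ at $z$ equals $\min\{\ell-1, d-1\} = d-1$, which is the full output dimension. The inverse function theorem then forces $\nu_{n,\alpha}(z)$ to lie in the interior of $\Pi_{n,\alpha}$; by contrapositive, every element of $\mathcal{F}_p$ has multiplicity length at most $d-1$.

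For injectivity on each stratum, fix a multiplicity vector $m = (m_0, m_1, \ldots, m_\ell)$ with $\ell \leq d-1$; the corresponding closed stratum of $W_{n-1}$ is parametrized by $0 \leq y_1 \leq \cdots \leq y_\ell$ with $\sum_{i=1}^\ell m_i y_i = 1$, on which $\nu_{n,\alpha}$ takes the form $(y_1, \ldots, y_\ell) \mapsto (\sum_{i=1}^\ell m_i y_i^{\alpha_k})_{k=1}^{d-1}$. Enclosing the stratum inside a full-dimensional compact polyhedron $K \subset \mathcal{W}_\ell$ (for instance $\{0 \leq y_1 \leq \cdots \leq y_\ell \leq C\}$ for $C$ large enough) and selecting any $\ell$ of the $d-1$ exponents $\alpha_k$, Lemma \ref{lem:homeom} applied to $K$ with weights $m_1, \ldots, m_\ell$ and these $\ell$ exponents shows that the corresponding sub-projection of $\nu_{n,\alpha}$ is a homeomorphism on $K$, and hence $\nu_{n,\alpha}$ itself is injective on the stratum. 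Since $W_{n-1}$ has only finitely many multiplicity strata, $\mathcal{F}_p$ meets each in at most one point and is therefore finite; combined with connectedness this forces $|\mathcal{F}_p| = 1$. The main obstacle is the indexing mismatch that arises when applying Lemma \ref{lem:homeom} to strata of multiplicity length $\ell < d-1$, where there are fewer variables than exponents; this is overcome by passing to a sub-projection that matches the number of exponents to the number of variables.
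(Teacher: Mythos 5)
Your proof is correct and follows essentially the same route as the paper: connectedness of the fiber via Lemma \ref{lem:interval}, the bound of at most $d-1$ distinct positive coordinates for boundary fibers via the rank/smoothness argument and the inverse function theorem, injectivity on each of the finitely many multiplicity strata via Lemma \ref{lem:homeom}, and then finiteness plus connectedness forcing a single point. Your extra care with the indexing (the dummy exponent for Lemma \ref{lem:interval} and enclosing the stratum in a full-dimensional polyhedron $K$) only tightens details the paper treats loosely.
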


\begin{proof}
We recall that $\Pi_{n,\alpha}=(p_{\alpha_1},\ldots,p_{\alpha_{d-1}})(\Delta_{n-1})=(p_{\beta_1},\ldots,p_{\beta_{d-1}})(\mathbb{S}^{n-1})$.
For any $c \in \Pi_{n,\alpha}$ the variety $V_{d-1}^\beta(c) \cap \mathcal{W}_n$ is connected by Lemma \ref{lem:interval}. In particular, this holds for points $c \in \bd \Pi_{n,\alpha}$. 
Moreover, if $c \in \bd \Pi_{n,\alpha}$ the set $V_{d-1}^\beta(c) \cap \R_{\geq 0}^n$ consists only of points with at most $d-1$ distinct positive coordinates. This is, since if $x \in V_{d-1}^\beta(c)$ has at least $d$ distinct positive coordinates, then $x$ is a smooth point in the Vandermonde variety by Lemma \ref{lem:d-1 points} and cannot be mapped to the boundary of $\Pi_{n,\alpha}$ under $\nu_{n,\beta}$ by the inverse function theorem.
However, there are only finitely many multiplicity vectors for points in $\R^n_{\geq 0}$ with multiplicity length $\leq d-1$, i.e. finitely many sequences $\omega$ where $\omega_i$ is the number of coordinates equal to the $i$-th smallest positive coordinate of a point and $\omega_0$ is the multiplicity of $0$. Any such $\omega$ defines a vector $(\omega_1,\ldots,\omega_l) \in \N^l$ of positive weights and $l \leq d-1$. 
By Lemma \ref{lem:homeom} the map $\nu_{(\omega_1,\ldots,\omega_l),(\alpha_1,\ldots,\alpha_l)}  $ with domain $\{x \in \R_{\geq 0}^l: \sum_{i=1}^l \omega_i x_i = 1\} \cap \mathcal{W}_l$ is one-to-one. Thus, for any of the finitely many multiplicity patterns there is at most one point in the pre-image of $c$ with this multiplicity pattern. However, since the pre-image of $c$ in $W_{n-1}$ is connected, this implies that there is a unique pre-image in $\Delta_{n-1}$ up to permutation. \end{proof} 

\begin{proof}[Proof of Proposition \ref{prop: projection of Vandermonde cell}]
We note that if $\bar{c} \in \bd \Pi_{n,\alpha'}$ then the pre-image of $\bar{c}$ under $\nu_{n,\alpha'}$ in $W_{n-1}$ is unique by Lemma \ref{lem: unique pre-image Simplex}. This shows the pre-image of $\bar{c}$ under $\pi$ must be unique.

Now, let $\bar{c} \in \Pi_{n,\alpha'}$ be a point with a unique pre-image in $\Pi_{n,\alpha}$ under projection. We want to show that $\bar{c} \in \bd \Pi_{n,\alpha'}$. We suppose that $\bar{c} \in \inti \Pi_{n,\alpha'}$ and argue by contradiction. 
Since $\bar{c}$ lies in the interior there is a sequence $c_i \in \inti \Pi_{n,\alpha'}$ of generic points with $c_i \to \bar{c}$ for $i \to \infty$. For each $i$ the pre-image of $c_i$ under the projection $\pi$ is a non-degenerate interval since the power sums $p_1, p_{\alpha_1},\ldots,p_{\alpha_{d-1}}$ are algebraically independent. The end points of the interval have unique pre-images $x^{(i)}$ and $y^{(i)}$ under $\nu_{n,\alpha}$ in $W_{n-1}$ with mulitplicity length $d-1$ by Lemmas \ref{lem:critical points} and \ref{lem: unique pre-image Simplex}, where, by Corollary \ref{cor:pap3 1}, we can suppose that $x^{(i)}$ has multiplicity type (1) and $y^{(i)}$ has multiplicity type (2) and $\lim_{i \to \infty} \nu_{n,\alpha'}(x^{(i)})=\lim_{i\to\infty}\nu_{n,\alpha'}(y^{(i)})=\bar{c}$. Since there are only finitely many multiplicity vectors of type (1) and (2) we can without loss of generality assume that all $x^{(i)}/y^{(i)}$ have the same multiplicity vector. However, by Lemma \ref{lem: two multiplicity vector sequences} we have that $\bar{c}$ is then also the image under $\nu_{n,\alpha'}$ of a point in the closure of the multiplicity vectors of type (1) or (2) of length $d-2$. Using Corollary \ref{cor:pap3 1} this shows that $\bar{c} \in \bd \Pi_{n,\alpha'}$ which is a contradiction. 
\end{proof}

\begin{remark}  \label{rem:int} 
We note that the above arguments also show that the Vandermonde cell $\Pi_{n,\alpha}$ is the topological closure of its interior. We can use Lemma \ref{lem:homeom} and then note that the property of being the closure of the interior is preserved under linear projections.
\end{remark}

We are now in position to give a proof of Theorem \ref{thm:1}. 
\begin{proof}[Proof of Theorem \ref{thm:1}] 
    Suppose $\alpha \in \Z_{\geq 2}^{d-1}$ is an integer exponent vector with $\alpha_1 < \alpha_2 < \ldots < \alpha_d$. 
Any point of type (1) or (2) with multiplicity length $d-1$ is indeed mapped to the boundary of $\Pi_{n,\alpha}$ by Corollary \ref{cor:pap3 1}.
The closure of the image of such points is also mapped to the boundary by continuity.
Now, we assume that $\overline{c} := (p_{\alpha_1}(x),\ldots,p_{\alpha_{d-1}}(x))$ is contained in the boundary of $\Pi_{n,\alpha}$. Since $\Pi_{n,\alpha}$ is the closure of its interior by Remark \ref{rem:int}, we can find a sequence of sufficiently general points $c_i$ from the interior of $\Pi_{n,\alpha}$ converging to $\overline{c}$, such that the Vandermonde varieties $V_{d-1}^{\alpha}(c_i)$ are smooth. Consider the projection $\pi: \Pi_{n,\alpha}\to \Pi_{n,\alpha'}$ of $\Pi_{n,\alpha}$ onto the first $d-2$ coordinates as in Lemma \ref{lem:interval}. The pre-image of each $c_i$ is a non-degenerate interval and $p_{\alpha_{d-1}}$ is either minimized or maximized at the endpoints of the interval. We apply Corollary \ref{cor:pap3 1} to see that the pre-images of the endpoints of the interval must be points of type (1) or (2). If $\pi(\overline{c})$ lies in the interior of $\Pi_{n,\alpha'}$, then its pre-image under $\pi$ is also an interval, and $\bar{c}$ is an endpoint of this interval. Now the endpoints of intervals for $c_i$'s must converge to the endpoints of the interval for $\bar{c}$ by continuity. Therefore we realized $\overline{c}$ as the limit of points of type (1) or (2).

If $\pi(\bar{c})$ lies on the boundary of $\Pi_{n,\alpha'}$ then its pre-image under $\pi$ is a single point $\bar{s}$. It follows that the endpoints of the intervals for $c_i$ converge to $\bar{s}$ by continuity. Therefore we realized $\overline{c}$ as the limit of points of type (1) or (2).

\end{proof}

\begin{remark}
The description of the boundary of the $(n,\alpha)$-Vandermonde cell in Theorem \ref{thm:1} transfers to $\alpha$-Vandermonde maps with positive weight vector $w \in \R_{>0}^n$, i.e. to maps $$x \mapsto (w_1x_1^{\alpha_1}+\ldots+ w_nx_n^{\alpha_1},\ldots,w_1x_1^{\alpha_{d-1}}+\ldots+ w_nx_n^{\alpha_{d-1}}).$$
This holds since the columns of the Jacobian matrix of a weighted $\alpha$-Vandermonde map are positive scalar multiples of the columns of the Jacobian matrix of the non-weighted $\alpha$-Vandermonde map.
\end{remark}

\subsection{Boundary of the Vandermonde cells $\Pi_{n,3}$ and $\Pi_3$.}\label{subsection power sum and elementary}
In this subsection, we investigate parametrizations of $\bd\Pi_{n,3}$ and $\bd\Pi_3$. 

\begin{figure}[h!]%
    \centering
    \subfloat
    {{\includegraphics[width=15cm]{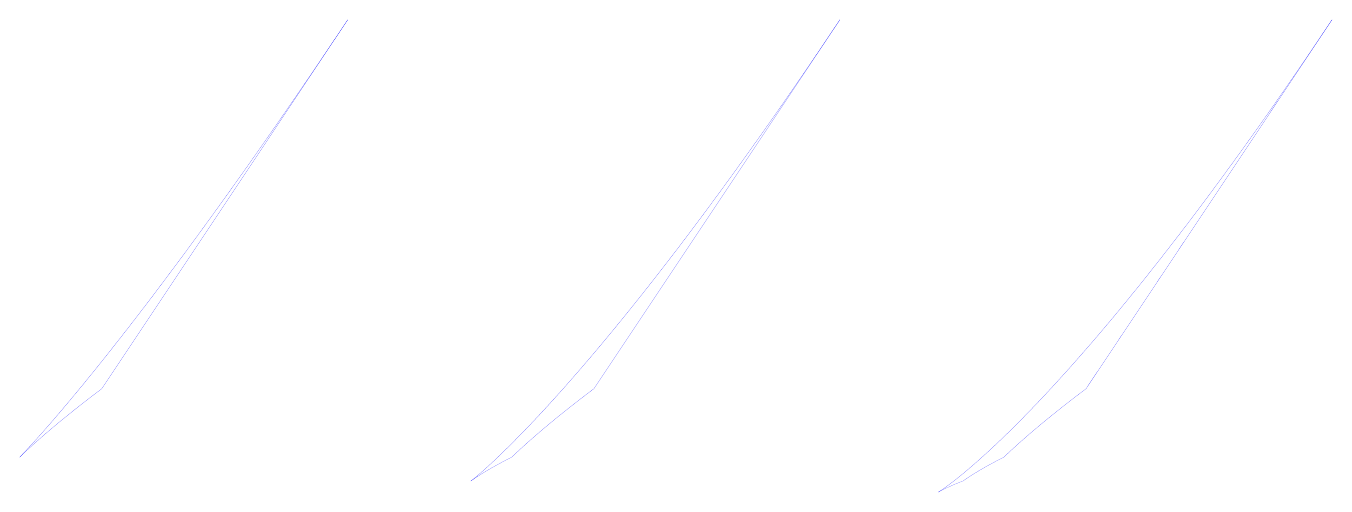} }}%
    \caption{The sets $\bd \Pi_{n,3}$ for $n=3,4,5$}%
    \label{fig: N33 and N53}%
\end{figure}

\begin{theorem} \label{thm:Parametrization Nnd}
For $n \geq 3$, a parametrization of
$\bd \Pi_{n,3}$ is given by the following $n$ arcs. The upper part of the boundary is parametrized by the arc 
\begin{align}\label{eq:upper part}
   U_{n-1}:=\left\{((n-1)t^2+(1-(n-1)t)^2,(n-1)t^3+(1-(n-1)t)^3): 0\le t\le\frac1n\right\}
\end{align}
while the lower part is parameterized by the $n-1$ arcs
\begin{align}\label{eq:lower part}
      L_k:=\left\{(kt^2+(1-kt)^2,kt^3+(1-kt)^3):\frac1{k+1}\le t\le\frac1k\right\}
\end{align}
for $k=1,\dots,n-1$. The endpoints of the arc $U_{n-1}$ are $(\frac1{n},\frac1{n^2})$ and $(1,1)$, while the endpoints of the arc $L_k$ are $(\frac1{k+1},\frac1{(k+1)^2})$ and $(\frac1k,\frac1{k^2})$ for each $k=1,\dots,n-1$. 
\end{theorem}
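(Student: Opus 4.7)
The plan is to deduce this directly from Theorem \ref{thm:1} applied with $d=3$, which states that $\bd \Pi_{n,3}$ is the closure of the image under $\nu_{n,3}$ of those points of $\Delta_{n-1}$ having multiplicity length $2$ of type (1) or (2). The proof then reduces to enumerating such configurations and checking that they produce the claimed arcs.

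First I would identify the admissible multiplicity patterns. For type (2) with multiplicity length $2$ the only possibility is $(m_1,m_2)=(n-1,1)$, giving points of the form $(x_1,\ldots,x_1,x_2)$ with $0<x_1<x_2$ and $(n-1)x_1+x_2=1$. The inequality $x_1<x_2$ forces $x_1 \in (0,1/n)$; setting $t=x_1$ and computing $p_2$ and $p_3$ yields exactly the parametrization \eqref{eq:upper part} of $U_{n-1}$. For type (1) with multiplicity length $2$ the patterns are $(m_0,m_1,m_2)=(n-k-1,1,k)$ for $k\in\{1,\ldots,n-1\}$, corresponding to points of the form $(0,\ldots,0,x_1,x_2,\ldots,x_2)$ with $0<x_1<x_2$ (one copy of $x_1$, $k$ copies of $x_2$) and $x_1+kx_2=1$. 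The inequalities $0<x_1<x_2$ force $x_2\in(1/(k+1),1/k)$; setting $t=x_2$ yields exactly the parametrization \eqref{eq:lower part} of $L_k$.

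Next I would substitute the endpoints of the parameter intervals into the formulas: at $t=0$ and $t=1/n$ the upper arc degenerates to $(1,1)$ and $(1/n,1/n^2)$ respectively, while at $t=1/k$ and $t=1/(k+1)$ the arc $L_k$ degenerates to $(1/k,1/k^2)$ and $(1/(k+1),1/(k+1)^2)$. At these parameter values the multiplicity length drops from $2$ to $1$, since either a positive coordinate vanishes ($x_1\to 0$) or two distinct positive coordinates coincide ($x_1\to x_2$). These degenerate configurations are precisely the limit points at which consecutive arcs meet, so the $n$ arcs $U_{n-1}, L_{n-1}, L_{n-2}, \ldots, L_1$ piece together into a closed curve, and by Theorem \ref{thm:1} this curve is all of $\bd\Pi_{n,3}$.

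I do not anticipate any real obstacle: Theorem \ref{thm:1} has reduced the problem to pure combinatorics on multiplicity vectors plus mechanical substitution into the defining formulas of $p_2$ and $p_3$. The only minor subtlety is that the boundary in Theorem \ref{thm:1} is defined as a closure, so one must verify that the endpoints with multiplicity length $1$ are recovered as continuous limits of the interior of the parameter intervals, which is immediate from the continuity of $\nu_{n,3}$.
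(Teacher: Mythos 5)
Your proposal is correct and takes essentially the same route as the paper's proof: invoke Theorem \ref{thm:1} with $d=3$, enumerate the type (1) and type (2) multiplicity patterns of length $2$, substitute into $p_2,p_3$, and recover the endpoints by continuity/closure. The only point the paper makes explicit that you leave implicit is why the type (2) family gives the \emph{upper} part of the boundary and the type (1) families the \emph{lower} part; the paper justifies this labeling via Corollary \ref{cor:pap3 1} (for odd $d$, type (2) points are local maxima and type (1) points local minima of $p_3$ on the level sets of $p_2$).
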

\begin{proof}
By Theorem \ref{thm:1} the boundary of $\Pi_{n,3}$ consists of the closure of the set of all points $(p_2,p_3)$ obtained by evaluating at the points $(0,\ldots,0,x_1,\ldots,x_1,x_2,\ldots,x_2)\in\Delta_{n-1}$, with $0<x_1<x_2$, of type (1) or (2). Moreover, by Corollary \ref{cor:pap3 1}, and since $\Pi_{n,3}\subset\R^2$, the lower boundary comes from points of type (1), while the upper boundary comes from points of type (2).
\medskip

The upper boundary then comes from points of the form $(a,\ldots,a,b)\in\Delta_{n-1}$ with $0\le a\le b$, so it can be parametrized by setting $a=t$ and $b=1-(n-1)t$ with $0\le t\le 1-(n-1)t$, and we obtain (\ref{eq:upper part}).
\medskip

The lower boundary comes from points of type (1). Observe there are $n-1$ types of points of type (1),
$$(\underbrace{0,\ldots,0}_{n-k-1},a,\underbrace{b,\ldots,b}_{k})$$
for $k=1,\dots,n-1$. Setting $a=1-kt$ and $b=t$ with $0\le 1-kt\le t$ we obtain (\ref{eq:lower part}).
\end{proof}

As a corollary, we obtain parametrizations of the boundary of the sets $E_{n,3}$ via Newton's identities. See Figure \ref{fig: E33 and E53} for a visualisation of these boundaries. In this case the map $(p_2,p_3)\to(e_2,e_3)$ is affine linear: $e_2=\frac12-\frac12p_2$ and $e_3=\frac16-\frac12p_2+\frac13p_3$ subject to $p_1=1$.

\begin{figure}[h!]%
    \centering
    \subfloat%
    {{\includegraphics[width=15cm]{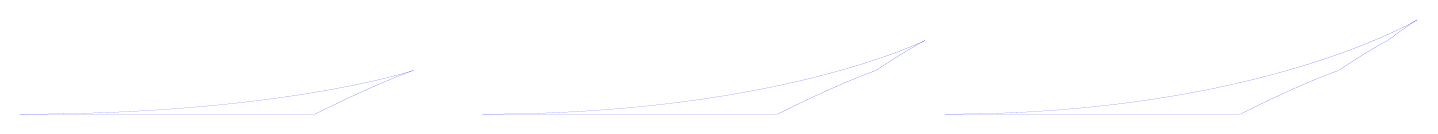} }}%
    \caption{The sets $\bd E_{n,3}$ for $n=3,4,5$}%
    \label{fig: E33 and E53}%
\end{figure}

In the transition from $\Pi_{n,3}$ to $\Pi_{n+1,3}$ in Theorem \ref{thm:Parametrization Nnd} the arc describing the upper boundary grows and converges. Its limit has the parametrization $(t,t^{3/2})$, $0\le t\le1$. Moreover, any point on the lower boundary of $\Pi_{n,3}$ remains on the lower boundary of $\Pi_{n+1,3}$, only the arc $L_n$ is added.

\begin{figure}[h!]%
    \centering
    {{\includegraphics[width=6cm]{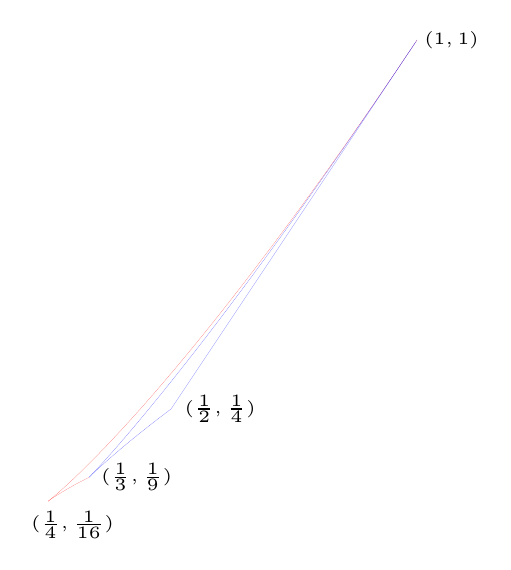}}}%
    \caption{Transition from $\Pi_{3,3}$ to $\Pi_{4,3}$}%
    \label{fig:4to5}%
\end{figure}

\begin{corollary} \label{cor:boundaryN3}
The boundary of the set $\Pi_{3} $ equals
$\left\{(t,t^{3/2}) : 0 \leq t \leq 1\right\} \cup \bigcup_{k\ge1} L_k$. 
\end{corollary}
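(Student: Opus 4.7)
The strategy is to identify $\Pi_3$ as the closed planar region enclosed above by the curve $C := \{(t, t^{3/2}) : 0 \leq t \leq 1\}$ and below by $\bigcup_{k \geq 1} L_k$, and then extract its boundary. I would prove both inclusions in the claimed equality.

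For the inclusion $C \cup \bigcup_k L_k \subseteq \bd \Pi_3$, I would first establish the outer bound $\Pi_3 \subseteq \{(p_2,p_3) : p_3 \leq p_2^{3/2}\}$ via the $\ell^p$-norm monotonicity on the nonnegative orthant, $\|x\|_3 \leq \|x\|_2$, which on $\Delta_{n-1}$ reads $p_3(x) \leq p_2(x)^{3/2}$. Then I would exhibit each $(t, t^{3/2})$ as a limit of Vandermonde-cell points via the explicit sequence $x_n := \bigl( \tfrac{1-\sqrt t}{n-1}, \ldots, \tfrac{1-\sqrt t}{n-1}, \sqrt t \bigr) \in \Delta_{n-1}$, whose image $\nu_{n,3}(x_n) = \bigl( \tfrac{(1-\sqrt t)^2}{n-1}+t,\, \tfrac{(1-\sqrt t)^3}{(n-1)^2}+t^{3/2} \bigr)$ converges to $(t, t^{3/2})$ as $n \to \infty$. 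Combined, these place $C$ in $\Pi_3$ while saturating the outer bound, so $C \subseteq \bd \Pi_3$. For the lower arcs, Theorem \ref{thm:Parametrization Nnd} gives $L_k \subseteq \bd \Pi_{m,3}$ for all $m \geq k+1$, and by Corollary \ref{cor:pap3 1} each $L_k$ minimizes $p_3$ at fixed $p_2 \in [1/(k+1), 1/k]$ within $\Pi_{m,3}$. Since $L_k$ does not depend on $m$ (one simply appends zero coordinates), no point strictly below $L_k$ lies in any $\Pi_{m,3}$, hence not in $\Pi_3$, giving $L_k \subseteq \bd \Pi_3$.

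For the reverse inclusion, I would argue that any $(p_2^*, p_3^*) \in \Pi_3$ lying neither on $C$ nor on any $L_k$ must be in $\inti \Pi_3$. Using the explicit parametrization $U_{m-1} = \bigl\{ \bigl( \tfrac{(1-b)^2}{m-1}+b^2,\, \tfrac{(1-b)^3}{(m-1)^2}+b^3 \bigr) : b \in [1/m,1] \bigr\}$, a direct check shows the upper arcs $U_{m-1}$ strictly ascend as $m$ grows, so for $m$ sufficiently large, $U_{m-1}$ strictly exceeds $p_3^*$ at $p_2 = p_2^*$; while the lower arcs $L_1, \ldots, L_{m-1}$ lie strictly below $(p_2^*, p_3^*)$. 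By Theorem \ref{thm:Parametrization Nnd} this sandwiches $(p_2^*, p_3^*)$ in $\inti \Pi_{m,3} \subseteq \inti \Pi_3$. The most subtle step in the plan is the strictness of $p_3 < p_2^{3/2}$ on the interior of each $\Delta_{n-1}$ (with equality only at a vertex, giving $(1,1)$), since this strictness is precisely what prevents $C$ from penetrating the interior of any finite $\Pi_{n,3}$, confining it instead to the topological boundary of $\Pi_3$.
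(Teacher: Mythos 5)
Your proposal is correct, and it takes a partly different route from the paper. The paper's proof is short: it notes (via Theorem \ref{thm:Parametrization Nnd}) that the lower arcs $L_k$ persist unchanged as $n$ grows, so the lower boundary of $\Pi_3$ is $\clo\bigcup_k L_k$, and it computes the limit of the reparametrized upper arcs $U_n$ directly, obtaining $\{((1-u)^2,(1-u)^3)\}=\{(t,t^{3/2})\}$. You instead certify the upper curve by an outer bound: $\|x\|_3\le\|x\|_2$ gives $\Pi_3\subseteq\{p_3\le p_2^{3/2}\}$, and your explicit sequences in $\Delta_{n-1}$ (which are exactly points on the arcs $U_{n-1}$ with $b=\sqrt t$) show $C\subseteq\Pi_3$, so $C\subseteq\bd\Pi_3$ without ever computing the limit curve of the $U_n$; you also supply the reverse inclusion (every point of $\Pi_3$ off $C\cup\bigcup_k L_k$ is interior), which the paper leaves implicit. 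What your approach buys is a cleaner identification of the upper boundary and a genuinely two-sided proof of the set equality; what it costs is that the reverse inclusion still leans on the monotone ascent and pointwise convergence of the arcs $U_{m-1}$ (so you do not fully escape the paper's computation) and on the step that being strictly between the lower arcs and $U_{m-1}$ puts a point in $\inti\Pi_{m,3}$. That step implicitly identifies $\Pi_{m,3}$ with the closed region enclosed by the Jordan curve $U_{m-1}\cup L_1\cup\dots\cup L_{m-1}$; this is true and follows from Theorem \ref{thm:Parametrization Nnd} together with compactness and Remark \ref{rem:int} (or the ball structure from Theorem \ref{thm:combinatorially cyclic polytope}), but it deserves a sentence, as it is the only genuinely topological input in your sandwich argument. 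Also note the edge cases $p_2^*\in\{0,1\}$ are automatically excluded since such points of $\Pi_3$ lie on $C$, and $(0,0)$, which the paper records separately as a closure point of $\bigcup_k L_k$, is already covered by $C$ in your formulation.
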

\begin{proof}
By Theorem \ref{thm:Parametrization Nnd}, the lower boundary of $\Pi_{n,3}$ is the union of the arcs $L_1,\dots,L_{n-1}$. Hence, since $\Pi_3=\clo\bigcup_{n\ge3}\Pi_{n,3}$, the lower boundary of $\Pi_3$ is $\clo\bigcup_{k\ge1}L_k=(0,0)\cup\bigcup_{k\ge1}L_k$.
\medskip

    By Theorem \ref{thm:1} and Corollary \ref{cor:pap3 1} the upper boundary $U_{n-1}$ of $\Pi_{n,3}$ lies below the upper boundary $U_n$ of $\Pi_{n+1,3}$. Observe that, by setting $u=(n+1)t$, $$U_n=\left\{\left(\frac{n}{(n+1)^2}u^2+\left(1-\frac{n}{n+1}u\right)^2,\frac{n}{(n+1)^3}u^3+\left(1-\frac{n}{n+1}u\right)^3\right):0\le u\le1\right\}$$
    and so the sequence of sets $(U_n)_{n\ge2}$ converges to the set $$\left\{(1-u)^2,(1-u)^3):0\le u\le1\right\}$$
    as $n$ goes to infinity. This set can be parametrized as $\left\{(t,t^{3/2}):0\le t\le1\right\}$.
\end{proof}

\begin{remark}\label{rem:arcs pi3}
For each $k=1,\dots,n-1$ the functions $f_k(t):=kt^2+(1-kt)^2$ and $g_k(t):=kt^3+(1-kt)^3$ are increasing on $\left[\frac1{k+1},\frac1k\right]$. Moreover, $f_k(\frac1{k+1})=\frac1{k+1}$, $f_k(\frac1k)=\frac1k$, $g_k(\frac1{k+1})=\frac1{(k+1)^2}$, $g_k(\frac1k)=\frac1{k^2}$. Hence, restricting to the lower boundary of $\Pi_{n,3}$, only consecutive arcs in the list $L_1,L_2,\dots,L_{n-1}$ intersect. For each $k=2,\dots,n-1$ the arcs $L_{k-1}$ and $L_k$ intersect at the single point $(\frac1k,\frac1{k^2})$. Also, the upper boundary $U_{n-1}$ intersects only the arcs $L_1$, at $(1,1)$, and $L_{n-1}$, at $(\frac1n,\frac1{n^2})$.
\medskip

The points $(\frac1k,\frac1{k^2})$, for $k=1,\dots,n$, are singular points of $\bd\Pi_{n,3}$. We check this by computing the slopes at these points with respect to the arcs to which they belong. For instance,
\begin{align*}
    L_k'(t)&=(2kt-2k(1-kt),3kt^2-3k(1-kt)^2)\\
    &=(2k(t-(1-kt)),3k(t-(1-kt))(t+(1-kt)))\\
    &=k(t-(1-kt))\cdot(2,3(t+(1-kt)))
\end{align*}
and so the slope of the tangent line at $L_k(t)$ is $\frac32(1-(k-1)t)$, which is a decreasing function positive on $\left[\frac1{k+1},\frac1k\right]$. So along each arc the slope is positive and decreasing, thus each arc of the lower boundary is concave. At the left and right endpoints of $L_k(t)$ we obtain slopes of $\frac3{k+1}$ and $\frac3{2k}$ respectively. For $k=2,\dots,n-1$ we thus obtain different slopes at $(\frac1k,\frac1{k^2})$: $\frac3{2k}$ as a right endpoint of $L_k$, and $\frac3k$ as a left endpoint of $L_{k-1}$. However, for the points $(1,1)$ and $(\frac1n,\frac1{n^2})$ the slope is the same with respect to each arc that contains them, but in either case the tangent vector jumps to the opposite orientation when crossing each point while traveling continuously through the arcs. So these points are cusps, hence also singular. 
\end{remark}

\begin{corollary} \label{cor:N3 infinite many }
The set $\bd\Pi_{3}$ has countably many isolated singular points, which are precisely $$\left(\frac{1}{k},\frac{1}{k^2}\right), ~k \in \N_{>0}.$$ 
\end{corollary}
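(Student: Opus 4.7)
The strategy is to combine the explicit description of $\bd\Pi_3$ from Corollary \ref{cor:boundaryN3} with the local calculations of slopes and cusps already recorded in Remark \ref{rem:arcs pi3}. By that corollary, $\bd\Pi_3$ is the union of the upper arc $U := \{(t,t^{3/2}) : 0 \leq t \leq 1\}$ and the countable family of lower arcs $L_k$, $k \geq 1$, glued at the junction points $(1/k, 1/k^2)$, with $(0,0)$ serving as the accumulation point of the left endpoints of the $L_k$. To classify the isolated singular points of $\bd\Pi_3$, I will (i) check smoothness on the relative interiors of $U$ and of each $L_k$, (ii) verify that each junction point is a singular point that is isolated in the set of singular points, and (iii) observe that $(0,0)$ cannot be isolated among singular points.

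For (i), $U$ has nonvanishing tangent $(1, \tfrac{3}{2}t^{1/2})$ on $(0,1]$, hence is a smooth submanifold there; and Remark \ref{rem:arcs pi3} exhibits $L_k$ as a concave arc with tangent line of finite positive slope $\tfrac{3}{2}(1-(k-1)t)$ on $[1/(k+1),1/k]$, hence smooth on its relative interior. For (ii), the same remark records that at $(1/k, 1/k^2)$ for $k \geq 2$ the arcs $L_{k-1}$ and $L_k$ have distinct one-sided slopes $3/k$ and $3/(2k)$, so $\bd\Pi_3$ fails to be locally a smooth $1$-manifold there; and at $(1,1)$ the arcs $U$ and $L_1$ meet with equal one-sided slope $3/2$ but opposite tangent orientations, producing a cusp. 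Both cases yield singular points. Since consecutive junctions are separated by the relative interior of a smooth arc (with $(1,1)$ and $(1/2,1/4)$ separated by the interior of $L_1$), each $(1/k, 1/k^2)$ admits a sufficiently small disc whose intersection with $\bd\Pi_3$ meets only the two smooth pieces adjacent to it; hence each $(1/k,1/k^2)$ is an isolated singular point.

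For (iii), the sequence $(1/k, 1/k^2) \to (0,0)$ exhibits singular points accumulating at the origin, so $(0,0)$ cannot be isolated in the set of singular points, regardless of whether it is itself singular. This completes the classification. The main point requiring care is pinning down the appropriate notion of singular point, namely that $\bd\Pi_3$ fails locally to be a smooth $1$-submanifold of $\R^2$, and confirming that the slope discrepancy and cusp behaviour of Remark \ref{rem:arcs pi3} genuinely preclude such a local manifold structure; this amounts to the elementary observation that a simple arc cannot be smooth at a point where two distinct one-sided tangent directions meet.
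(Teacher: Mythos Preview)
Your proof is correct and follows essentially the same approach as the paper: both invoke Corollary~\ref{cor:boundaryN3} for the description of $\bd\Pi_3$ and the slope computations of Remark~\ref{rem:arcs pi3} to identify the junction points $(1/k,1/k^2)$ as singular, with a separate check that $(1,1)$ remains a cusp for the limiting upper arc $\{(t,t^{3/2})\}$. Your argument is in fact more thorough than the paper's brief proof, since you explicitly verify smoothness on arc interiors, establish isolation of each singular point, and address the status of the origin as a non-isolated accumulation point---details the paper leaves implicit.
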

\begin{proof}
    By Corollary \ref{cor:boundaryN3} singular points of the lower boundary of $\Pi_{n,3}$ remain singular in $\bd\Pi_3$, except perhaps $(1,1)$ which is also part of the upper boundary. However, a short computation shows that $(1,1)$ is a cuspidal point of the curve $\left\{(t,t^{3/2}):0\le t\le1\right\}\cup L_1$.
\end{proof}

\begin{corollary}\label{cor:not semialgebraic}
The sets $\Pi_{d}$ and $E_d$ are not semialgebraic for all $d \geq 3$.
\end{corollary}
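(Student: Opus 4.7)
The plan is to reduce everything to the case $d=3$ for the power-sum Vandermonde cell and then transfer the conclusion to $E_d$ via Newton's identities. The key input is already in hand: Corollary \ref{cor:N3 infinite many} supplies an infinite discrete set of singularities in $\bd \Pi_3$, which is the obstruction to semialgebraicity in two dimensions.

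First, I would argue that $\Pi_3$ cannot be semialgebraic. Corollary \ref{cor:N3 infinite many} exhibits the countably infinite discrete set $S = \{(1/k, 1/k^2) : k \in \N_{>0}\}$ of isolated singular points of $\bd \Pi_3$. If $\Pi_3$ were semialgebraic, then its boundary and the locus of points where the boundary fails to be locally smooth would be semialgebraic too; in particular $S$ would be a semialgebraic subset of $\R^2$. But an infinite discrete set has infinitely many connected components, while every semialgebraic set has only finitely many connected components. This contradiction rules out semialgebraicity of $\Pi_3$.

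Second, for $d \geq 4$, I would use the coordinate projection $\pi : \R^{d-1} \to \R^2$ onto the first two coordinates. For every $n \geq d$ one has $\pi(\Pi_{n,d}) = \Pi_{n,3}$ by definition of the Vandermonde map, and each $\Pi_{n,d}$ is contained in the compact box $[0,1]^{d-1}$ because $p_k \leq 1$ on $\Delta_{n-1}$. Hence $\Pi_d = \clo \bigcup_n \Pi_{n,d}$ is compact, its image $\pi(\Pi_d)$ is closed, and a short argument combining continuity with the fact that closed sets absorb closures of their contents gives $\pi(\Pi_d) = \Pi_3$. By the Tarski–Seidenberg theorem, semialgebraicity of $\Pi_d$ would then imply semialgebraicity of $\Pi_3$, contradicting the first step.

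Third, to pass to $E_d$, I would invoke Newton's identities \eqref{eq:Newton's identities}. Restricted to the hyperplane $\{p_1 = 1\} = \{e_1 = 1\}$, these express $(e_2, \ldots, e_d)$ polynomially in $(p_2, \ldots, p_d)$ and vice versa, so $E_d$ is the image of $\Pi_d$ under a polynomial bijection with polynomial inverse. Applying Tarski–Seidenberg in both directions shows that $E_d$ is semialgebraic if and only if $\Pi_d$ is. The step I expect to demand the most care is the first: one should cite cleanly the fact that the singular locus of a semialgebraic set is semialgebraic (via a finite Whitney stratification or cylindrical algebraic decomposition), so that the discrete set $S$ inherits semialgebraicity and the finiteness-of-components theorem delivers the contradiction. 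Everything else is essentially formal.
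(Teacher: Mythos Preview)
Your proposal is correct and follows essentially the same route as the paper: reduce to $\Pi_3$ via the infinite singular locus of $\bd\Pi_3$, push down from $\Pi_d$ to $\Pi_3$ by coordinate projection and Tarski--Seidenberg, and transfer to $E_d$ through the polynomial change of coordinates given by Newton's identities. The paper argues slightly more tersely (it simply asserts $\pi(\Pi_d)=\Pi_3$ and that $\Pi_d$ is a polynomial image of $E_d$), but your added care about compactness and the two-way polynomial bijection is harmless and welcome.
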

\begin{proof}
We show that $\Pi_3$ is not semialgebraic. Then for $d \geq 4$ the set $\Pi_d$ is not semialgebraic, since for $d\geq 3$ we have $\Pi_3 = \pi (\Pi_d)$, where $\pi : \R^d \to \R^2$ denotes the projection onto the first $2$ coordinates. Moreover, $\Pi_d$ is a polynomial image of the set $E_d$, so $E_d$ cannot be semialgebraic for $d\ge3$.
\medskip

\indent Suppose that the set $\Pi_3$ is semialgebraic, hence $\bd\Pi_3$ is semialgebraic. Let $T$ denote the union of the singular points of $\bd\Pi_3$. The set $T$ is semialgebraic since the singularity condition can be formalized as the vanishing and non-vanishing of a finite number of polynomial equalities. By (\cite[Theorem 2.4.4]{bochnak2013real}) every semialgebraic set is a disjoint union of a finite number of semialgebraic sets. So $T$ is a finite set of points, which contradicts Corollary \ref{cor:N3 infinite many }.
\end{proof}

\begin{remark}
    Using Green's theorem and the parametrizations given in Theorem \ref{thm:Parametrization Nnd} we can compute the area of $\Pi_{n,3}$ for each $n\ge3$ and thus that of $\Pi_3$. We do so by computing, for each $k=1,\dots,n-2$, the area of the region $A_k$ enclosed by the curves $U_k$, $U_{k+1}$ and $L_{k+1}$ (where $U_1:=L_1$). These regions partition $\Pi_{n,3}$, with $A_k$ and $A_{k+1}$ sharing only the boundary curve $U_{k+1}$. 
    
    A short computation then shows that, for each $k=2,\dots,n-1$ the area of $A_{k-1}$ is $\frac1{10}(\frac1{k^2}-\frac1{k^3})$, and so the area of $\Pi_{n,3}$ is $\frac1{10}(\sum_{k=2}^{n-1}\frac1{k^2}-\sum_{k=2}^{n-1}\frac1{k^3})$, therefore the area of $\Pi_3$ is $\frac1{10}(\zeta(2)-\zeta(3))$ where $\zeta$ is the Riemann zeta function.

    Observe also that, subject to $p_1=1$, the Jacobian of the map $(p_2,p_3)\mapsto(e_2,e_3)$ is $\frac16$, so the areas of $E_{n,3}$ and $E_3$ are $\frac16$ of those of $\Pi_{n,3}$ and $\Pi_3$. 
    
\medskip

\end{remark}

\subsection{The boundary of the image of sub-probability simplex} \label{sec:boundary of sub-probability simplex image}
To study the boundary of the set $\Pi_{n,d}$ in the limit as $n$ approaches infinity it will be useful to also consider the image of the sub-probability simplex $\tilde{\Delta}_n:=\{x\in \R^n_{\geq 0}\,:\, p_1(x)\leq 1\}$ and $\widetilde{W}_n := \tilde{\Delta}_n \cap \mathcal{W}_n$. 
We will see in Section \ref{sec:Vandermonde map limit image} that the limits of images of the probability simplex and the sub-probability simplex under the Vandermonde map coincide, and it will be more convenient to consider the limit of the sub-probability simplex.

The main difference to the case of the probability simplex $\Delta_{n-1}$ is that now we distinguish between points whose coordinates sum up to one and those whose coordinate sum is strictly below 1. This changes the points given by Theorem \ref{thm:1} slightly. On the one hand, we obtain two types of points with a coordinate sum strictly smaller than one which are mapped to the boundary. These points have $d-2$ distinct non-zero coordinates. On the other hand, we obtain one type of points corresponding to points with a coordinate sum equal to one which are mapped to the boundary. These points have $d-1$ distinct non-zero coordinates. More concretely, we have the following.

\begin{theorem}\label{thm:subprob}
For integers $n\geq d$ the boundary of $\nu_{n,\alpha}(\tilde{\Delta}_n)$ is given by the closure of the image of the following three types of points: type (1) and (2) points with multiplicity length $d-2$ and type (1) points with multiplicity length $d-1$ and with coordinate sum $1$.
\end{theorem}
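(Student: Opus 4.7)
The plan is to follow the blueprint of Theorem \ref{thm:1} adapted to the sub-probability simplex. The structural differences to absorb are that $\tilde\Delta_n$ is $n$-dimensional rather than $(n-1)$-dimensional, its boundary splits into two kinds of facets (the coordinate facets $\{x_i=0\}$ and the outer facet $\Delta_{n-1}=\{\sum x_i=1\}$), and the Jacobian of $\nu_{n,\alpha}:\R^n_{\geq 0}\to\R^{d-1}$ no longer contains the $p_1$-row. A Descartes-rule argument identical to Lemma \ref{lem:d-1 points} shows this Jacobian has rank $\min(\text{mult length},d-1)$, so interior critical points of $\nu_{n,\alpha}|_{\tilde\Delta_n}$ must have multiplicity length at most $d-2$.

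For the forward direction, points of types (1) and (2) with multiplicity length $d-2$ are handled by the analogs of Lemma \ref{lem:critical points} and Proposition \ref{prop2} applied to the Lagrangian for $p_{\alpha_{d-1}}$ on level sets of $(p_{\alpha_1},\ldots,p_{\alpha_{d-2}})$ without the $p_1=1$ constraint; generic critical points then have multiplicity length exactly $d-2$, the Hessian splits by the same Descartes argument into a definite quadratic form determined by the type, and these strict local extrema land on $\bd\nu_{n,\alpha}(\tilde\Delta_n)$. For a type (1) point $x$ with $\ell=d-1$ and coordinate sum $1$ — which lies on $\Delta_{n-1}$ and hence on $\bd\Pi_{n,\alpha}$ by Theorem \ref{thm:1} — perturbations staying in $\tilde\Delta_n$ satisfy $\sum v_i\leq 0$ (the constraints $v_i\geq 0$ for $i$ with $x_i=0$ do not further restrict the image, since the Jacobian annihilates those coordinates), so the relevant tangent cone is $H^-=\{v:\sum v_i\leq 0\}$. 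Since $\ell=d-1$, Descartes forces $(1,\ldots,1)$ to lie in the row span of the reduced Jacobian, so $\sum v_i=\mu\cdot Jv$ for a unique $\mu\in\R^{d-1}$, and $J(H^-)$ is a half-space bounded by the tangent hyperplane to $\bd\Pi_{n,\alpha}$ at $\nu_{n,\alpha}(x)$. The decisive sign check — essentially a reinterpretation of Corollary \ref{cor:pap3 1} — is that for type (1) this half-space lies on the \emph{same} side of the tangent hyperplane as the local interior of $\Pi_{n,\alpha}$, while for type (2) it lies on the \emph{opposite} side; this can be seen directly by comparing the shrinking vector $J(-x)=-(\alpha_1p_{\alpha_1}(x),\ldots,\alpha_{d-1}p_{\alpha_{d-1}}(x))$ with the second-order inward normal of $\Pi_{n,\alpha}$ at $\nu_{n,\alpha}(x)$. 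Consequently type (1) points with $\ell=d-1$ and coordinate sum $1$ remain on $\bd\nu_{n,\alpha}(\tilde\Delta_n)$, while the corresponding type (2) points become interior, which explains the restriction to type (1) in the third item of the statement.

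The converse direction mirrors Proposition \ref{prop: projection of Vandermonde cell} with the projection $\tilde\pi:\nu_{n,\alpha}(\tilde\Delta_n)\to\nu_{n,\alpha'}(\tilde\Delta_n)$ onto the first $d-2$ coordinates (with $\alpha'=(\alpha_1,\ldots,\alpha_{d-2})$). A sub-probability analog of Lemma \ref{lem:interval} for $V_k^\beta(c)\cap\widetilde{W}_n$, proved by the same Givental (Lemma \ref{lemma:giv}) plus Kostov (Lemma \ref{lemma:kostov}) machinery, ensures each fiber is an interval whose endpoints are attained either at interior critical points of $\nu_{n,\alpha}$ in $\tilde\Delta_n$ (giving types (1) or (2) with $\ell=d-2$) or at preimages on $\bd\tilde\Delta_n$. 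A preimage on a coordinate facet reduces, after deleting the vanishing coordinate, to the same problem on $\tilde\Delta_{n-1}$ and inductively contributes a point already on the list (of multiplicity length $\leq d-2$); a preimage on $\Delta_{n-1}$ is of type (1) or (2) with $\ell=d-1$ by Theorem \ref{thm:1}, and the analysis above eliminates the type (2) case. Closing under limits then yields the stated description. The main obstacle is the delicate sign-tracking of $J(-x)$ relative to the tangent hyperplane in the type (1)/type (2) dichotomy on the $\Delta_{n-1}$-facet; a clean alternative that avoids a direct Hessian computation is a limiting argument in which one perturbs $x\in\Delta_{n-1}$ slightly into $\tilde\Delta_n$ by scaling coordinates down and tracks which of the interior critical-point types with $\ell=d-2$ are approached, thereby transferring the asymmetry already encoded in Corollary \ref{cor:pap3 1}.
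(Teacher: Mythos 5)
Your overall architecture does match the paper's: the forward direction via classification of critical points of $p_{\alpha_{d-1}}$ on fibers of the first $d-2$ power sums, the converse via the projection onto the first $d-2$ coordinates together with fiber intervals, Givental/Kostov connectedness, and a closure-under-limits step. However, the one genuinely new ingredient of this theorem relative to Theorem \ref{thm:1} --- deciding what happens on the facet $\{p_1=1\}$, i.e.\ why type (1) points of multiplicity length $d-1$ with coordinate sum $1$ stay on the boundary while the corresponding type (2) points become interior, and why both types of length $d-2$ appear when the sum is $<1$ --- is exactly the step you do not prove. You assert the ``decisive sign check'' that $J(H^-)$ lies on the same, respectively opposite, side of the tangent hyperplane for type (1), respectively type (2), say it ``can be seen directly'' from a second-order comparison, and then offer a limiting/perturbation argument as an alternative, but neither is carried out; you yourself flag it as ``the main obstacle.'' Note that a first-order tangent-cone computation cannot settle this on its own: the image of $\Delta_{n-1}$ is itself tangent to the same hyperplane at $\nu_{n,\alpha}(x)$, so whether the scaled-down points fill in the missing side or merely hug the side already covered is a second-order question, and that is precisely where the Morse/Hessian analysis must be redone in the presence of the inequality constraint. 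The paper resolves this cleanly in Lemma \ref{lem:sub-simplex max/min} by replacing Lagrange multipliers with the Karush--Kuhn--Tucker conditions for the active constraint $p_2\le 1$: the sign restriction on the multiplier $\lambda_0$ changes the Descartes count for the auxiliary univariate polynomial (as in \eqref{eq:f}), so that for one sign of $\lambda_0$ a critical point can have multiplicity length at most $d-2$, and only the remaining sign (a local maximum for $d$ even, a local minimum for $d$ odd) admits length $d-1$, which Corollary \ref{cor:pap3 1} then forces to be of type (1). Without this, or a completed version of your sign or limiting argument, the restriction to type (1) in the third item of the statement is not established.

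Two smaller points. First, your handling of fiber endpoints whose preimages lie ``on a coordinate facet'' by induction on $n$ is both unnecessary and slightly off: zero coordinates are already encoded in $m_0$ (and the paper removes the constraints $x_i\ge 0$ altogether by squaring the variables), so the only boundary stratum of $\tilde{\Delta}_n$ requiring separate treatment is $\{p_1=1\}$; moreover your parenthetical claim that such endpoints have multiplicity length $\le d-2$ is incorrect, since the inductive description of $\bd\,\nu_{n-1,\alpha}(\tilde{\Delta}_{n-1})$ also contains type (1) points of length $d-1$ with coordinate sum $1$, which lift to points of the same kind in $\tilde{\Delta}_n$. Second, for the converse you also need the sub-probability analogues of connectedness of the constrained fibers, uniqueness of boundary preimages, and the interval fibration; the paper supplies these as Lemma \ref{lem: subSimplex Vandermonde variety connected} (proved by slicing with the simplices of fixed coordinate sum and invoking Lemma \ref{lem:interval}), Lemma \ref{lem: unique pre-image subSimplex}, and Lemma \ref{lem:sub-simplex interval}, with all type information flowing from Lemma \ref{lem:sub-simplex max/min}; your sketch presupposes these but does not indicate how the connectedness statement is adapted to the ball constraint.
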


As in Subsection \ref{subsection boundary of the vandermonde cell} we consider $\alpha= (\alpha_1, \ldots, \alpha_{d-1}) \in \Z_{\geq 1}^n$ with $\alpha_i < \alpha_{i+1}$, write $\beta := 2\cdot \alpha$, and assume $\alpha_1 \geq 2$. Then we have \[\nu_{n,\alpha}(\tilde{\Delta}_n)=\nu_{n,\beta}(\mathcal{W}_n \cap \{x\in \R^n : p_2(x) \leq 1\}).\]

By Remark \ref{rem:gen}, $V_k^\beta(c)$ is generically smooth and the rank of the Jacobian matrix of the map $(p_{\alpha_1},\ldots,p_{\alpha_k})$ at the relative interior of an $\ell$-dimensional face of $\widetilde{W}_n$ is $\min\{\ell,k\}$. 

For the sub-probability simplex $\tilde{\Delta}_n$ we will be interested in the intersection of $V_k^\beta(c)$ with the unit ball in $\R^n$ defined by $p_2(x)\leq 1$, which we denote by $\tilde{V}_k^\beta(c)$. The crucial difference is in adjusting the statements of Corollary \ref{cor:pap3 1}.

\begin{lemma} \label{lem:sub-simplex max/min}
Suppose that $V_{d-2}^\beta(c)$ and $V_{d-1}^{(2,\beta_1,\ldots,\beta_{d-2})}(1,c_1,\ldots,c_{d-2})$ are both smooth, and let $x$ be a critical point of $p_{\beta_{d-1}}$ on $\tilde{V}_{d-2}^\beta(c)$. Then $x$ belongs to the following two cases
\begin{enumerate}[label=(\alph*)]
   \item $x$ has multiplicity length $d-2$ and is either of type $(1)$ or $(2)$, or
    \item $x$ has multiplicity length $d-1$ and is of type $(1)$ with $p_2(x)=1$. 
\end{enumerate}
Additionally, all the local extrema are strict in these cases. For odd $d$, case $(a)$ type $(1)$ points are local maxima and case $(a)$ type $(2)$ points and case $(b)$ points are local minima. For even $d$, case $(a)$ type $(1)$ points are local minima, and case $(a)$ type $(2)$ points and case $(b)$ points are local maxima.
\end{lemma}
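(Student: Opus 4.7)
The plan is to split critical points of $p_{\beta_{d-1}}$ on $\tilde{V}_{d-2}^\beta(c)$ according to whether $p_2(x) < 1$ (interior, giving case (a)) or $p_2(x) = 1$ (boundary, giving case (b)), and analyze each case by adapting the Lagrange-multiplier/Descartes/interlacing machinery used in Lemma \ref{lem:critical points}, Proposition \ref{prop2}, and Corollary \ref{cor:pap3 1}.

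For interior critical points only the $d-2$ equality constraints $p_{\beta_1},\ldots,p_{\beta_{d-2}}$ are active, so the Lagrange condition yields a polynomial
\[
f(t) = \beta_{d-1} t^{\beta_{d-1}-1} + \sum_{i=1}^{d-2} \lambda_i \beta_i t^{\beta_i-1}
\]
with only $d-1$ monomials, one fewer than in the probability-simplex setting. Descartes' rule of signs then bounds the multiplicity length of $x$ above by $d-2$, and smoothness of $V_{d-2}^\beta(c)$ (cf. Lemma \ref{lem:d-1 points}) forces equality; the only admissible multiplicity patterns are types (1) and (2), as in the probability-simplex analysis. The Hessian analysis mirrors Proposition \ref{prop2} one dimension down: on the $(n-d+2)$-dimensional tangent space the form is diagonal with entries $f'(\tilde x_i)$, and the alternating signs $f'(y_{d-2})>0,\ f'(y_{d-3})<0,\ldots$ (from interlacing of roots of $f$ and $f'$) give the Morse index and hence the stated extremum types. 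The delicate subtlety I must address is that for a type (1) point with $m_0 \geq 1$ zero coordinates, $f'(0)=0$ (because $\beta_1 \geq 4$), so the Hessian is degenerate in those directions. I will handle this by Taylor-expanding $p_{\beta_{d-1}}$ along a path that moves a zero coordinate out by $\epsilon > 0$ while compensating the remaining coordinates to stay on $V_{d-2}^\beta(c)$, obtaining a leading change $\lambda_1 \epsilon^{\beta_1}$, and then reading off $\operatorname{sign}(\lambda_1) = (-1)^d$ from $f(t)/t^{\beta_1-1}$ at $t=0$ via the same sign-alternation argument, which matches the sign dictated by the non-degenerate directions.

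For boundary critical points, $x$ lies on the slice $V_{d-1}^{(2,\beta_1,\ldots,\beta_{d-2})}(1,c)$ and is a critical point of $p_{\beta_{d-1}}$ restricted to it, so by Corollary \ref{cor:pap3 1} it has multiplicity length $d-1$ and is of type (1) or (2) with a known slice-extremum character. To decide whether $x$ extends to an extremum of $p_{\beta_{d-1}}$ on the full $\tilde{V}_{d-2}^\beta(c)$, I will compute the inward derivative along a path $x(s) \subset V_{d-2}^\beta(c)$ with $p_2(x(s)) = 1-s$. A direct Lagrange calculation gives
\[
\frac{d}{ds} p_{\beta_{d-1}}(x(s)) \Big|_{s=0} = \lambda_0,
\]
where $\lambda_0$ is the slice Lagrange multiplier attached to $p_2$, and applying the sign-alternation argument to $f(t)/t$ at $t=0$ yields $\operatorname{sign}(\lambda_0) = (-1)^{d-1}$. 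Combining with Corollary \ref{cor:pap3 1}: for odd $d$ the inward direction strictly increases $p_{\beta_{d-1}}$, so a type (1) slice minimum stays a local minimum on $\tilde{V}_{d-2}^\beta(c)$ while a type (2) slice maximum becomes a saddle; for even $d$ the inward direction strictly decreases $p_{\beta_{d-1}}$ and only type (1) survives, now as a local maximum. This is exactly why case (b) excludes type (2). Strictness of all extrema then follows because $f'(y_i)$, $\lambda_0$, and $\lambda_1$ are all nonzero by the Descartes computations.

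The hard part will be the degenerate Hessian at zero coordinates in case (a): there one cannot read off the extremum type from any quadratic approximation and must instead track the $\beta_1$-th order Taylor coefficient along paths leaving $V_{d-2}^\beta(c)$. Fortunately, the sign computation for $\lambda_1$ reuses exactly the same interlacing/Descartes device as the computation of $\operatorname{sign}(\lambda_0)$ in case (b) — evaluating $f$ divided by an appropriate power of $t$ at the origin — so a single family of sign-alternation arguments controls both the missing Hessian direction in case (a) and the inward derivative in case (b).
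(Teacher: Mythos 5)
Your proposal follows the same overall route as the paper: split critical points according to whether the constraint $p_2\le 1$ is inactive or active, and in each case run the Lagrange/KKT--Descartes--interlacing machinery of Lemma \ref{lem:critical points}, Proposition \ref{prop2} and Corollary \ref{cor:pap3 1}. In the active case the paper argues via KKT with a sign-constrained multiplier $\lambda_0$ and Descartes (a length-$(d-1)$ point forces all $d$ coefficients of $f$ to alternate, so $\operatorname{sign}(2\lambda_0)=(-1)^{d-1}$, whence only minima for $d$ odd and maxima for $d$ even can occur there, and these are type (1) by Corollary \ref{cor:pap3 1}); your version — Corollary \ref{cor:pap3 1} on the slice plus the inward derivative $\frac{d}{ds}p_{\beta_{d-1}}(x(s))\big|_{s=0}=\lambda_0$ with $\operatorname{sign}(\lambda_0)=(-1)^{d-1}$ — is the same computation packaged differently, and it has the virtue of making explicit why type (2) slice extrema are saddles of the full problem and why the surviving case-(b) extrema have the stated min/max character. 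In the inactive case you correctly identify a point the paper passes over: with $\lambda_0=0$ the polynomial $f$ has no degree-one term, so $f'(0)=0$ and the Hessian of the Lagrangian degenerates in the $m_0$ zero-coordinate directions of a type (1) point, so the function is no longer Morse there; the paper simply says the arguments of Corollary \ref{cor:pap3 1} apply ``in exactly the same way,'' which is not literally true. Your fix — the leading change $\lambda_1\epsilon^{\beta_1}$ with $\operatorname{sign}(\lambda_1)=(-1)^{d}$, agreeing with the sign of $f'(y_{2k})$ on the nondegenerate directions — is correct.

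One refinement you should make when writing this up: verifying the sign of the increment along one-parameter paths (one zero coordinate moved out, the rest compensated) does not by itself prove a strict local extremum, since sign-definiteness along every curve through a point is weaker than sign-definiteness on a neighborhood. The clean way to close this is to parametrize $V_{d-2}^\beta(c)$ near the critical point by the zero coordinates and the repeated coordinates via the implicit function theorem, write $p_{\beta_{d-1}}-p_{\beta_{d-1}}(\tilde x)=\sum_j\bigl(h(x_j)-h(\tilde x_j)\bigr)$ with $h'=f$, and observe that the dependent singleton coordinates move only at second order (their first-order response to the free directions vanishes, by the Vandermonde structure of the linearized constraints), so their wrong-sign contributions are of strictly higher order than the negative-definite quadratic part in the repeated directions and the $\lambda_1\sum_j u_j^{\beta_1}$ part in the zero directions. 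With that uniform estimate in place, your argument gives exactly the statement of the lemma, including strictness.
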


\begin{proof}
The proof follows the same line of arguments as the ones used for $\Delta_{n-1}$. 
As before we will argue via the Lagrange function defined in \eqref{eq:largange}. Now, however, the optimiality conditions that we need are the Karush-Kuhn-Tucker criterion \cite[Thm. C.15]{lasserre2010moments}, instead of Lagrange multipliers we used previously. The condition for $z=(z_1,\ldots,z_n)\in\R^n_{\geq 0}$ to be a critical point of $p_{\beta_{d-1}}$ is the following:
\[ 0=\nabla p_{\beta_{d-1}} (z)  +  \lambda_0\nabla p_2 (z)+\lambda_1 \nabla p_{\beta_1}(z) + \ldots + \lambda_{d-2} \nabla p_{\beta_{d-2}}(z), \]
with $\lambda_0\geq 0$ in case we are interested in a minimum, and $\lambda_0\leq 0$ in case we are interested in a maximum of $p_{\beta_{d-1}}$. As in the proof of Lemma \ref{lem:critical points} this leads to a univariate polynomial $f(t)$ of degree $\beta_{d-1}$ such that $f(z_i)=0$ for all $1\leq i\leq n$. However, depending on whether $p_2(z)=1$  or $p_2(z)<1$ we have  two  cases that we need to discuss separately:
\begin{enumerate}
    \item For the first case, we suppose that the constraint $p_2(z)\leq 1$ is not active, i.e., $p_2(z)<1$. In this case $\lambda_0=0$, and the arguments in the proof of 
    Corollary \ref{cor:pap3 1} apply. The only difference is that the number of power sums involved is $d-2$ instead of $d-1$. However, we proceed in exactly the same way, and we obtain that boundary points with $p_2<1$ are given by points of multiplicity length $d-2$ and type (1) and (2). We also get the information on local maxima and minima as claimed.
    \item For the second case, suppose that the constraint $p_2(z)\leq1$ is active (i.e. we have $p_2(z)=1$). Here we can also follow almost the same argument as in the proof Corollary \ref{cor:pap3 1}. However, unlike in the case when we use the constraint  $p_2=1$, here the constraint $p_2\leq 1$ imposes inequality constraints on $\lambda_0$. To analyze this difference, we observe, that for $d$ odd and $\lambda_0\geq 0$ we have by Descartes' rule of signs that the critical point $z$ can have multiplicity length at most $d-2$. Thus the multiplicity length of $z$ is exactly $d-2$, and it is of type (1) or (2). 
    Similarly, even $d$ and $\lambda_0\geq0$ will also yield a point of multiplicity length $d-2$ and type (1) or (2). Therefore for $d$  even, the only case that can lead to points of a different type is $\lambda_0<0$ which means that $z$ will be a local maximum, and for $d$ odd the only case is $\lambda_0>0$ which means that $z$ is a local minimum. Then by Corollary \ref{cor:pap3 1} these points will have multiplicity length $d-1$ and type (1).
\end{enumerate}

\end{proof}

\begin{lemma} \label{lem: subSimplex Vandermonde variety connected}
For all $1 \leq k \leq d-1$ the intersection of the Vandermonde variety $V_k^\beta(c)$ with $\{x \in \mathcal{W}_n : p_2(x) \leq 1\}$ is connected.    
\end{lemma}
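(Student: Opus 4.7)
The plan is to adapt the induction argument of Lemma \ref{lem:interval} to the sub-simplex setting, inducting on $k$. I will extend the range of the statement to $k = 0$, where $\mathcal{W}_n \cap \{p_2 \le 1\}$ is the intersection of a convex cone with a Euclidean ball and is therefore connected. For the inductive step, assume the claim for $k-1$ and fix $c = (c', c_k) \in \R^{k-1} \times \R$. The constraint $p_{\beta_1}(x) = c_1$ confines $x \in \R_{\ge 0}^n$ to a bounded set, so the inductively connected set $\tilde V_{k-1}^\beta(c') = V_{k-1}^\beta(c') \cap \mathcal{W}_n \cap \{p_2 \le 1\}$ is compact, and $\tilde V_k^\beta(c)$ is exactly the level set $\{p_{\beta_k} = c_k\}$ inside it. Kostov's Lemma \ref{lemma:kostov} then reduces the task to showing that all but finitely many such level sets are connected.

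For the smooth case I will first argue, by a Descartes' rule of signs argument parallel to Lemma \ref{lem:critical points}, that the critical values of $p_{\beta_k}$ on $V_{k-1}^\beta(c')$ and on $V_{k-1}^\beta(c') \cap \{p_2 = 1\}$ form a finite set: critical points have multiplicity length bounded by $k-1$ (respectively $k$), and for each admissible multiplicity pattern there are only finitely many solutions to the corresponding Lagrange or KKT system. For any $c_k$ outside this finite exceptional set, and with $c'$ chosen so that both $V_{k-1}^\beta(c')$ and $V_{k-1}^\beta(c') \cap \{p_2=1\}$ are smooth, Lemma \ref{lem:sub-simplex max/min} classifies every critical point of $p_{\beta_k}$ on $\tilde V_{k-1}^\beta(c')$ as a strict local extremum of type (a) or (b). In the local coordinates provided by the transformation $\phi_m$ from the proof of Lemma \ref{lem:interval}, such a neighborhood looks like $\R_{\ge 0}^a \times \R_{\ge 0}^b$ with Hessian $Q_+ + Q_-$ and either $a = 0$ or $b = 0$, so Givental's Lemma \ref{lemma:giv} shows that each critical point produces only the birth or the death of a simplex-like connected component of the level set, and never a merge.

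A Reeb-graph style argument then completes the smooth case. Sweeping $c_k$ from $-\infty$ to $+\infty$, every connected component of every level set is born at some local minimum of $p_{\beta_k}$ and dies at some local maximum, without ever joining another component; distinct birth-death pairs therefore sweep out pairwise disjoint connected sub-regions of $\tilde V_{k-1}^\beta(c')$. Were any level set to contain two or more connected components, these sub-regions would disconnect $\tilde V_{k-1}^\beta(c')$, contradicting the inductive hypothesis. Hence the level set is connected for every generic $c_k$, and Kostov's Lemma extends this to every $c_k$.

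The principal obstacle will be the non-smooth case, in which either $V_{k-1}^\beta(c')$ or $V_{k-1}^\beta(c') \cap \{p_2 = 1\}$ fails to be smooth and Lemma \ref{lem:sub-simplex max/min} does not apply directly. Here I will follow the strategy from the last paragraph of the proof of Lemma \ref{lem:interval}: analyze the rank of the Jacobian of $(p_2, p_{\beta_1}, \ldots, p_{\beta_k})$ on each face of $\mathcal{W}_n$ using Remark \ref{rem:rank}, so that each face intersects the variety either transversally or in a zero-dimensional set; approximate $c'$ by parameters $\hat c' \to c'$ for which both relevant varieties are smooth and use transversality to pass connectedness to the limit, while the zero-dimensional case is handled directly by another application of Kostov's Lemma.
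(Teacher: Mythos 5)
Your plan is workable, but it is a genuinely different (and much heavier) route than the one the paper takes. You re-run the entire Morse-theoretic induction in the constrained region: induction on $k$, the KKT classification of critical points from Lemma \ref{lem:sub-simplex max/min}, Givental's Lemma \ref{lemma:giv} for births/deaths, Kostov's Lemma \ref{lemma:kostov} and a transversality/approximation argument for the non-smooth fibers. The paper instead gives a short point-set argument: it slices $\widetilde{W}_n$ by the scaled simplices $\Delta_n(s)$ (the level sets of the coordinate sum), observes that each slice $V_k^\alpha(c)\cap\Delta_n(s)\cap\mathcal{W}_n$ is connected by Lemma \ref{lem:interval}, so in any hypothetical splitting $A\cup B$ of $V_k^\alpha(c)\cap\widetilde{W}_n$ every nonempty slice lies entirely in $A$ or entirely in $B$; closedness of $A$ and $B$ then forces an empty intermediate slice, which would disconnect $V_k^\alpha(c)\cap\mathcal{W}_n$, contradicting the connectedness already established by the machinery of Lemma \ref{lem:interval}. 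What the paper's reduction buys is that no new Morse analysis is needed in the presence of the extra constraint $p_2\leq 1$; your sweep has to handle precisely the points that constraint creates, namely critical points on the sphere $p_2=1$ (where the local model has an additional corner before Givental's lemma applies) and interior critical points with zero coordinates, where the Lagrangian Hessian is degenerate in those directions because the univariate polynomial $f$ has no linear term once $\lambda_0=0$, so the "Hessian $Q_++Q_-$ with $a=0$ or $b=0$" picture needs extra justification beyond a verbatim transfer of Proposition \ref{prop2}. Also note that Lemma \ref{lem:sub-simplex max/min} classifies which critical points are extrema; critical points of other multiplicity patterns do occur and must be dismissed via the trivial-reconstruction case of Givental's lemma rather than declared extrema. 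With those caveats your argument can be completed, but the slicing reduction is both shorter and avoids these delicate local analyses.
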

\begin{proof}
Note that this is equivalent to verifying that $V_k^\alpha (c) \cap \widetilde{W}_n $ is connected.
 Suppose that $V_k^\alpha(c)\cap\widetilde{W}_n $ is disconnected. Since $V_k^\alpha(c) \cap \widetilde{W}_{n}$ is closed, it follows that there exist two non-empty closed sets $A$ and $B$ such that $V_k^\alpha(c) \cap \widetilde{W}_{n}=A\cup B$ and $A\cap B=\emptyset$. Let $\Delta_{n-1}(s)$ be the simplex in the nonnegative orthant where the sum of coordinates is $s$. We know that $V_k^\alpha(c)\cap \Delta_n(s) \cap W_{n-1}$ is connected for all $s\geq 0$ by Lemma \ref{lem:interval}. It follows that for all $s\geq 0$ we have $A\cap \Delta_n(s)  =\emptyset$ or $B\cap \Delta_n(s)=\emptyset$. Since both $A$ and $B$ are non-empty and closed, we see that there exist some $0<s_1<s_2<s_3$ such that $A\cap \Delta_n(s_1)$ is nonempty $A\cap\Delta_n(s_2)=B\cap \Delta_n(s_2)=\emptyset$ and $B\cap \Delta(s_3)$ is non-empty (switch $A$ and $B$ if needed). But then $V_k^\alpha(c) \cap \mathcal{W}_n$ is disconnected, which is a contradiction.   
\end{proof}

\begin{lemma}\label{lem: bound distinct coordinates sub-simplex}
Any pre-image in $\tilde{\Delta}_n$ of a point in $\bd \nu_{n,\alpha}(\tilde{\Delta}_n)$ has at most $d-1$ distinct positive coordinates.
\end{lemma}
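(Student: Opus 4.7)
My plan is to establish the contrapositive: if $x\in\tilde{\Delta}_n$ has multiplicity length $\ell\geq d$, then $\nu_{n,\alpha}(x)$ lies in the interior of $\nu_{n,\alpha}(\tilde{\Delta}_n)$. Set $I_0:=\{i\in[n]:x_i=0\}$ and $J:=[n]\setminus I_0$, so that $|J|\geq\ell\geq d$, and note that $\nu_{n,\alpha}$ depends only on the coordinates indexed by $J$ since $\alpha_i\geq 2$. I will consider separately the subcases $p_1(x)<1$ and $p_1(x)=1$ and show that in each of them an appropriate restriction of $\nu_{n,\alpha}$ is open at $x$, which forces $\nu_{n,\alpha}(x)$ into the interior of the image by the inverse function theorem.

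In the subcase $p_1(x)<1$, a neighborhood of $x$ inside the face of $\tilde{\Delta}_n$ obtained by freezing the coordinates in $I_0$ at $0$ is an open subset of $\R^J$, since the constraints $y_j\geq 0$ for $j\in J$ and $p_1(y)\leq 1$ are all strict at $x$. The Jacobian of $\nu_{n,\alpha}$ on $\R^J$ is a $(d-1)\times|J|$ matrix whose row-dependences correspond, as in the proof of Lemma~\ref{lem:d-1 points}, to univariate polynomials $\sum_{i=1}^{d-1}\lambda_i\alpha_i\,t^{\alpha_i-1}$ with $d-1$ terms; Descartes' rule of signs forbids such a polynomial from vanishing at $\ell\geq d$ distinct positive arguments unless all $\lambda_i=0$, so the Jacobian has rank $d-1$ and the map is open at $x$.

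In the subcase $p_1(x)=1$, the relevant face of $\tilde{\Delta}_n$ through $x$ is cut out inside $\R^J$ by the single additional constraint $p_1=1$ and is therefore locally a hyperplane of dimension $|J|-1$. Openness of $\nu_{n,\alpha}$ restricted to this hyperplane at $x$ reduces to the enlarged Jacobian of $(p_1,p_{\alpha_1},\ldots,p_{\alpha_{d-1}})$ on $\R^J$ having rank $d$; row-dependences here correspond to polynomials $\lambda_0+\sum_{i=1}^{d-1}\lambda_i\alpha_i\,t^{\alpha_i-1}$ with $d$ terms, which by Descartes cannot vanish at $\ell\geq d$ distinct positive values unless all $\lambda_i=0$. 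The only conceptual subtlety is that the active constraint $p_1=1$ costs a dimension which must be compensated by requiring one additional distinct positive coordinate, so that the bound in the lemma is $d-1$ rather than the sharper $d-2$ valid when $p_1(x)<1$; otherwise the argument is a direct transcription of the Jacobian-plus-Descartes technique used throughout Section~\ref{subsection boundary of the vandermonde cell}.
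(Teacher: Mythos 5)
Your proof is correct and follows essentially the same route as the paper: split according to whether the constraint $p_1(x)=1$ is active, show via Descartes' rule of signs that the Jacobian of $(p_{\alpha_1},\ldots,p_{\alpha_{d-1}})$ (resp.\ of the enlarged map including $p_1$) has full rank when there are at least $d-1$ (resp.\ $d$) distinct positive coordinates, and conclude by the inverse function theorem that such points map into the interior. The only cosmetic difference is that you work with the unsquared exponents on the face of $\tilde{\Delta}_n$ obtained by freezing the zero coordinates, whereas the paper squares the variables and invokes the smoothness criterion of Lemma \ref{lem:d-1 points} for the Vandermonde varieties $V_{d-1}^{\beta}(c)$ and $V_d^{(2,\beta)}(1,c)$.
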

\begin{proof}
We consider the pre-images in $\mathcal{W}_n \cap \{ x \in \R^n : p_2(x) \leq 1\}$ of points in $\bd \nu_{n,\alpha}(\tilde{\Delta}_n)$ under the map $\nu_{n,\beta}$. 
 We distinguish between the points with $p_2(x)=1$ and $p_2(x) < 1$. If $z \in V_{d}^{(2,\beta)}(1,c) \cap \mathcal{W}_n$, where $c=(c_1,\dots,c_{d-1})$, then $z$ is a smooth point of the Vandermonde variety $V^{(2,\beta_1,\ldots,\beta_{d-1})}_d(1,c)$ if and only if the number of distinct positive coordinates of $z$ is at least $d$ by Lemma \ref{lem:d-1 points}. 
 Since the Jacobian matrix of the map $\R^n \to \R^d, x \mapsto (p_2(x),p_{\beta_1}(x),\ldots,p_{\beta_{d-1}}(x)) $ has full rank at a smooth point $z$, $c$ cannot lie on the boundary of $\nu_{n,\alpha}(\tilde{\Delta}_n)$ by the inverse function theorem.
 
Analogously, any point in $V_{d-1}^\beta \cap \mathcal{W}_n \cap \{x \in \R^n :  p_2(x) < 1\}$ with at least $d-1$ distinct positive coordinates is a smooth point in the variety $V_{d-1}^\beta(c) $. Again by the inverse function theorem such a point cannot be mapped to a point in $\bd \nu_{n,\alpha}(\tilde{\Delta}_n)$.    
\end{proof}

Combining Lemmas \ref{lem:homeom} and Lemma \ref{lem: bound distinct coordinates sub-simplex} we obtain that the pre-image in $\tilde{\Delta}_n$ of any point in the boundary of $\nu_{n,\alpha}(\tilde{\Delta}_n)$ is unique up to permutation. 
\begin{lemma}\label{lem: unique pre-image subSimplex}
Any point in $\bd \nu_{n,\alpha}(\tilde{\Delta}_n)$ has a unique pre-image, up to permutation, in $\tilde{\Delta}_{n}$.
\end{lemma}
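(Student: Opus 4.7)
The plan is to mirror the proof of Lemma~\ref{lem: unique pre-image Simplex}, substituting each ingredient by its sub-probability-simplex analog already established in this subsection.

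First, I would reduce to showing that the pre-image of $c \in \bd \nu_{n,\alpha}(\tilde{\Delta}_n)$ in the chamber $\widetilde{W}_n$ is a single point, since any pre-image in $\tilde{\Delta}_n$ is related to one in $\widetilde{W}_n$ by a coordinate permutation. Set $F := \nu_{n,\alpha}^{-1}(c) \cap \widetilde{W}_n$; this is a non-empty closed subset of $\widetilde{W}_n$. By Lemma~\ref{lem: subSimplex Vandermonde variety connected} the set $F$ is connected, and by Lemma~\ref{lem: bound distinct coordinates sub-simplex} every $x \in F$ has multiplicity length at most $d-1$.

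Next, I would stratify $F$ by multiplicity vector. There are only finitely many admissible multiplicity patterns $\omega = (\omega_0, \omega_1, \ldots, \omega_\ell)$ with $\ell \leq d-1$ and $\omega_0 + \omega_1 + \ldots + \omega_\ell = n$, so $F$ is a finite disjoint union of strata $F_\omega$ indexed by such $\omega$. A point in $F_\omega$ is determined by its ordered distinct positive values $0 < y_1 < \ldots < y_\ell$ subject to $\omega_1 y_1 + \ldots + \omega_\ell y_\ell \leq 1$; these lists range over the relative interior of the full-dimensional compact polyhedron $K_\omega := \{y \in \mathcal{W}_\ell : \omega_1 y_1 + \ldots + \omega_\ell y_\ell \leq 1\}$. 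Applying Lemma~\ref{lem:homeom} to $K_\omega$ with positive weights $(\omega_1, \ldots, \omega_\ell)$ and any $\ell$ of the exponents $\alpha_1 < \ldots < \alpha_{d-1}$ (say the first $\ell$) yields a homeomorphism onto its image, so the map $\nu_{n,\alpha}$ restricted to $F_\omega$ is injective. Hence each stratum $F_\omega$ contains at most one pre-image of $c$.

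Combining these observations, $F$ is a finite, non-empty, closed, connected subset of the Hausdorff space $\widetilde{W}_n$, and therefore consists of exactly one point. This yields uniqueness up to permutation in $\tilde{\Delta}_n$. The main technical point, as in the probability-simplex case, is the stratification step: one must verify that Lemma~\ref{lem:homeom} truly applies on each polyhedron $K_\omega$, using that the generalized Vandermonde matrix built from any $\ell$ of the exponents $\alpha_1, \ldots, \alpha_{d-1}$ remains totally positive, so that the weighted map is a homeomorphism onto its image regardless of which $\ell$ exponents we select.
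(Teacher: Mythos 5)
Your proposal is correct and follows essentially the same route as the paper's own proof: connectedness of the fiber in $\widetilde{W}_n$ (Lemma \ref{lem: subSimplex Vandermonde variety connected}), the bound of at most $d-1$ distinct positive coordinates (Lemma \ref{lem: bound distinct coordinates sub-simplex}), injectivity on each of the finitely many multiplicity strata via the weighted-Vandermonde homeomorphism of Lemma \ref{lem:homeom}, and the observation that a finite connected set is a single point. If anything, you spell out the stratification for all multiplicity lengths $\ell \leq d-1$ slightly more explicitly than the paper does, but the argument is the same.
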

\begin{proof}
For any $c \in \bd\nu_{n,\alpha}(\tilde{\Delta}_n)$ the set $W_\alpha(c):=V^\alpha_{d-1}(c) \cap \widetilde{W}_{n} $ is connected by Lemma \ref{lem: subSimplex Vandermonde variety connected}. 
Moreover, the set $W_\alpha(c)$ consists only of points with at most $d-1$ distinct positive coordinates by Lemma \ref{lem: bound distinct coordinates sub-simplex}. However, there are only finitely many multiplicity patterns $\omega$ that encode the equal coordinates of points in $W_\alpha(c)$.
For multiplicity length $d-1$ the weighted Vandermonde map $\nu_{\omega,\alpha}$ mapping $\tilde{\Delta}_{d-1}$ to a subset of $\nu_{n,\alpha}(\tilde{\Delta}_n)$ is a homeomorphism by Lemma \ref{lem:homeom}. Thus, for any of the finitely many multiplicity patterns there is maximally one point in pre-image of $c$ in $\widetilde{W}_n$. However, the pre-image of $c$ in $\widetilde{W}_n$ is connected by Lemma \ref{lem: subSimplex Vandermonde variety connected} which implies that there is a unique pre-image in $\tilde{\Delta}_n$ up to permutation.  
\end{proof}

Additionally we need the following statement, which is the equivalent of Lemma \ref{lem:interval} for $\tilde{
\Delta}_n$. 

\begin{lemma}\label{lem:sub-simplex interval}
Let $n \geq d \geq 2$ and consider the projection $\pi: \nu_{n,\alpha}(\tilde{\Delta}_n)\rightarrow \nu_{n,\alpha'}(\tilde{\Delta}_n)$
onto the first $d-2$ coordinates. Then the pre-image of any point on the boundary of $\nu_{n,\alpha'}(\tilde{\Delta}_n)$ is a single point, and the pre-image of any point in the interior of $\nu_{n,\alpha'}(\tilde{\Delta}_n)$ is a non-degenerate interval. 
\end{lemma}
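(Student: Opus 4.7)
The plan is to adapt the proof of Proposition \ref{prop: projection of Vandermonde cell} to the sub-simplex setting by substituting the probability-simplex inputs (Corollary \ref{cor:pap3 1} and Lemma \ref{lem: unique pre-image Simplex}) with the sub-simplex counterparts (Lemma \ref{lem:sub-simplex max/min} and Lemma \ref{lem: unique pre-image subSimplex}).

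First I would observe that for every $\bar c\in\nu_{n,\alpha'}(\tilde{\Delta}_n)$, the fiber $\pi^{-1}(\bar c)$ equals the image of the level set $\nu_{n,\alpha'}^{-1}(\bar c)\cap\widetilde{W}_n$ under the continuous function $p_{\alpha_{d-1}}$. By Lemma \ref{lem: subSimplex Vandermonde variety connected} this level set is connected, and hence $\pi^{-1}(\bar c)$ is a (possibly degenerate) interval. If $\bar c\in\bd\nu_{n,\alpha'}(\tilde{\Delta}_n)$, then Lemma \ref{lem: unique pre-image subSimplex} applied with $\alpha'$ in place of $\alpha$ yields a unique pre-image of $\bar c$ in $\tilde{\Delta}_n$ up to permutation, so the fiber collapses to a single point.

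For the converse direction I would proceed by contradiction: assume $\bar c\in\inti\nu_{n,\alpha'}(\tilde{\Delta}_n)$ has a degenerate fiber. Approximate $\bar c$ by a sequence of generic points $c_i\in\inti\nu_{n,\alpha'}(\tilde{\Delta}_n)$, chosen so that both Vandermonde varieties appearing in the hypothesis of Lemma \ref{lem:sub-simplex max/min} are smooth. Algebraic independence of $p_1,p_{\alpha_1},\ldots,p_{\alpha_{d-1}}$ then forces $\pi^{-1}(c_i)$ to be a nondegenerate interval whose endpoints are critical points of $p_{\alpha_{d-1}}$ on $\nu_{n,\alpha'}^{-1}(c_i)\cap\widetilde{W}_n$. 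By Lemma \ref{lem:sub-simplex max/min}, every such endpoint is either of type (1) or (2) with multiplicity length $d-2$ and $p_1<1$, or of type (1) with multiplicity length $d-1$ and $p_1=1$. By finiteness of multiplicity vectors and compactness, after passing to a subsequence the minimizer sequence $x^{(i)}$ and the maximizer sequence $y^{(i)}$ may be assumed to have fixed multiplicity patterns and to converge, say to $x^*$ and $y^*$ respectively. Since $\pi^{-1}(\bar c)$ is a single point $(\bar c,t^*)$ and $\nu_{n,\alpha}(\tilde{\Delta}_n)$ has nonempty interior in $\mathbb{R}^{d-1}$, the point $(\bar c,t^*)$ lies in $\bd\nu_{n,\alpha}(\tilde{\Delta}_n)$; invoking Lemma \ref{lem: unique pre-image subSimplex} at this boundary point forces $x^*=y^*$ up to permutation, and we set $z:=x^*=y^*$.

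The remaining, and main, task is to derive a contradiction by showing $\bar c=\nu_{n,\alpha'}(z)\in\bd\nu_{n,\alpha'}(\tilde{\Delta}_n)$. Since Lemma \ref{lem:sub-simplex max/min} guarantees that the fixed types of $x^{(i)}$ and $y^{(i)}$ realize opposite local extrema, these two types must differ, and a case analysis extending Lemma \ref{lem: two multiplicity vector sequences} exhibits $z$ as the limit of a sequence of points of one of the types classifying $\bd\nu_{n,\alpha'}(\tilde{\Delta}_n)$ in Theorem \ref{thm:subprob}, namely type (1) or (2) of multiplicity length $d-3$, or type (1) of multiplicity length $d-2$ with $p_1=1$. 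The main obstacle is this combinatorial case analysis, particularly the mixed subcase in which one sequence has length $d-2$ with $p_1<1$ while the other has length $d-1$ with $p_1=1$: then $p_1(z)=1$ is automatic, and one must carefully track how the two multiplicity vectors simultaneously collapse to that of $z$ in order to extract a valid type-(1), length-$(d-2)$, $p_1=1$ approximating sequence, possibly falling further to the lower-length type when several coordinates of $z$ coincide.
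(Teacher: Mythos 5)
Your outline follows the paper's proof of this lemma essentially step for step: the boundary direction via Lemma \ref{lem: unique pre-image subSimplex}, and the converse by contradiction through a generic sequence $c_i\to\bar c$, the endpoint classification of Lemma \ref{lem:sub-simplex max/min}, passage to fixed multiplicity patterns, and convergence of both endpoint pre-image sequences to a common point $z$ (a step the paper leaves implicit and you justify correctly via the boundary position of $(\bar c,t^*)$ and uniqueness of its pre-image). However, the decisive step — showing $\bar c=\nu_{n,\alpha'}(z)\in\bd\,\nu_{n,\alpha'}(\tilde\Delta_n)$ — is only announced as ``the remaining, and main, task'' and never carried out, so as written the proposal has a genuine gap exactly where the proof has to close.

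Moreover, your description of how that gap should be filled misjudges what is needed. In the unmixed case (both endpoint sequences of multiplicity length $d-2$, one of type (1), one of type (2)), Lemma \ref{lem: two multiplicity vector sequences} applies essentially verbatim with the lengths shifted down by one, and its constructed approximating sequences have coordinate sum at most that of the originals, hence stay in $\tilde\Delta_n$; their images are boundary-generating for $\nu_{n,\alpha'}(\tilde\Delta_n)$ by Lemma \ref{lem:sub-simplex max/min}. In the mixed case, no ``careful tracking of how the two multiplicity vectors simultaneously collapse'' is required: the case-(b) sequence forces $p_1(z)=1$, and then the type-(1), length-$(d-2)$ sequence $x^{(i)}$ can simply be rescaled to $x^{(i)}/p_1(x^{(i)})$, giving points of type (1), multiplicity length $d-2$ and coordinate sum exactly $1$ converging to $z$; these are precisely the case-(b) points for the truncated exponent vector $\alpha'$, so $\bar c$ lies in $\bd\,\nu_{n,\alpha'}(\tilde\Delta_n)$, the desired contradiction. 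This is the content of the paper's (terse) two-case conclusion; your proposal identifies the right cases but stops short of, and somewhat overcomplicates, their resolution.
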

We follow the same roadmap as in the proof of Proposition \ref{prop: projection of Vandermonde cell}, as we already established the equivalent statements for the subprobability simplex.
\begin{proof}
Any point $\bar{c}$ in the boundary of $\nu_{n,\alpha'}(\tilde{\Delta}_n)$ has a unique pre-image in $\nu_{n,\alpha}(\tilde{\Delta}_n)$, since $\bar{c}$ has a unique pre-image in $\widetilde{W}_n$ by Lemma \ref{lem: unique pre-image subSimplex}.

We now want to show that any $\bar{c} \in \nu_{n,\alpha'}(\tilde{\Delta}_n)$ with a unique pre-image in $\nu_{n,\alpha}(\tilde{\Delta}_n)$ must lie in the boundary of $\nu_{n,\alpha'}(\tilde{\Delta}_n)$.
We suppose otherwise and assume that $\bar{c}$ lies in $\inti \nu_{n,\alpha'}(\tilde{\Delta}_n)$. Then there is a sequence of generic points $c_i \in \inti \nu_{n,\alpha'}(\tilde{\Delta}_n)$ which converge to $\bar{c}$. Thus for any $c_i$ the varieties $V_{d-2}^\beta(c_i)$ and $V_{d-1}^{(2,\beta)}(1,c_i)$ are generic. The pre-image of $c_i$ in $\nu_{n,\alpha}(\tilde{\Delta}_n)$ is an interval and the endpoints are images of points of type (1) and (2) of multiplicity length $d-2$, or of type (1) of multiplicity lengths $d-2$ and $d-1$ by Lemma \ref{lem:sub-simplex max/min}.
Moreover, for $i \to \infty$ the endpoints of the intervals converge to the unique pre-image of $\bar{c}$ in $\nu_{n,\alpha}(\tilde{\Delta}_n)$. We can suppose without loss of generality that all endpoints are either points of type (1) and (2) of length $d-2$, or of type (1) with lengths $d-2$ and $d-1$. In the first case, by Lemma \ref{lem: two multiplicity vector sequences} the unique pre-image of $\bar{c}$ is also attained as the image of a limit of points of type (1) or (2) and of multiplicity length $d-3$. In particular, this shows $\overline{c} \in \bd \nu_{n,\alpha'}(\tilde{\Delta}_n)$ by Lemma \ref{lem:sub-simplex max/min}.
In the other case, we have that $\overline{c}$ is the image of a limit of points of type (1) with coordinate sum $1$ and multiplicity length $d-2$ and therefore $\bar{c}$ lies also in the boundary of $\nu_{n,\alpha'}(\tilde{\Delta}_n)$ by Lemma \ref{lem:sub-simplex max/min}. 
\end{proof}
\begin{proof}[Proof of Theorem \ref{thm:subprob}]
    We follow the same path as in the proof of Theorem \ref{thm:1}.

    Suppose that $\alpha \in \Z_{\geq 2}^{d-1}$ is an integer exponent vector with $\alpha_1 < \alpha_2 < \ldots < \alpha_d$. 
Any point of type (1) or (2) with multiplicity length $d-2$, or of type (1) with coordinate sum $1$ and multiplicity length $d-1$ is indeed mapped to the boundary of $\Pi_{n,\alpha}$ by Corollary \ref{lem:sub-simplex max/min}.
The closure of the image of such points is also mapped to the boundary by continuity.
Now, we assume that $\overline{c} := (p_{\alpha_1}(x),\ldots,p_{\alpha_{d-1}}(x))$ is contained in the boundary of $\nu_{n,\alpha}(\tilde{\Delta})$. Since $\nu_{n,\alpha}(\tilde{\Delta})$ is the closure of its interior we can find a sequence of sufficiently general points $c_i$ from the interior of $\nu_{n,\alpha}(\tilde{\Delta})$ converging to $\overline{c}$, such that the Vandermonde varieties $V_{d-1}^{\beta}(c_i),V_d^{(2,\beta)}(1,c_i)$ are smooth. Consider the projection $\pi: \nu_{n,\alpha}(\tilde{\Delta})\to \nu_{n,\alpha'}(\tilde{\Delta})$ of $\nu_{n,\alpha}(\tilde{\Delta})$ onto the first $d-2$ coordinates as in Lemma \ref{lem:sub-simplex interval}. The pre-image of each $c_i$ is a non-degenerate interval and $p_{\beta_{d-1}}$ is either minimized or maximized at the endpoints of the interval. We apply Lemma \ref{lem:sub-simplex max/min} to see that the pre-images of the endpoints of the interval must be points of type (1) or (2) of multiplicity length $d-2$, or points of type (1) and coordinate sum $1$ and multiplicity length $d-1$.

If $\pi(\overline{c})$ lies in the interior of $\nu_{n,\alpha'}(\tilde{\Delta})$, then its pre-image under $\pi$ is also an interval, and $\bar{c}$ is an endpoint of this interval. Now the endpoints of intervals for $c_i$'s must converge to the endpoints of the interval for $\bar{c}$ by continuity. Therefore we realized $\overline{c}$ as the limit of points of type (1) or (2).

If $\pi(\bar{c})$ lies on the boundary of $\nu_{n,\alpha'}(\tilde{\Delta})$ then its pre-image under $\pi$ is a single point $\bar{s}$. It follows that the endpoints of the intervals for $c_i$ converge to $\bar{s}$ by continuity. Therefore we realized $\overline{c}$ as the limit of points of type (1) or (2).
\end{proof}

We revisit the image of the map $(p_2,p_3)$ but this time we extend its domain to the sub-probability simplex $\Tilde{\Delta}_n$. Let $\Tilde{\Pi}_{n,d}:=(p_2,\dots,p_d)(\Tilde{\Delta}_n)$.

\begin{example}[Description of $\bd\Tilde{\Pi}_{n,3}$] For $d=3$ the three families of points in Theorem \ref{thm:subprob} are 
\begin{align*}
    (0,\ldots,0,x_1)\in\Tilde{\Delta}_n& \quad\text{(type (1) and multiplicity length $d-2$)},\\
    (x_1,\ldots,x_1)\in\Tilde{\Delta}_n& \quad\text{(type (2) and multiplicity length $d-2$)},\\
    (0,\ldots,0,x_1,x_2,\ldots,x_2)\in\Delta_{n-1}& \quad\text{(type (1) and multiplicity length $d-1$)}.
\end{align*}
By Lemma \ref{lem:sub-simplex max/min}, and since $d=3$ is odd, the points $(0,\dots,0,x_1)$ give the upper boundary, which is the arc parametrized by $(t^2,t^3)$ for $0\le t\le1$.
The other two families give the lower boundary. Belonging to $\Delta_{n-1}$, the family $(0,\dots,0,x_1,x_2,\dots,x_2)$ gives precisely the $n-1$ lower arcs of $\Pi_{n,3}$ (see Theorem \ref{thm:Parametrization Nnd}). The points $(x_1,\ldots,x_1)$ give an additional lower arc, which joins $(\frac1n,\frac1{n^2})$ to the origin, and can be parametrized by $(\frac1nt^2,\frac1{n^2}t^3)$ for $0\le t\le1$.
\smallskip

A visualization of the transition from $\Pi_{4,3}$ to $\Tilde{\Pi}_{4,3}$ is shown in Figure \ref{fig:subprob43}.

\begin{figure}[h!]%
    \centering
    {{\includegraphics[width=6cm]{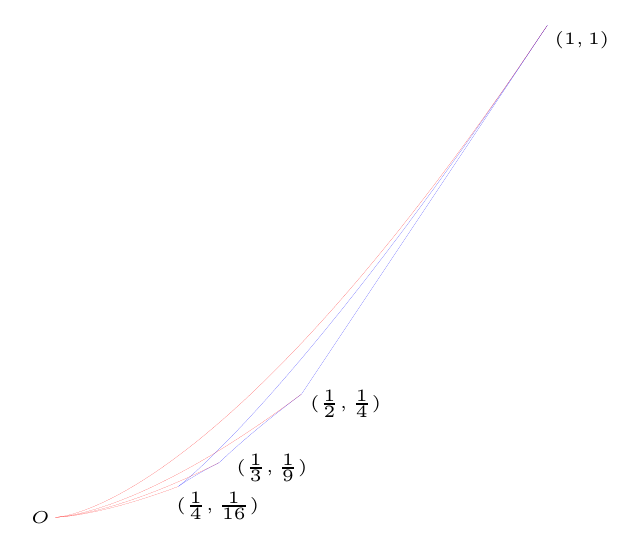}}}%
    \caption{Transition from $\Pi_{4,3}$ to $\Tilde{\Pi}_{4,3}$. Red curves represent the flow $(at^2,bt^3)$ of the vertices $(a,b)\in\Pi_{4,3}$ for $0\le t\le1$.}%
    \label{fig:subprob43}%
\end{figure}
\end{example}

We will go into a more detailed examination of the relative boundary of $\Tilde{\Pi}_{n,d}$ in Section \ref{sec:sub}.

\section{Combinatorial properties of the boundary of $\Pi_{n,d}$} \label{sec:comb prop}

Our main result in this section is the following theorem, which provides a combinatorial description of the boundary of the Vandermonde cell. 

\begin{theorem} \label{thm:combinatorially cyclic polytope}
The set $\Pi_{n,d}$ has the combinatorial structure of the cyclic polytope $C(n,d-1)$ (see Definition \ref{def:combst}). 
\end{theorem}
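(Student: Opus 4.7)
The plan is to construct an explicit homeomorphism $\phi : \bd C(n, d-1) \to \bd \Pi_{n, d}$ by identifying each facet of $C(n, d-1)$ with a top-dimensional patch of $\bd \Pi_{n, d}$ from the decomposition given in Theorem \ref{thm:1}, and then gluing consistently on lower-dimensional faces. On vertices, $\mu_{d-1}(k) = (k, k^2, \ldots, k^{d-1})$ should correspond to $(1/k, 1/k^2, \ldots, 1/k^{d-1}) \in \Pi_{n, d}$, the image under $\nu_{n, d}$ of the vector with $k$ entries equal to $1/k$ and $n - k$ zeros.

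The combinatorial heart of the argument is a bijection between subsets $S \subset [n]$ with $|S| = d - 1$ satisfying Gale's evenness and multiplicity patterns $m = (m_0, m_1, \ldots, m_{d-1})$ of type (1) or (2) with multiplicity length $d - 1$. To each such $m$ I associate
\[
S(m) = \{n - s_j : j = 0, 1, \ldots, d - 2\}, \quad s_j = m_0 + m_1 + \cdots + m_j.
\]
Gale's evenness for $S(m)$ is checked by organizing $[n] \setminus S(m)$ into its natural "clumps": a left overhang of size $m_{d-1} - 1$, interior gaps of size $m_{d-1-i} - 1$ between the $i$-th and $(i+1)$-th smallest elements of $S(m)$, and a right overhang of size $m_0$. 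The constraints $m_{2k-1} = 1$ for type (1), or $m_0 = 0$ together with $m_{2k} = 1$ (for $k \geq 1$) for type (2), force every non-empty clump to sit at a position of a fixed parity along the ordered $S(m)$-axis; consequently two consecutive non-empty clumps are always separated by an even number of elements of $S(m)$, which is exactly Gale's evenness. The inverse map recovers $m$ by reading the block sizes from the differences and overhangs of $S$, and determines the type from the parity class of the non-empty clumps. The only pattern that is simultaneously of type (1) and (2) is $(0, 1, 1, \ldots, 1)$, which requires $n = d - 1$ and is excluded by the hypothesis $n \geq d$.

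For each valid pattern $m$ the face
\[
F_m = \{(\underbrace{0, \ldots, 0}_{m_0}, \underbrace{y_1}_{m_1}, \underbrace{y_2, \ldots, y_2}_{m_2}, \ldots) \in W_{n-1} : 0 \leq y_1 \leq \cdots \leq y_{d-1}\}
\]
of the Weyl chamber is an abstract $(d-2)$-simplex, and its extreme points are precisely the vertices $v_{k_j} = \frac{1}{k_j}(0, \ldots, 0, 1, \ldots, 1)$ (with $k_j$ ones) for $k_j \in S(m)$. By Lemma \ref{lem:homeom} the restriction $\nu_{n, d}|_{F_m}$ is a homeomorphism onto its image -- one of the patches of $\bd \Pi_{n, d}$ appearing in Theorem \ref{thm:1} -- and a diffeomorphism on the relative interior of every face of $F_m$. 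I would define $\phi$ on the affine facet $\Delta_S \subset \bd C(n, d-1)$ spanned by $\{\mu_{d-1}(k_j) : k_j \in S\}$ by composing the affine identification $\Delta_S \to F_m$ matching vertices with $\nu_{n, d}$.

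Assembling these facet-wise definitions into a global continuous map requires checking compatibility on each $(d-3)$-face $\Delta_S \cap \Delta_{S'}$: such a face arises from $F_m$ either by setting $y_1 \to 0$ or by collapsing some $y_i = y_{i+1}$, and the resulting length-$(d-2)$ multiplicity pattern is shared with exactly one other length-$(d-1)$ pattern of type (1) or (2) -- mirroring the fact that every $(d-3)$-face of $C(n, d-1)$ lies in exactly two facets. Uniqueness of pre-images (Lemma \ref{lem: unique pre-image Simplex}) then forces the two facet-wise definitions of $\phi$ to agree on the shared face, and iterating to lower dimensions yields a well-defined continuous bijection between compact Hausdorff spaces -- surjective by Theorem \ref{thm:1} -- hence a homeomorphism. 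The diffeomorphism-on-relative-interiors property is built in by Lemma \ref{lem:homeom}. The main obstacle throughout is the combinatorial bookkeeping: establishing the bijection $m \leftrightarrow S(m)$ onto Gale-admissible subsets and checking its compatibility with face degenerations and vertex deletions. Once in hand, the geometry on each facet is handled directly by Theorem \ref{thm:1} and Lemma \ref{lem:homeom}.
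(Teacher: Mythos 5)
Your proposal follows essentially the same route as the paper: your partial-sum bijection $m\mapsto S(m)$ is exactly the paper's Proposition \ref{prop:simplices and cyclic polytopes} (indeed $S(m)$ is the vertex set of the paper's simplex $\Delta_m^*$ computed in Lemma \ref{lem:gale}), your facet-wise map is the paper's $\nu_{n,d}^*\circ\kappa_{n,d}$, and surjectivity, injectivity and the diffeomorphism property are drawn from Theorem \ref{thm:1}, Lemma \ref{lem: unique pre-image Simplex} and Lemma \ref{lem:homeom}, just as in the paper's proof. One small repair: agreement of the facet-wise definitions on a shared face is not a consequence of Lemma \ref{lem: unique pre-image Simplex} (that lemma yields injectivity of the assembled map), but follows simply because the shared face of $F_m$ and $F_{m'}$ is literally the same simplex in the Weyl chamber and both affine identifications send the common vertices $\mu_{d-1}(k)$ to $\overline{k}$, so they coincide there before composing with $\nu_{n,d}$.
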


Cyclic polytopes are well-studied objects in polyhedral combinatorics.

\begin{definition}
For $n > d \geq 2$ the \emph{cyclic polytope} $C(n,d)$ is the convex polytope with $n$ vertices which are points on the real moment curve $(t,t^2,\ldots,t^d)$. 
\end{definition}
The combinatorial structure, i.e. the $f$-vector, of $C(n,d)$ is independent of the chosen points and its boundary is a $(d-1)$-dimensional \textit{simplicial polytope}. Thus, we can speak about \textit{the} cyclic polytope $C(n,d)$. 
Cyclic polytopes are interesting objects. For instance, by the \emph{upper bound theorem} $C(n,d)$ has the component-wise maximal $f$-vector among all $d$-dimensional convex polytopes with $n$ vertices \cite{mcmullen1970maximum,stanley1975upper}. We refer to \cite[Section 0]{ziegler2012lectures} for more background on cyclic polytopes. 

For all $n \geq d\geq 3$ we have that $\conv \left\{ \left( \frac{1}{k},\frac{1}{k^2},\ldots,\frac{1}{k^{d-1}} \right) : k \in [n] \right\}$ is the cyclic polytope $C(n,d-1)$ and we will usually use this choice of vertices for $C(n,d-1)$.

\begin{definition}\label{def:combst}
   A closed set $S \subset \R^d$ which is homeomorphic to a closed ball in $\R^d$ has the \emph{combinatorial structure} of the cyclic polytope $C(n,d)$ if there exists a homeomorphism $\Phi : \bd C(n,d) \to \bd S$ which is a diffeomorphism when restricted to the relative interior of any face of $\bd C(n,d)$. The \emph{vertices} of $S$ are the images of the vertices of $C(n,d)$. \\
 We call a set $S \subset \mathbb{R}^n$ a \emph{curved $k$-simplex} if $S$ is the image of $\Delta_k$ under a homeomorphism $f$, such that $f$ is a diffeomorphism when restricted to the relative interior of any face of $\Delta_k$. The \emph{vertices} of a curved $k$-simplex are the images of the vertices of the simplex $\Delta_k$. 
\end{definition}

Note that the boundary of a set that has the combinatorial structure of a cyclic polytope is the gluing of as many patches as the number of facets of $C(n,d)$. Each patch is a curved $d$-simplex with vertices labeled by the so-called Gale's evenness condition (see below). Moreover, patches of the boundary intersect if and only if the intersection of their sets of vertices is non-empty.
\medskip

 The facets of a cyclic polytope $C(n,d)$ are characterized by \textit{Gale's evenness condition}. For $k\ne0$ we write $\hat{k} := \left( \frac{1}{k},\frac{1}{k^2},\ldots,\frac{1}{k^{d}} \right)$.
 
\begin{theorem}[\cite{gale1963neighborly}] \label{thm:facets of cyclic polytope}
The facets of $C(n,d)$ are precisely given by all $\{ \hat{k} : k \in S\}$, where $S \subset [n]$ is any set of size $d$ satisfying 
\begin{enumerate}
    \item[i)] If $d$ is even, then $S$ is either a disjoint union of consecutive pairs $\{i,i+1\}$, or a disjoint union of consecutive pairs $\{i,i+1\}$ and $\{1,n\}$.
    \item[ii)] If $d$ is odd, then $S$ is a disjoint union of consecutive pairs $\{i,i+1\}$ and either the singleton $\{1\}$ or $\{n\}$.
\end{enumerate}
\end{theorem}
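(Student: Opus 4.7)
The plan is to prove this by the standard supporting-hyperplane characterization of facets. Given $S = \{i_1 < \ldots < i_d\} \subset [n]$, the points $\hat{i_1}, \ldots, \hat{i_d}$ on the moment curve are affinely independent by a Vandermonde determinant, so they span a unique affine hyperplane $H_S = \{h = 0\}$ for some affine-linear functional $h(x) = a_0 + a_1 x_1 + \ldots + a_d x_d$. Substituting $x = (t, t^2, \ldots, t^d)$ yields a polynomial $h(t, t^2, \ldots, t^d) = \sum_{r=0}^{d} a_r t^r$ of degree at most $d$ that vanishes at $t \in \{1/i_1, \ldots, 1/i_d\}$, hence factors as $c \prod_{j=1}^d (t - 1/i_j)$ for some $c \neq 0$. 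By the supporting-hyperplane criterion, $S$ indexes a facet of $C(n,d)$ if and only if $h(\hat{k}) = c \prod_{j=1}^d (1/k - 1/i_j)$ has the same sign for every $k \in [n] \setminus S$.

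Next I would translate this into a combinatorial condition on $S$. Since $1/k - 1/i_j$ and $i_j - k$ have the same sign, the sign of $\prod_j (1/k - 1/i_j)$ equals $(-1)^{|\{j\,:\,i_j < k\}|}$. Constancy of this sign across $k \in [n] \setminus S$ is equivalent, after telescoping over consecutive pairs of elements of $[n] \setminus S$, to the condition that $|S \cap (k, k')|$ be even for every pair of consecutive $k < k'$ in $[n] \setminus S$. Equivalently, every maximal run of consecutive integers contained in $S$ that is bounded on both sides by elements of $[n] \setminus S$ must have even length, while the end runs at $1$ (if $1 \in S$) and at $n$ (if $n \in S$) are unconstrained.

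It remains to match this combinatorial description against the forms in (i) and (ii) by parity bookkeeping. Let $a, b \geq 0$ denote the lengths of the end runs at $1$ and at $n$ respectively; since all interior runs have even length and the total equals $d$, we have $a + b \equiv d \pmod{2}$. For even $d$, either $a$ and $b$ are both even, so $S$ is a disjoint union of consecutive pairs $\{i, i+1\}$, or both are odd, in which case extracting $\{1, n\}$ from the end runs leaves a set whose end-run lengths $a-1$ and $b-1$ are even, yielding a decomposition into consecutive pairs together with $\{1,n\}$; these are exactly the two alternatives in (i). For odd $d$, exactly one of $a, b$ is odd, and extracting the corresponding singleton $\{1\}$ or $\{n\}$ reduces to the even case, giving (ii). The converse follows from the same bookkeeping. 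Since the result is classical, the only real obstacle is this end-run accounting; the distinguished role of $\{1, n\}$ emerges naturally because $1$ and $n$ are the only indices able to absorb odd end-run lengths.
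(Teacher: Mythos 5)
The paper does not prove this statement at all: it is quoted as a classical result of Gale, with only the standard ``separated by an even number of elements'' reformulation stated afterwards. Your proposal is a correct, self-contained proof and is essentially the classical argument (Gale's original one, reproduced e.g.\ in Ziegler's book): affine independence of any $d$ points on the moment curve via Vandermonde determinants, the supporting-hyperplane criterion, the factorization of the restricted affine functional as $c\prod_j(t-1/i_j)$, and the resulting sign pattern $(-1)^{\#\{j:\,i_j<k\}}$, whose constancy over $k\in[n]\setminus S$ is exactly the standard evenness condition; your run-length bookkeeping then correctly converts that into the paired/singleton form (i)--(ii) used here. Two small points worth making explicit: the polynomial $\sum_r a_r t^r$ is not identically zero because the moment curve affinely spans $\R^d$, so it indeed has degree exactly $d$ and the factorization is legitimate; and the parametrization $k\mapsto 1/k$ reverses the order along the curve, which is harmless since both the evenness condition and the statement of (i)--(ii) are invariant under reversing $[n]$, but it deserves a sentence. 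Also, in the case of even $d$ with both end runs odd, you implicitly use $n>d$ (so $S\neq[n]$) to ensure the runs at $1$ and at $n$ are distinct before extracting $\{1,n\}$; this hypothesis is part of the definition of $C(n,d)$, so the argument is fine, but you should say it.
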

The standard formulation of Gale's evenness condition is the following.
Let $n > d$, $k_1 < \ldots < k_n \in \mathbb{R}$ and $T = \{\hat{k}_1,\ldots,\hat{k}_n\}$ be the vertices of $\conv \{ \hat{k}_i : 1 \leq i \leq n\}$. Then a set $T_d \subset T$ of size $d$ spans a facet of $C(n,d)$ if and only if any two elements in $T \setminus T_d$ are separated by an even number of elements from $T_d$ in the sequence $(\hat{k}_1,\hat{k}_2,\ldots,\hat{k}_n)$.

We briefly sketch an outline of our proof of Theorem \ref{thm:combinatorially cyclic polytope}. Let \[\nu_{n,d}^*:=(p_2,\ldots,p_d) : \Delta_{n-1} \to \R^{d-1}.\] From Theorem \ref{thm:1} we know that the boundary of $\Pi_{n,d}$ is the closure of the set of evaluations of $\nu_{n,d}^*$ at points in the probability simplex with multiplicity length $d-1$ and type (1) and (2). We associate these multiplicity vectors $m$ with $(d-2)$-dimensional simplices $\Delta_m^*$. Those simplices correspond to the facets of $C(n,d-1)$ (see Proposition \ref{prop:simplices and cyclic polytopes}). In Lemma \ref{lem:homeom 2} we observe that the restrictions of the map $\nu_{n,d}^*$ induce homeomorphisms $\nu_{n,d}^* : \Delta_m^* \to \nu_{n,d}^*(\Delta_m^*)$ which are diffeomorphisms when restricted to the relative interior of any face of $\Delta_m^*$ (as in Lemma \ref{lem:homeom}).

For a sequence $m = (m_0,m_1,\ldots,m_{d-1}) \in \N^d$ with $m_i \geq 1$ for all $1 \leq i \leq d-1$ we define the $(d-2)$-dimensional simplex $$\Delta_m^* := \{ (\underbrace{0,\ldots,0}_{m_0},\underbrace{x_1,\ldots,x_1}_{m_1},\ldots,\underbrace{x_{d-1},\ldots,x_{d-1}}_{m_{d-1}}) \in \R_{\geq 0}^n : x_i \leq x_{i+1} \forall i, \sum_{i=1}^{d-1}m_ix_i = 1\}.$$

\begin{lemma} \label{lem:homeom 2}
Let $m \in \N^{d}$ with $\sum_{i = 0}^{d-1} m_i = n$ and $m_i \geq 1$ for all $1 \leq i \leq d-1$. Then the map $ \nu_{n,d}^* : { \Delta_m^*} \to \nu_{n,d}^*(\Delta_m^*)$ is a homeomorphism and a diffeomorphism when restricted to the relative interior of any face of $\Delta_m^*$.
\end{lemma}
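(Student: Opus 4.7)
The plan is to realize $\nu_{n,d}^*|_{\Delta_m^*}$ as a weighted Vandermonde map in only $d-1$ variables, recognize $\Delta_m^*$ (after reparametrization) as a face of a full-dimensional compact polyhedron, and then invoke Lemma \ref{lem:homeom}. Each point $y \in \Delta_m^*$ is uniquely encoded by its tuple $(x_1, \ldots, x_{d-1})$ of distinct ``value coordinates'', so the linear map $\pi : \Delta_m^* \to \R^{d-1}$, $y \mapsto (x_1, \ldots, x_{d-1})$, is a bijection onto the $(d-2)$-dimensional simplex
$$S := \left\{x \in \mathcal{W}_{d-1} : \sum_{i=1}^{d-1} m_i x_i = 1\right\}.$$
The $m_0$ zero coordinates of $y$ contribute nothing to $p_k$ for $k \geq 1$, so $p_k(y) = \sum_{i=1}^{d-1} m_i x_i^k$ whenever $\pi(y) = x$. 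Consequently $\nu_{n,d}^*|_{\Delta_m^*} = \nu_{w,\gamma}|_S \circ \pi$, where $\nu_{w,\gamma}$ is the weighted Vandermonde map on $d-1$ variables with weight vector $w = (m_1, \ldots, m_{d-1}) \in \R_{>0}^{d-1}$ and exponent vector $\gamma = (2, 3, \ldots, d)$.

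To deploy Lemma \ref{lem:homeom} (with its parameter ``$d$'' replaced by ``$d-1$''), I would embed $S$ as a facet of the full-dimensional compact polyhedron
$$K := \left\{x \in \mathcal{W}_{d-1} : \sum_{i=1}^{d-1} m_i x_i \leq 1\right\} \subset \mathcal{W}_{d-1}.$$
Since $w$ has strictly positive entries and $\gamma$ is a strictly increasing tuple of positive reals, the lemma applies and yields a homeomorphism $\nu_{w,\gamma} : K \to \nu_{w,\gamma}(K)$ that is a diffeomorphism on the relative interior of every face of $K$. The set $S$ is exactly the facet of $K$ cut out by $\sum m_i x_i = 1$, and every face of $S$ is obtained from $K$ by imposing this equality together with additional tight constraints among $x_1 \geq 0$ and $x_i \leq x_{i+1}$; hence every face of $S$ is itself a face of $K$.

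The conclusion then follows by restriction. The restriction of an injective continuous map to a subset remains injective and continuous, so $\nu_{w,\gamma}|_S : S \to \nu_{w,\gamma}(S)$ is a continuous bijection, and compactness of $S$ combined with the Hausdorff property of $\R^{d-1}$ promotes it to a homeomorphism. The diffeomorphism property on the relative interior of each face of $K$ restricts to the same property on the relative interior of each face of $S$. Transporting these statements back to $\Delta_m^*$ through the linear isomorphism $\pi$ yields the lemma. The only step that is not purely mechanical is recognizing $\Delta_m^*$ (after reparametrization) as a facet of a full-dimensional polyhedron on which Lemma \ref{lem:homeom} can be applied directly; once this dimension-reduction observation is made, the remainder of the argument is formal.
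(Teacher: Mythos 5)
Your proposal is correct and follows essentially the same route as the paper, whose entire proof is the remark that the statement is a special case of Lemma \ref{lem:homeom}. You simply spell out the reduction the paper leaves implicit: identifying $\nu_{n,d}^*|_{\Delta_m^*}$ with the weighted Vandermonde map in the $d-1$ value coordinates with weights $(m_1,\ldots,m_{d-1})$ and exponents $(2,\ldots,d)$, realizing the simplex as a facet of the full-dimensional polyhedron $\{x\in\mathcal{W}_{d-1} : \sum_i m_i x_i \leq 1\}$, and restricting the conclusion of Lemma \ref{lem:homeom} to that facet and its faces.
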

\begin{proof}
This is a special case of Lemma \ref{lem:homeom}.
\end{proof}

For $1 \leq k \leq n$ we write $\overline{k} := (\underbrace{0,\ldots,0}_{n-k},\underbrace{\frac{1}{k},\ldots,\frac{1}{k}}_k)$. We immediately obtain the following Corollary.

\begin{corollary} \label{cor:deformed facets}
Let $m \in \N^{d}$ with $\sum_{i = 0}^{d-1} m_i = n$ and $m_i \geq 1$ for all $1 \leq i \leq d-1$. Each simplex $\Delta_m^*$ is mapped to a curved $(d-2)$-simplex in $\R^m$ by $\nu_{n,d}^*$ and the vertices of the curved simplex $\nu_{n,d}^*(\Delta_m^*)$ are the points $(\frac{1}{k},\frac{1}{k^2},\ldots,\frac{1}{k^{d-1}})$ for all vertices $\overline{k}$ of $\Delta_m^*$.
\end{corollary}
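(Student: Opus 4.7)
The plan is to combine Lemma \ref{lem:homeom 2} with a direct identification of the vertices of the simplex $\Delta_m^*$ and a computation of their images. By Lemma \ref{lem:homeom 2}, $\nu_{n,d}^*$ restricts to a homeomorphism $\Delta_m^* \to \nu_{n,d}^*(\Delta_m^*)$ that is a diffeomorphism on the relative interior of any face, so by Definition \ref{def:combst} the image is a curved $(d-2)$-simplex whose vertices are the images of the vertices of $\Delta_m^*$. The remaining task is to enumerate those vertices and compute their power sums.

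First I would identify the vertices of $\Delta_m^*$. Parametrizing $\Delta_m^*$ by $(x_1,\ldots,x_{d-1})$ subject to $0 \leq x_1 \leq x_2 \leq \ldots \leq x_{d-1}$ and $\sum_{i=1}^{d-1} m_i x_i = 1$, the simplex is cut out in a $(d-2)$-dimensional affine subspace by the $d-1$ inequalities $x_1 \geq 0$ and $x_i \leq x_{i+1}$ for $1 \leq i \leq d-2$. A vertex is obtained by turning $d-2$ of these into equalities; I would split this into two cases. If $x_1 \geq 0$ remains strict, then all the $x_i$'s collapse to a single value $\frac{1}{n-m_0}$, giving the point $\overline{n-m_0}$. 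Otherwise $x_1 = 0$ is tight and exactly one inequality $x_j \leq x_{j+1}$ stays strict; then $x_1 = \ldots = x_j = 0$ and $x_{j+1} = \ldots = x_{d-1} = \frac{1}{m_{j+1} + \ldots + m_{d-1}}$, which is the point $\overline{k}$ with $k = m_{j+1} + \ldots + m_{d-1}$. This produces exactly $d-1$ distinct vertices $\overline{k}$, as expected for a $(d-2)$-simplex.

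Finally, for any such vertex $\overline{k} = (\underbrace{0,\ldots,0}_{n-k}, \underbrace{\tfrac1k,\ldots,\tfrac1k}_{k})$ a direct computation gives $p_j(\overline{k}) = k \cdot \frac{1}{k^j} = \frac{1}{k^{j-1}}$ for $2 \leq j \leq d$, hence $\nu_{n,d}^*(\overline{k}) = \bigl(\tfrac{1}{k}, \tfrac{1}{k^2}, \ldots, \tfrac{1}{k^{d-1}}\bigr)$, completing the proof. There is no real obstacle here: the result is essentially a bookkeeping consequence of Lemma \ref{lem:homeom 2} together with the observation that collapsing one degree of freedom in $\Delta_m^*$ always produces a point with a single distinct positive coordinate.
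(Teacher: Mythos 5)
Your proposal is correct and follows essentially the same route as the paper: the paper deduces the corollary directly from Lemma \ref{lem:homeom 2} together with Definition \ref{def:combst}, and identifies the vertices of $\Delta_m^*$ exactly as you do (all but one defining inequality tight, giving the points $\overline{k}$), which is the content of the proof of Lemma \ref{lem:gale}. Your explicit computation $p_j(\overline{k})=k\cdot k^{-j}=k^{-(j-1)}$ is the same bookkeeping the paper leaves implicit.
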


It follows from Theorem \ref{thm:1} that the set $\bd \Pi_{n,d}$ is the union of the curved simplices $\nu_{n,d}^*( \Delta_m^*)$ for all multiplicity vectors of type (1) and (2) and multiplicity length $d-1$. We still have to show that the vertices of each simplex satisfy Gale's evenness condition. \\

Recall, a multiplicity vector $m$ of type (1) has the form $m_0 \geq 0$, $m_{2i-1}=1$ and $m_{2i} \geq 1$ and a multiplicity vector of type (2) is of the form $m_0=0, m_{2i-1} \geq 1$ and $m_{2i}=1$ for all $i$.

\begin{lemma}  \label{lem:gale}
Let $m \in \N^d$ with $\sum_{i=0}^{d-1}m_i=n$ be a multiplicity vector of multiplicity length $d-1$ and type (1) or (2). The vertices of $\Delta_m^*$ are 
\[ 
\left\{ \begin{array}{cc}
 \overline{n-m_0},\overline{n-m_0-1},\overline{n-m_0-m_2-1},\overline{n-m_0-m_2-2},\ldots     &  \text{ if $m$ is of type $(1)$}, \\
 \overline{n},\overline{n-m_1},\overline{n-m_1-1},\overline{n-m_1-m_3-1},\overline{n-m_1-m_3-2},\ldots     &   \text{ if $m$ is of type $(2)$}.
\end{array} \right.
\]
\end{lemma}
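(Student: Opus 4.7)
The plan is to identify $\Delta_m^*$, viewed as a polytope in $\R^n$, with the $(d-2)$-simplex inside the affine hyperplane
\[
H_m := \{(x_1,\ldots,x_{d-1}) \in \R^{d-1} : m_1 x_1 + \ldots + m_{d-1} x_{d-1} = 1\}
\]
cut out by the $d-1$ linear inequalities $0 \leq x_1$ and $x_i \leq x_{i+1}$ for $i = 1,\ldots,d-2$. Since $\Delta_m^*$ is a $(d-2)$-simplex by its very definition, its $d-1$ vertices are obtained exactly by relaxing one of these $d-1$ defining inequalities at a time while turning the remaining $d-2$ into equalities.

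I would then compute each vertex explicitly. For the vertex $v_1$ associated with relaxing $0 \leq x_1$, all of $x_1 \leq x_2,\ldots,x_{d-2} \leq x_{d-1}$ are tight, so $x_1 = \ldots = x_{d-1} = c$ with $c(m_1+\ldots+m_{d-1}) = 1$, giving $c = 1/(n-m_0)$; unfolded into $\R^n$ this is precisely $\overline{n-m_0}$. For $2 \leq j \leq d-1$, the tight equalities $0 = x_1 = x_2 = \ldots = x_{j-1}$ combined with $x_j = x_{j+1} = \ldots = x_{d-1}$ force the first $m_0 + m_1 + \ldots + m_{j-1}$ entries of the corresponding point in $\R^n$ to vanish and the remaining $n - m_0 - \ldots - m_{j-1}$ entries to be $1/(n - m_0 - \ldots - m_{j-1})$. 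Hence $v_j = \overline{n - \sum_{i=0}^{j-1} m_i}$, and the vertex set of $\Delta_m^*$ is
\[
\Bigl\{\, \overline{n - \textstyle\sum_{i=0}^{j-1} m_i} \,:\, 1 \leq j \leq d-1 \,\Bigr\}.
\]

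To match the list in the lemma I would finally substitute the two type patterns into this formula. In type (1) one has $m_{2k-1}=1$ with $m_{2k}$ unconstrained, so the partial sums $\sum_{i=0}^{j-1} m_i$ alternate between adding an arbitrary even-indexed $m_{2k}$ and adding $1$, producing $n - m_0,\, n - m_0 - 1,\, n - m_0 - m_2 - 1,\, n - m_0 - m_2 - 2,\, n - m_0 - m_2 - m_4 - 2,\,\ldots$ as claimed. Symmetrically, in type (2) we have $m_0 = 0$ and $m_{2k}=1$, yielding $n,\, n - m_1,\, n - m_1 - 1,\, n - m_1 - m_3 - 1,\, n - m_1 - m_3 - 2,\,\ldots$. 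The argument is essentially a finite bookkeeping exercise once the polytope structure is in place; no step presents a real obstacle, since the only thing one has to guarantee is that each slack-inequality choice yields a genuine vertex rather than an empty intersection, and this is automatic from the fact that $\Delta_m^*$ is by construction a $(d-2)$-simplex with exactly these $d-1$ facets.
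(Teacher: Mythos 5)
Your proposal is correct and matches the paper's argument: the paper likewise identifies each vertex of the $(d-2)$-simplex $\Delta_m^*$ as the point where all but one of the defining inequalities $0\leq x_1$, $x_i\leq x_{i+1}$ are tight, i.e. $x_1=\dots=x_{k-1}=0$ and $x_k=\dots=x_{d-1}$, which after normalizing by $\sum_i m_i x_i=1$ gives exactly the points $\overline{n-\sum_{i=0}^{j-1}m_i}$. Your explicit bookkeeping for the two type patterns is a correct expansion of what the paper leaves implicit.
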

\begin{proof}
The vertices are the points where all but one of the defining inequalities of the simplex $\Delta_m^*$ are tight. Thus, the vertices are the $d-1$ points for which $x_i = 0, 1 \leq i \leq k-1$ and $x_k = \ldots = x_{d-1}$ for all $0 \leq k \leq d-2$.    
\end{proof}

In the following Proposition, we observe that multiplicity vectors of type (1) or (2) encode Gale's evenness condition with the identification $k \leftrightarrow \overline{k}$. 

\begin{proposition} \label{prop:simplices and cyclic polytopes}
Let $\Gamma$ be the map sending a multiplicity vector $m$ of a point $x \in \Delta_{n-1}$ with multiplicity length $d-1$ and type (1) or (2) to the subset $S$ of $[n]$ consisting of integers $k$ such that $\overline{k}$ is a vertex of $\Delta_m^*$. Then $\Gamma$ is a bijection between multiplicity vectors of points in $\Delta_{n-1}$ with multiplicity length $d-1$ and type (1) or (2) and subsets of $[n]$ satisfying Gale's evenness condition.
\end{proposition}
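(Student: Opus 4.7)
The plan is to unwind Lemma \ref{lem:gale} into a uniform formula and then verify the claimed bijection by a parity case analysis. For a multiplicity vector $m = (m_0, m_1, \ldots, m_{d-1})$ of type (1) or (2) with multiplicity length $d-1$, set $s_j := \sum_{i=j}^{d-1} m_i$ for $1 \leq j \leq d-1$. Unwinding Lemma \ref{lem:gale}, the $j$-th vertex of $\Delta_m^*$ (obtained by setting $x_1 = \ldots = x_{j-1} = 0$ and $x_j = \ldots = x_{d-1}$) is exactly $\overline{s_j}$, so $\Gamma(m) = \{s_1, \ldots, s_{d-1}\}$ with $s_1 > s_2 > \ldots > s_{d-1}$ a strictly decreasing sequence in $[n]$. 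Injectivity of $\Gamma$ is then immediate: from the sorted vertex sequence we recover $m_0 = n - s_1$, $m_j = s_j - s_{j+1}$ for $1 \leq j \leq d-2$, and $m_{d-1} = s_{d-1}$.

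To show $\Gamma(m)$ satisfies Gale's evenness condition I would split by type. If $m$ is of type (1), then $m_{2k-1} = 1$ for every $k \geq 1$ with $2k-1 \leq d-1$, so $s_{2k-1} - s_{2k} = 1$ and each $\{s_{2k-1}, s_{2k}\}$ is a consecutive integer pair. When $d$ is odd these pairs exhaust $\Gamma(m)$ as $(d-1)/2$ consecutive pairs, matching the first alternative of Theorem \ref{thm:facets of cyclic polytope}; when $d$ is even the leftover $s_{d-1} = m_{d-1} = 1$ plays the role of the singleton $\{1\}$. For type (2) the analysis is symmetric: $m_0 = 0$ forces $s_1 = n$ and $m_{2k} = 1$ makes $\{s_2, s_3\}, \{s_4, s_5\}, \ldots$ consecutive pairs; for $d$ odd the additional relation $s_{d-1} = m_{d-1} = 1$ combines with $s_1 = n$ to supply the pair $\{1, n\}$, while for $d$ even the leftover is the singleton $\{n\}$. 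In each of these four subcases $\Gamma(m)$ matches precisely the facet description of $C(n, d-1)$.

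For surjectivity, given any $S \subset [n]$ of size $d-1$ satisfying Gale's condition, sort its elements as $s_1 > \ldots > s_{d-1}$ and define $m$ via the inversion formulas from the first paragraph. The key observation is that the grouping of a disjoint union of consecutive integer pairs is forced: the maximum element $s_1$ must pair with $s_1 - 1$, so $s_2 = s_1 - 1$, and one recurses on what remains. Applied to the consecutive-pair portion of each of the four Gale cases, this forces $m_j = 1$ at exactly the indices demanded by type (1) (odd indices) or type (2) (even positive indices), and pins down $m_0$ to be either unrestricted (type (1)) or zero (type (2)) according to whether $\{1, n\}$, $\{1\}$, or $\{n\}$ appears in the Gale decomposition. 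The main obstacle is simply the bookkeeping for these four subcases; once one sees that the top-down pairing is forced and maps cleanly onto the alternating structure defining types (1) and (2), each verification reduces to a short parity check.
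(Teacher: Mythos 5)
Your proposal is correct and follows essentially the same route as the paper's proof: both rest on Lemma \ref{lem:gale} for the vertex description (your partial sums $s_j=\sum_{i\ge j}m_i$ are exactly the vertices listed there) together with a four-way case analysis on the parity of $d-1$ and the type of $m$, with the converse direction given by recovering $m_0=n-s_1$, $m_j=s_j-s_{j+1}$, $m_{d-1}=s_{d-1}$. The only cosmetic difference is that the paper writes the multiplicity vector out explicitly in each Gale subcase, whereas you obtain it uniformly from the inversion formulas plus the forced top-down pairing of consecutive pairs; the two amount to the same bookkeeping, and your explicit injectivity remark is a harmless addition.
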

\begin{proof}
We distinguish between $d-1$ odd and even. 
First, we consider the case $d-1$ is odd. In this situation we have  by Lemma \ref{lem:gale} that the vertices of $\Delta_m^*$ are 
        \[ 
\left\{ \begin{array}{cc}
\overline{1} \text{ and } \overline{1+m_{d-1}},\overline{2+m_{d-1}},\ldots,\overline{n-m_0-1},\overline{n-m_0}  &   \text{ if $m$ is of type $(1)$}, \\
\overline{n}  \text{ and } \overline{m_{d-1}},\overline{m_{d-1}+1},\ldots,\overline{n-m_1-1},\overline{n-m_1}     &  \text{ if $m$ is of type $(2)$}. 
\end{array} \right.
\]   
    The corresponding set of integers satisfies Gale's evenness condition in both cases. 
Conversely, we suppose $S \subset [n]$ with $|S| = d-1$ and $S$ satisfies Gale's evenness condition. We construct the associated multiplicity vector $m$.
\begin{enumerate}[leftmargin=*]
    \item First, we suppose $S = \biguplus_{j=1}^{\frac{d}{2}-1}\{i_j,i_j+1\} \uplus\{1\}$ and $1 < i_1 < \ldots < i_{\frac{d}{2}-1}<n$. Then $d-2$ is even and we define $m_{d-2}:=i_1-1 \geq 1$ and $m_{d-2j} := i_j-i_{j-1}-1 \geq 1$ for all $1 < j \leq \frac{d}{2}-1$. Then, $$m_2+m_4+\ldots+m_{d-2} = i_{\frac{d}{2}-1}-\frac{d}{2}+1 \leq n-\frac{d}{2}.$$ We set $m_1 = m_3 = \ldots = m_{d-1} :=1$ and $ 0 \leq m_0 := n - m_1+m_2+\ldots+m_{d-1}.$ We note $m_1+\ldots+m_{d-1}  \leq n -\frac{d}{2} + \lceil \frac{d-1}{2}\rceil \leq n$. Thus, the multiplicity vector $m$ is indeed of type (1) and the simplex $\Delta_m^*$ has vertex set $\{\overline{k} : k \in S\}$.
    \item Second, we suppose $S = \biguplus_{j=1}^{\frac{d}{2}-1}\{i_j,i_j+1\} \uplus\{n\}$ and $1 \leq i_1 < \ldots < i_{\frac{d}{2}-1}<n$. We define $m_{d-1} := i_1$, $m_{d-2j+1}:=i_j-i_{j-1}-1 \geq 1$ for all $2 \leq j \leq \frac{d}{2}-1$. Then, $$m_{d-1}+m_{d-3}+\ldots+m_3 = i_{\frac{d}{2}-1}-\frac{d}{2}+2 \leq n-\frac{d}{2}.$$ We define $m_2 = m_4 =\ldots = m_{d-2} := 1$ and $m_1 := n-(m_{d-1}+m_{d-1}+\ldots+m_2) \geq 1$. We have that $m$ is of type (2) and the simplex $\Delta_m^*$ has vertex set $\{\overline{k} : k \in S\}$. 
\end{enumerate}

We now turn to the case $d-1$ is even. 
Again by Lemma \ref{lem:gale} the vertices of $\Delta_m^*$ are         \[ 
\left\{ \begin{array}{cc}
\overline{m_{d-1}},\overline{m_{d-1}+1},\ldots,\overline{n-m_0-1},\overline{n-m_0}    &  \text{ if $m$ is of type $(1)$}, \\
\overline{1},\overline{n} \text{ and } \overline{m_{d-2}+1},\overline{m_{d-2}+2},\ldots,\overline{n-m_1-1},\overline{n-m_1} &   \text{ if $m$ is of type $(2)$}.
\end{array} \right.
\]   
    Also in this case  the corresponding set of integers satisfies Gale's evenness condition. Conversely, let $S \subset [n]$ be of size $d-1$ satisfying Gale's evenness condition. We construct the associated multiplicity vector $m$. 
\begin{enumerate}[leftmargin=*]
    \item First, we suppose \[S = \{1,n\} \uplus \biguplus_{j=1}^{\frac{d-3}{2}} \{ i_j,i_j+1\} \text{ with } < i_1 < \ldots < i_{\frac{d-3}{2}} < n-1.\] We define $m_{2} = m_4 = \ldots = m_{d-1} := 1$, $m_{d-2}:=i_1-1 \geq 1$, $m_{d-{2k}} := i_k-i_{k-1} -1 \geq 1$ for all $2 \leq k \leq \frac{d-3}{2}$, and $m_1 :=n-(m_2+\ldots+m_{d-1})\geq 1$. The vector $m$ is of type (2) and the simplex $\Delta_m^*$ has the vertex set $\{\overline{k} : k \in S\}$. 
    \item Second, we suppose \[S = \biguplus_{j=1}^{\frac{d-1}{2}}\{i_j,i_j+1\}\text{ and }1 \leq i_1 < \ldots < i_{\frac{d-1}{2}} < n.\] We set $m_1 = \ldots = m_{d-2}:=1$, $m_{d-1} := i_1 \geq 1$, $m_{d-2k+1} := i_k-i_{k-1}-1 \geq 1$ for all $2 \leq k \leq \frac{d-1}{2}$ and $m_0 := n-(m_1+\ldots+m_{d-1}) \geq 0$. Then, the vector $m$ is of type (1) and the vertex set of the simplex $\Delta_m^*$ is $\{\overline{k} : k \in S$\}. 
\end{enumerate}
\end{proof}

\begin{corollary}
    \label{cor:3.9}
    The map $$ \abb{\kappa_{n,d}}{ \bd \left( \conv \{ \left(\frac{1}{i},\ldots,\frac{1}{i^{d-1}}\right) : 1 \leq i\leq n \}\right)}{\bigcup_{m \in \N^d \text{ has type }(1),(2)}\Delta_m^*}{\sum_{j=1}^{d}\lambda_{i_j} \left( \frac{1}{i_j},\ldots,\frac{1}{i^{d-1}_j}\right) }{\sum_{j=1}^{d}\lambda_{i_j} \left(0,\ldots,0,\frac{1}{i_j},\ldots,\frac{1}{i_j}\right)} $$ is a homeomorphism and a diffeomorphism when restricted to the relative interior of any face of $\bd \left( \conv \{ \left(\frac{1}{i},\ldots,\frac{1}{i^{d-1}}\right) : 1 \leq i\leq n \}\right)$.
\end{corollary}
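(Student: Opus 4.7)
The plan is to exhibit $\kappa_{n,d}$ as a piecewise affine map matching the boundary decomposition of the cyclic polytope $C(n,d-1)$ with the decomposition of $\bigcup_m \Delta_m^*$ into $(d-2)$-simplices, so that an affine bijection on each piece automatically gives the desired homeomorphism and diffeomorphism properties.

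First, I would invoke Proposition~\ref{prop:simplices and cyclic polytopes} together with Theorem~\ref{thm:facets of cyclic polytope}: the map $\Gamma$ is a bijection between multiplicity vectors $m$ of type (1) or (2) with multiplicity length $d-1$ and subsets $S\subseteq[n]$ of size $d-1$ satisfying Gale's evenness condition. For $S=\Gamma(m)$, denote by $F_S$ the facet of $C(n,d-1)$ with vertex set $\{(1/i,\ldots,1/i^{d-1}):i\in S\}$; by Lemma~\ref{lem:gale} the vertex set of $\Delta_m^*$ is exactly $\{\overline{i}:i\in S\}$. Thus $F_S$ and $\Delta_m^*$ are two $(d-2)$-simplices whose vertices are naturally labelled by the same set $S$.

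By its defining formula, $\kappa_{n,d}|_{F_S}$ is the unique affine extension of the vertex assignment $(1/i,\ldots,1/i^{d-1})\mapsto \overline{i}$ for $i\in S$, hence an affine isomorphism onto $\Delta_m^*$. To verify that these pieces glue consistently, I would observe that if $G\subseteq F_S\cap F_{S'}$ is a common face with vertex index set $T\subseteq S\cap S'$, then the corresponding face of both $\Delta_{\Gamma^{-1}(S)}^*$ and $\Delta_{\Gamma^{-1}(S')}^*$ is the simplex with vertex set $\{\overline{i}:i\in T\}$; the restrictions of $\kappa_{n,d}$ from the two facets agree on $G$ because they send common vertices to the same points. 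Hence $\kappa_{n,d}$ is well defined and continuous on $\bd C(n,d-1)$.

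Bijectivity follows directly: surjectivity because every $\Delta_m^*$ is the image of $F_{\Gamma(m)}$, and injectivity from the facet-wise affine isomorphisms together with consistent gluing. The domain is compact and the codomain $\bigcup_m\Delta_m^*\subset\R^n$ is a finite union of simplices, hence compact and Hausdorff, so a continuous bijection between them is a homeomorphism. The relative interior of any face of $\bd C(n,d-1)$ lies inside some facet $F_S$, on which $\kappa_{n,d}$ is an affine isomorphism onto $\Delta_m^*$; restricting an affine isomorphism to a relatively open subset of an affine subspace is a diffeomorphism onto its image. The main obstacle is purely combinatorial: tracking through $\Gamma$ that facets and all their subfaces correspond correctly on both sides. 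Once this bookkeeping is in place, the analytic content reduces to the elementary fact that affine bijections between simplices are diffeomorphisms on relative interiors.
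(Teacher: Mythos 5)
Your proposal is correct and follows essentially the same route as the paper: identify facets of $C(n,d-1)$ with the simplices $\Delta_m^*$ via Proposition \ref{prop:simplices and cyclic polytopes} and Lemma \ref{lem:gale}, note that $\kappa_{n,d}$ is the affine map determined by the vertex correspondence on each facet (well-defined by affine independence of the moment-curve points), and conclude the homeomorphism/diffeomorphism properties from piecewise affinity; you simply spell out the gluing, bijectivity, and compactness details that the paper leaves implicit.
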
 
The map $\bd C(n,d-1) \to \bd \Pi_{n,d}$ in Theorem \ref{thm:combinatorially cyclic polytope} will be the composition $\nu_{n,d}^* \circ \kappa_{n,d} $.
\begin{proof}
Since any facet of the cyclic polytope $\conv \{ \left(\frac{1}{i},\ldots,\frac{1}{i^{d-1}}\right) : 1 \leq i\leq n \}$ is the convex hull of $d-1$ points on the moment curve, these points are convexly independent. Moreover, the facet defining sets of vertices correspond to the multiplicity vectors $m$ of multiplicity length $d-1$ and type (1) or (2) by Proposition \ref{prop:simplices and cyclic polytopes}. Thus the map $\kappa_{n,d}$ is well-defined. This map is a homeomorphism and a diffeomorphism when restricted to the relative interior of any face of $\bd \left( \conv \{ \left(\frac{1}{i},\ldots,\frac{1}{i^{d-1}}\right) : 1 \leq i\leq n \}\right)$ since it is an affine linear map on any facet of $C(n,d-1)$.
\end{proof}

We are ready to give a proof of Theorem 
\ref{thm:combinatorially cyclic polytope}. 

\begin{proof}[Proof of Theorem \ref{thm:combinatorially cyclic polytope}]
Let $C(n,d-1) = \conv \{ \left(\frac{1}{i},\ldots,\frac{1}{i^{d-1}}\right) : 1 \leq i\leq n \}$. By Corollary \ref{cor:3.9} the map $\kappa_{n,d} : \bd C(n,d-1) \to \bigcup_{m \in \N^d \text{ type }(1) \text{ or }(2)}\Delta_m^*$ is a homeomorphism and a diffeomorphism when restricted to the relative interior of any face of $\bd C(n,d-1)$. \\
We consider the map $\nu_{n,d}^* \circ \kappa_{n,d} : \bd C(n,d-1) \to \bd \Pi_{n,d}$. The map $\nu_{n,d}^*$ is surjective by Theorem \ref{thm:1}. 
The restriction of the map $\nu_{n,d}^*$ on $\Delta_{n-1}$ to the pre-image of $\bd \Pi_{n,d}$ is injective by Lemma \ref{lem: unique pre-image Simplex}. 

On each simplex $\Delta_m^*$ the map $\nu_{n,d}^*$ is a homeomorphism and a diffeomorphism when restricted to the relative interior of any face by Lemma \ref{lem:homeom}. Thus, $\nu_{n,d}^* \circ \kappa_{n,d}$ is a diffeomorphism on the restriction of any face of $\bd C(n,d-1)$. 
\end{proof}

\begin{remark}
    The natural extension of $\kappa_{n,d}$ to the interior of $C(n,d-1)$
    $$ \abb{\kappa}{\conv \{ \left(\frac{1}{i},\ldots,\frac{1}{i^{d-1}}\right) : 1 \leq i\leq n \}}{\Pi_{n,d}}{\sum_{j=1}^{d}\lambda_{i_j} \left( \frac{1}{i_j},\ldots,\frac{1}{i^{d-1}_j}\right) }{\nu_{n,d}^*\left(\sum_{j=1}^{d}\lambda_{i_j} \left(0,\ldots,0,\frac{1}{i_j},\ldots,\frac{1}{i_j}\right) \right)} $$
    is not well defined. For instance, $\frac{1}{13}(\frac{1}{2},\frac{1}{4})+\frac{12}{13}(\frac{1}{4},\frac{1}{16})= \frac{27}{52}(\frac{1}{3},\frac{1}{9})+\frac{25}{52}(\frac{1}{5},\frac{1}{25})$ but 
    \begin{align*}
    \begin{array}{llll}
        &\nu_{5,3}\left(1/13 \left(0,0,0,1/2,1/2\right)+12/13\left(0,1/4,1/4,1/4,1/4\right)\right) & =& \left(1, 85/338\right) \\
        &\nu_{5,3}\left(27/52\left(0,0,1/3,1/3,1/3\right)+25/52\left(1/5,1/5,1/5,1/5,1/5\right) \right) & =& \left(1,319/1352\right).
    \end{array}
    \end{align*}
    \end{remark}

\subsection{The sub-probability simplex} \label{sec:sub}
In analogy to the discussion about $\Pi_{n,d}$ above we briefly discuss the combinatorial properties of the image of the sub-probability simplex under the Vandermonde map. 
We write $\overline{\infty}$ for the point $(0,\ldots,0) \in \Tilde{\Delta}_n$. The following discussion of vertices parallels Lemma \ref{lem:gale} and Proposition \ref{prop:simplices and cyclic polytopes}.

We discuss the three cases of Theorem \ref{thm:subprob} separately. Suppose that $x$ has type (1) and multiplicity length $d-1$. Then its multiplicity vector $m$ has the form $(m_0,m_1,\dots,m_{d-1})$. 
To this multiplicity vector $m$ we associate the simplex $\tilde{\Delta}_m^*$ with vertices $\overline{m_{d-1}},\overline{m_{d-2}+m_{d-1}},\dots,$ $\overline{m_1+m_2+\dots+m_{d-1}}$. It can be seen that the closure of the image under $\nu_{n,d}^*$ of all points with multiplicity type $m$ is the same as the image of $\tilde{\Delta}_m^*$.

Suppose that $x$ has type (1) and multiplicity length $d-2$. Then its multiplicity vector $m$ has the form $(m_0,m_1,\dots,m_{d-2})$. To this multiplicity vector $m$ we associate the simplex $\tilde{\Delta}_m^*$ with vertices $\overline{\infty},\overline{m_{d-2}},\overline{m_{d-3}+m_{d-2}},\dots,\overline{m_1+m_2+\dots+m_{d-2}}$. It can be seen that the closure of the image under $\nu_{n,d}^*$ of all points with multiplicity type $m$ is the same as the image of $\tilde{\Delta}_m^*$.

Suppose that $x$ has type (2) and multiplicity length $d-2$. Then its multiplicity vector $m$ has the form $(m_1,\dots,m_{d-2})$. To this multiplicity vector $m$ we associate the simplex $\tilde{\Delta}_m^*$ with vertices $\overline{\infty}, \overline{m_{d-2}},\overline{m_{d-3}+m_{d-2}}\dots,\overline{m_1+m_2+\dots+m_{d-2}}$. It can be seen that the closure of the image under $\nu_{n,d}^*$ of all points with multiplicity type $m$ is the same as the image of $\tilde{\Delta}_m^*$.

\begin{remark}
The image of the first type of simplex above is a patch on the boundary of $\Pi_{n,d}$ and was studied previously in Section \ref{sec:comb prop}. The other two types consist of taking a $d-2$ dimensional simplex $S$ (which has vertices given by sums of $m_i$) and taking the convex hull with the origin, which is denoted by $\overline{\infty}$. Let $S'=\conv \{\{0\}\cup S\}$. The image $\nu_{n,d}^*(S')$ can be obtained from $\nu_{n,d}^*(S)$ as follows: take a point $u=(u_1,\dots,u_{d-1}) \in \nu_{n,d}^*(S)$ and define a parametrized segment $(u_1t^2,u_2t^3,\dots,u_{d-1}t^d)$ for $0\leq t\leq 1$. We can think of this segment as the point $u$ ``flowing'' to the origin via a segement of a monomial curve. The image $\nu_{n,d}^*(S')$ is the union of all such segments for $u\in \nu_{n,d}^*(S)$.
\end{remark}

Similarly to Lemma \ref{lem:homeom} and Corollary \ref{cor:deformed facets} we can also show the following: 

\begin{proposition}\label{prop:sub diff}
The map $\nu_{n,d}^*$ restricted to a simplex $\tilde{\Delta}_m^*$ is a homeomorphism and a diffeomorphism when restricted to the relative interior of any face of the simplex.
\end{proposition}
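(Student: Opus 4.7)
The plan is to treat the three types of simplices separately. For the first type (type~(1) with multiplicity length $d-1$), every point in $\tilde{\Delta}_m^*$ actually lies on the probability simplex $\Delta_{n-1}$, because the nonzero coordinates already sum to $1$ at each listed vertex and hence on the convex hull. Therefore $\tilde{\Delta}_m^*$ coincides with a simplex $\Delta_m^*$ of the kind treated in Lemma~\ref{lem:homeom 2}, and the claim follows directly from that lemma.

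The genuinely new cases are the remaining two, where $\tilde{\Delta}_m^*$ has the origin $\overline{\infty}$ as a vertex. For these, I will use the natural parametrization of $\tilde{\Delta}_m^*$ by the $d-2$ ordered distinct positive coordinates, identifying $\tilde{\Delta}_m^*$ affine-linearly with the full-dimensional compact polyhedron
\[
K := \bigl\{(x_1,\ldots,x_{d-2}) \in \mathcal{W}_{d-2} \,:\, m_1 x_1 + \ldots + m_{d-2} x_{d-2} \leq 1\bigr\}.
\]
Under this identification, the restriction of $\nu_{n,d}^*$ becomes the smooth map
\[
F \colon K \longrightarrow \R^{d-1}, \qquad x \longmapsto \Bigl(\sum_{i=1}^{d-2} m_i x_i^2,\,\ldots,\,\sum_{i=1}^{d-2} m_i x_i^d\Bigr),
\]
that is, the weighted power sums $p_2,\ldots,p_d$ with positive weights $(m_1,\ldots,m_{d-2})$.

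The main obstacle is a dimension mismatch: $F$ has $d-2$ inputs but $d-1$ outputs, so Lemma~\ref{lem:homeom} does not apply to $F$ directly. To circumvent this, I compose with the projection $\pi \colon \R^{d-1} \to \R^{d-2}$ onto the first $d-2$ coordinates. Then $\pi \circ F$ is precisely the weighted Vandermonde map with weights $(m_1,\ldots,m_{d-2})$ and integer exponents $(2,3,\ldots,d-1)$ on the full-dimensional compact polyhedron $K \subset \mathcal{W}_{d-2}$, so Lemma~\ref{lem:homeom} applies and $\pi \circ F$ is a homeomorphism onto its image and a diffeomorphism when restricted to the relative interior of any face of $K$.

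From this I will deduce the same statement for $F$. Injectivity of $F$ is immediate from that of $\pi \circ F$, and its continuous inverse is $(\pi \circ F)^{-1} \circ \pi$ restricted to $F(K)$, so $F$ is a homeomorphism onto its image. For the diffeomorphism property on the relative interior of a face $G$ of $K$, the key observation is that $F(\mathrm{relint}(G))$ is the graph of the smooth map $p_d \circ (\pi \circ F)^{-1}$ over the smooth manifold $(\pi \circ F)(\mathrm{relint}(G))$; hence $F(\mathrm{relint}(G))$ is itself a smooth submanifold of $\R^{d-1}$, and $F|_{\mathrm{relint}(G)}$ is a diffeomorphism onto it. Combined with the bijection between faces of $\tilde{\Delta}_m^*$ and faces of $K$ given by the affine-linear identification, this completes the argument.
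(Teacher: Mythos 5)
Your argument is correct and follows essentially the route the paper intends (the paper gives no written proof, asserting the statement ``similarly to Lemma~\ref{lem:homeom} and Corollary~\ref{cor:deformed facets}''): identify each $\tilde{\Delta}_m^*$ with its parameter polytope and reduce to the weighted power-sum homeomorphism of Lemma~\ref{lem:homeom} (with the first case collapsing to Lemma~\ref{lem:homeom 2}). Your projection-plus-graph step correctly supplies the one detail the paper leaves implicit, namely that on the simplices containing $\overline{\infty}$ the map has $d-1$ components but only $d-2$ parameters, so the square-case Lemma~\ref{lem:homeom} cannot be applied verbatim and must be applied to the truncated map $(p_2,\ldots,p_{d-1})$ before lifting the last coordinate as a smooth graph.
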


    Additionally, we can see that the $d-1$ points specified as vertices for the simplices above satisfy Gale's evenness condition, e.g. in the first example the points are $\frac{1}{m_{d-1}},\frac{1}{m_{d-2}+m_{d-1}}, \dots, \frac{1}{m_1+\dots+m_{d-1}},$ and in the second example $0,\frac{1}{m_{d-2}},\frac{1}{m_{d-3}+m_{d-2}}, \dots, \frac{1}{m_1+\dots+m_{d-2}}$. It can be seen in the same way that the image $\tilde{\Pi}_{n,d}=\nu_{n,d}^*(\tilde{\Delta}_{n})$ also has the combinatorial type of a cyclic polytope. Here the vertices of the image are the images of $0,\frac{1}{n},\frac{1}{n-1},\dots,1$ on the moment curve in $\mathbb{R}^{d-1}$. We formalize our discussion in the following proposition.

\begin{proposition}\label{prop:bound sub}
The boundary of $\tilde{\Pi}_{n,d}$ consists of patches which are images of the simplices $\tilde{\Delta}_m^*$. The vertices of the patches correspond to subsets of $\{0\}\cup \{\frac{1}{k} \,\, :\,\, k \in \mathbb{Z}, \,\, 1\leq k \leq  n\}$ (or equivalently $[n]\cup \{ \infty \}$) satisfying Gale's evenness condition.
\end{proposition}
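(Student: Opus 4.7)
The plan is to mirror the proof strategy of Theorem \ref{thm:combinatorially cyclic polytope} in the sub-probability setting, using the three families of boundary-generating points supplied by Theorem \ref{thm:subprob} and the fact that $\nu_{n,d}^*$ restricts to a diffeomorphism on faces of each $\tilde{\Delta}_m^*$ (Proposition \ref{prop:sub diff}). The added ingredient is the extra ``vertex'' $\overline{\infty}$, which plays the role of the parameter value $t=0$ on the moment curve in the cyclic polytope $C(n+1,d-1)$.

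First I would use Theorem \ref{thm:subprob} to write $\bd \tilde{\Pi}_{n,d}$ as the closure of the union $\bigcup_m \nu_{n,d}^*(\tilde{\Delta}_m^*)$ where $m$ ranges over the three families: type (1) of multiplicity length $d-1$ with coordinate sum $1$, type (1) of multiplicity length $d-2$, and type (2) of multiplicity length $d-2$. By Proposition \ref{prop:sub diff}, each $\nu_{n,d}^*(\tilde{\Delta}_m^*)$ is a curved $(d-2)$-simplex with vertices as described in the paragraphs preceding the statement. The substantive step is then to set up a bijection, analogous to Proposition \ref{prop:simplices and cyclic polytopes}, between these admissible $m$ and the $(d-1)$-subsets of $[n]\cup\{\infty\}$ obeying Gale's evenness condition for $C(n+1,d-1)$, where the ordering on the moment curve is $\overline{\infty}<\overline{n}<\overline{n-1}<\dots<\overline{1}$. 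Patches of the first family avoid $\overline{\infty}$, and the bijection collapses to the one from Proposition \ref{prop:simplices and cyclic polytopes} with the ordering shifted by one; patches of the two remaining families all include $\overline{\infty}$ as their unique ``leftmost'' vertex, and the remaining $d-2$ vertices $\overline{m_{d-2}},\,\overline{m_{d-3}+m_{d-2}},\,\ldots,\,\overline{m_1+\cdots+m_{d-2}}$ can be converted directly into a subset of $[n]$ whose gap structure is dictated by the alternating parity conditions defining types (1) and (2).

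The main obstacle will be the parity bookkeeping in this bijection. For each of the three families one has to split into the two parities of $d$ and verify that the alternating pattern of equalities and $\geq 1$ inequalities in the multiplicity vector precisely matches the ``disjoint union of consecutive pairs, optionally together with one or both endpoints'' structure from Theorem \ref{thm:facets of cyclic polytope}; in the two new families the endpoint $\overline{\infty}$ is always present and plays the role of the left endpoint $\{1\}$ of the Gale description for $C(n+1,d-1)$. Checking surjectivity requires, given a Gale subset $S\subseteq [n]\cup\{\infty\}$, to read off from consecutive differences the integers $m_i$ and to confirm they meet the constraints $m_{2i-1}=1, m_{2i}\geq 1$ (type $(1)$) or $m_{2i-1}\geq 1, m_{2i}=1$ (type $(2)$), with the total $\sum m_i \leq n$ with equality iff $\overline{\infty}\notin S$; this is a direct but case-heavy translation.

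Once the bijection is in hand, the curved-simplex patches glue along shared faces because the vertex sets of $\tilde{\Delta}_m^*$ and $\tilde{\Delta}_{m'}^*$ intersect in the vertex set of a common face of both simplices (exactly as in the usual simplicial structure on $\bd C(n+1,d-1)$), and $\nu_{n,d}^*$ is continuous. Composing with the affine-linear cellular map $\kappa:\bd C(n+1,d-1)\to \bigcup_m \tilde{\Delta}_m^*$ defined vertex-wise by $\hat{k}\mapsto \overline{k}$ (with $\hat{\infty}\mapsto \overline{\infty}$), as in Corollary \ref{cor:3.9}, one obtains a homeomorphism $\bd C(n+1,d-1)\to \bd \tilde{\Pi}_{n,d}$ which is a diffeomorphism on the relative interior of every face, completing the proof.
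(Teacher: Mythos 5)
Your approach is essentially the paper's: the paper obtains Proposition \ref{prop:bound sub} exactly by combining Theorem \ref{thm:subprob} with Proposition \ref{prop:sub diff} and a Gale-type identification of the three families of multiplicity vectors with $(d-1)$-subsets of $[n]\cup\{\infty\}$, with $\overline{\infty}$ playing the role of the extra endpoint of the moment curve; like you, the paper only sketches the parity bookkeeping rather than redoing Proposition \ref{prop:simplices and cyclic polytopes} in full.

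One concrete slip in your bookkeeping: the surjectivity criterion ``$\sum_i m_i\le n$ with equality iff $\overline{\infty}\notin S$'' is not correct. For the type (1), length-$(d-1)$ family (the patches avoiding $\overline{\infty}$), the quantity $n-\sum_{i\ge1}m_i$ is precisely the free multiplicity $m_0\ge 0$ of the zero coordinate, so these Gale subsets need not have their entries determined by a partition of $n$: for $d=3$ the arc $L_k$ has vertex set $\{k,k+1\}$, hence $\sum_{i\ge1}m_i=k+1<n$ whenever $k<n-1$, yet $\overline{\infty}\notin S$. Enforcing your equality would wrongly exclude all such patches. The correct constraint runs the other way: it is the type (2), length-$(d-2)$ family (which does contain $\overline{\infty}$) that forces $\sum_{i\ge1}m_i=n$, i.e.\ those Gale subsets must contain both $\infty$ and $n$, while for both type (1) families $m_0:=n-\sum_{i\ge1}m_i$ is unconstrained slack. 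With that correction the case analysis you outline goes through and matches the paper's discussion.
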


\section{The boundary at infinity} \label{sec:Vandermonde map limit image}

We show that the set $\bd \Pi_d$ is a gluing of countably many patches and each patch is a curved $(d-2)$-simplex. The vertices of all patches lie in a countable set of points on the moment curve.
The boundary patches correspond to subsets of size $d-1$ of this set satisfying Gale's evenness condition, in direct analogy to cyclic polytopes (see Theorem \ref{thm:limit bound}). We begin with investigating properties of $\Pi_d$.

\begin{lemma} \label{lem:bound}
For $x =(x_1,\ldots,x_{d-1})\in \Pi_d$ we have $(t^2x_1,\ldots,t^d x_{d-1}) \in \Pi_d$ for all $0 \leq t \leq 1$. 
\end{lemma}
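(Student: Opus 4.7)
The plan is to first handle the case $x\in\Pi_{N,d}$ for some finite $N$ and then take limits. So, suppose $x=\nu_{N,d}^{*}(y)$ for some $y\in\Delta_{N-1}$. The natural object to look at is the scaled vector $ty\in\tilde\Delta_N$: since each $p_k$ is $k$-homogeneous we obtain $p_k(ty)=t^k p_k(y)$, so
\[
\nu_{N,d}^{*}(ty)=(t^{2}x_{1},\ldots,t^{d}x_{d-1}).
\]
Unfortunately $ty$ lies in the sub-probability simplex rather than in $\Delta_{N-1}$, so this only shows membership in $\tilde\Pi_{N,d}$, not in $\Pi_d$ directly. The key is to absorb the missing mass $1-t$ in a way that does not disturb higher power sums.

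The device I would use is coordinate padding. For each integer $M\geq 1$, form the point
\[
z^{(M)}:=(ty_{1},\ldots,ty_{N},\underbrace{\tfrac{1-t}{M},\ldots,\tfrac{1-t}{M}}_{M})\in\Delta_{N+M-1}.
\]
Its $k$-th power sum, for $2\leq k\leq d$, equals
\[
p_{k}(z^{(M)})=t^{k}p_{k}(y)+M\cdot\Bigl(\tfrac{1-t}{M}\Bigr)^{k}=t^{k}x_{k-1}+\frac{(1-t)^{k}}{M^{\,k-1}}.
\]
Since $k\geq 2$, each error term $(1-t)^{k}/M^{k-1}$ tends to $0$ as $M\to\infty$. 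Therefore
\[
\nu_{N+M,d}^{*}(z^{(M)})\in\Pi_{N+M,d}\subseteq\bigcup_{n}\Pi_{n,d}
\]
converges to $(t^{2}x_{1},\ldots,t^{d}x_{d-1})$, and the latter point lies in the closure $\Pi_d$. This proves the statement whenever $x$ lies in some $\Pi_{N,d}$.

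For the general case $x\in\Pi_d=\operatorname{cl}\bigcup_{n}\Pi_{n,d}$, take a sequence $x^{(j)}\in\Pi_{N_{j},d}$ with $x^{(j)}\to x$. By the step above, the vectors $(t^{2}x^{(j)}_{1},\ldots,t^{d}x^{(j)}_{d-1})$ all belong to $\Pi_d$, and they converge to $(t^{2}x_{1},\ldots,t^{d}x_{d-1})$. Since $\Pi_d$ is closed by construction, the limit also lies in $\Pi_d$, completing the argument. No step here is a real obstacle; the only subtlety is remembering that the scaling $y\mapsto ty$ by itself lands in the sub-probability simplex, which is precisely why the mass-redistribution trick with $M\to\infty$ is needed to stay inside $\bigcup_n\Pi_{n,d}$.
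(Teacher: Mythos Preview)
Your proof is correct and follows essentially the same approach as the paper: both pad the scaled point $ty$ with many small equal coordinates $\frac{1-t}{M}$ to restore total mass $1$, let the number of padding coordinates tend to infinity so that their contribution to $p_2,\ldots,p_d$ vanishes, and then invoke closedness of $\Pi_d$ for the general case. The only cosmetic difference is that you decouple the ambient dimension $N$ from the padding parameter $M$, whereas the paper uses the same $n$ for both (relying on the nesting $\Pi_{n,d}\subset\Pi_{n+1,d}$); your version is arguably cleaner, but the idea is identical.
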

\begin{proof}
Suppose $x$ is in the interior of $\Pi_d$, thus there exists $n$ such that $x\in\Pi_{n,d}$. Then $x=(p_2,\ldots,p_d)(z)$ for a point $z \in \Delta_{n-1}$. Consider the point $z'$: $$z'=\left(tz,\frac{1-t}{n},\ldots,\frac{1-t}{n}\right) \in \Delta_{2n-1}.$$ We see that $$ \nu_{2n,d}^*(z')= (t^{2}x_1,\ldots,t^{d}x_{d-1})+\left(\frac{(1-t)^{2}}{n},\ldots,\frac{(1-t)^{d}}{n^{{d-1}}}\right) \in \Pi_{2n,d},$$ so letting $n\to\infty$ we obtain $(t^2x_1,\ldots,t^dx_{d-1}) \in \Pi_d$ because $\Pi_d$ is the closure of $\bigcup_{n\ge d}\Pi_{n,d}$. By the continuity of the weighted scaling and the closedness of $\Pi_d$ this property also holds for all points in the boundary of $\Pi_d$.
\end{proof}

We now show that one arrives at the limit set $\Pi_d$ regardless if one considers the probability or the sub-probability simplex as the domain of the Vandermonde map.

\begin{lemma} \label{image_in_subprob_goes_to_prob}
The set $\Pi_d$ is the closure of the limit of the sets $\tilde{\Pi}_{n,d}=\nu_{n,d}^*(\tilde{\Delta}_{n})$ as $n$ goes to infinity, i.e. $\Pi_d = \clo \left(\bigcup_{n\geq d}\tilde{\Pi}_{n,d}\right)$.
\end{lemma}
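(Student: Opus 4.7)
The plan is to establish the equality by proving both inclusions, leveraging Lemma \ref{lem:bound} which controls how weighted scaling interacts with $\Pi_d$.

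For the inclusion $\Pi_d \subseteq \clo\bigl(\bigcup_{n \geq d} \tilde{\Pi}_{n,d}\bigr)$, I would simply observe that $\Delta_{n-1} \subset \tilde{\Delta}_n$, so $\Pi_{n,d} \subseteq \tilde{\Pi}_{n,d}$ for every $n \geq d$. Taking unions over $n$ and then closures gives the desired containment directly from the definition $\Pi_d = \clo\bigl(\bigcup_{n \geq d} \Pi_{n,d}\bigr)$.

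For the reverse inclusion, it suffices to show $\tilde{\Pi}_{n,d} \subseteq \Pi_d$ for each $n$, since $\Pi_d$ is closed by definition. Pick $x \in \tilde{\Delta}_n$ with $s := p_1(x) \leq 1$. If $s = 0$ then $x = 0$ and $\nu_{n,d}^*(x) = 0$, which lies in $\Pi_d$ (it is the limit of $\nu_{n,d}^*$ evaluated at points of $\Delta_{n-1}$ with arbitrarily small positive coordinates, or even more directly it appears in the description of $\bd \Pi_d$ from Theorem \ref{thm:subprob}). If $s > 0$, set $y := x/s \in \Delta_{n-1}$, and use homogeneity of power sums to write
\[
\nu_{n,d}^*(x) = (s^2 p_2(y), s^3 p_3(y), \ldots, s^d p_d(y)).
\]
Since $\nu_{n,d}^*(y) \in \Pi_{n,d} \subseteq \Pi_d$ and $0 < s \leq 1$, Lemma \ref{lem:bound} applied with $t = s$ gives $\nu_{n,d}^*(x) \in \Pi_d$.

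Combining the two inclusions yields $\Pi_d = \clo\bigl(\bigcup_{n \geq d} \tilde{\Pi}_{n,d}\bigr)$. I do not anticipate any real obstacle here: the entire argument is essentially a homogeneity computation plus the scaling lemma already proved. The only slightly delicate point is ensuring that the origin lies in $\Pi_d$, but this is immediate either from Lemma \ref{lem:bound} applied with $t = 0$ to any point of $\Pi_d$, or from noting that $\nu_{n,d}^*$ sends a sequence in $\Delta_{n-1}$ concentrating mass on a single coordinate scaled down (by going to larger $n$) to $0$.
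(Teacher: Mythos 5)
Your proof is correct, but it takes a slightly different route from the paper's. For the nontrivial inclusion the paper argues directly: given $x\in\tilde{\Delta}_n$ with coordinate sum $\theta\leq 1$, it appends $m$ extra coordinates each equal to $(1-\theta)/m$ to obtain a point of $\Delta_{n+m-1}$, and observes that the images under $\nu_{n+m,d}^*$ converge to $\nu_{n,d}^*(x)$ as $m\to\infty$, so closedness of $\Pi_d$ gives $\nu_{n,d}^*(x)\in\Pi_d$. You instead rescale: with $s=p_1(x)>0$ you put $y=x/s\in\Delta_{n-1}$, use homogeneity to write $\nu_{n,d}^*(x)=(s^2p_2(y),\ldots,s^dp_d(y))$, and invoke Lemma \ref{lem:bound} with $t=s$; the degenerate case $x=0$ is handled by $t=0$ in the same lemma. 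Since Lemma \ref{lem:bound} was itself proved by exactly the padding trick the paper reuses here, the two arguments rest on the same mechanism, but yours buys a cleaner intermediate statement (the exact inclusion $\tilde{\Pi}_{n,d}\subseteq\Pi_d$ without any fresh limit computation), whereas the paper's version is self-contained and does not route through the scaling lemma. One small caveat: your parenthetical appeal to Theorem \ref{thm:subprob} for $0\in\Pi_d$ is a mis-citation (that theorem describes $\bd\,\nu_{n,\alpha}(\tilde{\Delta}_n)$, not $\bd\Pi_d$), but the alternatives you give — Lemma \ref{lem:bound} with $t=0$, or the limit of $\nu_{n,d}^*(1/n,\ldots,1/n)$ — are both valid, so this does not affect correctness.
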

\begin{proof}
Since $\Delta_{n-1}\subset\tilde{\Delta}_{n}$ for all $n\ge d$ then $\Pi_d \subset \clo (\bigcup_{n\geq d}\tilde{\Pi}_{n,d})$. If $x=(x_1,\ldots,x_n) \in  \tilde{\Delta}_{n}$ then 
$\theta := \sum_{j =1}^n x_j \leq 1$  
and thus $x^{[m]} := (x,(1-\theta)/m,\ldots,(1-\theta)/m) \in \Delta_{n+m-1}$. Therefore $$\Pi_d \ni \lim_{m \to \infty} \nu_{m+n,d}^*(x^{[m]}) = \nu_{n,d}^*(x).$$ So $\Pi_d\supset\clo(\bigcup_{n\geq d}\tilde{\Pi}_{n,d})$ since $\Pi_d$ is closed.
\end{proof}

\begin{lemma}\label{lem:boundary as limit of sequences}
Let $ d \geq 3$. If $u \in \bd \Pi_d$ then there exists a sequence $(u_n)$ such that $u_n \in \bd \tilde{\Pi}_{n,d}$ and $u_n \to u$ as $n \to \infty$.

\end{lemma}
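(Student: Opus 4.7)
The plan is to sandwich the required $u_n \in \bd \tilde{\Pi}_{n,d}$ on a line segment between an \emph{inner} approximant $v_n \in \tilde{\Pi}_{n,d}$ of $u$ and an \emph{outer} approximant $w_n \notin \Pi_d$ of $u$. Once such $v_n, w_n$ are in hand, the segment from $v_n$ to $w_n$ begins in the compact set $\tilde{\Pi}_{n,d}$ and ends outside it, so continuity forces it to cross $\bd \tilde{\Pi}_{n,d}$; the crossing point $u_n$ will then be pinned by the triangle inequality to within $\max(\|v_n - u\|, \|w_n - u\|)$ of $u$, forcing $u_n \to u$.

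First I would produce the outer sequence. Since $\Pi_d$ is closed and $u \in \bd \Pi_d = \Pi_d \setminus \inti \Pi_d$, the point $u$ is not interior to $\Pi_d$, so every ball around $u$ contains points of the complement of $\Pi_d$; pick any $w_n \to u$ with $w_n \notin \Pi_d$, and observe $w_n \notin \tilde{\Pi}_{n,d}$ since $\tilde{\Pi}_{n,d} \subseteq \Pi_d$. For the inner sequence, I would note that appending a zero coordinate embeds $\tilde{\Delta}_n$ into $\tilde{\Delta}_{n+1}$ while preserving all power sums, so the sets $\tilde{\Pi}_{n,d}$ are nested. Combined with Lemma \ref{image_in_subprob_goes_to_prob}, which gives $u \in \clo \bigcup_n \tilde{\Pi}_{n,d}$, the nesting yields $\mathrm{dist}(u, \tilde{\Pi}_{n,d}) \to 0$; compactness of each $\tilde{\Pi}_{n,d}$ then allows me to select a nearest point $v_n \in \tilde{\Pi}_{n,d}$ to $u$, so $v_n \to u$.

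Finally I would carry out the segment argument. Set $t_n := \sup\{t \in [0,1] : (1-t)v_n + t w_n \in \tilde{\Pi}_{n,d}\}$; the supremum is attained because the set on the right is closed (preimage of the closed set $\tilde{\Pi}_{n,d}$ under a continuous parametrization), and $t_n < 1$ because $w_n \notin \tilde{\Pi}_{n,d}$. The point $u_n := (1-t_n)v_n + t_n w_n$ belongs to $\tilde{\Pi}_{n,d}$ while maximality of $t_n$ ensures that every neighborhood of $u_n$ meets the complement, placing $u_n$ on $\bd \tilde{\Pi}_{n,d}$. The estimate $\|u_n - u\| \leq (1-t_n)\|v_n - u\| + t_n \|w_n - u\| \leq \max(\|v_n - u\|, \|w_n - u\|) \to 0$ finishes the proof. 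There is no substantial obstacle — the argument is a routine sandwiching whose only non-trivial input is the identification $\Pi_d = \clo \bigcup_n \tilde{\Pi}_{n,d}$ established in Lemma \ref{image_in_subprob_goes_to_prob}.
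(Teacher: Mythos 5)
Your proof is correct and follows essentially the same route as the paper's: both rest on the nestedness of the sets $\tilde{\Pi}_{n,d}$, the identity $\Pi_d=\clo\bigl(\bigcup_{n}\tilde{\Pi}_{n,d}\bigr)$ from Lemma \ref{image_in_subprob_goes_to_prob}, and the observation that $u\in\bd\Pi_d$ cannot lie in $\inti\tilde{\Pi}_{n,d}$, so that $u$ is arbitrarily close to $\bd\tilde{\Pi}_{n,d}$. The only cosmetic difference is the final step: the paper notes directly that $\operatorname{d}(u,\tilde{\Pi}_{n,d})=\operatorname{d}(u,\bd\tilde{\Pi}_{n,d})$ and takes nearest boundary points, whereas you reach the boundary by the segment-crossing argument between an inner approximant $v_n$ and an outer approximant $w_n$.
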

For a set $A \subset \R^n$ and a point $x \in \R^n$ we denote the \textit{distance} from $x$ to $A$ by $\operatorname{d}(x,A)$, i.e. $\operatorname{d}(x,A)  = \inf ~ \{ ||x-a|| : a \in A\}$. 
\begin{proof}
Suppose that $u \in \bd \Pi_d$. Then, since the sets $\tilde{\Pi}_{n,d}$ are nested increasingly, we have that $\operatorname{d}(u,\tilde{\Pi}_{n,d})$ is a decreasing sequence in $n$. However, $u \in \bd \Pi_d$ implies that for all $n\ge d$ $u \not \in \inti \nu_{n,d}^*(\tilde{\Delta}_n)$, and thus $\operatorname{d}(u,\tilde{\Pi}_{n,d}) = \operatorname{d}(u,\bd \tilde{\Pi}_{n,d})$. Hence $\operatorname{d}(u,\bd \tilde{\Pi}_{n,d}) \to 0$ which implies that there exists a sequence $(u_n)_n$ with $u_n \in  \bd \tilde{\Pi}_{n,d}$ and $u_n \to u$. 
\end{proof}
 Let $T=\{0\}\cup \{\frac{1}{k} \,\, : \,\,  k\in \mathbb{N}_{>0}\}$. A finite subset $R$ of $T$ is said to satisfy Gale's
evenness condition if for any pair of points $a_1,a_2\in T\setminus R$ the number of points of $R$ lying between $a_1$ and $a_2$ is even. This is completely analogous to the standard definition for a finite set.

\begin{theorem}\label{thm:limit bound}
 Let $T=\{0\}\cup \{\frac{1}{k} \,\, : \,\,  k\in \mathbb{N}_{>0}\}$. The boundary of $\Pi_d$ consists of patches whose vertices are (images under the moment map of) subsets of $T$ of size $d-1$ satisfying Gale's evenness condition (see Section \ref{sec:sub} for the definition of the corresponding simplices). Each such patch occurs as a boundary patch of $\tilde{\Pi}_{n,d}$ for all $n$ sufficiently large.
\end{theorem}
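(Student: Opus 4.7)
The strategy is to upgrade the combinatorial description of $\bd \tilde{\Pi}_{n,d}$ from Proposition \ref{prop:bound sub} to the limit via a compactness argument. The first observation is a monotonicity: if $R \subset T$ is a Gale subset of size $d-1$ and $n_0$ is chosen so that $R \subset \{0\}\cup\{1/k:1 \le k \le n_0\}$, then $R$ remains a Gale subset of $\{0\}\cup\{1/k:1\le k \le n\}$ for every $n \ge n_0$. Indeed, the additional elements $1/k$ with $k>n_0$ all lie strictly below every positive element of $R$, so they contribute no new $R$-points between any pre-existing pair of elements of $T\setminus R$, and every new pair involving one of them likewise contains no $R$-point between them. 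Proposition \ref{prop:bound sub} then already guarantees that the curved $(d-2)$-simplex $P_R$ is a boundary patch of $\tilde{\Pi}_{n,d}$ for all $n\ge n_0$, which is the second assertion of the theorem.

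To prove $P_R \subset \bd \Pi_d$, I would pick a point $u$ in the relative interior of $P_R$. By Proposition \ref{prop:sub diff}, a neighbourhood of $u$ inside $P_R$ is a smooth $(d-2)$-manifold, and for $\epsilon>0$ small enough (independently of $n$, since $P_R$ itself does not depend on $n$) the ball $B(u,\epsilon)$ is split by $P_R$ into two open half-balls. The nestedness $\tilde{\Pi}_{n_0,d} \subset \tilde{\Pi}_{n,d}$ forces the ``interior side'' of $\tilde{\Pi}_{n,d}$ at $u$ to coincide for every $n\ge n_0$, so the opposite open half-ball $B^+(u,\epsilon)$ is disjoint from $\bigcup_{n\ge n_0}\tilde{\Pi}_{n,d}$. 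Since an open set disjoint from a set is automatically disjoint from its closure, $B^+(u,\epsilon) \cap \Pi_d = \emptyset$, hence $u \in \bd \Pi_d$; taking closure gives $P_R \subset \bd \Pi_d$.

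For the converse, take $u\in \bd \Pi_d$. By Lemma \ref{lem:boundary as limit of sequences} there is a sequence $u_n \in \bd \tilde{\Pi}_{n,d}$ with $u_n \to u$, and by Theorem \ref{thm:subprob} one may write $u_n = \nu_{n,d}^*(z_n)$ where $z_n$ has one of the three prescribed multiplicity patterns with at most $d-1$ distinct positive coordinates. Passing to a subsequence, I assume the type and the multiplicity length stabilise, each distinct positive coordinate $y_i^{(n)}$ converges to some $\bar y_i \in [0,1]$, and each multiplicity $m_i^{(n)}$ either stabilises to $\bar m_i \in \Z_{>0}$ or tends to $\infty$. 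The bound $m_i^{(n)} y_i^{(n)} \le \sum_j m_j^{(n)} y_j^{(n)} \le 1$ forces $\bar y_i=0$ whenever $m_i^{(n)} \to \infty$, and when $\bar y_i=0$ one has $m_i^{(n)}(y_i^{(n)})^k \le (y_i^{(n)})^{k-1} \to 0$ for every $k\ge 2$, so such positions drop out of the limit. Consequently the surviving positions (those with $\bar y_i>0$, which automatically have bounded multiplicity) assemble into a point $\bar z$ satisfying $\sum_i \bar m_i \bar y_i \le 1$, so $\bar z \in \tilde{\Delta}_N$ for some $N$ and $u = \nu_{N,d}^*(\bar z)$. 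Since $u \in \bd \Pi_d$ while $\tilde{\Pi}_{N,d} \subset \Pi_d$, $u$ cannot lie in the interior of $\tilde{\Pi}_{N,d}$, hence $u \in \bd \tilde{\Pi}_{N,d}$. Applying Proposition \ref{prop:bound sub} at level $N$ and combining with the monotonicity of the first step places $u$ in a patch $P_R$ for some Gale subset $R \subset T$ of size $d-1$.

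The main obstacle is the subsequence extraction in the converse direction: one must allow simultaneously for multiplicities blowing up, positive coordinates collapsing to $0$, and distinct positive coordinates colliding in the limit, and still check that after discarding the vanishing contributions the surviving data genuinely defines a point $\bar z$ of some $\tilde{\Delta}_N$ whose image under $\nu_{N,d}^*$ equals $u$. A subtler technical point in the first half is the uniformity of the neighbourhood: one needs that, for $\epsilon$ depending only on $P_R$, no patch of $\bd \tilde{\Pi}_{n,d}$ other than $P_R$ intrudes into $B(u,\epsilon)$, so that $B^+(u,\epsilon)$ is genuinely outward for every $n \ge n_0$. This follows from the fact that distinct patches meet only along lower-dimensional faces and that $u$ is chosen in the open relative interior of $P_R$, at positive distance from any such face.
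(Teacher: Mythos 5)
There is a genuine gap, and it sits exactly where the paper's proof does its real work. In your converse direction you stop at a fixed level $N$: you produce $\bar z\in\tilde{\Delta}_N$ with $u=\nu_{N,d}^*(\bar z)\in\bd\tilde{\Pi}_{N,d}$ and then invoke Proposition \ref{prop:bound sub} ``at level $N$'' together with ``the monotonicity of the first step.'' But Proposition \ref{prop:bound sub} only tells you that $u$ lies in a patch whose vertex set satisfies Gale's evenness condition \emph{relative to} $\{0\}\cup\{\tfrac1k : k\in[N]\}$, and your monotonicity goes the wrong way: Gale in $T$ implies Gale in every finite truncation (trivially, since the complement only shrinks), but Gale at level $N$ does \emph{not} imply Gale in $T$. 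Concretely, for $d=3$ the vertex set $\{0,\tfrac1N\}$ satisfies the condition at level $N$ (it is the bottom consecutive pair there) but fails it in $T$ (the pair $\tfrac1{N+1},\tfrac1{N-1}$ of complement points is separated by exactly one element of $R$), and the corresponding patch of $\bd\tilde{\Pi}_{N,3}$ --- the arc $(\tfrac{t^2}{N},\tfrac{t^3}{N^2})$ joining the origin to $(\tfrac1N,\tfrac1{N^2})$ --- has its relative interior inside $\inti\Pi_3$, not on $\bd\Pi_3$. So your argument does not exclude that the level-$N$ patch containing $u$ is such a transient patch. Repairing this is precisely the content of the paper's proof: one uses that $u\in\bd\tilde{\Pi}_{n,d}$ for \emph{all} large $n$, extracts (by compactness of $T$) a convergent subsequence of the vertex sets of the patches containing the approximating points, and then the parity argument shows that when vertices merge into $0$ an even number of them do so, so the limiting (distinct) vertex set still satisfies Gale's condition in $T$; only then is $u$ placed in a theorem patch. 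Your subsequence extraction on the pre-images $z_n$ is a reasonable substitute for part of this, but the final identification of the limiting patch as a $T$-Gale patch is missing.

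The forward direction has a related unresolved point, which you flag but do not close. The claim that one fixed $\epsilon$ makes $B^+(u,\epsilon)$ disjoint from $\tilde{\Pi}_{n,d}$ for \emph{every} $n\ge n_0$ is not a consequence of ``distinct patches meet only along lower-dimensional faces'': that controls intersections within a single $\bd\tilde{\Pi}_{n,d}$, whereas here infinitely many new patches appear as $n$ grows (vertex sets involving $\tfrac1k$ for $k>n$), and nothing you say prevents them from accumulating at $u$ from the outward side, in which case the ``interior side'' would not stabilize and $u$ could be swallowed into $\inti\Pi_d$. Ruling this out again requires the limit-of-patches analysis (eventually constant, or collapsing onto lower-dimensional faces of theorem patches, which miss the relative interior of $P_R$). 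Finally, a small factual slip in your first step: it is not true that a new pair involving an element $\tfrac1k$ with $k>n_0$ has \emph{no} point of $R$ between its members (take $R=\{\tfrac13,\tfrac12\}$ and the pair $\tfrac1k,1$); what is true, and all you need there, is that for $R$ Gale in $T$ the condition is inherited by every truncation because the set of complement pairs only shrinks --- but the reverse implication, which is what your converse direction silently uses, is false.
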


\begin{proof}
In the proof we identify elements of $T$ with their images under the moment map. By Lemmas \ref{image_in_subprob_goes_to_prob} and \ref{lem:boundary as limit of sequences} a point on the boundary of $\Pi_d$ is a limit of boundary points of $\tilde{\Pi}_{n,d}$. Proposition \ref{prop:bound sub} tells us that the boundary of $\tilde{\Pi}_{n,d}$ is given by patches whose vertices are subsets of $\{0\}\cup\{\frac{1}{k} \, \, : \, \, k\in[n]\}$. We take the limits of patches by considering the limits of corresponding sets of $d-1$ vertices. Note that the set $T=\{0\}\cup \{\frac{1}{k} \,\, : \,\,  k\in \mathbb{N}_{>0}\}$ is compact. 
Given a sequence $v_i$ of vertex sets, where $v_i=(v^{(1)}_{i},\dots,v_i^{(d-1)})$ with $v^{(1)}_i<v^{(2)}_i<\dots<v_i^{(d-1)}$, $v_i^{(k)}\in T$, we can consider limits of $k$-th coordinates $v_i^{(k)}$ for all $1\leq k \leq d-1$. By compactness of $T$, we can restrict to a subsequence of $v_i$ (which by abuse of notation we will still call $v_i$), such that $v_i^{(k)}$ converges for all $1\leq k \leq d-1$. Let $\overline{v}^{(k)}=\lim_{n\rightarrow \infty}v^{k}_n$, and $\overline{v}=(\overline{v}^{(1)},\dots, \overline{v}^{(d-1)})$. Note that if $\overline{v}^{(k)} \neq 0$, then we have $v_i^{(k)}=\overline{v}^{(k)}$ for all $i$ sufficiently large. Therefore, if $\overline{v}^{(1)} \neq 0$ then $\overline{v}^{(k)} \neq 0$ for all $1\leq k\leq d-1$, and therefore the sequence of patches is eventually constant. 

Now suppose that $\overline{v}^{(1)}=0$. Let $m$ be the smallest index $k$ (resp. $M$ be the largest index $k$) such that $\overline{v}^{(k)}=0$, and the sequence $v_i^{(k)}$ is not equal to $0$ for all $i$ sufficiently large. If no such integers $m,M$ exist, then it follows that the sequence of patches is again eventually constant.

We suppose that $m,M$ exist. Observe that since each $v_i$ satisfies Gale's evenness condition, we must have that $M-m$ is odd, i.e. an even number of vertices distinct from $0$ in $v_i$ converge to $0$, in particular, we must have at least two. 

To see that $M-m$ is odd let $R_i \subset T$ denote the set of size $d-1$ containing the coordinates of the vector $v_i$. We distinguish between the cases $m=1$ and $m>1$ and apply Gale's evenness condition. If $m=1$ we set $a_1 :=0,a_2 := \overline{v}^{(M+1)}/2 \in T$ and for sufficiently large $i$ we have $a_1 < v_i^{(1)} < \ldots < v_i^{(M)} < a_2 < v_i^{(M+1)}$. Since the set $R_i$ satisfies Gale's evenness condition $a_1,a_2 \in T\setminus R_i$ are separated by an even number of elements in $R_i$. Thus $M$ is even and $M-m=M-1$ is odd. In the second case we must have $m=2$. This is, since $v_i^{(1)}=0$ for sufficiently large $i$, but $v_i^{(2)} > v_i^{(1)}$ for all $i$. First, we suppose $M=d-1$. Then, since $v_i^{(1)}=0 < a_1:=v_i^{(2)}/2 < v_i^{(2)} < \ldots < v_i^{(M)} < a_2 :=1$ for all sufficiently large $i$, $a_1,a_2 \in T\setminus R_i$ we have that $M-1$ is even and therefore $M-m$ is odd. Second, if $M< d-1$ then for sufficiently large $i$ we have $v_i^{(1)}=0 < a_1:=v_i^{(2)}/2 < v_i^{(2)} < \ldots < v_i^{(M)} < a_2 :=\frac{1}{1/\overline{v}^{(M+1)}+1} < v_i^{(M+1)}$ and $a_1,a_2 \in T\setminus R_i$. Thus $M-2=M-m$ must be odd.

Note that the distinct points of $\overline{v}$ will still satisfy Gale's evenness condition. Therefore by Proposition \ref{prop:sub diff} the points of the patches corresponding to vertices $v_i$ converge to boundary points of patches of $\tilde{\Pi}_{n,d}$ specified in the theorem statement.
The above analysis shows that either a sequence of patches becomes eventually constant, or the sequence of patches approaches the boundary of a patch of $\tilde{\Pi}_{n,d}$ specified in the theorem. Therefore, the boundary of $\Pi_d$ is contained in the union of patches such that their vertices are subsets of $T$ of size $d-1$ satisfying Gale's evenness condition, and furthermore the interiors of all such patches lie on the boundary of $\Pi_d$. The theorem now follows by Lemma \ref{image_in_subprob_goes_to_prob}.
\end{proof}

\begin{remark}
   We can view patches of the boundary of  $\tilde{\Pi}_{n,d}$ as limits of patches of the boundary of $\Pi_{n,d}$. This can be seen by looking at the limits of vertices for a patch as in the proof of Theorem \ref{thm:limit bound}. A boundary patch of $\Pi_{n,d}$ has $d-1$ vertices, and this vertex set satisfies Gale's evenness condition in $[n]$ (or equivalently in $\{\frac{1}{k} \,\, : \,\, k\in[n]\}$). As we take limits of these sets of vertices we obtain all subsets of $T=\{0\}\cup \{\frac{1}{k} \,\, : \,\,  k\in \mathbb{N}_{>0}\}$ of size $d-1$ which satisfy Gale's evenness condition in $T$.
\end{remark}

\section{Convex hull for elementary symmetric polynomials and test sets for copositivity} \label{sec:Convex hull}
In this section we study the convex hulls of the sets $E_{n,d}$, $\Pi_{n,d}$, $E_d$ and $\Pi_d$. Recall that, for $n\ge d$, $E_{n,d}$ is the image of the probability simplex under the so called \emph{Vieta map}
\begin{align*}
    \Delta_{n-1}&\to\R_{\ge0}^{d-1}\\
    x&\mapsto(e_2,\dots,e_d)(x),
\end{align*}
where $e_k$ is the $k$-th elementary symmetric polynomial.
\medskip

Although $ E_{n,d} \cong \Pi_{n,d}$ and $E_d \cong \Pi_d$ are diffeomorphic, we show that $\conv E_{n,d}$ has some properties which are not shared by $\conv \Pi_{n,d}$. We relate the study of the convex hulls to copositivity of certain symmetric forms. Based on our finding that $\conv E_{n,d}$ is a convex polytope with vertices $\{(e_2,\ldots,e_d)(\bar{k})\, :\, k\in[n]\}$ we geometrically explain and slightly generalize a result by Choi, Lam and Reznick on a test set for nonnegativity of even symmetric sextics \cite{choi1987even}. 
\medskip

Our main finding of this section is that $\mathcal{E}_{n,d}:=\conv E_{n,d}$ is a polytope that has the combinatorial type of the cyclic polytope $C(n,d-1)$. 
\medskip
 
Recall $\bar{k}:=(0,\dots,0,\frac1k,\dots,\frac1k)\in\Delta_{n-1}$, for $k\in[n]$, so $(e_2,\dots,e_d)(\bar{k})=\left({k\choose2}\frac1{k^2},\dots,{k\choose d}\frac1{k^d} \right)$. 
 
\begin{theorem} \label{thm:Convex hull Image of Elementary}
For $d\ge3$ the set $\mathcal{E}_{n,d}$ is the image of the cyclic polytope $C(n,d-1)$ under an invertible affine linear map, and it is the convex hull of the points $(e_2,\dots,e_d)(\bar{k})$ for $k\in[n]$.
\end{theorem}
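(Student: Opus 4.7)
The plan is to (i) explicitly construct an invertible affine linear map $L\colon\R^{d-1}\to\R^{d-1}$ carrying each vertex $\hat k = (1/k,1/k^2,\ldots,1/k^{d-1})$ of the cyclic polytope $C(n,d-1)$ to $v_k := (e_2(\bar k),\ldots,e_d(\bar k)) = (\binom{k}{2}/k^2,\ldots,\binom{k}{d}/k^d)$, and (ii) show that $L(C(n,d-1))$ coincides with $\mathcal{E}_{n,d}$.

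For (i), observe that
\[ e_j(\bar k) = \binom{k}{j}\frac{1}{k^j} = \frac{1}{j!}\prod_{i=0}^{j-1}\Big(1 - \frac{i}{k}\Big) \]
is a polynomial of degree exactly $j-1$ in the variable $u:=1/k$, with nonzero leading coefficient $(-1)^{j-1}/j$. Writing this polynomial in the monomial basis $1,u,\ldots,u^{j-1}$ and then substituting $y_i$ for $u^i$ yields an affine linear expression in $y_1,\ldots,y_{j-1}$, which I take as the $(j-1)$-th component of $L(y_1,\ldots,y_{d-1})$. The linear part of $L$ is lower triangular with nonzero diagonal, so $L$ is an invertible affine linear isomorphism of $\R^{d-1}$, and by construction $L(\hat k) = v_k$ for every $k\in [n]$. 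Since affine maps preserve convex hulls, $L(C(n,d-1)) = \conv\{v_1,\ldots,v_n\}$.

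The inclusion $L(C(n,d-1))\subseteq\mathcal{E}_{n,d}$ is immediate because each $\bar k\in\Delta_{n-1}$. For the reverse inclusion, which is the main content of the theorem, I would verify that every facet inequality of the polytope $\conv\{v_k\}$ holds on all of $E_{n,d}$. By Gale's evenness condition (Theorem \ref{thm:facets of cyclic polytope}), the facets of $C(n,d-1)$ correspond to $(d-1)$-subsets $K\subseteq[n]$, and each facet hyperplane is cut out by $P_K(u):=\prod_{k\in K}(u-1/k)=0$ in moment-curve coordinates. Pushing through $L^{-1}$, each such facet inequality becomes a linear relation $c_0+c_2e_2+\cdots+c_de_d\geq 0$ in the $(e_2,\ldots,e_d)$-coordinates, and the associated homogeneous hook-shaped form $f_K(x) := c_0 e_1^d + c_2 e_1^{d-2}e_2 + \cdots + c_d e_d$ vanishes at $\bar k$ for $k\in K$ and has the same sign at each $\bar k$ with $k\notin K$, as dictated by Gale's evenness condition.

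The main obstacle — the technical heart of the theorem — is proving that each such hook-shaped polynomial $f_K$ is copositive on $\R_{\geq 0}^n$. This is the statement whose combinatorial content traces back to Bollob\'as's extremal graph-theoretic work \cite{bollobas1976relations}. One viable route is to exploit the generating function $f_x(u) = \prod_i(1+ux_i)=\sum_j e_j(x)u^j$, which is real-rooted by construction, interpreting each facet inequality as a linear functional of the coefficients $(e_j(x))_j$ whose nonnegativity can be reduced, via iterated application of Newton's inequalities and interlacing of real-rooted polynomials, to checking the value at the distinguished points $\bar k$; an alternative is a direct extremal/compression argument in the spirit of Kruskal--Katona that moves any $x\in\Delta_{n-1}$ to a Gale-even configuration without increasing $f_K(x)$. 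Once copositivity of the $f_K$ is established, the facet inequalities of $\conv\{v_k\}$ hold throughout $E_{n,d}$, yielding the reverse inclusion and thus the claimed equality $\mathcal{E}_{n,d}=L(C(n,d-1))$.
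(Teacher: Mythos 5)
Your step (i) is exactly the paper's Lemma on the affine isomorphism: writing $e_j(\bar k)=\tfrac1{j!}\prod_{i=1}^{j-1}(1-\tfrac ik)$ as a degree-$(j-1)$ polynomial in $u=1/k$ gives a triangular, hence invertible, affine map carrying the moment-curve points $\hat k$ to $(e_2,\dots,e_d)(\bar k)$, and this part is fine. The genuine gap is in step (ii): the reverse inclusion $E_{n,d}\subseteq\conv\{(e_2,\dots,e_d)(\bar k):k\in[n]\}$ is the entire content of the theorem, and you do not prove it. You reduce it to copositivity of one hook-shaped form $f_K$ per facet of the cyclic polytope and then only gesture at possible strategies (``Newton's inequalities and interlacing'' or ``a Kruskal--Katona-style compression''), none of which is carried out; Newton's inequalities alone do not obviously yield these facet inequalities, and the compression argument is precisely the nontrivial thing one would have to invent. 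Note also that the logical order in the paper is the opposite of yours: copositivity of hook-shaped forms at the points $\bar k$ (Theorem \ref{thm:discrete test set in elementarys}) is \emph{deduced from} this convex-hull theorem, so appealing to such copositivity here risks circularity unless you supply an independent proof.

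The paper closes this gap with a single short exchange argument (Lemma \ref{lem:zhao}, going back to Bollob\'as): any affine linear combination $\phi=c_1+c_2e_2+\cdots+c_de_d$ attains its minimum over $\Delta_{n-1}$ at some point $\bar k$. One takes a minimizer with the maximal number of zero coordinates, fixes all but two positive coordinates, writes $\phi=Cx_1x_2+D$ subject to $x_1+x_2$ constant, and observes that $C\ge0$ would let one push a coordinate to $0$ (contradicting maximality), while $C<0$ forces $x_1=x_2$; hence the minimizer has all positive coordinates equal. Combined with Krein--Milman and the convex position of the points $(e_2,\dots,e_d)(\bar k)$ from your step (i), this identifies the extreme points of $\mathcal{E}_{n,d}$ exactly and gives the theorem without ever touching individual facets or Gale's evenness condition. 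You should replace your facet-by-facet copositivity plan with this minimization lemma (or prove an equivalent statement); as it stands, your proposal leaves the theorem's essential inclusion unestablished.
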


The above observation appeared for the first time in the context of extremal combinatorics. In the planar setting it was proven by Bollobás to give a description of the convex hull of the range of edge versus triangle densities of graphs \cite{bollobas1976relations}. The result was extended to larger dimensions shortly afterwards and new proofs appeared for instance also in \cite{foregger1987relative,kovavcec2012note,riener2012degree,linear}. 
We present a proof using the following two Lemmas. The following proof is a formalization of Bollobás's original argument, which we borrow from \cite{zhao2023graph} and provide for completeness. 

\begin{lemma}[\cite{zhao2023graph},~Lemma~5.4.3] \label{lem:zhao}
A real finite linear combination of $n$-variate elementary symmetric polynomials attains its minimum value on the probability simplex $\Delta_{n-1}$ in at least one point of multiplicity length equal to one.
\end{lemma}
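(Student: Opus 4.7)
My plan is to argue by induction on the number of variables $n$, with base case $n=1$ immediate since $\Delta_0 = \{(1)\}$. The key algebraic input is a two-variable expansion: for any pair of indices $i \neq j$, writing $\hat{x}$ for the vector obtained by deleting $x_i$ and $x_j$ and setting $s = x_i + x_j$, $p = x_i x_j$, partitioning the index sets in the definition of $e_k(x)$ according to their intersection with $\{i,j\}$ yields
\[ e_k(x) = e_k(\hat{x}) + s \cdot e_{k-1}(\hat{x}) + p \cdot e_{k-2}(\hat{x}). \]
Consequently any finite real linear combination $f = \sum_k c_k e_k$ is affine linear in $p$ when $s$ and the remaining coordinates are held fixed.

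For the inductive step, let $x^\ast \in \Delta_{n-1}$ minimize $f$. If some coordinate $x_i^\ast$ vanishes, then $x^\ast$ lies on a face canonically identified with $\Delta_{n-2}$ on which $e_k$ restricts to the analogous polynomial in $n-1$ variables, so the inductive hypothesis yields a minimizer of multiplicity length one on that face, which is also a minimizer on $\Delta_{n-1}$. If instead every coordinate of $x^\ast$ is positive but some pair $x_i^\ast \neq x_j^\ast$ differs, I would consider the one-dimensional slice obtained by fixing all coordinates except $x_i, x_j$ together with $s = x_i^\ast + x_j^\ast$. On this slice $p = x_i x_j$ ranges over $[0, s^2/4]$, and the strict AM--GM inequality places $p^\ast = x_i^\ast x_j^\ast$ in the open interval $(0, s^2/4)$. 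Since $f$ is affine linear in $p$ yet attains its minimum at the interior point $p^\ast$, it must be constant on the slice; hence the endpoint with $x_i = 0$, $x_j = s$ is also a minimizer, reducing to the previous case. The only remaining possibility is that all coordinates are positive and equal, in which case $x^\ast = \overline{n}$ already has multiplicity length one.

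I do not anticipate a serious obstacle once the affine linearity in $p$ is spotted, as the exchange argument is then essentially forced. The one subtlety worth flagging is that the reduction should push a coordinate to zero rather than collapse $x_i$ and $x_j$ to their common midpoint $s/2$: the latter option, while also producing a minimizer on the slice by constancy of $f$, need not strictly reduce the multiplicity length in a single step, whereas zeroing a coordinate drops the effective number of variables and lets the induction close cleanly.
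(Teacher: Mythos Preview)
Your proposal is correct and follows essentially the same approach as the paper: both exploit that, upon fixing $s = x_i + x_j$ and the remaining coordinates, the linear combination is affine in $p = x_i x_j$, and then use this to push a coordinate to zero or force all positive coordinates to be equal. The paper organizes this via an extremal choice (a minimizer with the maximum number of zero coordinates) together with a case split on the sign of the coefficient of $p$, whereas you use induction on $n$ and the observation that an affine function minimized at an interior point is constant; these are equivalent ways to package the same argument.
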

\begin{proof}
    Let $\phi(x):=c_1+c_2e_2(x)+\ldots+c_de_d(x)$ with $c_i\in\R$ and, among its minimizers on $\Delta_{n-1}$ consider $x^*$ with the maximum number of zero coordinates. We show that $x^*=\bar{k}$ up to permutation for some $k\in[n]$. 
    
    Since $\phi$ is symmetric then $\phi(x)=A+Bx_1+Bx_2+Cx_1x_2$, where $A,B,C$ are functions of $x_3,\ldots,x_n$. If $x^*=(1,0,\dots,0)$ up to permutation we are done, so suppose without loss of generality that $x_1^*>0$ and $x_2^*>0$. 
    
    By fixing $x_1+x_2=x_1^*+x_2^*$ we obtain $\phi(x)=Cx_1x_2+D$ where $D$ is a function of $x_3,\dots,x_n$. If $C(x^*)\geq 0$, holding $x_1+x_2=x_1^*+x_2^*$ fixed, we set either $x_1=0$ or $x_2=0$ and obtain a minimizer with more zero coordinates than $x^*$, a contradiction. So $C(x^*)<0$ and we obtain that $\phi(x)$ is minimized when $x_1=x_2$ (when the sum of nonnegative $x_1,x_2$ is fixed, its product is maximized when $x_1=x_2$). Since the choice of two positive coordinates was arbitrary, we must have $x^*=\bar{k}$ (up to permutation) for some $k\in[n]$.
\end{proof}

\begin{lemma} \label{prop:affine isomorphism}
For $n\ge d \geq 3$ the points $(e_2,\dots,e_d)(\bar{k})$ for $k\in[n]$ are the vertices of a polytope of combinatorial type $C(n,d-1)$.
\end{lemma}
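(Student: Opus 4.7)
The plan is to exhibit an explicit invertible affine linear transformation $\Phi\colon\R^{d-1}\to\R^{d-1}$ that carries the moment-curve realization $\{(1/k,1/k^2,\ldots,1/k^{d-1}):k\in[n]\}$ of the cyclic polytope $C(n,d-1)$ bijectively onto the points $(e_2,\ldots,e_d)(\bar k)$ for $k\in[n]$. Since invertible affine maps send cyclic polytopes to combinatorially equivalent polytopes, and in particular send vertices to vertices, this will immediately yield the lemma.

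The first step is the elementary expansion
\[
e_j(\bar k)\;=\;\binom{k}{j}\frac{1}{k^j}\;=\;\frac{1}{j!}\prod_{i=1}^{j-1}\!\left(1-\frac{i}{k}\right).
\]
Setting $t:=1/k$, the right-hand side becomes a polynomial in $t$ of degree exactly $j-1$, with constant term $1/j!$ and leading coefficient $(-1)^{j-1}(j-1)!/j!=(-1)^{j-1}/j$. Reading off the coefficients in the basis $1,t,t^2,\ldots,t^{d-1}$, I would define
\[
\Phi(t,t^2,\ldots,t^{d-1})\;:=\;v_0+A\,(t,t^2,\ldots,t^{d-1})^{\!\top},
\]
where $v_0=(1/2!,1/3!,\ldots,1/d!)$ and $A$ is the $(d-1)\times(d-1)$ matrix whose $(j,i)$-entry (with $j=2,\ldots,d$, $i=1,\ldots,d-1$) is the coefficient of $t^i$ in $e_j(\bar k)$. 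Because $e_j(\bar k)$ has no terms of degree larger than $j-1$ in $t$, the matrix $A$ is lower triangular, and its diagonal entries $(-1)^{j-1}/j$ are all nonzero, so $A$ is invertible.

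By construction $\Phi(1/k,1/k^2,\ldots,1/k^{d-1})=(e_2(\bar k),\ldots,e_d(\bar k))$ for every $k\in[n]$, so $\Phi$ sends the vertex set of $C(n,d-1)=\conv\{(1/k,\ldots,1/k^{d-1}):k\in[n]\}$ bijectively onto $\{(e_2,\ldots,e_d)(\bar k):k\in[n]\}$. Combinatorial type is preserved by invertible affine transformations, so the $n$ points $(e_2,\ldots,e_d)(\bar k)$ form the vertex set of a polytope with the combinatorial type of $C(n,d-1)$, which is exactly the assertion of the lemma. The only content beyond bookkeeping is the triangular form and non-vanishing diagonal of $A$, which is forced by the product expansion above, so I do not anticipate any genuine obstacle.
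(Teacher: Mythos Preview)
Your proposal is correct and follows essentially the same approach as the paper: both arguments exploit the identity $e_j(\bar k)=\tfrac{1}{j!}\prod_{i=1}^{j-1}(1-i/k)$ to see that the points lie on a curve whose components are polynomials of degree $0,1,\ldots,d-2$ in $t=1/k$, and then invoke the invertible affine change of coordinates between this curve and the moment curve. The paper states the existence of the affine isomorphism directly from the degree count, whereas you make the lower-triangular structure and nonzero diagonal of $A$ explicit; this is the same argument with a bit more bookkeeping spelled out.
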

\begin{proof}

Observe that $\binom{k}i\frac1{k^i}=\frac{k(k-1)\cdots(k-i+1)}{i!k^i}=\frac1{i!}(1-\frac1k)(1-\frac2k)\cdots(1-\frac{i-1}k)$ and consider the curve $f(t):=(f_2(t),\dots,f_d(t))$ where $f_i(t):=\frac1{i!}(1-t)(1-2t)\cdots(1-(i-1)t)$ for $t\in\R$. The points $(e_2,\dots,e_d)(\bar{k})$ for $1\leq k\leq n$ then belong to the curve $f(t)$. Now, since $f_i(t)$ is a polynomial of degree $i-1$ in $t$, there exists an affine linear isomorphism $\varphi:\R^{d-1}\to\R^{d-1}$ sending the moment curve $(t,t^2,\dots,t^{d-1})$ to $(f_2(t),\dots,f_d(t))$. Further observe that, by construction, $\varphi$ sends $(\frac1k,\frac1{k^2},\dots,\frac1{k^{d-1}})$ to $(e_2,\dots,e_d)(\bar{k})$ for all $k\in\N_{>0}$. Since affine linear isomorphisms preserve combinatorial types of polytopes we get the result.
\end{proof}

\begin{proof}[Proof of Theorem \ref{thm:Convex hull Image of Elementary}]
By the Krein-Milman theorem a compact convex set in $\R^k$ is the convex hull of its extreme points. Extreme points of a compact convex set $K$ in $\R^k$ are minima of affine linear maps on $K$. An affine linear map on the compact convex set $\mathcal{E}_{n,d}$ is an affine linear combination of $e_2(x),\dots,e_d(x)$, $x\in\Delta_{n-1}$, so by Lemma \ref{lem:zhao} its minima is attained at a point $\bar{k}$ for some $k\in[n]$. Therefore the set of extreme points of $\mathcal{E}_{n,d}$ is contained in the set $\{(e_2,\dots,e_d)(\bar{k}):k\in[n]\}$. But by Lemma \ref{prop:affine isomorphism} the points $\{(e_2,\dots,e_d)(\bar{k}):k\in[n]\}$ are the vertices of a polytope of combinatorial type $C(n,d-1)$, so in particular they are in convex position, and, since they belong to $\mathcal{E}_{n,d}$, they must be precisely the extreme points of $\mathcal{E}_{n,d}$. It follows from the proof of Lemma \ref{prop:affine isomorphism} that the set $\mathcal{E}_{n,d}$ is the image of the cyclic polytope $\conv \{\nu_{n,d}^*(\overline{k})\, :\, k \in [n]\}$ under an invertible affine linear map.
\end{proof}

Observe that for fixed $d$ the sets $E_{n,d}$, for $n\ge d$, are nested non-decreasingly, i.e. $E_{n,d}\subseteq E_{n+1,d}$ for all $n\ge d$. This follows from the fact that $e_k(x_1,\dots,x_n,x_{n+1})|_{x_{n+1}=0}=e_k(x_1,\dots,x_n)$ for all $n\ge d$ and $k=1,\dots,d$. Hence also $\mathcal{E}_{n,d}\subseteq\mathcal{E}_{n+1,d}$ for all $n\ge d$. As $n$ increases, the vertices of $\mathcal{E}_{n,d}$ accumulate towards the point $$\lim_{k\to\infty}\left( {k \choose 2}\frac{1}{k^2},\ldots,{k \choose d}\frac{1}{k^d}\right)=\left(\frac{1}{2!},\ldots,\frac{1}{d!}\right),$$ which suggests the following description for the limit set $\mathcal{E}_{d}:=\clo( \bigcup_{n\ge d}\mathcal{E}_{n,d})$.

\begin{proposition} \label{cor:image infinite probability simplex}
$\mathcal{E}_{d} = \conv \left\{  \left\{ (e_2,\dots,e_d)(\bar{k}) : k \in \N_{>0} \right\}  \cup \{ \left( \frac{1}{2!},\frac{1}{3!},\ldots,\frac{1}{d!} \right) \} \right\}. $
\end{proposition}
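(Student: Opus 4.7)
The plan is to prove both inclusions of the claimed equality, using Theorem~\ref{thm:Convex hull Image of Elementary} as the main input. Let me write $P$ for the right-hand side, i.e.\ $P := \conv\bigl(\{(e_2,\dots,e_d)(\bar k) : k \in \N_{>0}\} \cup \{(1/2!,\dots,1/d!)\}\bigr)$.

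First I would observe that $\mathcal{E}_d$ is convex and compact. Convexity follows because the sets $\mathcal{E}_{n,d}$ are nested and each is convex, so their union is convex, and the closure of a convex set in $\R^{d-1}$ remains convex. Compactness is not strictly needed, but closedness is built into the definition. In parallel, I would note that $P$ is compact: the set $S := \{(e_2,\dots,e_d)(\bar k) : k \in \N_{>0}\} \cup \{(1/2!,\dots,1/d!)\}$ is a convergent sequence together with its limit (since $\binom{k}{i}/k^i \to 1/i!$ as $k \to \infty$), hence compact in $\R^{d-1}$; then $P = \conv S$ is compact by the standard fact that convex hulls of compact sets in finite-dimensional spaces are compact (via Carathéodory).

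For the inclusion $P \subseteq \mathcal{E}_d$: every extreme point $(e_2,\dots,e_d)(\bar k)$ with $k \in \N_{>0}$ lies in $\mathcal{E}_{k,d} \subseteq \bigcup_{n\ge d}\mathcal{E}_{n,d} \subseteq \mathcal{E}_d$ for $k \geq d$, and for $k < d$ it lies in $\mathcal{E}_{d,d}$. The limit point $(1/2!,\dots,1/d!)$ is the limit of $(e_2,\dots,e_d)(\bar k)$ as $k \to \infty$, and therefore lies in $\mathcal{E}_d$ because $\mathcal{E}_d$ is closed. Since $\mathcal{E}_d$ is convex and contains $S$, it contains $P = \conv S$.

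For the inclusion $\mathcal{E}_d \subseteq P$: by Theorem~\ref{thm:Convex hull Image of Elementary}, for each $n \ge d$ we have $\mathcal{E}_{n,d} = \conv\{(e_2,\dots,e_d)(\bar k) : k \in [n]\} \subseteq \conv S = P$. Therefore $\bigcup_{n \geq d}\mathcal{E}_{n,d} \subseteq P$, and since $P$ is closed (being compact), passing to closures yields $\mathcal{E}_d \subseteq P$. There is essentially no main obstacle here; the only subtlety worth flagging is the compactness of $P$, which depends on the observation that $\{(e_2,\dots,e_d)(\bar k) : k \in \N_{>0}\} \cup \{(1/2!,\dots,1/d!)\}$ is compact precisely because the added point is the limit of the sequence.
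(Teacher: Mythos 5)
Your proposal is correct and follows essentially the same route as the paper: one inclusion via convexity of $\mathcal{E}_d$ (from nestedness of the $\mathcal{E}_{n,d}$) together with closedness giving the limit point, and the other via Theorem \ref{thm:Convex hull Image of Elementary} plus closedness of the right-hand hull. In fact you supply a detail the paper only asserts, namely that the hull is closed, which you justify by compactness of the convergent vertex sequence together with its limit and Carath\'eodory.
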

\begin{proof}
The set $\bigcup_{n \geq d} \mathcal{E}_{n,d}$ is convex because the sets $\mathcal{E}_{n,d}$ are nested, and so its closure $\mathcal{E}_d$ is convex too. Denote $\underline{d}:=(\frac1{2!},\dots,\frac1{d!})$ and observe that by definition $(e_2,\dots,e_d)(\bar{k})\in\mathcal{E}_d$, and $\underline{d}\in\mathcal{E}_{d}$ since $\mathcal{E}_d$ is closed. Hence $S\subset\mathcal{E}_d$ where $S:=\conv \left\{\left\{  (e_2,\dots,e_d)(\bar{k}) : k \in \N_{>0} \right\}  \cup \{ \underline{d} \} \right\}$. Furthermore, $\mathcal{E}_{n,d} \subset S$ for all $n \geq d $ and thus $\clo \left( \bigcup_{n \geq d} \mathcal{E}_{n,d} \right) \subset S$, since $S$ is closed.
\end{proof}

Figure \ref{fig:1 E23 and E26} depicts how the new vertices in $\mathcal{E}_{n,3}$ accumulate towards the point $\left(\frac{1}{2!},\frac{1}{3!}\right)$ as $n$ increases. We mark the vertices in the figures below.

\begin{figure}[h!]%
    \centering
    \begin{tikzpicture}
    \begin{scope}[scale=5]
        \draw[thin, blue, fill=blue!20, opacity=0.5] 
            (0.0, 0.0) -- (0.5, 0.0) -- (0.6667, 0.2222) -- cycle;
        \foreach \x/\y in {0.0/0.0, 0.5/0.0, 0.6667/0.2222} {
            \fill (\x, \y) circle (0.005);
        }
        \node at (0.3, -0.15) {};
    \end{scope}
    \end{tikzpicture}
    \qquad
    \begin{tikzpicture}
    \begin{scope}[xshift=4cm, scale=5]
        \draw[thin, blue, fill=blue!20, opacity=0.5] 
            (0.0, 0.0) -- (0.5, 0.0) -- (0.6667, 0.2222) -- (0.75, 0.375) -- (0.8, 0.48) -- (0.8333, 0.5556) -- cycle;
        \foreach \x/\y in {0.0/0.0, 0.5/0.0, 0.6667/0.2222, 0.75/0.375, 0.8/0.48, 0.8333/0.5556} {
            \fill (\x, \y) circle (0.005);
        }
        \node at (0.45, -0.15) {};
    \end{scope}
    \end{tikzpicture}
    \qquad
    \begin{tikzpicture}
  \begin{scope}[xshift=8cm, scale=5]
        \draw[thin, blue, fill=blue!20, opacity=0.5] 
            (0.0, 0.0) -- (0.5, 0.0) -- (0.6667, 0.2222) -- (0.75, 0.375) -- (0.8, 0.48) -- (0.8333, 0.5556) --
            (0.8571, 0.6122) -- (0.875, 0.65625) -- (0.8889, 0.6914) -- (0.9, 0.72) -- (0.9091, 0.7438) --
            (0.9167, 0.7639) -- (0.9231, 0.7811) -- (0.9286, 0.7959) -- (0.9333, 0.8089) -- (0.9375, 0.8203) -- 
            (0.9412, 0.8304) -- (0.9444, 0.8395) -- (0.9474, 0.8476) -- (0.95, 0.855) -- (0.9524, 0.8617) -- 
            (0.9545, 0.8678) -- (0.9565, 0.8733) -- (0.9583, 0.8785) -- (0.96, 0.8832) -- (0.9615, 0.8876) -- 
            (0.96296, 0.8916) -- cycle;
        \foreach \x/\y in {
            0.0/0.0, 0.5/0.0, 0.6667/0.2222, 0.75/0.375, 0.8/0.48, 0.8333/0.5556, 
            0.8571/0.6122, 0.875/0.65625, 0.8889/0.6914, 0.9/0.72, 0.9091/0.7438, 
            0.9167/0.7639, 0.9231/0.7811, 0.9286/0.7959, 0.9333/0.8089, 0.9375/0.8203,
            0.9412/0.8304, 0.9444/0.8395, 0.9474/0.8476, 0.95/0.855, 0.9524/0.8617, 
            0.9545/0.8678, 0.9565/0.8733, 0.9583/0.8785, 0.96/0.8832, 0.9615/0.8876, 0.96296/0.8916} {
            \fill (\x, \y) circle (0.005);
        }
        \node at (0.65, -0.15) {};
    \end{scope}   
    \end{tikzpicture}
    \caption{The sets $\mathcal{E}_{3,3}$, $\mathcal{E}_{6,3}$ and $\mathcal{E}_{20,3}$}%
    \label{fig:1 E23 and E26}%
\end{figure}
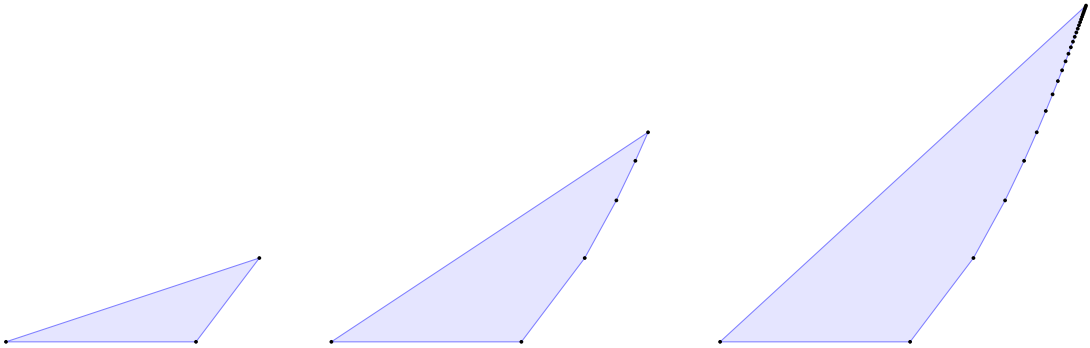

We have seen that $E_{n,d}$ is contained in $\conv\{(e_2,\dots,e_d)(x):x\in C_n\}$ where $C_n:=\{\bar{k}:k\in[n]\}$. Observe that $C_n$ is precisely the set of points in $\Gamma_{n-1}:=\Delta_{n-1}\cap\mathcal{W}_n$ where the Jacobian matrix of the Vieta map $(e_2,\ldots,e_d) : \R^n \to \R^{d-1}$ has rank one. This follows since the rank of the Jacobian matrix is preserved under diffeomorphism (Newton's identities (\ref{eq:Newton's identities}) provide polynomial diffeomorphisms between $E_{n,d}$ and $\Pi_{n,d}$), and $C_n$ is precisely the set of points on $\Gamma_{n-1}$ where the Jacobian matrix of the Vandermonde map $(p_2,\ldots,p_d) : \R^n \to \R^{d-1}$ has rank one.
\medskip

Observe that $(p_2,\dots,p_d)(\bar{k})=\left(\frac{1}{k},\frac{1}{k^2},\ldots,\frac{1}{k^{d-1}}\right)$ for each $k\in[n]$, and as seen in Remark \ref{rem:arcs pi3}, the arcs $L_k$, $k\in[n-1]$, are concave (and the upper arc $U_{n-1}$ is below the line joining $(1,1)$ and $(\frac1n,\frac1{n^2})$), so $\Pi_{n,3}\subset\conv\{ (\frac{1}{k},\frac{1}{k^2}):k\in [n]\}$. So it seems natural to ask whether an analogue to Proposition \ref{cor:image infinite probability simplex} holds for $\Pi_{n,d}$. 
Namely, is $\Pi_{n,d}\subset\conv\{(p_2,\dots,p_d)(x):x\in C_n\}$? One of the reasons this holds for $d=3$ but might fail for $d\ge4$ (and it fails) is that the diffeomorphism provided by Newton's identities is affine linear for $d=3$ while it is not for $d\ge4$ (e.g. $p_4$ is quadratic in $e_2$).

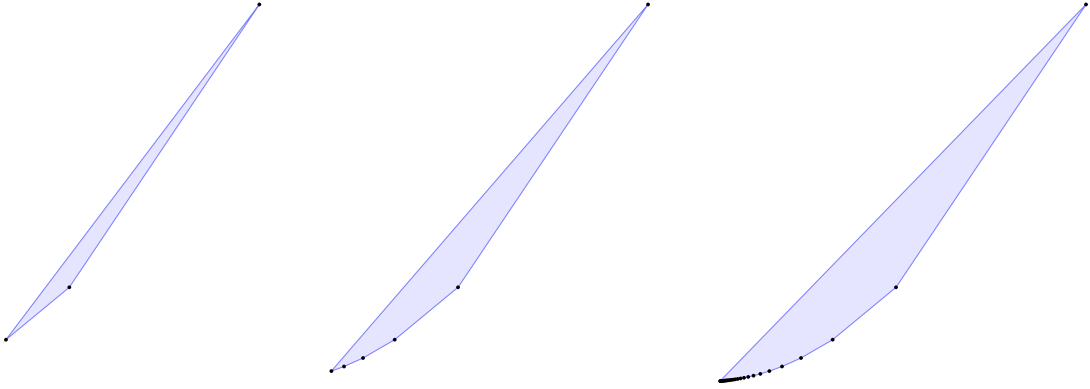
\begin{figure}[h!]%
    \centering
    \begin{tikzpicture}
    \begin{scope}[scale=5]
        \draw[thin, blue, fill=blue!20, opacity=0.5] 
            (1.0, 1.0) -- (0.5, 0.25) -- (0.3333, 0.1111) -- cycle;
        \foreach \x/\y in {1.0/1.0, 0.5/0.25, 0.333333/0.111111} {
            \fill (\x, \y) circle (0.005);
        }
        \node at (0.5, -0.15) {};
    \end{scope}
    \end{tikzpicture}
    \qquad
    \begin{tikzpicture}
        \begin{scope}[xshift=4cm, scale=5]
        \draw[thin, blue, fill=blue!20, opacity=0.5] 
            (1.0, 1.0) -- (0.5, 0.25) -- (0.3333, 0.1111) -- (0.25, 0.0625) -- (0.2, 0.04) -- (0.1667, 0.0278) -- cycle;
        \foreach \x/\y in {1.0/1.0, 0.5/0.25, 0.3333/0.1111, 0.25/0.0625, 0.2/0.04, 0.1667/0.0278} {
            \fill (\x, \y) circle (0.005);
        }
        \node at (0.35, -0.15) {};
    \end{scope}
    \end{tikzpicture}
    \qquad
    \begin{tikzpicture}
        \begin{scope}[xshift=8cm, scale=5]
        \draw[thin, blue, fill=blue!20, opacity=0.5] 
            (1.0, 1.0) -- (0.5, 0.25) -- (0.3333, 0.1111) -- (0.25, 0.0625) -- (0.2, 0.04) -- (0.1667, 0.0278) --
            (0.1429, 0.0204) -- (0.125, 0.015625) -- (0.1111, 0.01235) -- (0.1, 0.01) -- (0.0909, 0.00826) --
            (0.0833, 0.00694) -- (0.07692, 0.00592) -- (0.07143, 0.0051) -- (0.06667, 0.00444) -- (0.0625, 0.00391) -- 
            (0.05882, 0.00346) -- (0.05556, 0.00309) -- (0.05263, 0.00277) -- (0.05, 0.0025) -- (0.04762, 0.00227) -- 
            (0.04545, 0.00207) -- (0.04348, 0.00189) -- (0.04167, 0.00174) -- (0.04, 0.0016) -- (0.03846, 0.00148) -- 
            (0.03704, 0.00137) -- cycle;
        \foreach \x/\y in {
            1.0/1.0, 0.5/0.25, 0.3333/0.1111, 0.25/0.0625, 0.2/0.04, 0.1667/0.0278, 
            0.1429/0.0204, 0.125/0.015625, 0.1111/0.01235, 0.1/0.01, 0.0909/0.00826, 
            0.0833/0.00694, 0.07692/0.00592, 0.07143/0.0051, 0.06667/0.00444, 0.0625/0.00391,
            0.05882/0.00346, 0.05556/0.00309, 0.05263/0.00277, 0.05/0.0025, 0.04762/0.00227, 
            0.04545/0.00207, 0.04348/0.00189, 0.04167/0.00174, 0.04/0.0016, 0.03846/0.00148, 
            0.03704/0.00137} {
            \fill (\x, \y) circle (0.005);
        }
        \node at (0.55, -0.15) {};
    \end{scope}    
    \end{tikzpicture}
    \caption{The convex polytopes $\conv \Pi_{n,3}$ for $n=3$, $n=6$ and $n=20$}%
    \label{fig:3 P23 and P26}%
\end{figure}

\begin{proposition} \label{prop:not power sums}
For all $n\ge d\ge4$ the set $\Pi_{n,d}$ is not contained in $\conv \left\{ \left( \frac{1}{k},\frac{1}{k^2},\cdots,\frac{1}{k^{d-1}} \right) : k \in [n] \right\}$. Moreover, $\Pi_d \not \subseteq \conv \left\{(0,\ldots,0), \left( \frac{1}{k},\frac{1}{k^2},\cdots,\frac{1}{k^{d-1}} \right) : k \in \N_{>0} \right\}$.
\end{proposition}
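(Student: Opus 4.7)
The plan is to exhibit a single linear functional on $\R^{d-1}$ that is non-negative at every vertex of the candidate convex hull but strictly negative at an explicit point of $\Pi_{n,d}$; this separating hyperplane will simultaneously disprove both inclusions in the proposition.

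A linear functional on $\R^{d-1}$ can be written as $F(y_1,\ldots,y_{d-1}) = \sum_{j=2}^{d} c_j y_{j-1}$, and I encode it by the polynomial $q(t) := c_2 t + c_3 t^2 + \cdots + c_d t^{d-1}$, which has degree at most $d-1$ and vanishes at $t = 0$. A direct swap of summations shows that for any $x \in \Delta_{n-1}$,
\[
F\bigl(p_2(x),\ldots,p_d(x)\bigr) \;=\; \sum_{j=2}^d c_j\, p_j(x) \;=\; \sum_{i=1}^n x_i\, q(x_i),
\]
while $F(\hat{k}) = q(1/k)$ for every $k \in \N_{>0}$ (writing $\hat{k} = (1/k,1/k^2,\ldots,1/k^{d-1})$), and $F(0,\ldots,0)=0$. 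Thus the proposition reduces to finding a polynomial $q$ of degree at most $d-1$ with $q(0)=0$, with $q(1/k) \geq 0$ for all positive integers $k$, and with $\sum_i x_i q(x_i) < 0$ for some $x \in \Delta_{n-1}$.

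I would take $q(t) := t^{d-3}(t-\tfrac12)(t-\tfrac13)$, which vanishes at $0$ (using $d \geq 4$) and has degree $d-1$. Its non-negativity at $\{1,1/2,1/3,1/4,\ldots\}$ is transparent: the factor $t^{d-3}$ is positive at positive $t$, while the two linear factors either vanish (at $t = 1/2$ and $t=1/3$) or share the same sign (both positive at $t=1$, both negative for $t \leq 1/4$). For the negativity test, I would plug in the point $x = (2/5,2/5,1/5,0,\ldots,0) \in \Delta_{n-1}$, available for all $n \geq 4$. A short computation factoring out $(1/5)^{d-3}$ gives
\[
\sum_{i} x_i\, q(x_i) \;=\; \tfrac{4}{5}q(2/5) + \tfrac{1}{5}q(1/5) \;=\; \frac{(1/5)^{d-3}}{750}\bigl(6 - 2^{d-1}\bigr),
\]
which is strictly negative for every $d \geq 4$ since $2^{d-1} \geq 8$.

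The proof then concludes in one line: because $F$ is linear and non-negative on each of the generators $\hat{k}$ (and at the origin), it is non-negative on both convex hulls appearing in the proposition, yet it is negative at the point $(p_2(x),\ldots,p_d(x)) \in \Pi_{n,d} \subseteq \Pi_d$. I do not foresee any genuine obstacle; the only substantive step is the sign computation of $\sum_i x_i q(x_i)$, which the explicit factoring renders immediate.
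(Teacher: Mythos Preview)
Your proof is correct and uses the same separating-hyperplane idea as the paper: exhibit a linear functional nonnegative at every $\hat k$ (encoded by a polynomial $q$ with $q(0)=0$ and $q(1/k)\ge 0$) but negative at some point of $\Pi_{n,d}$. The paper takes $q(t)=2t(t-1)(t-\tfrac12)$ for $d=4$, shows negativity asymptotically along $(t,1,\ldots,1)$, and reduces $d>4$ to $d=4$ by projection, whereas your parametrized choice $q(t)=t^{d-3}(t-\tfrac12)(t-\tfrac13)$ together with the explicit witness $(\tfrac25,\tfrac25,\tfrac15,0,\ldots,0)$ handles all $d\ge4$ uniformly and a bit more cleanly.
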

\begin{proof}
Consider the symmetric polynomial $f(p_1,p_2,p_3,p_4) = 2p_4-3p_3p_1+p_2p_1^2$ which is expressed in power sums. For $m:=n-1$ we define \[  g_m(t) := f(p_1,\ldots,p_4)(t,\underbrace{1,\ldots,1}_{m \text{ times}}) = 
-mt^3 + (m^2+m)t^2 + (2m^2-3m)t + m^3-3m^2+2m.\] 
The leading coefficient of the univariate polynomial $g_m(t)$ is negative, so $g_m(a)<0$ for sufficiently large $a>0$. Hence $f$ cannot be nonnegative on $\R_{\geq 0}^n$ and, since $f$ is homogeneous, it cannot be nonnegative on $\Delta_{n-1}$. However, the symmetric form $f(1,p_2,p_3,p_4)$ is nonnegative on the points on the moment curve of the form $(1/k,1/k^2,1/k^3)$, since \[ f\left(1,\frac1k,\frac1{k^2},\frac1{k^3}\right) = \frac{(k-1)(k-2)}{k^3} \geq 0\] for all integers $k \geq 1$. 
Since $f(1,p_2,p_3,p_4)$ is affine in the $p_i$'s and $f(1,1/k,1/k^2,1/k^3) \geq 0$, the polynomial $f(p_1,p_2,p_3,p_4)$ is nonnegative on $\conv \left\{ \left( 1,\frac{1}{k},\frac{1}{k^2},\frac{1}{k^3} \right) : k \in [n] \right\}$. Thus, we have $\Pi_{n,4} \not \subset \conv \left\{ \left( 1,\frac{1}{k},\frac{1}{k^2},\frac{1}{k^3} \right) : k \in [n] \right\}$. 
For $d > 4$ the claim follows by projection to the first four coordinates and the argument above.
\end{proof}

\subsection{Test sets for copositivity}
Choi, Lam and Reznick studied even symmetric sextics and found strikingly simple test sets for nonnegativity in any number of variables \cite{choi1987even}. Namely, given an $n$-variate even symmetric sextic $f(p_2,p_4,p_6)=ap_2^3+bp_4p_2+cp_6$, $a,b,c\in\R$, $f$ is nonnegative if and only if $f(t,t,t)\ge0$ for $t=1,\dots,n$. From our setup above we can naturally restate it as follows.  

\begin{theorem}[\cite{choi1987even},~Theorem~3.7] \label{thm:choi-lam-reznick}
Let $f(p_2,p_4,p_6)$ be an even symmetric sextic in $n \geq 3$ variables. Then $f$ is nonnegative if and only if $f\left(1,\frac{1}{k},\frac{1}{k^2}\right)$ is nonnegative for all $k \in [n]$.
\end{theorem}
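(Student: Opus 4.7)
My plan is to reduce the statement to a direct application of Theorem \ref{thm:Convex hull Image of Elementary} in the special case $d = 3$, exploiting the fact that Newton's identities are affine linear in this degree. Since every monomial in $p_2, p_4, p_6$ is even in $x$, nonnegativity of $f$ on $\R^n$ is equivalent to nonnegativity on $\R_{\geq 0}^n$, and the substitution $y_i = x_i^2$ (a bijection on the nonnegative orthant) identifies $p_{2i}(x)$ with $p_i(y)$. Thus the question becomes whether the degree-three polynomial
\[
g(y) \,=\, a\,p_1(y)^3 + b\,p_1(y)\,p_2(y) + c\,p_3(y)
\]
is nonnegative on $\R_{\geq 0}^n$, and by homogeneity this further reduces to nonnegativity on the probability simplex $\Delta_{n-1}$.

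On $\Delta_{n-1}$ we have $p_1 = 1$, so the restriction of $g$ coincides with the affine linear functional $h(u,v) := a + b u + c v$ composed with the map $y \mapsto (p_2(y), p_3(y))$. I therefore need to decide when $h$ is nonnegative on the Vandermonde cell $\Pi_{n,3}$. Since $h$ is affine, this is equivalent to $h$ being nonnegative on $\conv \Pi_{n,3}$, and then to nonnegativity at every extreme point of this convex hull.

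To identify those extreme points, I will use the fact that, restricted to $p_1 = 1$, Newton's identities take the affine linear form $e_2 = (1 - p_2)/2$ and $e_3 = 1/6 - p_2/2 + p_3/3$. Hence $(p_2, p_3) \mapsto (e_2, e_3)$ is an invertible affine linear map sending $\Pi_{n,3}$ onto $E_{n,3}$, so $\conv \Pi_{n,3}$ is the affine preimage of $\mathcal{E}_{n,3}$. Theorem \ref{thm:Convex hull Image of Elementary} then identifies the vertices of $\conv \Pi_{n,3}$ as the points $(p_2(\bar{k}), p_3(\bar{k})) = (1/k, 1/k^2)$ for $k \in [n]$, and evaluating $h$ at each such vertex gives exactly $f(1, 1/k, 1/k^2)$. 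I do not expect a serious obstacle: the affine linearity of Newton's identities in degree three makes the reduction essentially automatic, and all of the geometric content has already been absorbed into Theorem \ref{thm:Convex hull Image of Elementary}. The statement of Proposition \ref{prop:not power sums} confirms that this shortcut is genuinely special to $d = 3$ and breaks down for $d \geq 4$.
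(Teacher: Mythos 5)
Your proposal is correct and follows essentially the same route as the paper: after squaring variables and restricting to the probability simplex, the affine-linear form of Newton's identities in degree $3$ transfers the convex-hull description of $\mathcal{E}_{n,3}$ from Theorem \ref{thm:Convex hull Image of Elementary} to $\conv\Pi_{n,3}$, whose vertices $(\frac1k,\frac1{k^2})$ give the test set. The only cosmetic difference is that you pull the convex-hull theorem back to power-sum coordinates directly, whereas the paper passes through the hook-shaped formulation of Theorem \ref{thm:discrete test set in elementarys}; the geometric content is identical.
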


\begin{remark} \label{rmk:choi-lam-reznick}
In contrast to Theorem \ref{thm:choi-lam-reznick}, a consequence of Proposition \ref{prop:not power sums} is that the finite set $\{(\frac1k,\frac1{k^2},\dots,\frac1{k^{d-1}}):k\in[n]\}$ is not enough for testing copositivity of $n$-variate hook-shaped polynomials (changing elementary symmetric polynomials to power sums) of degree $d$, for any $d\ge4$. 
\end{remark}

The convexity properties of the Vieta map discussed above allow us to geometrically explain and generalize Theorem \ref{thm:choi-lam-reznick} to a slightly larger family of symmetric polynomials.

A hook-shaped polynomial is homogeneous and thus copositive if and only if it is nonnegative on the probability simplex $\Delta_{n-1}$. Restricting to the probability simplex we have $e_1=1$ and thus hook-shaped polynomials are affine linear combinations of $e_2,\ldots,e_d$. Now, by Theorem \ref{thm:Convex hull Image of Elementary} $\E_{n,d}:=\conv E_{n,d}$ is a convex polytope, hence nonnegativity of a hook-shaped polynomial $f$ on $E_{n,d}$ is equivalent to its nonnegativity on the vertices of $\mathcal{E}_{n,d}$.

\begin{theorem}\label{thm:discrete test set in elementarys}
Let $f=g(e_1,\dots,e_d)$ be a hook-shaped polynomial of degree $d$ in $n\ge d$ variables. Then $f$ is copositive if and only if $g(1,(e_2,\ldots,e_d)(\bar{k}))\ge0$ for all $k \in[n]$.
\end{theorem}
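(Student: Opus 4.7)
The plan is to reduce copositivity of $f$ to a finite vertex check on a polytope, using three ingredients already in hand: homogeneity of $f$, the structural fact that hook-shaped polynomials become affine once $e_1$ is fixed, and the explicit vertex description of $\mathcal{E}_{n,d}$ from Theorem \ref{thm:Convex hull Image of Elementary}.

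First I would exploit homogeneity: since $f$ is homogeneous of degree $d$, it is nonnegative on $\R_{\geq 0}^n$ if and only if it is nonnegative on the probability simplex $\Delta_{n-1}$, on which $e_1 \equiv 1$. Applying the hook-shape hypothesis $f = c_1 e_1^d + c_2 e_1^{d-2} e_2 + \cdots + c_m e_d$, the restriction of $f$ to $\Delta_{n-1}$ equals the affine-linear functional
\[
L(y_2,\ldots,y_d) := g(1,y_2,\ldots,y_d) = c_1 + c_2 y_2 + \cdots + c_m y_d
\]
evaluated at $(e_2,\ldots,e_d)(x)$. Consequently $f$ is copositive if and only if $L \geq 0$ on the set $E_{n,d} = (e_2,\ldots,e_d)(\Delta_{n-1})$.

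Next, since $L$ is affine, its nonnegativity on $E_{n,d}$ is equivalent to its nonnegativity on $\mathcal{E}_{n,d} = \conv E_{n,d}$, and an affine functional on a bounded polytope is nonnegative if and only if it is nonnegative at the vertices. Theorem \ref{thm:Convex hull Image of Elementary} identifies those vertices as precisely the points $(e_2,\ldots,e_d)(\bar{k})$ for $k \in [n]$. Combining the two equivalences gives $f$ copositive $\iff L((e_2,\ldots,e_d)(\bar{k})) \geq 0$ for every $k \in [n]$, which is exactly the test set in the statement.

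There is no serious obstacle here, as Theorem \ref{thm:Convex hull Image of Elementary} has done the geometric work; the only point requiring care is to invoke the hook-shape definition to ensure that $f|_{\Delta_{n-1}}$ is genuinely affine in $(e_2,\ldots,e_d)$, rather than merely polynomial. This is immediate from the form $c_1 e_1^d + c_2 e_1^{d-2} e_2 + \cdots + c_m e_d$ once $e_1$ is set to $1$.
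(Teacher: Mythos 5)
Your proposal is correct and follows essentially the same route as the paper: restrict to $\Delta_{n-1}$ by homogeneity, note that the hook shape makes $g(1,e_2,\ldots,e_d)$ affine linear in $e_2,\ldots,e_d$ so copositivity reduces to nonnegativity of that affine functional on $\mathcal{E}_{n,d}$, and then invoke the vertex description of $\mathcal{E}_{n,d}$ from Theorem \ref{thm:Convex hull Image of Elementary}. No gaps.
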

\begin{proof}
Since $f$ is homogeneous we can restrict to the domain $\Delta_{n-1}$, where $e_1$ is the constant function $1$. As $g(1,e_2,\ldots,e_d)$ is affine linear in $e_2,\dots,e_d$, we observe that $f$ is copositive if and only if $g(1,x)\ge0$ for all $x\in\mathcal{E}_{n,d}$. In particular, $g(1,x)\ge0$ on $\mathcal{E}_{n,d}$ if and only if $g(1,x)$ is nonnegative on the vertices of $\mathcal{E}_{n,d}$, which are precisely the claimed points by Theorem \ref{thm:Convex hull Image of Elementary}.
\end{proof}

The convex geometry of the limit set $\mathcal{E}_d$ (see Proposition \ref{cor:image infinite probability simplex}) allows us to test nonnegativity of hook-shaped polynomials for any number of variables.  

\begin{corollary} \label{cor:limit test set}
Let $f=g(e_1,e_2,\ldots,e_d)$
be a hook-shaped polynomial. Then $f$ is nonnegative in any number of variables $n\geq d$ if and only if $g$ is nonnegative on the discrete set $\left\{ (e_1,e_2,\ldots,e_d)(\bar{k}) \, : \, k \in \N_{>0} \right\}$.
\end{corollary}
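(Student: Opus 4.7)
The plan is to reduce the statement to a direct application of Theorem \ref{thm:discrete test set in elementarys} combined with the observation that the test points $\bar{k}$ and the corresponding values $(e_1,\ldots,e_d)(\bar{k})$ are \emph{intrinsic}: they depend only on $k$, not on the ambient dimension $n$, as long as $n\ge k$. Explicitly, $\bar{k}=(\underbrace{0,\ldots,0}_{n-k},\underbrace{1/k,\ldots,1/k}_{k})\in\Delta_{n-1}$ satisfies $e_j(\bar{k})=\binom{k}{j}k^{-j}$, independent of $n$ provided $n\geq k$. Thus the discrete test set in Theorem \ref{thm:discrete test set in elementarys} only grows as $n$ does, and its union over all $n\ge d$ is exactly the infinite set in the corollary.

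For the forward implication, I would simply note that if $f$ is nonnegative in every number of variables $n\ge d$, then for each $k\in\N_{>0}$ one may evaluate $f$ at $\bar{k}\in\Delta_{n-1}$ for any $n\ge\max(k,d)$; since $f=g(e_1,\ldots,e_d)$ and $\bar{k}$ lies in the nonnegative orthant, this yields $g((e_1,\ldots,e_d)(\bar{k}))=f(\bar{k})\ge0$, giving nonnegativity of $g$ on the whole discrete set.

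For the converse, suppose $g$ is nonnegative on $\{(e_1,\ldots,e_d)(\bar{k}):k\in\N_{>0}\}$. Fix any $n\ge d$. Restricting to those $k\in[n]$ gives $g(1,(e_2,\ldots,e_d)(\bar{k}))\ge0$ for all $k\in[n]$, since $e_1(\bar{k})=1$ on the probability simplex. By Theorem \ref{thm:discrete test set in elementarys} this is precisely the criterion for $f$ to be copositive in $n$ variables, and homogeneity of $f$ upgrades copositivity on $\Delta_{n-1}$ to nonnegativity on $\R^n_{\ge0}$. Since $n\ge d$ was arbitrary, $f$ is nonnegative in every number of variables.

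There is no real obstacle here: once one recognizes that the test set of Theorem \ref{thm:discrete test set in elementarys} is a nested family whose union is the countable set appearing in the corollary, the result follows immediately. The only minor care needed is in phrasing the test point $\bar{k}$ consistently across different ambient dimensions, which is handled by the stability of the values $e_j(\bar{k})=\binom{k}{j}k^{-j}$ under padding with zeros.
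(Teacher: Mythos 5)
Your proof is correct, but it follows a different route from the paper's. The paper argues directly in the limit: it replaces $\mathcal{E}_{n,d}$ by the limit convex set $\mathcal{E}_d$, invokes Proposition \ref{cor:image infinite probability simplex} to identify the extreme points of $\mathcal{E}_d$ as the points $(e_2,\ldots,e_d)(\bar{k})$, $k\in\N_{>0}$, together with their accumulation point $\left(\frac{1}{2!},\ldots,\frac{1}{d!}\right)$, and then uses continuity of the affine function $g(1,\cdot)$ to dispense with testing at that accumulation point. You instead avoid the limit object altogether: you fix $n\ge d$, apply Theorem \ref{thm:discrete test set in elementarys} verbatim, and observe that the finite test sets are nested and stable under padding with zeros (since $e_j(\bar{k})=\binom{k}{j}k^{-j}$ does not depend on $n\ge k$), so that their union over all $n$ is exactly the countable set in the corollary; the forward direction is a direct evaluation at $\bar{k}\in\Delta_{n-1}$. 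Your finitary argument is slightly more elementary, needing neither Proposition \ref{cor:image infinite probability simplex} nor the continuity/accumulation-point step, while the paper's version buys the geometric picture of $\mathcal{E}_d$ as an infinite-vertex polytope-like set, which it reuses elsewhere (e.g.\ to note $\mathcal{E}_d$ is not semialgebraic). One cosmetic remark: your closing sentence about homogeneity ``upgrading copositivity on $\Delta_{n-1}$ to nonnegativity on $\R^n_{\ge0}$'' is redundant as phrased, since Theorem \ref{thm:discrete test set in elementarys} already delivers copositivity, i.e.\ nonnegativity on the nonnegative orthant; the homogeneity reduction to the simplex is internal to that theorem's proof.
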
 
\begin{proof}
    The proof is analogous to the one for Theorem \ref{thm:discrete test set in elementarys} changing $\mathcal{E}_{n,d}$ by $\mathcal{E}_d$, and using Proposition \ref{cor:image infinite probability simplex}. By continuity, the set $\left\{ (e_1,e_2,\ldots,e_d)(\bar{k}) \, : \, k \in \N_{>0} \right\}$ is enough to test nonnegativity because $\left( \frac{1}{2!},\frac{1}{3!},\ldots,\frac{1}{d!} \right)$ is its accumulation point.
\end{proof}

Observe that the set $\mathcal{E}_d$ is not semialgebraic for any $d\ge3$ since it has countably many vertices by Proposition \ref{cor:image infinite probability simplex}.

\begin{remark} 
Let $T=\{\left(\frac{1}{2!},\ldots,\frac{1}{d!}\right)\} \cup \{\left( {k \choose 2}\frac{1}{k^2},\ldots,{k \choose d}\frac{1}{k^d} \right): k \in \N_{>0}\}$. The boundary of $\mathcal{E}_d$ consists of the convex combinations of $d-1$ points in $T$ satisfying Gale's evenness condition. 
\end{remark}

\section{Undecidability of nonnegativity of trace polynomials} \label{sec:undecidability}

In this Section, we show that the problem of deciding nonnegativity of trace polynomials in real symmetric matrices of all sizes is undecidable (see Theorem \ref{thm:undecidable traces}). This result stays sharp in contrast to the case of finitely many variables. Surprisingly, we then prove that the analogous problem defined with normalized traces is decidable (see Theorem \ref{thm:normalized decidable}). The key for the undecidability lies in the geometry of $\Pi_3$. To prove Theorem \ref{thm:undecidable traces} we show that deciding copositivity of multihomogeneous \emph{product symmetric} polynomials in several groups of variables and in any number of variables is an undecidable problem (see Theorem \ref{thm:undecidable}). The proof follows from \cite{hatami2011undecidability,blekherman2022undecidability} on undecidability in graph homomorphism densities.

\begin{definition}
For a variable $X$ we denote by $\Tr (X^n)$ the \emph{formal trace symbol} on $X^n$. A \emph{trace polynomial} in the variables $X_1,\ldots,X_k$ is a polynomial expression in formal trace symbols of powers of the variables $X_1,\ldots,X_k$. A trace polynomial is \emph{univariate} if $k=1$.
\end{definition}

For instance, $2\Tr (X_1^4)\Tr (X_2)-\Tr(X_1^2)[\Tr(X_2^3)]^5$ is a trace polynomial in the variables $X_1,X_2$.  
A trace polynomial can naturally be evaluated on square matrices of all sizes. All eigenvalues of a real symmetric matrix $A$ are real. Thus we have $\Tr (A^k) \in \R$ for all $k \in \N$. 
We call a trace polynomial $f(X_1,\ldots,X_k)$ \textit{nonnegative} if $f(A_1,\ldots,A_k) \geq 0$ for all real symmetric matrices $A_1,\ldots,A_k$ of all sizes. We show that establishing nonnegativity of a trace polynomial is an undecidable problem. 
\begin{theorem} \label{thm:undecidable traces}
The following decision problem is undecidable.
\begin{itemize}
    \item[{\footnotesize Instance:}] A positive integer $k$ and a trace polynomial $f(X_1,\ldots,X_k)$.
    \item[{\footnotesize Question:}] Is $f(M_1,\ldots,M_k)$ nonnegative for all real symmetric matrices $M_1,\ldots,M_k$  of all sizes? 
\end{itemize}
\end{theorem}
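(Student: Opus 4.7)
The plan is to reduce the problem to the companion Theorem~\ref{thm:undecidable} on copositivity of multihomogeneous product symmetric polynomials, whose proof rests on the geometry of $\Pi_3$.

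First, I would restrict attention to trace polynomials built only from even matrix powers $\Tr(X_i^{2m})$. For a real symmetric matrix $M$ with eigenvalues $\lambda_1,\ldots,\lambda_n\in\R$, one has
\[
\Tr(M^{2m}) \;=\; \sum_{i=1}^n \lambda_i^{2m} \;=\; p_m(\lambda_1^2,\ldots,\lambda_n^2).
\]
Setting $y_i := \lambda_i^2 \ge 0$, evaluating any even-power trace polynomial on real symmetric matrices becomes evaluation of a polynomial in the power sums $p_{a_i}$ on $\R_{\ge 0}^n$. Conversely, every $y\in\R_{\ge 0}^n$ arises in this way: the diagonal matrix $\operatorname{diag}(\sqrt{y_1},\ldots,\sqrt{y_n})$ realizes $y$ as the vector of squared eigenvalues.

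Next, I would upgrade this pointwise correspondence to an equivalence of decision problems. Given a multihomogeneous product symmetric polynomial $F$ in $k$ groups of variables $y^{(1)},\ldots,y^{(k)}$ involving only power sums $p_{a_1},\ldots,p_{a_r}$, define a trace polynomial $\widetilde F$ by substituting $\Tr(X_j^{2a_i})$ for each occurrence of $p_{a_i}(y^{(j)})$. By the previous paragraph, $\widetilde F$ is nonnegative on all tuples of real symmetric matrices of all sizes if and only if $F$ is copositive in every number of variables. In particular, every instance of the (undecidable) copositivity problem for product symmetric polynomials can be encoded as an instance of the trace nonnegativity problem of the theorem, and this encoding is computable.

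Finally, I would invoke Theorem~\ref{thm:undecidable}, which asserts that deciding copositivity of multihomogeneous product symmetric polynomials in all numbers of variables is undecidable. Combined with the reduction above, this immediately yields Theorem~\ref{thm:undecidable traces}. The bulk of the work and the true main obstacle is hidden in Theorem~\ref{thm:undecidable}: its proof adapts the Hatami--Norin reduction from Hilbert's Tenth Problem, using that $\Pi_3$ is (up to an affine-linear change of coordinates) the Hatami--Norin region and that its boundary carries the countable family of cusp points $(1/k,1/k^2)$ from Corollary~\ref{cor:N3 infinite many }. These cusps allow one to encode integer Diophantine data as polynomial inequalities on products $\Pi_3^k$, while the non-semialgebraicity of $\Pi_3$ (Corollary~\ref{cor:not semialgebraic}) precludes any effective algebraic certificate.
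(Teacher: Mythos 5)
Your proposal is correct and follows essentially the same route as the paper: restrict to even-power trace polynomials, identify $\Tr(M^{2m})$ with the power sum $p_m$ evaluated at the squared eigenvalues (with diagonal matrices giving surjectivity onto the nonnegative orthant), and then transfer the undecidability of Theorem~\ref{thm:undecidable} via multihomogeneity. The only cosmetic difference is that you substitute arbitrary even powers $\Tr(X_j^{2a_i})$, whereas the paper fixes the instances coming from Theorem~\ref{thm:undecidable} and works with $\Tr(X_i^2),\Tr(X_i^4),\Tr(X_i^6)$, i.e.\ with $\mathfrak{p}_1,\mathfrak{p}_2,\mathfrak{p}_3$ on $\Delta^k$ via Newton's identities.
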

The undecidability occurs already if we only allow $\Tr(X_i^2),\Tr(X_i^4),\Tr(X_i^6)$ in any of the $k$ variables.
We now give an intuitive explanation of the hardness of the decision problem, and relate trace nonnegativity to the geometry of the limit Vandermonde cell.
For an $(n\times n)$-matrix $A$ we can replace $\Tr(A^m)$ by the power sum $p_m(\lambda)$ in the eigenvalues $\lambda_1,\ldots,\lambda_n$ of $A$. A polynomial in $k$ pairwise disjoint groups of variables $x^{(i)}=(x_1^{(i)},\ldots,x_{n_i}^{(i)})$ for $1 \leq i \leq k$ is \emph{multihomogeneous} if it is homogeneous in all of the groups of variables. For instance, $p_2(x^{(1)})p_4^2(x^{(2)})p_3(x^{(3)})+p_1^2(x^{(1)})p_8(x^{(2)})p_2(x^{(3)})p_1(x^{(3)})$ is multihomogeneous in $x^{(1)},x^{(2)},x^{(3)}$. A trace polynomial is \emph{multihomogeneous} if the associated polynomial in power sums in several groups of variables is multihomogeneous. A multihomogeneous trace polynomial in $\Tr(X_i^2),\Tr(X_i^4),\Tr(X_i^6)$ for $1 \leq i \leq k$ is nonnegative if the associated polynomial \[f(p_2(x^{(1)}),p_4(x^{(1)}),p_6(x^{(1)}),\ldots,p_2(x^{(k)}),p_4(x^{(k)}),p_6(x^{(k)}))\] is nonnegative on $(\{1\}\times \Pi_3)^k$.
The key to the hardness of the problem is the geometry of $\Pi_3$ which we investigated in Section \ref{sec:Vandermonde map}. We reduce establishing nonnegativity of multihomogeneous integer polynomials on $(\Pi_3)^k$ to deciding nonnegativity of $k$-variate polynomials on $\N^k$ which is known to be undecidable \cite{hatami2011undecidability}.

\begin{remark}
It follows from Theorem \ref{thm:undecidable traces} that there cannot exist a unified algorithm or an effective certificate to determine the validity of polynomial inequalities in traces of powers of real symmetric matrices of all sizes. Note that for a finite number of variables it follows by Artin's solution to Hilbert's 17th problem \emph{\cite{artin1927zerlegung}} that validity of polynomial inequalities on semialgebraic sets is decidable. 
\end{remark}

Nonnegativity of trace polynomials is investigated in the context of non-commutative real algebraic geometry. There one usually considers normalized trace polynomials. In \cite{klep2021positive} the authors prove a Positivstellensatz for univariate normalized trace polynomials. 

\begin{definition}
For a variable $X$ we denote by $\widetilde{\operatorname{tr}}(X)$ the \emph{formal normalized trace symbol} on $X$. A \emph{normalized trace polynomial} in the variables $X_1,\ldots,X_k$ is a polynomial expression in formal normalized trace symbols of powers of the variables $X_1,\ldots,X_k$. A normalized trace polynomial is \emph{univariate} if $k=1$.
\end{definition}
As the name formal normalized trace operator already indicates, for a matrix $A \in \R^{n \times n}$ we define the evaluation $\widetilde{\operatorname{tr}}(A) := \frac{1}{n}\Tr (A)$. A normalized trace polynomial is \textit{nonnegative} if its evaluation on all real symmetric matrices of all sizes is nonnegative. 

\begin{theorem}\label{thm:normalized decidable}
 The following decision problem is decidable.
\begin{itemize}
    \item[{\footnotesize Instance:}] A positive integer $k$ and a normalized trace polynomial $f(X_1,\ldots,X_k)$.
    \item[{\footnotesize Question:}] Is $f(M_1,\ldots,M_k)$ nonnegative for all real symmetric matrices $M_1,\ldots,M_k $ of all sizes?
\end{itemize}   
\end{theorem}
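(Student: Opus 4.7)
The strategy is to reduce nonnegativity of normalized trace polynomials to nonnegativity of an ordinary polynomial on a semialgebraic set, and then invoke the Tarski--Seidenberg theorem.

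First, I would rewrite the question spectrally. Every $n \times n$ real symmetric matrix $M$ has real eigenvalues $\lambda=(\lambda_1,\ldots,\lambda_n)$, and $\widetilde{\operatorname{tr}}(M^j)=\frac{1}{n}p_j(\lambda)$; conversely, every $\lambda \in \R^n$ is realized by a diagonal symmetric matrix. Writing $d$ for the largest power of any variable appearing under a normalized trace symbol in $f(X_1,\ldots,X_k)$, the polynomial $f$ is nonnegative on symmetric matrices of fixed size $n$ if and only if an associated polynomial $F$ in variables $(y^{(i)}_j)_{1\le j\le d,\,1\le i\le k}$, with coefficients depending rationally on those of $f$, is nonnegative on the $k$-fold product of
\[
M_{n,d} := \left\{\left(\tfrac{1}{n}p_1(\lambda),\ldots,\tfrac{1}{n}p_d(\lambda)\right)\,:\,\lambda\in\R^n\right\}\subset \R^d.
\]
Geometrically, $M_{n,d}$ is the set of first $d$ moments of empirical probability measures with $n$ equally weighted atoms on $\R$, and hence is contained in the set $M_d$ of first $d$ moments of arbitrary Borel probability measures on $\R$.

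Next, I would show that $\bigcup_{n\ge 1} M_{n,d}$ is dense in $M_d$. By the Tchakaloff--Richter theorem every $(m_1,\ldots,m_d)\in M_d$ is realized by some finitely supported probability measure $\sum_{i=1}^r w_i\,\delta_{x_i}$; approximating the weights by rationals $p_i/q$ with $\sum_i p_i=q$ gives an equally weighted atomic measure on $q$ points (counted with multiplicity) whose first $d$ moments converge to the target as $q\to\infty$, and this measure's moment vector lies in $M_{q,d}$. Because $F$ is continuous, nonnegativity of $f$ on real symmetric matrices of all sizes is thus equivalent to nonnegativity of $F$ on the closure $M_d^k$.

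Finally, I would appeal to the classical fact that $M_d$ is a semialgebraic set, explicitly described by positive semidefiniteness conditions on Hankel matrices built from the moment coordinates (the truncated Hamburger moment problem, as described for example in \cite{MR3729411}). Consequently $M_d^k$ is semialgebraic and cut out by finitely many polynomial inequalities with rational coefficients; deciding nonnegativity of a polynomial with rational coefficients on such a set is decidable by the Tarski--Seidenberg theorem (or equivalently by Artin's solution to Hilbert's 17th problem combined with effective quantifier elimination). Applied to $F$ on $M_d^k$, this yields the claimed decidability.

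The main obstacle in this plan is the density step: one must be certain that restricting to empirical measures with exactly $n$ equal atoms still approximates the full truncated moment cone. The Tchakaloff--Richter representation together with rational approximation of the atomic weights settles this cleanly, but it is the one place where the passage from "for all $n$" to "on the limit set $M_d$" must be justified; the rest of the argument is a clean application of known semialgebraic descriptions and quantifier elimination.
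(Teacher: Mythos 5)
Your argument is correct, but it takes a genuinely different route from the paper. You reduce to the truncated Hamburger moment body: you pass to eigenvalues, observe that the normalized power-sum vectors of size-$n$ spectra are exactly moments of $n$-atom empirical measures, prove density in the full truncated moment set $M_d$ via Tchakaloff--Richter plus rational approximation of the weights, and then invoke the semialgebraic (Hankel/LMI) description of that set together with Tarski--Seidenberg. The paper instead never mentions moment sequences in the proof: it reduces to power mean polynomials and applies Timofte's half-degree principle (Theorem \ref{thm:half degree princ}) in the packaged form of Lemma \ref{lem:gregcordian} from \cite{blekherman2021symmetric}, which says that nonnegativity of the whole sequence $\mathfrak{f}_n$ is equivalent to nonnegativity of the explicit polynomial $\Phi_{\mathfrak{f}}$ on the semialgebraic set $\Delta_{d-1}\times\R^d$ (a parametrization by at most $d$ atoms and simplex weights), after which decidability again follows from quantifier elimination over the reals. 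The two approaches encode the same underlying phenomenon --- the normalized limit is a $d$-atom/moment body, hence semialgebraic, in contrast to the unnormalized cell $\Pi_d$ --- but yours leans on classical moment theory and the LMI description from \cite{MR3729411} (which the paper only invokes informally in the introduction), while the paper's lemma gives the target semialgebraic set directly as a parametrized domain, avoiding any density or representation theorem. One small imprecision to fix in your write-up: the set $M_d$ itself is \emph{not} exactly the PSD-Hankel set (by Curto--Fialkow, positive semidefiniteness alone is insufficient on the boundary of the truncated Hamburger problem); it is the \emph{closure} of $M_d$ that the Hankel conditions describe. Since your density-plus-continuity step already reduces everything to the closure, this does not affect the proof, but the semialgebraicity claim should be stated for $\clo(M_d)$ rather than for $M_d$. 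You should also make explicit that the associated polynomial $F$ and the Hankel description have rational coefficients, so that the Tarski--Seidenberg decision procedure genuinely applies to the resulting first-order sentence.
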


For matrices of fixed size deciding nonnegativity of normalized trace polynomials and trace polynomials is equivalent. The sharp contrast appears when we ask about nonnegativity for matrices of all sizes. Geometrically, the limit image of the normalized Vandermonde map of $\R_{\geq 0}^n$ corresponds to the set of the first $d$ moments of a probability measure supported on $\mathbb{R}_{\geq 0}$ \cite{acevedo2024power}. It is well-known that this set can be described by linear matrix inequalities \cite{MR3729411}. In particular, the normalized limit is semialgebraic for all $d$. The phenomenon of decidability for normalized trace can also be explained with the half-degree principle (\cite{timofte2003positivity}, Corollary 2.1), and we follow this direction in our proof.

\subsection{Proof of Theorem \ref{thm:undecidable traces}} 
Let $\Delta := \{x \in \R_{\geq 0}^\N : \sum_{i=1}^\infty x_i =1 \}$ denote the infinite probability simplex.
We write $\mathfrak{p}_m := \sum_{i \in \N} x_i^m$ for the \emph{power sum function} and $\mathfrak{e}_m:=\sum_{I \subset \N, |I|=m}\prod_{i \in I}x_i$ for the \emph{elementary symmetric function} in countably many variables. If $x \in \R^\N$ contains only finitely many non-zero coordinates we could also write $p_k(x)$ (resp. $e_k(x)$) instead of $\mathfrak{p}_k(x)$ (resp. $\mathfrak{e}_k(x)$). Like for finitely many variables, Newton's identities provide polynomial relations between $\mathfrak{p}_m$'s and $\mathfrak{e}_m$'s. A \emph{symmetric function} is a polynomial expression in the $\mathfrak{p}_m$'s, or equivalently in the $\mathfrak{e}_m$'s. If we have $k$ pairwise disjoint groups of countably many variables we can consider \emph{product symmetric functions}. These are polynomial expressions in $\mathfrak{p}_{m,(1)},\ldots,\mathfrak{p}_{m,(k)}$ for $1 \leq m \in \Z$, where $\mathfrak{p}_{m,(i)}$ denotes the $m$-th power symmetric function in the $i$-th group of variables. We write $\mathfrak{e}_{m,(i)}$ for the $m$-th elementary symmetric functions in the $i$-th group of variables. \\
To prove Theorem \ref{thm:undecidable} we require access to polynomials with domain $E_d^k$. For this purpose we introduced product symmetric functions.

\begin{theorem}\label{thm:undecidable}
 The following problem is undecidable.
\begin{itemize}
    \item[{\footnotesize Instance:}] A positive integer $k$ and a multihomogeneous product symmetric function $\mathfrak{f}$ in $k$ pairwise disjoint groups of variables.
    \item[{\footnotesize Question:}] Is $\mathfrak{f}$ nonnegative on the set $ \Delta^k$?
\end{itemize}
\end{theorem}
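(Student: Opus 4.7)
The plan is to reduce from a well-known undecidable diophantine problem: by the Matiyasevich--Robinson--Davis--Putnam resolution of Hilbert's tenth problem, determining, given $g \in \Z[n_1, \ldots, n_k]$, whether $g(n_1, \ldots, n_k) \geq 0$ for every $(n_1, \ldots, n_k) \in \N_{>0}^k$ is undecidable. The strategy is to translate such a diophantine inequality into the nonnegativity of a multihomogeneous product symmetric function on $\Delta^k$, using the geometry of $\Pi_3$ established in Subsection \ref{subsection power sum and elementary} together with the technique of Hatami and Norin \cite{hatami2011undecidability}, whose set is essentially a linear transformation of $\Pi_3$.

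First I would set up the geometric bridge. For each $x \in \Delta$ the pair $(\mathfrak{p}_2(x), \mathfrak{p}_3(x))$ lies in $\Pi_3$, and the nonnegativity of any polynomial in $\mathfrak{p}_{2,(i)}, \mathfrak{p}_{3,(i)}$, $1 \le i \le k$, on $\Delta^k$ is equivalent to that of the corresponding polynomial on $\Pi_3^k$. The distinguished vectors $\bar{n} := (\underbrace{1/n, \ldots, 1/n}_{n}, 0, 0, \ldots) \in \Delta$ map to the vertex points $(1/n, 1/n^2)$ of $\Pi_3$, and by Cauchy--Schwarz on $\Delta$ (using $\mathfrak{p}_1 = 1$) one has $\mathfrak{p}_3 \geq \mathfrak{p}_2^2$, with equality if and only if all nonzero coordinates of $x$ coincide. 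Combined with the parametrization in Theorem \ref{thm:Parametrization Nnd} and Corollary \ref{cor:boundaryN3}, this gives that $\Pi_3 \cap \{s = t^2\}$ is exactly the discrete set $\{(1/n, 1/n^2) : n \in \N_{>0}\} \cup \{(0,0)\}$, providing a natural embedding $\N_{>0} \hookrightarrow \bd \Pi_3$.

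Given $g \in \Z[n_1,\ldots,n_k]$, I would then construct a multihomogeneous product symmetric function $\mathfrak{f}$ on $\Delta^k$ adapting the Hatami--Norin reduction, designed so that \emph{(i)} at every product vertex $(\bar{n_1}, \ldots, \bar{n_k}) \in \Delta^k$ the value of $\mathfrak{f}$ is a positive multiple of $g(n_1, \ldots, n_k)$, and \emph{(ii)} the nonnegative slack quantities $\sigma_i := \mathfrak{p}_{3,(i)} - \mathfrak{p}_{2,(i)}^2$, which vanish exactly at the vertex points, can be added with sufficiently large positive coefficients to absorb the behaviour of $\mathfrak{f}$ away from vertices. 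Concretely, after making $g$ homogeneous and substituting $n_i \leftrightsquigarrow 1/\mathfrak{p}_{2,(i)}$ (which is exact at $\bar{n_i}$) and clearing denominators by suitable powers of $\mathfrak{p}_{2,(i)}$, one obtains a polynomial $\mathfrak{f}_0$ in the $\mathfrak{p}_{2,(i)}$'s whose values at vertex points recover $g$; the final $\mathfrak{f}$ is then of the form $\mathfrak{f}_0 + C \sum_i \sigma_i \cdot \mathfrak{h}_i$, where the $\mathfrak{h}_i$ are positive multihomogeneous correction terms and $C > 0$ is a constant chosen large enough.

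The main obstacle is the forward direction of the equivalence: assuming $g \ge 0$ on $\N_{>0}^k$, one must show $\mathfrak{f} \ge 0$ on all of $\Pi_3^k$, not only at vertex points. The reverse direction is immediate by restriction to vertices. For the forward direction, since $\Pi_3^k$ is compact, $\mathfrak{f}_0$ is bounded below, and the strict positivity of $\sum_i \sigma_i$ outside the discrete vertex set — quantified on any closed subset bounded away from the vertices — allows the penalty term to dominate the negative part. A limiting/compactness argument near vertex points, exploiting the explicit parametrizations of the arcs $L_k$ in Theorem \ref{thm:Parametrization Nnd} (so that $\sigma_i$ vanishes only to controlled order), completes the estimate. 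Combining this reduction with Matiyasevich's theorem yields the undecidability statement, which will in turn be the essential ingredient for Theorem \ref{thm:undecidable traces} via the identification $\Tr(A^m) = p_m(\lambda)$ on eigenvalues.
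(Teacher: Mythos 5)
Your overall strategy is the same one the paper (following Hatami--Norin) uses: embed $\N_{>0}$ into the distinguished points $\bar n\mapsto(1/n,1/n^2)$, note that the Cauchy--Schwarz slack $\mathfrak{p}_1\mathfrak{p}_3-\mathfrak{p}_2^2$ vanishes on $\Delta$ exactly at these points (the paper's analogue is that $6\mathfrak{e}_3-g(2\mathfrak{e}_2)$ with $g(x)=2x^2-x$ vanishes only at the vertices of $\mathcal{E}_3$), and add a penalty so that nonnegativity of the integer polynomial at lattice points becomes nonnegativity of a multihomogeneous product symmetric function on $\Delta^k$. The reverse implication (restriction to the vertices) is fine, and the identification of $\Pi_3\cap\{s=t^2\}$ with the vertex set is correct. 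A small fixable issue: as written $\sigma_i=\mathfrak{p}_{3,(i)}-\mathfrak{p}_{2,(i)}^2$ and your $\mathfrak{f}_0$ are not multihomogeneous; you must homogenize with powers of $\mathfrak{p}_{1,(i)}$ to stay inside the class of instances in the theorem.

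The genuine gap is the forward direction, which you describe as ``choose $C$ large enough, use compactness away from the vertices, and a limiting argument near them.'' This does not work as stated: the vertex points $(1/n,1/n^2)$ accumulate at the origin, so there are infinitely many of them, and near the accumulation point the slack $\sigma_i$, the margin of $\mathfrak{f}_0$ (whose vertex values are of order $n^{-D}g(n)$ after clearing denominators), and the distance between consecutive vertices all degenerate simultaneously. A constant $C$ obtained from compactness on any set bounded away from the vertices gives no control there, and no soft limiting argument supplies the missing uniformity; one needs a genuinely quantitative construction. This is exactly the content of Hatami--Norin's Lemma~5.4: the auxiliary polynomial $q=p\cdot\prod_i(1-Y_i)^6+M\sum_i(Z_i-g(Y_i))$ with the damping factor $\prod_i(1-Y_i)^6$ (in your coordinates, high powers of $\mathfrak{p}_{2,(i)}$ killing the behaviour near the accumulation point), the explicit constant $M$, and the strictly convex minorant touching the lower boundary precisely at the distinguished points, so that nonnegativity of $p$ at the integer points forces nonnegativity of $q$ on all of the convex region $R^k$ containing the image. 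The paper avoids redoing this estimate by transporting the image $(2\mathfrak{e}_2,6\mathfrak{e}_3)(\Delta)$ into Hatami--Norin's set via Proposition~\ref{cor:image infinite probability simplex} (the convexity of $\mathcal{E}_3$, which your power-sum picture lacks by Proposition~\ref{prop:not power sums}) and quoting their lemma verbatim. To complete your proposal you must either invoke that lemma in a usable form or prove a quantitative substitute for the domination near $(0,0)$; as written, the key estimate is asserted rather than proved.
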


We follow (\cite[Section 5]{hatami2011undecidability}) and use their notation. Hatami and Norin's work concerns undecidability of determining the validity of linear inequalities in graph homomorphism densities for graphons. 
By adapting only very few parts of Hatami and Norin's proof we show that an undecidable problem can be embedded into the problem of deciding copositivity of product symmetric homogeneous functions in $\mathfrak{e}_1,\mathfrak{e}_2,\mathfrak{e}_3$. 

The outline of the proof is as follows. Hatami and Norin use that Matiyasevich’s solution to Hilbert’s
tenth problem implies that deciding nonnegativity of rational polynomials on $\N^k$ is undecidable which is equivalent to deciding nonnegativity of rational polynomials on $\{\frac{n-1}{n} : n \in \N\}$ (\cite[Lemma~5.1]{hatami2011undecidability}). 
They define the compact convex set $R \subset [0,1]^2$ with the countably many vertices $(1,1),(0,1)$ and $(\frac{n-1}{n},\frac{(n-1)(n-2)}{n^2})$ for $n \in \Z_{\geq 1}$. Hatami and Norin then verify that deciding nonnegativity of rational polynomials on $\N^k$ is equivalent to deciding nonnegativity of rational polynomials on $R^k$ which is again equivalent to nonnegativity of rational polynomials on $\{(\frac{n-1}{n},\frac{(n-1)(n-2)}{n^2}) : 1 \leq n \in \Z\}^k$. 

In the proof of Theorem \ref{thm:undecidable} we define $C:=\operatorname{conv}(2\mathfrak{e}_2,6\mathfrak{e}_3)(\Delta) \subset [0,1]^2 \subset R$. Then the lower parts of the boundaries of $C$ and $R$ are equal by Proposition \ref{cor:image infinite probability simplex}. As in \cite{hatami2011undecidability} we define a picewise linear function $L : [0,1] \to [0,1]$ which graph parameterizes the lower part of the boundary of $C$ and $R$, and a strictly convex function $g$ on $[0,1]$ which graph lies below the graph of $L$. Moreover, $g(x)=L(x)$ if and only if $x \in \{1,\frac{n-1}{n} : n \in \Z_{>0}\}$.
We then associate to rational polynomials $p(x_1,\ldots,x_k)$ product symmetric functions $\tau(p)$ in $\mathfrak{e}_{1,(i)},\mathfrak{e}_{2,(i)},\mathfrak{e}_{3,(i)}$ for $1 \leq i \leq k$. We observe that $p(x_1,\ldots,x_k)$ is nonnegative on $\{\frac{n-1}{n} : n \in \N\}$ if and only if $\tau(p)$ restricted to $\Delta^k$ is nonnegative on $C^k$. We refer to Figure \ref{figure:6} which depicts the subsets of $C$ and $R$ with vertices for $1 \leq k \leq 100$ and the function $g$.

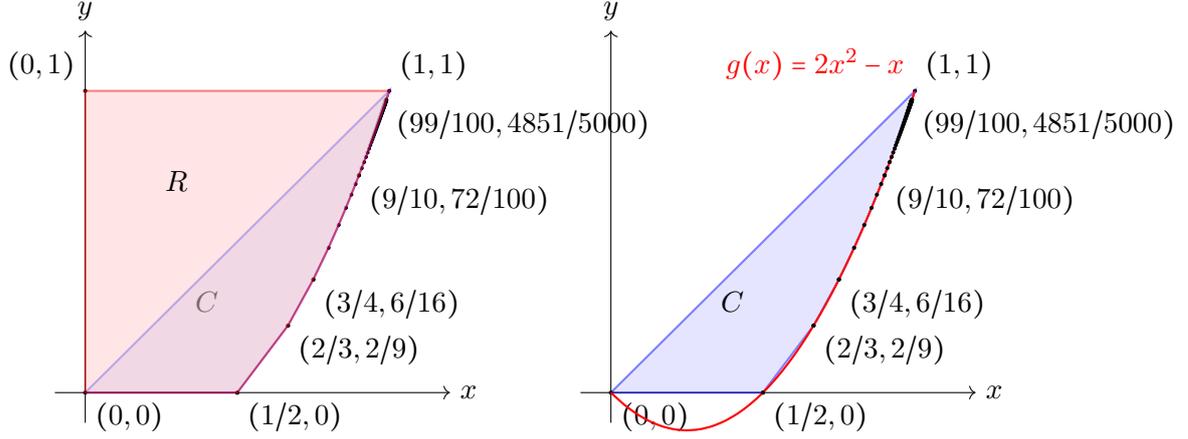
\begin{figure}[h!]
    \centering
    \begin{minipage}{0.45\textwidth}
        \centering
        \begin{tikzpicture}[scale=4]
            \draw[->] (-0.1,0) -- (1.2,0) node[right] {$x$};
            \draw[->] (0,-0.1) -- (0,1.2) node[above] {$y$};
            \foreach \k in {2,3,...,100} {
                \pgfmathsetmacro{\x}{(\k-1)/\k}
                \pgfmathsetmacro{\y}{(\k-1)*(\k-2)/\k^2}
                \filldraw (\x,\y) circle (0.005) node[above left] {};
            }
            \filldraw (1,1) circle (0.005) node[above right] {$(1,1)$};
            \draw[thick, blue, fill=blue!20, opacity=0.5] plot coordinates {(0.0, 0.0) (0.5, 0.0) (0.6667, 0.2222) (0.75, 0.375) (0.8, 0.48) (0.8333, 0.5556) (0.8571, 0.6122) (0.875, 0.6563) (0.8889, 0.6914)  (0.9, 0.72) (0.9091, 0.7438) (0.9167, 0.7639) (0.9231, 0.7811) (0.9286, 0.7959) (0.9333, 0.8089) (0.9375, 0.8203) (0.9412, 0.8304) (0.9444, 0.8395) (0.9474, 0.8476) (0.95, 0.855) (0.9524, 0.8617) (0.9545, 0.8678) (0.9565, 0.8733) (0.9583, 0.8785) (0.96, 0.8832) (0.9615, 0.8876) (0.963, 0.8916) (0.9643, 0.8954) (0.9655, 0.8989) (0.9667, 0.9022) (0.9677, 0.9053) (0.9688, 0.9082) (0.9697, 0.9109) (0.9706, 0.9135) (0.9714, 0.9159) (0.9722, 0.9182) (0.973, 0.9204) (0.9737, 0.9224) (0.9744, 0.9244) (0.975, 0.9263) (0.9756, 0.928) (0.9762, 0.9297) (0.9767, 0.9313) (0.9773, 0.9329) (0.9778, 0.9343) (0.9783, 0.9357) (0.9787, 0.9371) (0.9792, 0.9384) (0.9796, 0.9396) (0.98, 0.9408) (0.9804, 0.9419) (0.9808, 0.943) (0.9811, 0.9441) (0.9815, 0.9451) (0.9818, 0.9461) (0.9821, 0.9471) (0.9825, 0.948) (0.9828, 0.9489) (0.9831, 0.9497) (0.9833, 0.9506) (0.9836, 0.9514) (0.9839, 0.9521) (0.9841, 0.9529) (0.9844, 0.9536) (0.9846, 0.9543) (0.9848, 0.955) (0.9851, 0.9557) (0.9853, 0.9563) (1,1) (0.9855072463768116, 0.9569418189455997) (0.9857142857142858, 0.9575510204081633) (0.9859154929577465, 0.958143225550486) (0.9861111111111112, 0.9587191358024691) (0.9863013698630136, 0.959279414524301) (0.9864864864864865, 0.9598246895544192) (0.9866666666666667, 0.9603555555555555) (0.9868421052631579, 0.9608725761772853) (0.987012987012987, 0.9613762860516107) (0.9871794871794872, 0.9618671926364234) (0.9873417721518988, 0.9623457779202051) (0.9875, 0.9628125) (0.9876543209876543, 0.9632677945435147) (0.9878048780487805, 0.9637120761451516) (0.9879518072289156, 0.9641457395848454) (0.9880952380952381, 0.9645691609977324) (0.9882352941176471, 0.9649826989619377) (0.9883720930232558, 0.965386695511087) (0.9885057471264368, 0.9657814770775531) (0.9886363636363636, 0.9661673553719008) (0.9887640449438202, 0.9665446282035096) (0.9888888888888889, 0.9669135802469135) (0.989010989010989, 0.9672744837580003) (0.9891304347826086, 0.9676275992438563) (0.989247311827957, 0.9679731760897213) (0.9893617021276596, 0.96831145314622) (0.9894736842105263, 0.9686426592797784) (0.9895833333333334, 0.9689670138888888) (0.9896907216494846, 0.9692847273886704) (1,1) } -- cycle;    
            \node at (0.4, 0.3) {$C$};   
            \filldraw (0,1) circle (0.005) node[above left] {$(0,1)$};
            \filldraw (0,0) circle (0.005) node[below right] {$(0,0)$};
            \filldraw (1/2,0) circle (0.005) node[below right] {$(1/2,0)$};
            \filldraw (2/3,2/9)  circle (0.005) node[below right] {$(2/3,2/9) $};
            \filldraw (3/4,6/16)  circle (0.005) node[below right] {$(3/4,6/16)  $};
            \filldraw (9/10,72/100)   circle (0.005) node[below right] {$(9/10,72/100)   $};
            \filldraw (99/100,4851/5000)  circle (0.005) node[below right] {$(99/100,4851/5000)   $};  
            \draw[thick, red, fill=red!20, opacity=0.5] plot coordinates {(0.0, 0.0) (0.5, 0.0) (0.6667, 0.2222) (0.75, 0.375) (0.8, 0.48) (0.8333, 0.5556) (0.8571, 0.6122) (0.875, 0.6563) (0.8889, 0.6914)  (0.9, 0.72) (0.9091, 0.7438) (0.9167, 0.7639) (0.9231, 0.7811) (0.9286, 0.7959) (0.9333, 0.8089) (0.9375, 0.8203) (0.9412, 0.8304) (0.9444, 0.8395) (0.9474, 0.8476) (0.95, 0.855) (0.9524, 0.8617) (0.9545, 0.8678) (0.9565, 0.8733) (0.9583, 0.8785) (0.96, 0.8832) (0.9615, 0.8876) (0.963, 0.8916) (0.9643, 0.8954) (0.9655, 0.8989) (0.9667, 0.9022) (0.9677, 0.9053) (0.9688, 0.9082) (0.9697, 0.9109) (0.9706, 0.9135) (0.9714, 0.9159) (0.9722, 0.9182) (0.973, 0.9204) (0.9737, 0.9224) (0.9744, 0.9244) (0.975, 0.9263) (0.9756, 0.928) (0.9762, 0.9297) (0.9767, 0.9313) (0.9773, 0.9329) (0.9778, 0.9343) (0.9783, 0.9357) (0.9787, 0.9371) (0.9792, 0.9384) (0.9796, 0.9396) (0.98, 0.9408) (0.9804, 0.9419) (0.9808, 0.943) (0.9811, 0.9441) (0.9815, 0.9451) (0.9818, 0.9461) (0.9821, 0.9471) (0.9825, 0.948) (0.9828, 0.9489) (0.9831, 0.9497) (0.9833, 0.9506) (0.9836, 0.9514) (0.9839, 0.9521) (0.9841, 0.9529) (0.9844, 0.9536) (0.9846, 0.9543) (0.9848, 0.955) (0.9851, 0.9557) (0.9853, 0.9563) (0.9855072463768116, 0.9569418189455997) (0.9857142857142858, 0.9575510204081633) (0.9859154929577465, 0.958143225550486) (0.9861111111111112, 0.9587191358024691) (0.9863013698630136, 0.959279414524301) (0.9864864864864865, 0.9598246895544192) (0.9866666666666667, 0.9603555555555555) (0.9868421052631579, 0.9608725761772853) (0.987012987012987, 0.9613762860516107) (0.9871794871794872, 0.9618671926364234) (0.9873417721518988, 0.9623457779202051) (0.9875, 0.9628125) (0.9876543209876543, 0.9632677945435147) (0.9878048780487805, 0.9637120761451516) (0.9879518072289156, 0.9641457395848454) (0.9880952380952381, 0.9645691609977324) (0.9882352941176471, 0.9649826989619377) (0.9883720930232558, 0.965386695511087) (0.9885057471264368, 0.9657814770775531) (0.9886363636363636, 0.9661673553719008) (0.9887640449438202, 0.9665446282035096) (0.9888888888888889, 0.9669135802469135) (0.989010989010989, 0.9672744837580003) (0.9891304347826086, 0.9676275992438563) (0.989247311827957, 0.9679731760897213) (0.9893617021276596, 0.96831145314622) (0.9894736842105263, 0.9686426592797784) (0.9895833333333334, 0.9689670138888888) (0.9896907216494846, 0.9692847273886704) (1,1) (0,1)} -- cycle;    
            \node at (0.3, 0.7) {$R$};
        \end{tikzpicture}
    \end{minipage}%
    \hspace{0.5cm}%
    \begin{minipage}{0.45\textwidth}
        \centering
        \begin{tikzpicture}[scale=4]
            \draw[->] (-0.1,0) -- (1.2,0) node[right] {$x$};
            \draw[->] (0,-0.1) -- (0,1.2) node[above] {$y$};
            \filldraw (1,1) circle (0.005) node[above right] {$(1,1)$};
            \draw[thick, blue, fill=blue!20, opacity=0.5] plot coordinates {(0,0) (0.5,0) (2/3,2/9) (3/4,6/16) (4/5,12/25) (5/6,20/36) (6/7,30/49) (7/8,42/64) (8/9,56/81) (9/10,72/100) (10/11,90/121) (11/12,110/144) (12/13,132/169) (13/14,156/196) (14/15,182/225) (15/16,210/256) (16/17,240/289) (17/18,272/324) (18/19,306/361) (19/20,342/400) (99/100,4851/5000) (1,1) } -- cycle;   
            \node at (0.4, 0.3) {$C$};    
            \filldraw (0,0) circle (0.005) node[below right] {$(0,0)$};
            \filldraw (1/2,0) circle (0.005) node[below right] {$(1/2,0)$};
            \filldraw (2/3,2/9)  circle (0.005) node[below right] {$(2/3,2/9) $};
            \filldraw (3/4,6/16)  circle (0.005) node[below right] {$(3/4,6/16)  $};
            \filldraw (9/10,72/100)   circle (0.005) node[below right] {$(9/10,72/100)   $};
            \filldraw (99/100,4851/5000)  circle (0.005) node[below right] {$(99/100,4851/5000)   $};
            \draw[thick, red, domain=0:1, smooth] plot (\x, {2*(\x)*(\x) - \x}) node[above left] {$g(x) = 2x^2 - x$};
            \foreach \k in {2,3,...,100} {
                \pgfmathsetmacro{\x}{(\k-1)/\k}
                \pgfmathsetmacro{\y}{(\k-1)*(\k-2)/\k^2}
                \filldraw (\x,\y) circle (0.005) node[above left] {};
            }
        \end{tikzpicture}
    \end{minipage}
    \caption{Visualizations for $1 \leq k \leq 100$}
    \label{figure:6}
\end{figure}

\begin{proof}[Proof of Theorem \ref{thm:undecidable}]
By \cite[Lemma~5.1]{hatami2011undecidability} it follows from Matiyasevich’s solution to Hilbert’s tenth problem that the following validity problem is undecidable.
\begin{itemize}
    \item[{\footnotesize Instance:}] A positive integer $k$ and a polynomial $p \in \Z[Y_1,\ldots,Y_k]$.
    \item[{\footnotesize Question:}] Do there exist $x_1,\ldots,x_k \in \left\{ \frac{n-1}{n} : n \in \N\right\}$ with $p(x_1,\ldots,x_k) <0$?
\end{itemize}
When we replace $(\mathfrak{e}_2,\mathfrak{e}_3)$ by $(2\mathfrak{e}_2,6\mathfrak{e}_3)$ in Proposition \ref{cor:image infinite probability simplex} we have 
\[C :=  \conv (2\mathfrak{e}_2,6\mathfrak{e}_3)(\Delta) = \conv \left\{(1,1),\left( \frac{n-1}{n}, \frac{(n-1)(n-2)}{n^2}\right) : n \in \Z, n \geq 1\right\} \] as $(2\mathfrak{e}_2,6\mathfrak{e}_6)(\frac{1}{n},\ldots,\frac{1}{n},0,\ldots,0)=\left( \frac{n-1}{n}, \frac{(n-1)(n-2)}{n^2}\right)$ if $\frac{1}{n}$ appears $n$ times in $(\frac{1}{n},\ldots,\frac{1}{n},0,\ldots,0)$.

We define a piecewise linear function $L$ on $[0,1]$ by $$
L(x):= \frac{3t_x^2-t_x-2}{t_x(t_x+1)}x-\frac{2(t_x-1)}{t_x+1}$$ where $t_x$ is a positive integer for which $x \in [1-1/{t_x},1-1/({t_x}+1)]$ and $L(1)=1$. 
For a positive integer $t$ we have $L(\frac{t-1}{t})=\frac{(t-1)(t-2)}{t^2}=6e_3(0,1/t,\ldots,1/t)$ and $L(\frac{t}{t+1})=\frac{t(t-1)}{(t+1)^2}=6e_3(1/{(t+1)},\ldots,1/{(t+1)})$ where the vectors are supposed to lie in $\Delta_{t}$. Moreover, we have $\lim_{t \to \infty}(\frac{3t^2-t-2}{t(t+1)}x-\frac{2(t-1)}{t+1})=3x-2$. Thus, the map $L$ is well defined and continuous.   
Since $L$ is picewise linear and $L(\frac{n-1}{n})=\frac{(n-1)(n-2)}{n^2}$ the lower part of the boundary of $C $ equals $\{(x,y) :  x \in [0,1], L(x)=y\} $. 
We define $R := \{ (x,y) \in [0,1]^2 : y \geq L(x)\} $ and have nested sets $(2\mathfrak{e}_2,6\mathfrak{e}_3)(\Delta) \subset C \subset R$.
The function $g(x):=2x^2-x$ is strictly convex and for any positive integer $t$ we have $g(\frac{t-1}{t})=\frac{(t-1)(t-2)}{t^2}=L((t-1)/t)$. This shows $L(x)-g(x) \geq 0$ on the interval $[0,1)$. The symmetric function $6\mathfrak{e}_3-g(2\mathfrak{e}_2)$ is zero on $\Delta$ if and only if $2\mathfrak{e}_2 \in \{1, (n-1)/n : n \in \Z, n \geq 1\}$. This follows, since the boundaries of $\mathcal{E}_3$ and $E_3$ only intersect on the vertices of $E_3$. 

Let $p \in \R[Y_1,\ldots,Y_k]$ be a polynomial and let $M$ be the sum of the absolute values of its coefficients multiplied by $100 \deg (p)$. We consider the auxiliary polynomial 
$$ q(Y_1,\ldots,Y_k,Z_1,\ldots,Z_k):= p(Y_1,\ldots,Y_k)\cdot \prod_{i=1}^k (1-Y_i)^6+M \left( \sum_{i=1}^kZ_i - g(Y_i) \right).$$ Then, by (\cite[Lemma 5.4]{hatami2011undecidability}) the following are equivalent: 
\begin{enumerate}
    \item[{(i)}] $q(x_1,\ldots,x_k,y_1,\ldots,y_k) < 0$ for some $x_1,\ldots,x_k,y_1,\ldots,y_k$ with $(x_i,y_i) \in R$ for all $1 \leq i \leq k$;
    \item[{(ii)}] $p(x_1,\ldots,x_k) < 0$ for some $x_1,\ldots,x_k \in \{  \frac{n-1}{n} : n \in \Z, n \geq 1\}$.
\end{enumerate}
Let $\mathcal{R}_k$ denote the set of product symmetric functions in $k$ pairwise disjoint groups of variables. We consider the map 
$$ \abb{\tau}{\R[Y_1,\ldots,Y_k,Z_1,\ldots,Z_k]}{\mathcal{R}_k}{f(Y_1,\ldots,Y_k,Z_1,\ldots,Z_k)}{\prod_{i=1}^k \mathfrak{e}_{1,(i)}^{3\deg f} \cdot f\left( \frac{\mathfrak{e}_{2,(1)}}{\mathfrak{e}_{1,(1)}^2},\ldots,\frac{\mathfrak{e}_{2,(k)}}{\mathfrak{e}_{1,(k)}^2},\frac{\mathfrak{e}_{3,(1)}}{\mathfrak{e}_{1,(1)}^3},\ldots,\frac{\mathfrak{e}_{3,(k)}}{\mathfrak{e}_{1,(k)}^3}\right)}.$$
Since ${\mathfrak{e}_{2,(i)}}$ and ${\mathfrak{e}_{1,(i)}^2}$ (resp. $\mathfrak{e}_{3,(i)}$ and ${\mathfrak{e}_{1,(i)}^3}$) have degree $2$ (resp. $3$) multiplying by $\mathfrak{e}_{1,(i)}^{3\deg f}$ ensures that $\tau (f)$ has always nonnegative exponent in  $\mathfrak{e}_{1,(i)}$ for all $1 \leq i \leq k$. Thus $\tau (f)$ is a product symmetric function in $\mathfrak{e}_{1,(i)},\mathfrak{e}_{2,(i)},\mathfrak{e}_{3,(i)}$ for $1 \leq i \leq k$. By construction, the product symmetric function $\tau(f)$ is multihomogeneous. \smallskip

Analogously to (\cite{hatami2011undecidability}, Claims 5.7 \& 5.8) we claim that the following assertions are equivalent 
\begin{itemize}
    \item[(a)] $q(x_1,\ldots,x_k,y_1,\ldots,y_k) < 0$ for some $x_1,\ldots,x_k,y_1,\ldots,y_k$ with $(x_i,y_i) \in R$ for all $1 \leq i \leq k$;
    \item[(b)] $\tau (q)$ attains a negative value on $\Delta^k$.
\end{itemize}
First, we suppose (a). Hatami and Norin show in the proof of (\cite[Lemma~5.4]{hatami2011undecidability}) that if \\
$q(x_1,\ldots,x_k,y_1,\ldots,y_k) < 0$ for some $x_1,\ldots,x_k,y_1,\ldots,y_k$ with $(x_i,y_i) \in R$ for all $1 \leq i \leq k$ then the $x_i$'s can be chosen as $x_1,\ldots,x_k \in \{  \frac{n-1}{n} : n \in \N\}$, and $y_i = L(x_i)$. Thus, $\tau(q)$ is negative on $\Delta^k$ by Proposition \ref{cor:image infinite probability simplex}. More precisely, $\mathfrak{e}_{1,(i)}=1, 2\mathfrak{e}_{2,(i)}=x_i$ and $6\mathfrak{e}_{3,(i)}=y_i$ for all $1 \leq i \leq k$ is feasible and thus $\tau(q)$ attains a negative value on $\Delta$. \\
Second, we suppose $q(x_1,\ldots,x_k,y_1,\ldots,y_k) \geq 0$ for all $x_i,y_i$ with $(x_i,y_i) \in R$ for all $1 \leq i \leq k$, then $\tau(q)$ is nonnegative on $\Delta^k$, since $((2\mathfrak{e}_2,6\mathfrak{e}_3)(\Delta))^k \subset C^k \subset R^k$.  \smallskip

So the assertions (ii) and (b) are equivalent. Given an instance of the undecidable problem (\cite[Lemma 5.1]{hatami2011undecidability}), i.e. a polynomial $p \in \Z[Y_1,\ldots,Y_k]$, consider the associated rational auxiliary polynomial $q(Y_1,\ldots,Y_k,Z_1,\ldots,Z_k)$ as above. Then by (\cite[Lemma 5.4]{hatami2011undecidability}) the polynomial $p$ is nonnegative on $\{\frac{n-1}{n} : n\in \Z, n \geq 1\}^k$ if and only if $q$ is nonnegative on $R^k$. To establish the latter is equivalent to testing nonnegativity of the multihomogeneous product symmetric polynomial $\tau (q)$ on $\Delta^k$. This proves the Theorem. 
\end{proof}

We are ready to prove the main theorem on deciding nonnegativity of trace polynomials.

\begin{proof}[Proof of Theorem \ref{thm:undecidable traces}]
    We consider the subproblem of determining validity of nonnegativity of multihomogeneous trace polynomials $f(X_1,\ldots,X_k)$ in which any formal trace symbol is in an even square of a variable up to degree $6$, i.e. $f$ is a polynomial expression in $\Tr (X_i^{2}),\Tr (X_i^4), \Tr (X_i^6)$ for $1 \leq i \leq k$. Then deciding nonnegativity of $f$ for all symmetric matrices $M_1,\ldots,M_k$ of all sizes is equivalent to deciding nonnegativity of a multihomogeneous product symmetric function $g(\mathfrak{p}_{2,(1)},\mathfrak{p}_{4,(1)},\mathfrak{p}_{6,(1)},\ldots,\mathfrak{p}_{2,(k)},\mathfrak{p}_{4,(k)},\mathfrak{p}_{6,(k)}).$ Its nonnegativity is equivalent to nonnegativity of $g (\mathfrak{p}_{1,(1)},\mathfrak{p}_{2,(1)},\mathfrak{p}_{3,(1)},\ldots,\mathfrak{p}_{1,(k)},\mathfrak{p}_{2,(k)},\mathfrak{p}_{3,(k)})$ on $\Delta^k$. However, this problem is undecidable by Theorem \ref{thm:undecidable} since Newton's identities provide a linear relation between the power sums and elementary symmetric polynomials up to degree $3$ on the probability simplex. 
\end{proof}

\begin{remark}
It was shown by Jones \cite{jones1982universal} that deciding whether Diophantine equations in $9$ variables have solutions in $\N^9$ is already undecidable. Thus, Hatami-Norin's work shows that it is enough to consider trace polynomials in $9$ variables for undecidability. Moreover, in the proof of Theorem \ref{thm:undecidable} we could have directly worked with power sum functions, $\Pi_3$ and $\conv \Pi_3$, and we refer to \cite{blekherman2022undecidability} for details.     
\end{remark}

\subsection{Proof of Theorem \ref{thm:normalized decidable}}
The small adjustment of using normalized traces makes the problem of establishing nonnegativity decidable. An important role is played by Timofte's half degree principle. The decidability was implicitly observed by Blekherman and Riener in \cite{blekherman2021symmetric}. 

\begin{theorem}[\cite{timofte2003positivity}] \label{thm:half degree princ}
A symmetric $n$-variate polynomial $f \in \R[x]$ is nonnegative if and only if $f(a) \geq 0$ for any $a \in \R^n$ with $\# \{a_1,\ldots,a_n\} \leq \max \{ \lfloor \frac{\deg f}{2} \rfloor ,2\}$.    
\end{theorem}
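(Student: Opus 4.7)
The plan is to prove the nontrivial ($\Leftarrow$) direction along the classical route of Timofte, exploiting a decisive affine-linear structure available whenever $\deg f \le 2k+1$. Suppose $f$ is symmetric of degree $d$ with $f \ge 0$ on $A_k := \{a \in \R^n : |\{a_1,\ldots,a_n\}| \le k\}$ and $k = \max\{\lfloor d/2 \rfloor, 2\}$. For any $x^* \in \R^n$, consider the level set $M := \{y \in \R^n : p_i(y) = p_i(x^*),\ 1 \le i \le k\}$. Because $k \ge 2$, the constraint $p_2 = p_2(x^*)$ confines $M$ to a sphere, so $M$ is compact and $f|_M$ attains its minimum at some $y^*$ with $f(y^*) \le f(x^*)$. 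It suffices to produce such a minimizer with at most $k$ distinct coordinates.

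Writing $f = F(p_1,\ldots,p_d)$ with $F$ graded by weighted degree $\deg p_i = i$, every monomial of weighted degree $\le d \le 2k+1$ contains at most one factor $p_i$ with $i > k$, since each such factor contributes at least $k+1$ to the weighted degree. Hence, after substituting $p_i = c_i$ for $i \le k$,
\[
 f|_M \;=\; A + \sum_{i=k+1}^{d} B_i\, p_i ,
\]
an affine-linear function of $p_{k+1},\ldots,p_d$ with constants $A, B_i$ depending on $c_1,\ldots,c_k$. Let $\pi := (p_{k+1},\ldots,p_d) : M \to \R^{d-k}$. Then $f|_M$ is the pullback of a linear functional $L$ on $\R^{d-k}$, so $\min_M f = \min_{\pi(M)} L = \min_{\conv \pi(M)} L$, with the last minimum attained at an extreme point of $\conv \pi(M)$. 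The problem thus reduces to the claim that every extreme point of $\conv \pi(M)$ admits a preimage in $M$ with at most $k$ distinct coordinates.

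To attack the claim, suppose for contradiction that a minimizer $y^*$ of some linear functional $\sum_{i>k} \tilde B_i p_i$ on $M$ has $r > k$ distinct coordinates. Then $M$ is smooth at $y^*$, because the Jacobian of $(p_1,\ldots,p_k)$ has full rank $k$ there by a weighted Vandermonde argument in the spirit of Lemma \ref{lem:d-1 points}. Lagrange multipliers then yield scalars $\alpha_1,\ldots,\alpha_d$, not all zero (with $\alpha_i = \tilde B_i$ for $i>k$), satisfying $\sum_i \alpha_i \nabla p_i(y^*) = 0$; componentwise, $Q(y^*_j) = 0$ for every $j$, where $Q(t) := \sum_i i\alpha_i t^{i-1}$ has degree at most $d-1$. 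This first-order analysis alone only yields $r \le d-1$, the classical \emph{degree} principle.

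The main obstacle is sharpening $r \le d-1$ to $r \le k$, which is the distinctive content of the \emph{half}-degree principle. The remaining input is the second-order necessary condition at a minimum: the Lagrangian Hessian restricted to $T_{y^*} M$ is positive semidefinite. Since this Hessian is the diagonal matrix $\operatorname{diag}(Q'(y^*_j))$, and at the simple roots $v_1<\cdots<v_r$ of $Q$ the derivative $Q'$ must alternate in sign, the semidefiniteness requirement on a subspace of codimension $k$---whose normal directions are spanned by the weighted-Vandermonde vectors $\nabla p_1,\ldots,\nabla p_k$---forces a rigid sign pattern on $Q'$ at the $v_j$'s that can be sustained by at most $k$ alternations. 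Making this sign-counting argument precise is the technical heart of the proof. It can alternatively be packaged, in the spirit of Section \ref{sec:Convex hull}, as a direct combinatorial characterization of the extreme points of $\conv \pi(M)$ in terms of bounded multiplicity length on preimages, the analog for power sums of Theorem \ref{thm:Convex hull Image of Elementary}.
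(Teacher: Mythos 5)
First, note that the paper does not prove this statement at all: it is quoted verbatim from Timofte \cite{timofte2003positivity}, so there is no internal proof to compare with and your argument has to stand on its own. Your reduction is the standard opening of the known proofs (Timofte's, and especially the optimization route of \cite{riener2012degree}): fixing $p_1,\dots,p_k$ gives a compact fiber $M$ because $k\ge 2$ pins down $p_2$; since $d\le 2k+1$, no monomial in the $p_i$ of weighted degree $\le d$ contains two factors of index $>k$, so $f|_M$ is affine-linear in $p_{k+1},\dots,p_d$; hence $\min_M f$ is the minimum of a linear functional over $\conv\pi(M)$, attained at an extreme point. All of this is correct.

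The gap is the final claim, which is exactly the content of the half-degree principle and which you do not prove: that extreme points of $\conv\pi(M)$ (equivalently, minimizers on $M$ of the induced linear functionals) admit preimages with at most $k$ distinct coordinates. You defer "making this sign-counting argument precise," but the mechanism you sketch does not suffice as stated. First-order conditions give a nonzero univariate $Q$ of degree $\le d-1\le 2k$ vanishing at the $r$ distinct values $v_1<\dots<v_r$; the second-order condition (positive semidefiniteness of $\operatorname{diag}(Q'(y^*_j))$ on the codimension-$k$ tangent space) only bounds by $k$ the number of coordinates, counted with multiplicity, at which $Q'<0$. Combining this with the alternation of upward and downward crossings bounds the number of values with $Q'>0$ by roughly $k$ as well (from $2u-1\le\deg Q\le 2k$), and says nothing about values with $Q'(v_i)=0$; altogether one only gets $r\lesssim 2k$, i.e., nothing sharper than the degree principle you already conceded. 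Closing the factor of two is the actual theorem, and it requires an additional idea that is absent here: Timofte's flow preserving $p_1,\dots,p_k$ that merges coordinates without increasing $f$, or a deformation/induction argument producing, among the minimizers, one with a coordinate collision. Finally, the proposed fallback to "the analog for power sums of Theorem \ref{thm:Convex hull Image of Elementary}" is not available from the paper: the boundary and convex-hull analysis there (Theorem \ref{thm:1}, Theorem \ref{thm:Convex hull Image of Elementary}) is carried out on the nonnegative orthant/probability simplex, Proposition \ref{prop:not power sums} shows the naive power-sum analogue of the convex-hull statement actually fails for $d\ge 4$, and the fibers relevant here live over all of $\R^n$ with signed coordinates, so that analysis would have to be redone rather than cited.
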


We briefly illustrate the subtle difference between normalized power sums and power sums based on Timofte's half degree principle. 
Suppose we are given a power sum $p_d=x_1^d+\ldots+x_n^d$ in $n$ variables of degree $d \geq 4$ and let $\kappa := \lfloor \frac{d}{2} \rfloor$. Testing nonnegativity of $p_d$ is equivalent to testing nonnegativity of the $\kappa$-variate polynomials $p_{d,\alpha} = \alpha_1x_1^d+\ldots+\alpha_{\kappa}x_{\kappa}^d$ for all integer coefficient sequences $\alpha \in \N^{\kappa}$ with $\sum_{i=1}^{\kappa}\alpha_i = n$ by Theorem \ref{thm:half degree princ}. If we divide $p_{d,\alpha}$ by the number of variables $n$ of $p_d$ we observe that the coefficient vector $\frac{\alpha}{n}=(\frac{\alpha_1}{n},\ldots,\frac{\alpha_\kappa}{n}) $ is contained in the $(\kappa -1)$-dimensional probability simplex $\Delta_{\kappa-1}$. Thus, testing nonnegativity of the power mean $\frac{p_d}{n}$ in any number of variables $n \geq d$ is equivalent to testing nonnegativity of $\frac{p_{d,\alpha}}{n}= \frac{\alpha_1}{n}x_1^d+\ldots+\frac{\alpha_{\kappa}}{n}x_{\kappa}^d$ for all $n \geq d$ and all $\alpha \in \N^\kappa$ such that $\sum_i\alpha_i = n$. Then nonnegativity of the power mean $\frac{p_d}{n}$ for all $n$ is equivalent to nonnegativity of $\beta_1x_1^d+\ldots+\beta_{\kappa}x_{\kappa}^d$ for all $(\beta_1,\ldots,\beta_\frac{d}{2}) \in \Delta_{\kappa-1} \times \R^\kappa$ due to the density of $\mathbb{Q}$ in $\R$. However, the set $\Delta_{\kappa-1} \times \R^\kappa$ is semialgebraic.

\begin{definition}
    Let $\mathfrak{f}=\sum_{\lambda \in \N^{2d}} c_\lambda \mathfrak{p}_{\lambda_1}\cdots \mathfrak{p}_{\lambda_{2d}}$ be a symmetric function of degree $2d$. We denote by $\mathfrak{f}_n = \sum_{\lambda \in \N^{2d}}c_\lambda \frac{p_{\lambda_1}p_{\lambda_2}\cdots p_{\lambda_{2d}}}{n^{\mid\{i ~:~ \lambda_i > 0\}\mid }} $ the symmetric polynomial in $n$ variables that we obtain from $\mathfrak{f}$ by replacing any power sum function $\mathfrak{p}_{\lambda_i}$ in $\mathfrak{f}$ by the scaled power sum polynomial $\frac{1}{n}p_{\lambda_i}$ in $n$ variables.
    We call each $\mathfrak{f}_n$ a \emph{power mean} polynomial.
    We define an associated $2d$-variate function $\Phi_\mathfrak{f}$ as 
    $$ \Phi_\mathfrak{f}(s,t) =  \sum_\lambda c_\lambda \prod_{i=1}^{l} (s_1t_1^{\lambda_i}+\ldots + s_d t_d^{\lambda_i}). $$
\end{definition}

The following Lemma generalizes the application of Timofte's half degree principle from the discussion above to arbitrary normalized symmetric polynomials. 
\begin{lemma}[\cite{blekherman2021symmetric} Theorem~3.4] \label{lem:gregcordian}
 Let $\mathfrak{f}=\sum_{\lambda \in \N^{2d}} c_\lambda \mathfrak{p}_{\lambda_1}\cdots \mathfrak{p}_{\lambda_{2d}}$ be a symmetric function of degree $2d$. Then the $n$-variate symmetric polynomials $\mathfrak{f}_n$ are nonnegative for all $n \in \N$ if and only if $\Phi_{\mathfrak{f}}$ is nonnegative on $\Delta_{d-1} \times \R^d$.
\end{lemma}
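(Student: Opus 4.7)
My plan is to derive the equivalence by combining Timofte's half-degree principle (Theorem \ref{thm:half degree princ}) with a direct substitution identity relating $\mathfrak{f}_n$ and $\Phi_\mathfrak{f}$. Since $\mathfrak{f}_n$ is a symmetric polynomial of degree $2d$ in $n$ variables, Theorem \ref{thm:half degree princ} (applied with $d \geq 2$) asserts that $\mathfrak{f}_n \geq 0$ on $\R^n$ if and only if $\mathfrak{f}_n(x) \geq 0$ at every $x$ whose coordinates take at most $d$ distinct values. Any such $x$ can be parametrized by a value vector $t = (t_1, \ldots, t_d) \in \R^d$ and a multiplicity vector $a = (a_1, \ldots, a_d) \in \N^d$ with $\sum_{j=1}^d a_j = n$ (allowing some $a_j = 0$ when fewer than $d$ distinct values are needed).

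The central step is to verify the identity $\mathfrak{f}_n(x) = \Phi_\mathfrak{f}(s, t)$ with $s_j := a_j/n$. For each $k \geq 1$ one computes
\[
\frac{1}{n}p_k(x) \;=\; \frac{1}{n}\sum_{j=1}^d a_j t_j^k \;=\; \sum_{j=1}^d s_j t_j^k,
\]
and by construction $s = (s_1,\ldots,s_d) \in \Delta_{d-1}$. Feeding this into the definition of $\mathfrak{f}_n$ turns each term $c_\lambda \prod_i p_{\lambda_i}(x) / n^{|\{i : \lambda_i > 0\}|}$ into $c_\lambda \prod_i \bigl(\sum_j s_j t_j^{\lambda_i}\bigr)$, which is exactly the $\lambda$-contribution to $\Phi_\mathfrak{f}(s,t)$. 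Here the normalization $n^{|\{i:\lambda_i>0\}|}$ is chosen precisely so that only the positive-exponent power sums are divided by $n$, the positions with $\lambda_i = 0$ contributing the factor $1$ under the convention $\mathfrak{p}_0 \equiv 1$ (equivalently, $\sum_j s_j = 1$ on $\Delta_{d-1}$).

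Both directions then follow. If $\Phi_\mathfrak{f} \geq 0$ on $\Delta_{d-1} \times \R^d$, then for every $n$ and every $x \in \R^n$ with at most $d$ distinct values the identity gives $\mathfrak{f}_n(x) \geq 0$, so Theorem \ref{thm:half degree princ} yields $\mathfrak{f}_n \geq 0$ on all of $\R^n$. Conversely, if $\mathfrak{f}_n \geq 0$ for all $n$, then for any rational $s \in \Delta_{d-1} \cap \Q^d$ I pick $n$ so that $ns_j \in \N$ for all $j$ and set $a_j := ns_j$; the identity yields $\Phi_\mathfrak{f}(s,t) = \mathfrak{f}_n(x) \geq 0$ for every $t \in \R^d$. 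Since $\Phi_\mathfrak{f}$ is a polynomial and $(\Delta_{d-1} \cap \Q^d) \times \R^d$ is dense in $\Delta_{d-1} \times \R^d$, continuity extends the inequality to the full domain. The main obstacle is the careful bookkeeping of the normalization factors in the substitution identity; once the convention $\mathfrak{p}_0 \equiv 1$ is adopted, the remaining density/continuity argument is routine.
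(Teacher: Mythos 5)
Your proof is correct and follows essentially the same route the paper intends: the lemma is cited from Blekherman--Riener, and the paragraph preceding it sketches exactly your argument (Timofte's half-degree principle, the substitution $s_j = a_j/n$ landing the normalized multiplicities in $\Delta_{d-1}$, and density of rational points plus continuity of $\Phi_{\mathfrak{f}}$) in the special case of a single power sum. Your bookkeeping of the $\mathfrak{p}_0 \equiv 1$ convention and the caveat that Timofte's bound $\max\{\lfloor \deg/2\rfloor,2\}$ matches $d$ only for $d \geq 2$ are consistent with the intended reading of the statement.
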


We are ready to prove Theorem \ref{thm:normalized decidable}.
\begin{proof}[Proof of Theorem \ref{thm:normalized decidable}]
For a real symmetric matrix $M \in \R^{n \times n}$ with eigenvalues $\lambda_1,\ldots,\lambda_n$ we have $$\widetilde{\operatorname{tr} }\left(M^k\right) = \frac{1}{n} \Tr(M^k) = \frac{1}{n} \sum_{i=1}^n \lambda_i^k = \frac{1}{n}p_k(\lambda). $$ 
Thus verifying nonnegativity of a univariate normalized trace polynomial is equivalent to verifying nonnegativity of the associated sequence of power mean polynomials in any number of variables. By Lemma \ref{lem:gregcordian} this is equivalent to nonnegativity of a polynomial on the semialgebraic set $\Delta_{d-1}\times \R^d$ and thus decidable. \\
For a multivariate normalized trace polynomial we proceed analogously. We have that nonnegativity of a normalized trace polynomial in $k$ variables is equivalent to nonnegativity of an associated polynomial on the semialgebraic set $(\Delta_{d_1-1} \times \R^{d_1})\times \ldots \times (\Delta_{d_k-1} \times \R^{d_k})$ where $d_i$ is the maximal occurring exponent of the $i$-th variable.
\end{proof}

\section{Conclusion and open questions}
In this article, we have studied the wonderful geometry of the Vandermonde map in the finite and infinite setup. In particular, we have shown how a connection to trace polynomials allows to show that the problem of determining if a given multivariate trace polynomial is  nonnegative is undecidable. Our proof inspired by Hatami-Norin's proof \cite{hatami2011undecidability} relied on Matiyasevich work on  Hilbert's tenth problem \cite{matiyasevich1970diophantineness} which showed that it is not possible to computationally decide if a Diophantine equation in several variables has an integer solution. In this context it is worth noticing that asserting that a given univariate polynomial has a root in the integers is a decidable task. Our construction used to prove Theorem \ref{thm:undecidable traces} does not apply if we restrict to univariate trace polynomials and therefore it remains a natural question whether verification of nonnegativity of univariate trace polynomials is decidable. 

\printbibliography
\end{document}